\renewcommand{\subsection}{\@startsection
{subsection}%
{2}%
{0mm}%
{-\baselineskip}
{0.5\baselineskip}%
{\normalfont\normalsize\bfseries}}%
\theoremstyle{definition}
\theoremstyle{remark}
\newtheorem*{rem}{Remark}
\theoremstyle{definition}
\newtheorem*{definition}{Definition}
\newtheorem*{fact}{Fact}
\newtheorem*{example}{Example}
\theoremstyle{remark}
\newtheorem*{remark}{Remark}
\newtheorem*{note}{Note}
\theoremstyle{plain}
\newtheorem*{assertion}{Assertion}
\newtheorem*{lemma}{Lemma}
\newtheorem*{cor}{Corollary}
\newtheorem*{thm}{Theorem}
\newcommand{\Conf}{\operatorname{Conf}}
\newcommand{\Hom}{\operatorname{Hom}}
\newcommand{\Aut}{\operatorname{Aut}}
\newcommand{\aug}{\operatorname{aug}}
\newcommand{\id}{\operatorname{id}}
\newcommand{\Emb}{\operatorname{Emb}}
\newcommand{\rank}{\operatorname{rank}}
\def\Z{{\mathbb{Z}}}
\def\R{{\mathbb{R}}}
\def\C{{\mathbb{C}}}
\def\Q{{\mathbb{Q}}}
\def\SS{{\mathbb{S}}}
\def\T{{\mathbb{T}}}
\def\A{{\mathbb{A}}}
\def\LL{{\mathcal L}}
\def\al{{\alpha}}
\newdimen\argwidth
\def\db[#1\db]{%
 \setbox0=\hbox{$#1$}\argwidth=\wd0
 \setbox0=\hbox{$\left[\box0\right]$}
    \advance\argwidth by -\wd0
 \left[\kern.3\argwidth\box0 \kern.3\argwidth\right]}
\title{Limit Elements in the Configuration\\
Algebra for a Cancellative Monoid      
\footnote{  
\quad The present paper is a complete version of the 
announcement \cite{Sa2} based on the preprint RIMS-726.
We rewrote the introduction, left out the 
filtration by $(p,q)$,
divided section \S10 into \S10 and 11, 
and updated the references.
The \S11, 12 are newly written, where, applying the results in \S2-10 to Cayley graphs $(\Gamma,G)$ of a cancellative monoid, we introduce a fibration $\Omega(\Gamma,G)\to\Omega(P_{\Gamma,G})$ of our interest.
In Winter semester 05-06 at RIMS, 
the author held a series of seminars on the present paper.  
He thanks to its participants  Yohei Komori, Michihiko Fujii, Yasushi Yamashita, 
Masahiko Yoshinaga, Takefumi Kondo, and   
Makoto Fuchiwaki.
Particular thanks go to Yohei Komori, without whose encouredgment, this paper would not have appeared. 
The author is also grateful to Brian Forbes and Ken Shackleton for the careful reading
of the manuscript.
}}
\dedicatory{Dedicated to Professor Heisuke Hironaka\\
on the occasion of his seventy-seventh birthday}
\author{Kyoji \textsc{Saito}
      \footnote{
RIMS, Kyoto University, Kyoto 606-8502, Japan.}}
\begin{document}
%
\maketitle

\vspace{-0.5cm}
\noindent
{\footnotesize {\bf Abstract.} We introduce two spaces $\Omega(\Gamma,G)$ and $\Omega(P_{\Gamma,G})$ of pre-partition functions and of opposite series, respectively, which are associated with a Cayley graph $(\Gamma,G)$ of a cancellative monoid $\Gamma$ with a finite generating system $G$ and with its growth function $P_{\Gamma,G}(t)$. Under mild assumptions on $(\Gamma,G)$, we introduce a fibration $\pi_\Omega\!:\!\Omega(\Gamma,G)\!\to\! \Omega(P_{\Gamma,G})$ equivariant with a $\Z_{\ge0}$-action, which is transitive if it is of finite order. Then, the sum of  pre-partition 
functions in a fiber 
is a linear combination of residues of the proportion of two growth functions $P_{\Gamma,G}(t)$ and $P_{\Gamma,G}\mathcal{M}(t)$ attached to $(\Gamma,G)$ at the places of poles on the circle of the convergent radius.
}

\vspace{0.1cm}

\tableofcontents
\vspace{-.6cm}

\section{Introduction}\label{sec:1} 

 \vspace{-0.08cm}
Replacing the square lattice $\Z^2$ in the classical 
Ising model (\cite{Gi}\cite{I}\cite{O}\cite{Ba}) by a Cayley graph $(\Gamma,G)$ of a cancellative monoid $\Gamma$ with a finite generating system $G$, we introduce the space  $\Omega(\Gamma,G)$ of {\it pre-partition functions}. 
Here, the word pre-partition function is used only in the present introduction for a reason we explain now. 
Namely, for any finite region $T$ of the Cayley graph, we define the {\it free energy} $\frac{\mathcal{M}(T)}{\#(T)}$ by the logarithm of the sum of configurations in $T$ ((5.1.5) and (6.1.1)), and then consider  accumulating points set $\Omega(\Gamma,G)$ (in a suitable toplogical setting) of the sequence $\{\frac{\mathcal{M}(\Gamma_n)}{\#(\Gamma_n)}\}_{n\!\in\!\Z_{\ge0}}$ of free energies of balls $\Gamma_n$ of radius $n$ in $(\Gamma,G)$ (\S11.1 Definition). 
In the case of $\Gamma\!=\!\Z^2$, $\Omega(\Gamma,G)$ consists of a single element. By inputting the data of Boltzmann weights to it, we get the partition function: an elliptic function dependent on the parameters involved in the Boltzmann weights. 
This fact, inspired the author {\it to use the pre-partition functions 
to construct functions
on the moduli of} $\Gamma$ ([Sa1,3]).

In our new setting, the space $\Omega(\Gamma,G)$ is no longer a single element set but is a compact Hausdorff space. Under mild assumptions on $(\Gamma,G)$, we construct a fibration $\pi_\Omega:\Omega(\Gamma,G)\!\to\!\Omega(P_{\Gamma,G})$ (11.2.12), where $\Omega(P_{\Gamma,G})$ is another newly introduced compact space, consisting of opposite sequences of the growth function $P_{\Gamma,G}(t)$ (11.2.3). The fibration is equivariant with an action $\tau_\Omega$. If the action is of finite order, then it is transitive and the sum of the partition functions in a fiber of $\pi_\Omega$ is given by a linear combination of the proportions of residues of the two series $P_{\Gamma,G}\mathcal{M}(t):=\sum_{n=0}^\infty\mathcal{M}(\Gamma_n)t^n$ and $P_{\Gamma,G}(t):=\sum_{n=0}^\infty\#(\Gamma_n)t^n$ at their poles on the circle $|t|=r_{\Gamma,G}$ of convergent radius $r_{\Gamma,G}$. We publish these results in the present paper, eventhough our original attempt is not achieved. 

The paper is divided into two parts. 
In the first part \S2-10, we develop a general frame work on a topological Hopf algebra $\R\db[\Conf\db]$ called the {\it configuration algebra}, where necessary concepts such as the configuration sums, the free energies (called equally dividing points) etc. are introduced. The algebra is equipped with two (one adic and the other classical) topologies in order to discuss carefully limit process in it. 
Then, inside its subspace $\LL_{\R,\infty}$ of Lie-like elements at infinity, the set $\Omega_\infty:= \overline{\log(\text{EDP})}_{\infty}$ of all accumulation points of all free energies is introduced. 
In the second half, \S11-12, we consider a Cayley graph $(\Gamma,G)$ of a monoid. Then, the set of pre-partition functions $\Omega(\Gamma,G)$ is defined as the subset of $\Omega_\infty$ of all accumulating points of the sequence of free energies of the balls $\Gamma_n$ of radius $n\! \in\!\Z_{\ge0}$ in $(\Gamma,G)$. We also introduce another limit set $\Omega(P_{\Gamma,G})\!\subset\! \R\db[s\db]$, called {\it the space of opposite sequences}, depending only on the Poincare series $P_{\Gamma,G}(t)$  of $(\Gamma,G)$ (see (11.2.1-4) and (11.2.6)).
The space $\Omega(P_{\Gamma,G})$ is the key to relate the space $\Omega(\Gamma,G)$  with the singularities of the Poincare series  $P_{\Gamma,G}(t)$ on the circle $|t|=r_{\Gamma,G}$ of convergence (\S11 Theorems 1-5).
 Then, comparing the two limit spaces  $\Omega(\Gamma,G)$ and $\Omega(P_{\Gamma,G})$, we arrive at the goal: a residual presentation of pre-partition functions (\S11 Theorem 6.)



Let us explain the contents of the present paper in more detail. 

The isomorphism class of a 
colored oriented finite graph is called a {\it configuration}. 
The set of all configurations with fixed bounds of valency and colors, denoted by $\Conf$, has an additive monoid structure generated by $\Conf_0$, isomorphism classes of connected graphs (by taking the disjoint 
union as the product) and a partial ordering structure (\S2.3). 
In \S2.4, we introduce the basic invariant
$\left(\begin{smallmatrix}S \\ S_{1},\ldots.S_{m}\end{smallmatrix}\right)\in \Z_{\ge0}$ for $S_{1},\ldots,S_{m}$ and $S \in \Conf$,
called a {\it covering coefficient}.
We denote by  $\A\db[\Conf\db]$ the completion of the semigroup ring  
$\A\cdot \Conf$ with respect to the grading $\deg(S)\!\!:=\!\!\#(S)$, called the {\it configuration algebra}  (\S3), where $\A$ is the ring of coefficients. The algebra $\A\db[\Conf\db]$ carries a topological Hopf algebra structure by taking the covering coefficients as structure constants (\S4).

For a configuration $S\!\in\! \Conf$, let $\mathcal{A}(S)\!\in\! \A\cdot\Conf$ be the  sum of all its subgraphs, the configuration sum. We put 
$\mathcal{M}(S)\!:=\!\log(\mathcal{A}(S))$, then 
$\{\mathcal{M}(S)\}_{S\in\Conf_0}$ forms a basis of the Lie-like space of the non\!-\!complete bi-algebra $\A\!\cdot\!\Conf$ (\S5\! and\! 6). 
However, 
this is not a topological basis of the 
Lie-like space $\LL_{\A}$ 
of the algebra $\A\db[\Conf\db]$. 
\vspace{-0.04cm}
Therefore, we introduce  a topological basis, 
denoted by $\{\varphi(S)\}_{S\in \Conf_0}$, of  $\LL_{\A}$. 
\vspace{-0.04cm}
The coefficients of the transformation matrix between $\{\mathcal{M}(S)\}_{S\in \Conf_0}$ and 
$\{\varphi(S)\}_{S\in \Conf_0}$
are described by {\it kabi-coefficients},
introduced in \S7. The base-change induces
a linear map, called the {\it kabi-map}, from $\LL_{\A}$ 
to a formal  module spanned by $\{\mathcal{M}(S)\}_{S\in \Conf_0}$. 
The kernel of the kabi-map is 
denoted by $\LL_{\A,\infty}$ and is called the
{\it Lie-like space 
at infinity} (\S8).

The group-like elements $\frak{G}_{\mathbb{Z}, finite}$ of the configuration algebra $\Z\db[\Conf\db]$ is isomorphic to the fractional group of the monoid $\Conf$ by the correspondence $\mathcal{A}(S)\leftrightarrow S$ (\S9). Thus, it contains a positive cone spanned by $\Conf$.  
%
We are interested in the 
{\it equal division points} $\mathcal{A}(S)^{1/\#S}$  ($S\!\in\! \Conf$)
of the lattice points in the positive cone, 
and the set $\overline{\text{EDP}}$ of
{\it their accumulation points with respect to the classical topology} 
by specializing the coefficient $\A$ to $\R$. 
In \S10, by taking their logarithms\footnote
{
The logarithm $\log(\mathcal{A}(S)^{1/\#S})=\mathcal{M}(S)/\#S$, which we call the logarithmic equal dividing point, is called the (Helmholz) free energy in statistical mechanics (\cite{Gi}\cite{I}\cite{O}\cite{Ba}).
}
, we define their accumulation set 
$\Omega\!:=\!\overline{\log(\text{EDP})}$ in $\LL_{\R}$.
The set $\Omega$ decomposes into a join of the 
infinite simplex spanned by the 
vertices $\frac{\mathcal{M}(S)}{\#S}$ 
for $S\!\in\!\Conf_0$ 
and  
a compact subset 
$\Omega_\infty\!:=\!\overline{\log(\text{EDP})}_{\infty}$ 
of $\LL_{\R,\infty}$ (\S10).

From \S11, we fix a monoid $\Gamma$ with a finite generating system $G$. 
The sequence 
of the logarithmic equal division 
points $\frac{\mathcal{M}(\Gamma_n)}{\#\Gamma_n}$ 
for the sequence of balls $\Gamma_n$ of radius 
$n\!\in\!\Z_{\ge0}$ in the Cayley graph 
accumulates to a compact subset $\Omega(\Gamma,G)$ of $\Omega_\infty$, called the 
{\it space of limit elements} for $(\Gamma,G)$. 
This is the main object of interest of the present article.
If $\Gamma$ is a group of polynomial growth, then due to  
results of Gromov \cite{Gr1} and Pansu \cite{P}, 
for any generating system $G$, $(\Gamma,G)$ is simple
accumulating, i.e.\ $\#(\Omega(\Gamma,G))\!=\!1$.

In order to study the multi-accumulating cases, we introduce 
in \S11 (11.2.3) another compact accumulating set $\Omega(P_{\Gamma,G})$: the {\it space of opposite series of} 
the growth series 
$P_{\Gamma,G}(t)\!:=\!\sum_{n=0}^\infty \#(\Gamma_n)t^n$. 
Under mild {\bf Assumptions 1} in \S11.1 and {\bf 2} in \S11.2 on $(\Gamma,G)$, 
we show that a natural proper surjective map $\pi_\Omega: \Omega(\Gamma,G)\to \Omega(P_{\Gamma,G})$ equivariant with an action of $\Z_{\ge0}$ exists (see 11.2 {\bf Theorems 1,2,3} and {\bf 4.}), where i) $\pi_\Omega$ is a forgetful map which remembers only the portion $\underset{n\to \infty}{\lim}\!\frac{A(\Gamma_{n-k},\Gamma_n)}{\#\Gamma_n}$ ($k\in\Z_{\ge0}$) of the limit elements
\vspace{-0.05cm}
(here $A(\Gamma_{n-k},\Gamma_n)\!\!:=\!\!\#$ of subgraphs of $\Gamma_n$ isomorphic to $\Gamma_{n\!-\!k}$) and ii) the $\Z_{\ge0}$-action on $\Omega(\Gamma,G)$  is generated 
\vspace{-0.07cm}
by the map $\tilde\tau_\Omega$: {\it the limit element $\underset{m\to\infty}\lim\frac{\mathcal{M}(\Gamma_{n_m})}{\#\Gamma_{n_m}}$ $\mapsto$  
\vspace{-0.07cm}
the limit element 
\vspace{-0.07cm}
 $\underset{m\to\infty}{\lim}\frac{\mathcal{M}(\Gamma_{n_m-1})}{\#\Gamma_{n_m-1}}$.} 
This $\tilde\tau_\Omega$-action, up to an initial constant factor, has an interpretation by the fattening action on $\Conf$: $S\!\mapsto\! S\Gamma_1$ (here, $S\Gamma_1$ is the equivalence class of $\mathbb{S}\cdot\Gamma_1$ for a representative $\mathbb{S}$ of $S$) in the level of $\Omega(\Gamma,G)$, and interpretation by the degree shift action $t^n\!\mapsto\! t^{n+1}$ in the level of $\Omega(P_{\Gamma,G})$.

Subsections 11.3 and 11.4 are devoted to the study of the space of opposite sequences $\Omega(P)$ for a power series $P(t)$ (11.2.1) in general with a constraint on the growth of coefficients.
\vspace{-0.09cm}
The main concern is to clarify a certain duality between the set $\Omega(P)$ and the set of singularities of P(t) on the boundary of the disc of radius $r$ of convergence. It asks intricate analysis, and, in the present paper, we have clarified only when $\Omega(P)$ is a finite set. Actually, if  $\Omega(P)$ is finite, then the $\Z_{\ge0}$-action becomes a cyclic $\Z/h\Z$ and simple-transitive action. We explicitly determine  $\Omega(P)$ as a set of rational functions in the opposit variable $s$ of $t$ (i.e.\ $st\!=\!1$).
In particular, their common denominator $\Delta_P^{op}(s)$, which is a factor of $1-(rs)^h$, has the degree equals to the rank of the space $\R\Omega(P)$ spanned by $\Omega(P)$. If, further, $P(t)$ is meromorphic in a neighborhood of the convergent disc, then the top part $\Delta_P^{top}(t)$  of the denominator of $P(t)$ on the convergent circle of radius $r$ (see (11.4.1)) and the opposite denominator $\Delta^{op}_P(s)$ are related by the opposite transformation $st=1$ (11.4 {\bf Theorem 5}).

If $\Omega(\Gamma,G)$ is finite, then again 
the $\Z_{\ge0}$-action on $\Omega(\Gamma,G)$ 
becomes cyclic $\Z/\tilde h_{\Gamma,G}\Z$ and simple-transitive action for some $\tilde h_{\Gamma,G}\!\in\! \Z_{>0}$ such that $h_{\Gamma,G}| \tilde h_{\Gamma,G}$ for the period $h_{\Gamma,G}$ of the $\Z_{\ge0}$-action on $\Omega(P_{\Gamma,G})$. Therefore, the map $\pi_\Omega$ is equivalent to the Galois covering map $\Z/\tilde h_{\Gamma,G}\Z\to\!> \Z/h_{\Gamma,G}\Z$. 
Let us call the kernel of the homomorphism the {\it inertia group} 
and the sum of elements in $\Omega(\Gamma,G)$  of an orbit
of the inertia group a {\it trace element}.
As the goal of the present paper, 
we express the trace elements  
as linear combinations of  the proportions
$\frac{P_{\Gamma,G}\mathcal{M}(t)}{P_{\Gamma,G}(t)}\bigr|_{t=x}$
of the residues of meromorphic functions 
$P_{\Gamma,G}\mathcal{M}(t):=\sum_{n=0}^\infty\mathcal{M}(\Gamma_n)t^n$ 
and  $P_{\Gamma,G}(t)$ at the places $x$ in the root of $\Delta_P^{top}(t)=0$. 
(11.5 {\bf Theorem 6}). In the proof, we  essentially uses the duality theory in 11.4.

\smallskip
Finally in \S12, we give a few concluding remarks. 
Since we are only at the starting of the study of the limit space $\Omega(\Gamma,G)$,
the questions are scattered in various directions of general nature or of specific nature. 

As an immediate generalization of our goal Theorem 6 for the cases when 
$\Omega(\Gamma,G)$ is not finite, in Problem 1.1, we ask {\it measure theoretic approach for the duality between $\Omega(P)$ and $Sing(P)$}, and give a conjectural formula. 

Another important generalization of Theorem 6 is the {\it globalization} in the following sense: in many important examples, the growth function $P_{\Gamma,G}(t)$ analytically extends to a meromorphic function in a covering regions of $\C$ (and same for $P_{\Gamma,G}\mathcal{M}(t)$). Let $x$ be a pole of order $d$ of such a meromorphic function, then $\left(\frac{d^i}{dx^i}\frac{P_{\Gamma,G}\mathcal{M}(t)}{P_{\Gamma,G}(t)}\right)\big|_{t=x}$ for $0\le i\!<\!d$ (which we call a {\it residue} of depth $i$ at $x$) belongs to $\mathcal{L}_{\C,\infty}$ (even though it is no longer a limit element). Theorem 6 treats only the extremal case $|x|\!=\!r$ and $i\!=\!0$. Therefore, we ask {\it to study  all residues at all possible poles together with a possible action of a Galois group}, in particular, {\it to clarify the meaning of the (higher) residues at} $x\!=\!1$.

We conjecture that 
hyperbolic groups and some groups of geometric significance (surface groups, 
mapping class groups and Artin groups for suitable choices of generators) are finite accumulating, 
i.e.\ $\#(\Omega(\Gamma,G))\!<\!\infty$. 


\section{Colored graphs and covering coefficients}\label{sec:2}

An isomorphism  class of finite graphs  
with a fixed color-set and a bounded number 
of edges (valency) at each vertex is called a {\it configuration}.
The set of all configurations carries the structure of an
 abelian monoid with a partial ordering.
The goal of the present section is to introduce 
a numerical invariant, 
called the {\it covering coefficient}, 
and to show some of its basic properties.

\subsection{Colored Graphs.}\label{subsec:2.1}
We first give a definition of colored graph which is used in the present paper.
\begin{definition} 
1. 
A pair $(\Gamma,B)$ is called a {\it graph},
if $\Gamma$ is a set and $B$ is a subset of
$\Gamma\!\times\!\Gamma\!\setminus\!\Delta$ with
$\sigma (B)\!=\!B$, where $\sigma$ is the involution
$\sigma(\alpha,\beta)\!:=\!(\beta,\alpha)$ and $\Delta$
is the diagonal subset.
An element of $\Gamma$ is called a
{\it vertex} and a $\sigma$-orbit in $B$ is called an {\it edge}. 
A graph is called {\it finite} if $\sharp\Gamma\!\! <\!\infty$. 
We sometimes denote a graph by $\Gamma$ and 
the set of its vertices by $|\Gamma|$.

2.
Two graphs are {\it isomorphic} if there is a bijection of 
vertices inducing a bijection of edges.
Any subset $\mathbb{S}$ of $|\Gamma|$ carries  
a graph structure  
by taking $B\cap(\mathbb{S}\times \mathbb{S})$ as the set of edges for $\mathbb{S}$. 
The set $\SS$ equipped with this graph structure is called 
a {\it subgraph} (or a {\it full subgraph}) of $\Gamma$ and is denoted by the 
same $\SS$.
In the present paper, the word ``subgraph'' shall be used only in this sense,
and the notation $\mathbb{S}\subset \Gamma$ shall mean also that $\mathbb{S}$ 
is a subgraph of $\Gamma$ associated to the subset.
Hence, we have the bijection:
$\{\text{subgraphs of }\Gamma\}\  \simeq\  \{\text{subsets of }|\Gamma|\}.$

3. A pair $(G,\sigma_G)$ of a set $G$ and an involution $\sigma_{G}$ 
on $G$ (i.e.\ a map $\sigma_G\!:\!G\!\to\! G$ with $\sigma_G^2\!=\!id_G$) 
is called a {\it color set}. 
For a graph $(\Gamma,B)$, a map $c:B\!\rightarrow\! G$ is called 
a $(G,\sigma_G)$-{\it coloring }, or $G$-{\it coloring}, 
if $c$ is equivariant with respect to involutions: 
$c\circ\sigma\!=\!\sigma_{G}\circ c$. 
The pair consisting of a graph and a $G$-coloring  is called a {\it $G$-colored graph}.
Two $G$-colored graphs are called {\it $G$-isomorphic} 
if there is an isomorphism 
of the graphs compatible with the colorings.
Subgraphs of a $G$-colored graph are naturally $G$-colored.

If all points of $G$ are fixed by $\sigma_{G}$,
then the graph is called un-oriented.
If $G$ consists of one orbit of $\sigma_{G}$,
then the graph is called un-colored.

The isomorphism class of a $G$-colored graph $\mathbb{S}$ is
denoted by $[\mathbb{S}]$. 
Sometimes we will write $\mathbb{S}$ instead of $[\mathbb{S}]$
(for instance, we put $\sharp[\mathbb{S}]:=\sharp\mathbb{S}$, and call 
$[\mathbb{S}]$ {\it connected} if $\SS$ is topologically connected as a simplicial complex). 
\end{definition}
%
\noindent
{\bf Example.}
(Colored Cayley graph of a monoid with cancellation conditions). \ 
Let $\Gamma$ be a monoid satisfying the left and right cancellation conditions:
if $axb\!=\!ayb$ in $\Gamma$ for $a,b,x,y\in\Gamma$ then $x\!=\!y$ in $\Gamma$. 
In the other words, for any two elements $a,b\in \Gamma$, if there exists $g\in\Gamma$ such that $a=bg$ (resp.\  $a=gb$) then $g$ is uniquely determined from $a$ and $b$, which we shall denote by $b^{-1}a$ (resp.\ $ab^{-1}$).
Let $G$ be a finite generating system of $\Gamma$ 
with $e \notin G$. Then, we equip $\Gamma$ with a graph structure by taking 
$B:=\{(\al,\beta)\in\Gamma\times\Gamma :\al^{-1}\beta \text{ or }\beta^{-1}\al \in G\}$ as the 
set of edges.  Due to the left cancellation condition, it becomes a colored graph by taking $G\cup G^{-1}$as the color set and by putting $c(\al,\beta)\!=\!\al^{-1}\beta$ for $(\al,\beta)\in B$, where $G^{-1}$ is a formally defined set consisting of elements of symbols $\al^{-1}$ for $\al\in G$ and identifying $\al^{-1}$ with $\beta\!\in\! G$ if $\al\beta\!=\!e$ in $\Gamma$ (such $\beta$ may not always exists). Due to the right cancellation condition, for any vertex $x$ and any $\al\!\in\! G$, vertices connected with $x$ by the edges of color $\al$ (i.e.\ $y\!\in\!\Gamma$ s.t.\ $y\al\!=\!x$) is unique. 
Let us call the graph, denoted by $(\Gamma,G)$ or $\Gamma$ for simplicity, 
the {\it colored Cayley graph} of the monoid $\Gamma$ 
with respect to the generating system $G$. 
The left action of $g\!\in\! \Gamma$ on $\Gamma$ is a color preserving graph embedding map from $(\Gamma,G)$ to itself. 

If $G=G^{-1}$, then $\Gamma$ is a group and the above definition coincides with the usual definition of a Cayley graph of a group.

\subsection{Configuration.}\label{subsec:2.2}

For the remainder of the paper, 
{\it we fix a finite color set $(G,\sigma_G)$ (i.e.\ $\#G<\infty$) 
and a non-negative integer $q\!\in\!\Z_{\ge0}$, and  
consider only the $G$-colored graphs such that 
 the number of edges ending at any vertex (called valency) is at 
most $q$}. 
The isomorphism class $[\mathbb{S}]$ of such a graph
$\SS$ is  called a $(G,q)$-{\it configuration} (or, a {\it configuration}).
The set of all (connected) configurations is defined by:
\begin{eqnarray}
\label{eq:2.2.1}
\qquad\Conf& := & \{ 
\text{$G$-isomorphism classes of $G$-colored graphs such that the } \\
&&  \ \text{number of edges ending at any given vertex is at 
most $q$} \} \nonumber\\
\label{eq:2.2.2}
\qquad \Conf_{0}\! & : = &  \{S\in\Conf \mid S
 \ \text{is connected}\}.
\end{eqnarray}
The isomorphism class $[\emptyset]$ of an empty graph is contained 
in $\Conf$ but not in $\Conf_{0}$.
Sometimes it is convenient to exclude $[\emptyset]$ from $\Conf$.
So put:
\begin{equation}\label{eq:2.2.3}
\Conf_{+}\ :\ =\ \  \Conf\setminus\{[\emptyset]\}.
\end{equation}
\begin{remark} 
To be exact, the set of configurations \eqref{eq:2.2.1} 
should have been denoted by $\Conf^{G,q}$.
If there is a map $G\to G'$ between two color sets compatible 
with their involutions and an inequality $q\le q'$, 
then there is a natural map 
$\Conf^{G,q}\to \Conf^{G',q'}$.
Thus, for any inductive system $(G_n,q_n)_{n\in\Z_{>0}}$ (i.e.\
$G_n\!\to\! G_{n+1}$ and $q_n\!\le\! q_{n+1}$ for $n$), 
we get the 
inductive limit  $\underset{n\to\infty}{\lim} \Conf^{G_n,q_n}$.
In [S2], we used such limit set.
However, in this paper, we fix $G$ and $q$,
since  the key limit processes 
\eqref{eq:3.2.2} and \eqref{eq:10.1.1} 
can be carried out for fixed $G$ and $q$.
\end{remark}

\subsection{Semigroup structure and partial ordering structure on $\Conf$.}
\label{subsec:2.3}
We introduce the following two structures 1. and 2. on $\Conf$.

1.
{\it The set $\Conf$ naturally has an abelian semigroup structure 
by putting}

\centerline{
$[\mathbb{S}]\cdot[\mathbb{T}]\ :\ =\ [\mathbb{S}\sqcup \mathbb{T}]
\qquad \text{for} \ 
[\mathbb{S}],[\mathbb{T}]\in\Conf,$
}

\medskip
\noindent
where  $\mathbb{S}\sqcup \mathbb{T}$ is the disjoint union of 
graphs $\mathbb{S}$ and $\mathbb{T}$
representing the  isomorphism classes $[\mathbb{S}]$ and $[\mathbb{T}]$. 
The empty class $[\emptyset]$ plays the role of the unit and
is denoted by 1.
It is clear that 
{\it $\Conf$ is freely generated by $\Conf_{0}$.}
The power $S^{k}$ or $[\mathbb{S}]^k$  $(k\geq 0)$ denotes 
the class of a disjoint union
$\mathbb{S}\sqcup\cdots \sqcup\mathbb{S}$
of $k$-copies of $\mathbb{S}$.

\medskip
2. {\it The set $\Conf$ is partially ordered}, where we define, 
for $S$ and $T\in\Conf$, 

\centerline{
$ S\leq T \ 
\overset{def.}{\Leftrightarrow} \ $
there exist graphs $\mathbb{S}$ and $\mathbb{T}$ with 
$S=[\mathbb{S}], T=[\mathbb{T}]$  and $\mathbb{S}\subset \mathbb{T}$.
}

\medskip
{\it The unit $1=[\emptyset]$ is the unique minimal element in $\Conf$
by this ordering.}

\subsection{Covering coefficients.}\label{subsec:2.4}
For $S_{1},\ldots,S_{m}$ and $S\in\Conf$, we introduce 
a non-negative integer:
\begin{equation}\label{eq:2.4.1}
{\small
\begin{pmatrix} S \\ S_{1},\ldots,S_{m}\end{pmatrix} 
}
\ :\ =\ \ \sharp
{\small
\begin{pmatrix}\mathbb{S}\\ S_{1},\ldots,S_{m}  \end{pmatrix}
} 
\quad \in \mathbb{Z}_{\geq 0}
\vspace{-0.2cm}
\end{equation} 
and call it the {\it covering coefficient},
where 
$\left(\begin{smallmatrix} \mathbb{S}\\S_{1},\ldots,S_{m}\end{smallmatrix}
\right)$
is defined  
by the following:

\noindent
i)
Fix any  $G$-graph $\mathbb{S}$ with $[\mathbb{S} ]=S$.

\noindent
ii)
Define a set:
\vspace{-0.2cm}
\begin{equation}
\label{eq:2.4.2}  
\begin{array}{lll}
{\small
\begin{pmatrix} {\SS}\\ S_{1},\ldots,S_{m} \end{pmatrix}
}
&\ :\ =\ &\{(\mathbb{S}_{1},\ldots,\mathbb{S}_{m}) \mid
\ \mathbb{S}_{i}\subset\mathbb{S} \text{ such that } 
[\mathbb{S}_{i}]=S_{i} \\
&& \quad  (i=1,\ldots,m) \text{ and } \cup^{m}_{i=1}
|\mathbb{S}_{i}|=|\mathbb{S}|.\}
\end{array}
\vspace{-0.2cm}
\end{equation}
\vspace{-0.2cm}
\noindent
iii) Show:
an isomorphism
$\mathbb{S}\simeq\mathbb{S}'$ induces a bijection
$\left(\begin{smallmatrix} \mathbb{S}\\ S_{1},\ldots,S_{m}
\end{smallmatrix}
\right)
\simeq 
\left(\begin{smallmatrix}\mathbb{S}'\\ S_{1},\ldots,S_{m} 
\end{smallmatrix}
\right)$.

\begin{remark}
In the definition \eqref{eq:2.4.2},
one should notice that 

\noindent
i)
Each $\mathbb{S}_{i}$ in \eqref{eq:2.4.2} should be a full subgraph of
$\mathbb{S}$ (see (\ref{subsec:2.1}) Def.~2.).

\noindent
ii)
The union of the edges of $\mathbb{S}_{i}$  ($i=1,\ldots,k$) does not have to 
cover all edges of $\mathbb{S}$.

\noindent
iii)
The set of vertices 
$|\mathbb{S}_{i}|$ ($i=1,\ldots,k$) may overlap the set $|\mathbb{S}|$.
\end{remark}

\begin{example}
Let $X_{1}, X_{2}$ be elements of $\Conf_{0}$
with $\sharp X_{i}=i$ for $i=1,2$. Then
$
\left(\begin{smallmatrix}X_{2}\\ X_{1}\cdot X_{1} \end{smallmatrix}\right) 
 = 0 $
 and 
$
\left(\begin{smallmatrix}X_{2}\\ X_{1}, X_{1}
\end{smallmatrix}\right)
 = 2.
$
\end{example}

The covering coefficients are the most basic tool in the present paper.
We shall give their elementary properties in  \ref{subsec:2.5} 
and their two basic rules: {\it the composition rule}  in \ref{subsec:2.6}
and {\it the decomposition rule} in \ref{subsec:2.7}.

\subsection{Elementary properties for covering coefficients.}\label{subsec:2.5}
Some elementary properties of covering coefficients, as 
immediate consequences of the definition, 
are listed below.
They will be used in the study of the Hopf 
algebra structure on the configuration algebra in \S4. 

{\it
\noindent
{\rm i)}
$\left(\begin{smallmatrix}S\\S_{1},\ldots,S_{m}\end{smallmatrix}\right)=0$
unless $S_{i}\leq S$ for $i=1,\ldots, m$ and
$\sum\sharp S_{i}\geq\sharp S$.

\noindent
{\rm ii)}
$\left(\begin{smallmatrix}S\\ S_{1},\ldots, S_{m} \end{smallmatrix}\right)$ 
is invariant by permutations of $S_{i}\ 's$.

\noindent
{\rm iii)}
For $1\leq i\leq m$, one has an elimination rule:
\vspace{-0.1cm}
\begin{equation}\label{eq:2.5.1}
{\small
\begin{pmatrix}
S\\ S_{1},\ldots,S_{i-1},[\emptyset],S_{i+1},\ldots,S_{m} 
\end{pmatrix}\ =\
\begin{pmatrix}
S\\ S_{1},\ldots,S_{i-1},S_{i+1},\ldots,S_{m}
\end{pmatrix}
}.
\end{equation}

\noindent
{\rm iv)}
For the case $m=0$, the covering coefficients are given by
\vspace{-0.1cm}
\begin{equation}\label{eq:2.5.2}
{\small
\begin{pmatrix}S\\ [\emptyset]\end{pmatrix}
\ =\
\begin{cases}
1 \qquad & \text{\rm if} \ S=[\emptyset],\\
0 & \text{\rm otherwise},
\end{cases}
}
\end{equation}

\noindent
{\rm v)}
For the case $m=1$, the covering coefficients are given by
\vspace{-0.1cm}
\begin{equation}\label{eq:2.5.3}
{\small
\begin{pmatrix}S\\T \end{pmatrix}
\ =\ 
\begin{cases}
1 \qquad & \text{\rm if} \ S=T,\\
0 & \text{\rm otherwise},
\end{cases}
}
\end{equation}

\noindent
{\rm vi)}
For the case $S=[\emptyset]$, the covering coefficients are given by
\vspace{-0.1cm}
\begin{equation}\label{eq:2.5.4}
{\small
\begin{pmatrix}
[\emptyset]\\ S_{1},\ldots,S_{m} 
\end{pmatrix}\ =\ 
\begin{cases}
1 \qquad & \text{\rm if} \ \cup S_{i}=\emptyset,\\
0 & \text{\rm otherwise}.
\end{cases}
}
\vspace{-0.5cm}
\end{equation}
}

\subsection{Composition rule.}\label{subsec:2.6}
\begin{assertion}  
For $S_{1},\ldots,S_{m}, T_{1},\ldots,T_{n}$,
$S\!\in\! \Conf$ ($m,n\!\in\!\Z_{\ge0}$), one has
\begin{equation}\label{eq:2.6.1}
\sum_{U\in \Conf}
{\small
\begin{pmatrix}U\\ S_{1},\ldots, S_{m} \end{pmatrix}
\begin{pmatrix}S\\ U,T_{1},\ldots, T_{n} \end{pmatrix}\ =\ 
\begin{pmatrix}S\\ S_{1},\ldots, S_{m},T_{1},\ldots, T_{n} \end{pmatrix}.
}
\end{equation}
\end{assertion}
\begin{proof}
If $m=0$, then the formula reduces to \ref{subsec:2.5} iii) and iv). 
Assume $m\ge1$ and consider the map
\vspace{-0.2cm}
\begin{align*}
{\footnotesize
\begin{pmatrix}\mathbb{S}\\ S_{1},\ldots,S_{m},T_{1},\ldots,T_{n} 
\end{pmatrix}
}&\ \longrightarrow \
\bigsqcup_{U\in \Conf}
{\footnotesize
\begin{pmatrix}\mathbb{S}\\ U,T_{1},\ldots,T_{n} 
\end{pmatrix}
}
\\
(\mathbb{S}_{1},\ldots,\mathbb{S}_{m},\mathbb{T}_{1},\ldots,\mathbb{T}_{n})
&\ \longmapsto\
(\cup^{m}_{i=1}\mathbb{S}_{i},\mathbb{T}_{1},\ldots,\mathbb{T}_{n}).
\end{align*}
Here, $\displaystyle{\cup^{m}_{i=1}}\ \mathbb{S}_{i}$
means the subgraph of $\mathbb{S}$
whose vertices are the union of the vertices of the
$\mathbb{S}_{i}$  ($i=1,\ldots, m$)
(cf. (\ref{subsec:2.1}) Def.~2.) and the class 
$\left[\displaystyle{\cup^{m}_{i=1}}\ \mathbb{S}_{i}\right]$
is denoted by $U$.
The fiber over a point
$(\mathbb{U},\mathbb{T}_{1},\ldots,\mathbb{T}_{n})$ 
is bijective to the set
\vspace{-0.2cm}
{\footnotesize
$\begin{pmatrix}\mathbb{U}\\ S_{1},\ldots,S_{m} 
\end{pmatrix}$
}
so that one has the bijection
\vspace{-0.2cm}
\begin{equation*}
{\footnotesize
\begin{pmatrix}
\mathbb{S}\\ S_{1},\ldots,S_{m},T_{1}\ldots,T_{n} \end{pmatrix}
\ \simeq \bigsqcup_{U\in \Conf}
\begin{pmatrix}\mathbb{U}\\ S_{1},\ldots,S_{m} \end{pmatrix}
\begin{pmatrix}\mathbb{S}\\ U,T_{1},\ldots,T_{n} \end{pmatrix}.
}
\vspace{-0.5cm}
\end{equation*}
\vspace{-0.3cm}
\end{proof}
\begin{note} 
The LHS of \eqref{eq:2.6.1} is a finite sum, since
the only positive summands arise with $U\leq S$ due to \ref{subsec:2.5} i).
\vspace{-0.3cm}
\end{note}

\subsection{Decomposition rule.}\label{subsec:2.7}
\begin{assertion} Let $m\in\Z_{\ge0}$.
For $S_{1},\ldots,S_{m},U$ and $V\in \Conf$, 
one has
\begin{equation}\label{eq:2.7.1}
{\small
\begin{pmatrix}U\cdot V\\ S_{1},\ldots,S_{m} 
\end{pmatrix}
\ = \sum_{\substack{R_1,T_1\in \Conf\\
S_{1}=R_{1}\cdot T_{1}}} \cdots
\sum_{\substack{R_m,T_m\in\Conf\\ S_{m}=R_{m}\cdot T_{m}}}
\begin{pmatrix}U\\ R_{1},\ldots,R_{m} \end{pmatrix}
\begin{pmatrix}V\\ T_{1},\ldots,T_{m} \end{pmatrix}.
}
\end{equation}
%

Here $R_{i}$ and $T_{i} \in \Conf$ run over all possible
decompositions of $S_{i}$ in $\Conf$.
\end{assertion}
\begin{proof} If $m=0$, this is \eqref{eq:2.5.2}.
Consider the map
\vspace{-0.1cm}
\begin{align*}
{\small
\begin{pmatrix}\mathbb{U}\cdot \mathbb{V}\\ S_{1},\ldots ,S_{m} 
\end{pmatrix}
}
&\longrightarrow \bigcup_{S_{1}=R_{1}\cdot T_{1}}\ldots
\bigcup_{S_{m}=R_{m}\cdot T_{m}}
{\small
\begin{pmatrix}\mathbb{U}\\ R_{1},\ldots,R_{m} 
\end{pmatrix} \times
\begin{pmatrix}\mathbb{V}\\ T_{1},\ldots,T_{m} 
\end{pmatrix}}
,
\\
(\mathbb{S}_{1},\cdots,\mathbb{S}_{m})
&\ \longmapsto \qquad (\mathbb{S}_{1}\cap\mathbb{U},\ldots,\mathbb{S}_{m}\cap\mathbb{U})
\times
(\mathbb{S}_{1}\cap \mathbb{V},\ldots,\mathbb{S}_{m}\cap\mathbb{V}).
\end{align*}
One checks easily that the map is bijective.
\end{proof}
\begin{note}
The RHS of \eqref{eq:2.7.1} is a finite sum,
since the only positive summands arises when $R_{i}\le U$ and $T_{i}\le V$.
\end{note}


\section{Configuration algebra.}\label{sec:3}
We complete the semigroup ring $\A\cdot\Conf$, 
where  $\A$ is a commutative associative unitary algebra, 
by use of the adic topology with respect to the grading $\deg(S)\!:=\!\# S$,
and call the completion the configuration algebra.
It is a formal power series ring of  
infinitely many variables $S\!\in\!\Conf_0$. 
We discuss several basic properties of the algebra, 
including topological tensor products.

\subsection{The polynomial type configuration algebra $\Z\cdot \Conf$.}
\label{subsec:3.1}

The free abelian group generated by
$\Conf$:
\begin{equation}\label{eq:3.1.1}
\mathbb{Z}\cdot \Conf 
\end{equation}
naturally carries the structure of an algebra by the use 
of the semigroup structure on $\Conf$  (recall \ref{subsec:2.3}), 
where $[\emptyset]=1$ plays the role of the unit element.
It is isomorphic to the free polynomial algebra generated by $\Conf_{0}$, 
and hence is called {\it the polynomial type configuration algebra.}
The algebra is graded by taking $\deg(S):=\sharp(S)$
for $S\in \Conf$, since one has additivity:
\begin{equation}\label{eq:3.1.2}
\sharp(S\cdot T)\ \ =\ \ \sharp(S)+\sharp(T).
\end{equation}

\subsection{The completed configuration algebra $\mathbb{Z}\db[\Conf\db]$.}\label{subsec:3.2}
The polynomial type algebra \eqref{eq:3.1.1} is not sufficiently large for our purposes, 
since it does not contain certain limit elements which we want 
to investigate 
(cf 4.6 {\it Remark} 3 and 6.4 {\it Remark} 2).
Therefore, we localize the algebra by the completion 
with respect to the grading given in \ref{subsec:3.1}.

For $n\geq 0$, let us define an ideal in 
$\mathbb{Z}\cdot \Conf$
\begin{equation}\label{eq:3.2.1}
\mathcal{J}_{n}\ :\ =\ \ 
 \ \text{the ideal generated by} \ 
\{S\in \Conf\mid \sharp(S)\geq n\}.
\end{equation}
Taking $\mathcal{J}_n$ as a fundamental system
of neighborhoods of $0\in\Z\cdot \Conf$,
we define the {\it adic topology} on $\Z\cdot \Conf$ 
(see Remark below).
The completion 
%
\begin{equation}\label{eq:3.2.2}
\mathbb{Z}\db[\Conf\db]\ :\ =\ \ 
\underset{n}{\varprojlim\limits}\ \mathbb{Z}\cdot \Conf/\mathcal{J}_{n}
\end{equation}
will be called {\it the completed configuration algebra}, or, simply, 
{\it the configuration algebra}.
More generally, for any commutative algebra $\mathbb{A}$ with
unit, we put 
\begin{equation}
\label{eq:3.2.3}
\mathbb{A}\db[\Conf \db]\ :\ =\ \ 
\underset{n}{\varprojlim\limits}\ \ \mathbb{A}\cdot \Conf/\mathbb{A}\mathcal{J}_{n},
\end{equation}
and call it the {\it configuration algebra} over $\mathbb{A}$, or, simply, 
the configuration algebra.
The {\it augmentation ideal} of the algebra is defined as
\begin{align*}
\mathbb{A}\db[\Conf\db]_{+}& :\ =\ 
 \ \text{the closed ideal generated by} \Conf_{+}
\\
&\ \  = \ \ \text{the closure of} \ \mathcal{J}_{1} 
 \ \text{with respect to the adic topology}.
\end{align*}

Let us give an explicit expression of an element of the 
configuration algebra by an infinite series. The quotient 
$\mathbb{A}\cdot\Conf/\mathbb{A}\mathcal{J}_{n}$ is naturally bijective 
to the free module 
$\displaystyle{\prod_{\substack{S\in \Conf\\ \sharp S<n}}}\mathbb{A}\cdot S$
of finite rank. Taking the inverse limit of the bijection, 
\vspace{-0.1cm}
we obtain
\begin{equation*}
\mathbb{A}\db[\Conf\db]\ \ \simeq
\prod_{S\in \Conf}\mathbb{A}\cdot S.
\end{equation*}
In the other words, {\it any element $f$ of the configuration algebra is
expressed uniquely by an infinite series}
\begin{equation}\label{eq:3.2.4}
\begin{array}{ll}
f\ \ =\sum_{S\in \Conf}S\cdot f_{S}
\end{array}
\end{equation}
for some constants $f_{S}\in\mathbb{A}$  for all $S\in\Conf$.
The coefficient $f_{[\emptyset]}$ of the unit element is called the 
{\it constant term of $f$}. The augmentation ideal is nothing but the 
collection of those $f$ having vanishing constant term. 

\begin{rem} The topology on $\A\db[\Conf\db]$
(except for the case $q=0$) defined above is {\it not equal} to the 
topology defined by taking the powers of the augmentation ideal 
as the fundamental system of neighborhoods of 0.
More precisely, for $n>1$ and $q\not=0$, the image of the product map:
\begin{equation}\label{eq:3.2.5}
(\mathbb{A}\db[\Conf\db]_{+})^n\ \longrightarrow \ \overline{\mathbb{A}\mathcal{J}}_{n}
\end{equation}
(c.f.\ \eqref{eq:3.5.4} and \eqref{eq:3.5.5}) 
does not generate (topologically) the target ideal on the RHS 
(= the closure in $\mathbb{A}\db[\Conf \db]$ of 
the ideal $\mathbb{A}\mathcal{J}_{n}$   
$=\{f\in \A\db[\Conf\db]\mid \deg S\ge n \text{ for }f_S\not=0\}$), 
since 
there exists a connected configuration $S$ with $\deg S=n$,
but $S$, as an element in  $\mathcal{J}_{n}$, 
cannot be expressed as a function of elements of
$\mathcal{J}_{m}$ for $m<n$. 
In this sense, the name ``adic topology'' is {\it misused} here.

The notation $\mathbb{A}\db[\Conf\db]$ should not be mistaken for 
the algebra of formal power series in $\Conf$. In fact, it 
is the set of formal series in $\Conf_{0}$. 
\end{rem}

\subsection{Finite type element in the configuration algebra.}
\label{subsec:3.3}
The support for the series $f$ \eqref{eq:3.2.4}
is defined as
\begin{equation}\label{eq:3.3.1}
\operatorname{Supp}(f)\ :\ =\ \ 
\{S\in \Conf\mid f_{S}\neq 0\}.
\end{equation}

\begin{definition}
An element $f$ of a configuration algebra is said to
be of {\it finite type} if $\operatorname{Supp}(f)$ is 
contained in a finitely generated semigroup in $\Conf$.
Note that $f$ being of finite type does not mean
that $f$ is a finite sum, but means that it is expressed
only by a finite number of ``variables''. 
The subset of $\mathbb{A}\db[\Conf\db]$ consisting of all elements of 
finite type is
denoted by $\mathbb{A}\db[\Conf\db]_{finite}$. 
The polynomial type configuration 
algebra $\mathbb{A}\cdot\Conf$ is a subalgebra of 
$\mathbb{A}\db[\Conf\db]_{finite}$. 
\end{definition}

\subsection{Saturated subalgebras of the configuration algebra.}
\label{subsec:3.4}
The configuration 
algebra is sometimes a bit too large. For later applications, 
we introduce a class of its subalgebras, 
called the saturated subalgebras.

A subset $P\subset \Conf$ is called {\it saturated}
if for $S\in P$, any $T\in \Conf_0$ with  $T\le S$ belongs to $P$.
For a saturated set $P$, let us define a subalgebra 
\begin{equation}\label{eq:3.4.1}
\mathbb{A}\db[P\db]  :\ =\   \left\{ 
f\in \mathbb{A}\db[\Conf\db] \mid 
\operatorname{Supp}(f)\subset \text{the semigroup generated by $P$}
\right\}.\!\!\!\!\!\!\!\!\!
\end{equation}
We shall call a subalgebra of the configuration algebra of the form 
\eqref{eq:3.4.1} for some saturated $P$ a {\it saturated subalgebra. }
A saturated algebra $R$ is characterized 
by the properties: i) $R$ is a closed subalgebra under the adic topology of 
the configuration algebra, and 
ii) if $S\in \operatorname{Supp}(f)$ for $f\in R$
then any connected component of $S$ (as a monomial) belongs to $R$.
We call the set
\begin{equation}\label{eq:3.4.2}
\begin{array}{ll}
\operatorname{Supp}(R)\ :\ =\ \bigcup_{f\in R}\operatorname{Supp}(f)
\end{array}
\end{equation}
the support of $R$. Obviously, $\operatorname{Supp}(R)$ 
is the saturated subsemigroup of $\Conf$ generated by $P$. 
The algebra $R$ is determined from $\operatorname{Supp}(R)$.

It is clear that if $R$ is a saturated subalgebra of $\A\db[\Conf\db]$ 
then $R\cap(\A\cdot\Conf)$ is a dense subalgebra of $R$ and that $R$ is
naturally isomorphic to the completion of  $R\cap(\A\cdot\Conf)$ with 
respect to the induced adic topology.

\begin{example}
We give two typical examples of saturated sets.  

1. For any element $S\in \Conf$,
we define its {\it saturation} by
\begin{equation}\label{eq:3.4.3}
\langle S\rangle\ :\ =\ \  
\{T\in \Conf : T\leq S\}. 
\end{equation}

2. \ Let $(\Gamma,G)$ be a Cayley graph of an infinite monoid $\Gamma$ 
with respect to a finite generating system $G$. Then, by choosing 
$G\cup G^{-1}$ as the color set and $q:=\#(G\cup G^{-1})$ as the bound of valence, 
we define a  saturated subset of $\Conf$ by
\begin{equation}\label{eq:3.4.4}
\langle \Gamma,G\rangle\ :\ =\ \ \{\text{isomorphism classes of 
finite subgraphs of }(\Gamma,G)\}.
\end{equation}
\end{example}

 Obviously, the saturated subalgebra $\A\db[\langle S\rangle\db]$ consists 
 only of finite type elements, whereas the algebra 
 $\A\db[\langle\Gamma,G\rangle\db]$
 contains non-finite type elements. 
 This makes the latter algebra  
 interesting when we study limit elements in \S11.

\subsection{Completed tensor product of the configuration algebra.}
\label{subsec:3.5}

The tensor product over $\A$ of $m$-copies of $\A\cdot\Conf$ 
for $m\in\Z_{\ge0}$ is denoted by $\otimes^m (\A\cdot\Conf)$. 
In this section, we describe the completed tensor product 
$\widehat{\otimes}^m(\A\db[\Conf\db])$ of the 
completed configuration algebra,

\begin{definition} 
Let $\mathbb{A}$ be a commutative algebra with unit. 
For $m\in\Z_{\ge0}$, the completed $m$-tensor product 
$\widehat{\otimes}^m \mathbb{A}\db[\Conf\db]$
of the configuration algebra 
$\mathbb{A}\!\db[\Conf\db]$
is defined by the inverse limit
\begin{equation}
\label{eq:3.5.1}
\widehat{\otimes}^m \mathbb{A}\db[\Conf\db]\ :\ =\ \underset{n}{\varprojlim\limits}\  \otimes^m (\mathbb{A}\cdot \Conf)
/(\otimes^m\mathbb{A}\mathcal{J})_{n},
\end{equation}
where $(\otimes^m\mathbb{A}\mathcal{J})_{n}$ is the ideal 
in $\otimes^m(\A\cdot\Conf)$ given by
\begin{equation}
\label{eq:3.5.2}
(\otimes^m\mathbb{A}\mathcal{J})_{n}\ :\ =\ 
\sum_{n_{1}+ \cdots +n_{m} \geq n} \mathbb{A}
\mathcal{J}_{n_{1}} \otimes \cdots \otimes 
\mathbb{A}\mathcal{J}_{n_{m}},
\end{equation}
where  $\widehat{\otimes}^0 \mathbb{A}\db[\Conf\db]=\mathbb{A}$ and
 $\widehat{\otimes}^1 \mathbb{A}\db[\Conf\db]= \mathbb{A}\db[\Conf\db]$.
\end{definition}

We list some basic properties of 
$\widehat{\otimes}^m \mathbb{A}\db[\Conf\db]$ 
(proofs are left to the reader).

i) Since $\cap_{n=0}^\infty (\mathbb{A}\mathcal{J}^{\otimes m})_{n}=\{0\}$,
we  have the natural inclusion map 
\begin{equation}\label{eq:3.5.3}
\otimes^m(\A\cdot\Conf)\ \ \subset\ \ \widehat{\otimes}^m(\A\db[\Conf\db])
\end{equation}
whose image is a dense subalgebra with respect to the \eqref{eq:3.5.2}-adic topology.

\newpage
ii) There is a natural algebra homomorphism 
\begin{equation}\label{eq:3.5.4}
{\otimes}^m(\A\db[\Conf\db])\ \ \longrightarrow\ \ \widehat{\otimes}^m(\A\db[\Conf\db])
\end{equation}
with a suitable universal property. The image of an element 
$f_1\otimes\cdots\otimes f_m$ is denoted by 
$f_1\widehat\otimes\cdots\widehat\otimes f_m$. 
We also denote it by $f_1\otimes\cdots\otimes f_m$ 
if $f_i\in \A\cdot\Conf$ ($i=1,\cdots,m$) because of i).

iii) If $\Psi_i:{\otimes}^{m_i}(\A\cdot\Conf)\to
{\otimes}^{n_i}(\A\cdot\Conf)$ $(i=1,\cdots,l)$ are 
continuous homomorphisms, then one has the completed  
homomorphism 
\begin{equation}\label{eq:3.5.5}
\widehat\otimes_{i=1}^l \Psi_i\ :\ 
\widehat{\otimes}^{\sum_{i=1}^lm_i}(\A\db[\Conf\db])
\ \ \longrightarrow\ \ 
\widehat{\otimes}^{\sum_{i=1}^ln_i}(\A\db[\Conf\db])
\end{equation}
with some natural characterizing properties. 
In particular, the completed product map
$\A\db[\Conf\db]\widehat\otimes\A\db[\Conf\db]\to \A\db[\Conf\db]$
is sometimes denoted by M.

\subsection{Exponential and logarithmic maps.}\label{subsec:3.6}

\vspace{-0.1cm}
Let $\varphi(t)=\displaystyle{\sum^{\infty}_{n=0}}\varphi_{n}t^{n}
\in\mathbb{A}\db[t\db]$
\vspace{-0.4cm}
be a formal power series in the indeterminate $t$.
\vspace{-0.1cm}
Then the substitution of $t$ by an element $f$ of 
$\mathbb{A}\db[\Conf\db]_{+}$ to give  
$\varphi(f):=\displaystyle{\sum^{\infty}_{n=0}}
\varphi_{n}f^{n}\in\mathbb{A}\db[\Conf\db]$ defines a map 
$\varphi: \mathbb{A}\db[\Conf\db]_{+}\to \mathbb{A}\db[\Conf\db]$ 
(c.f.\ \eqref{eq:3.2.5}). 
The map  is equivariant with respect to any continuous 
endomorphism of the configuration algebra. 
The map can be restricted to any closed subalgebra of 
the configuration algebra to itself.
If $f$ is of finite type, then 
$\varphi(f)$ is also of finite type.

In particular, if $\mathbb{A}$ contains $\mathbb{Q}$, 
then we define the {\it exponential},
{\it logarithmic} and {\it power} (with an exponent 
$c\in\mathbb{A}$) maps as follows:
\begin{align}
\label{eq:3.6.1} &
\ \ \ \exp(f)\!&\!\!:\ =\ &\sum^{\infty}_{n=0}\ \frac{1}{n!}\ \ f^{n}
&  \text{for} \ f\in\mathbb{A}\db[ \Conf\db]_{+},
\\
\label{eq:3.6.2} &
\ \ \ \log(1+f)\!&\!\! :\ =\ &\sum^{\infty}_{n=1}\ \frac{(-1)^{n-1}}{n}\ \ f^{n}
& \text{for} \ f\in \mathbb{A}\db[ \Conf\db]_+, 
\\
\label{eq:3.6.3} &
\quad\ (1+f)^{c}\!&\!\!:\ =\ &\!\sum^{\infty}_{n=0}
\frac{c(c\!-\!1)\cdots (c\!-\!n\!+\!1)}{n!}\ f^n
\!\!\!\!
& \text{for} \ f\in\mathbb{A}\db[ \Conf\db]_{+}.
\end{align}
%
They satisfy the standard functional relations:
$\exp(f+g)= \exp(f)\cdot \exp(g)$, 
$\log((1+f)(1+g)) =\log(1+f)+\log(1+g)$,
$(1+f)^{c_{1}}\cdot (1+f)^{c_{2}} =(1+f)^{c+c_{2}}_{1}$ and
$\log((1+f)^{c})= c\cdot\log(1+f)$. 

%

\noindent
{\bf Fact.} {\it Let  $\mathcal{A}\!=\sum_{S\in\Conf_+}S\cdot A_S$ and 
$\mathcal{M}\!=\!\sum_{S\in \Conf_+} S\cdot M_S \in \mathbb{A}\db[ \Conf\db]$ 
by related by 
$\mathcal{A}\!=\!\exp(\mathcal{M})$ ($\Leftrightarrow  \mathcal{M}\!=\!\log(\mathcal{A})$).
Then their coefficients are related by
\begin{equation}
\label{eq:3.6.4} 
A_{S}\ =\  \sum_{m=0}^{\infty}
\sum_{\substack{S_1,\cdots,S_m\in\Conf_+\\
S=S_{1}^{k_{1}}\cdot\ldots\cdot S_{m}^{k_{m}}}}
\frac{1}{k_{1}!\ldots k_{m}!}
M_{S_{1}}^{k_{1}}\cdots M_{S_{m}}^{k_{m}},
\vspace{-0.4cm}
\end{equation}
\vspace{-0.2cm}
and 
\begin{equation}
\label{eq:3.6.5} 
M_{S}\ =\ \sum_{m=0}^\infty
\sum_{\substack{S_1,\cdots,S_m\in\Conf_+\\ 
S=S_{1}^{k_{1}}\cdot\ldots\cdot S_{m}^{k_{m}}}}
\!\!\!\!\!\!\!\!\!\frac{(k_{1}+\cdots +k_{m}\!-\!1)!(-1)^{k_{1}
+\cdots+k_{m}^{-1}}}{k_{1}!\ldots k_{m}!}
A_{S_{1}}^{k_{1}}
\cdots A_{S_{m}}^{k_{m}}.\!\!
\end{equation}
Here the summation index runs over the set of all decompositions of $S$:
\begin{equation}\label{eq:3.6.6}
S\ \ =\ \ S^{k_{1}}_{1} \cdot\ \ldots\ \cdot S^{k_{m}}_{m}
\end{equation}
for pairwise distinct $S_{i}\in \Conf_{+} \ (i=1,\ldots,m)$ 
(which may not necessarily be connected) 
and for positive integers $k_{i}\in\mathbb{Z}_{>0}$.
Two decompositions $S^{k_{1}}_{1}\cdot\ldots\cdot S^{k_{m}}_{m}$
and $T^{l_{1}}_{1}\cdot\ldots\cdot T^{l_{n}}_{n}$
are regarded as the same if $m=n$ and
there is a permutation $\sigma\in \frak{S}_{n} \ such\ that \ k_{i}=l_{\sigma(i)}$
and $S_{i}=T_{\sigma(i)}$ for $i=l,\ldots,m$.
The RHS's of \eqref{eq:3.6.4} and \eqref{eq:3.6.5}
are finite sums, since the $S_{i}$'s and $k_i$'s are bounded by $S$.
}

\smallskip
We omit the proof since it is a straightforward calculation of 
formal power series in the infinite generating system $\Conf_{0}$.



\begin{cor}
Let  $\mathcal{A}$ and $\mathcal{M}\in \mathbb{A}\db[ \Conf\db]$ be related as above. Then one has
\begin{equation}
\label{eq:3.6.7}
\qquad A_S=M_S \qquad \forall S\in \Conf_0.
\end{equation}
\end{cor}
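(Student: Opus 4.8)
The plan is to read off the desired identity directly from the explicit formulae \eqref{eq:3.6.4} and \eqref{eq:3.6.5}, specialized to the case where $S\in\Conf_0$, i.e. $S$ is connected. The key observation is that a connected configuration is \emph{indecomposable} in the free abelian monoid $\Conf$: since $\Conf$ is freely generated by $\Conf_0$ (recall \ref{subsec:2.3}.1), the only way to write $S=S_1^{k_1}\cdots S_m^{k_m}$ with the $S_i\in\Conf_+$ pairwise distinct and $k_i>0$ is the trivial decomposition $m=1$, $S_1=S$, $k_1=1$. Indeed, if $\#S_i\geq 2$ for some factor, or if two distinct factors appeared, then $S$ would split as a disjoint union of two nonempty graphs, contradicting connectedness; and a factor with $\#S_i=1$ is a single vertex, which can only occur with multiplicity matching the number of isolated vertices of $S$, again forcing $S$ itself to be that single vertex (the case $\#S=1$, still giving the trivial decomposition).

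First I would invoke the \textbf{Fact} in \ref{subsec:3.6} to get the two coefficient expansions \eqref{eq:3.6.4} and \eqref{eq:3.6.5}, together with the stated description of the indexing set: the sum runs over all decompositions \eqref{eq:3.6.6} of $S$ into pairwise distinct factors $S_i\in\Conf_+$ with positive multiplicities $k_i$, up to permutation. Second, I would fix $S\in\Conf_0$ and argue, as above, that this indexing set contains exactly one element, namely the decomposition with $m=1$, $S_1=S$, $k_1=1$. Third, I would substitute this unique term into \eqref{eq:3.6.4}: the factorial prefactor $\frac{1}{k_1!}=\frac{1}{1!}=1$ and the monomial $M_{S_1}^{k_1}=M_S$, so $A_S=M_S$. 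For good measure one can check consistency with \eqref{eq:3.6.5}: the prefactor $\frac{(k_1-1)!(-1)^{k_1-1}}{k_1!}=\frac{0!\,(-1)^0}{1!}=1$ and $A_{S_1}^{k_1}=A_S$, again giving $M_S=A_S$.

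There is no real obstacle here; the statement is essentially a corollary of the combinatorial structure of the formulae in the \textbf{Fact}, and the only thing requiring a word of justification is the indecomposability of connected configurations in the free monoid $\Conf$, which is immediate from the freeness asserted in \ref{subsec:2.3}. The proof is therefore a one-line specialization, and I would present it as such, perhaps with a parenthetical remark recording why the decomposition set is a singleton when $S$ is connected.

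\begin{proof}
By the \textbf{Fact} in \ref{subsec:3.6}, the coefficient $M_S$ (and likewise $A_S$) is given by a sum over all decompositions \eqref{eq:3.6.6} of $S$ into pairwise distinct factors $S_i\in\Conf_+$ with positive multiplicities. If $S\in\Conf_0$ is connected, then, since $\Conf$ is freely generated by $\Conf_0$ (see \ref{subsec:2.3}), $S$ cannot be written as a product of two nonempty configurations other than trivially; hence the only decomposition of the form \eqref{eq:3.6.6} is $m=1$, $S_1=S$, $k_1=1$. Substituting this single term into \eqref{eq:3.6.4} gives $A_S=\frac{1}{1!}M_S^{1}=M_S$, which is \eqref{eq:3.6.7}.
\end{proof}
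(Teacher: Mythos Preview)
Your proof is correct and is exactly the natural argument. The paper itself omits the proof of both the \textbf{Fact} and this Corollary, remarking only that it is a straightforward calculation with formal power series in the generating system $\Conf_0$; your specialization of \eqref{eq:3.6.4} to connected $S$ is precisely that calculation.
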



\section{The Hopf algebra structure}\label{sec:4}

We construct a topological commutative Hopf algebra structure on
the configuration algebra $\mathbb{A}\db[\Conf\db]$.
More precisely, we construct in \ref{subsec:4.1} a sequence of co-products 
$\Phi_n$ ($n\in\Z_{\ge0}$) by the use of the covering coefficients 
and, in \ref{subsec:4.4}, the antipode $\iota$, which together 
satisfy the axioms of a topological Hopf algebra.

\subsection{Coproduct \ $\Phi_{m}$ for $m\in\Z_{\ge0}$.}\label{subsec:4.1}

For a non-negative integer $m\in\mathbb{Z}_{\ge0}$ and $U\in \Conf$,
define an element
\begin{equation}\label{eq:4.1.1}
\Phi_{m}(U)\ :\ =\ \sum_{S_{1}\in \Conf} \cdots \sum_{S_{m}\in \Conf}
{\small 
\begin{pmatrix}U\\ S_{1},\ldots,S_{m} \end{pmatrix}
}
\ S_{1}\otimes\ldots\otimes S_{m}
\end{equation}
in the tensor product ${\otimes}^m(\mathbb{Z}\cdot \Conf)$
of $m$-copies of the polynomial type configuration 
algebra.
Due to \ref{subsec:2.5} v), one has,
\begin{equation}\label{eq:4.1.2}
\Phi_{m}([\emptyset])\ \ =\ \ [\emptyset]\ \ (=1).
\end{equation}

The map $\Phi_{m}$ is {\it multiplicative.}
That is, for $U,V\in \Conf$, one has
\begin{equation}\label{eq:4.1.3}
\Phi_{m}(U\cdot V)\ \ =\ \ \Phi_{m}(U)\cdot\Phi_{m}(V).
\end{equation}

\begin{proof}
The decomposition rule \eqref{eq:2.7.1} implies the formula.
\end{proof}

Thus, the linear extension of $\Phi_{m}$ induces an 
algebra homomorphism  from $\mathbb{Z}\cdot\Conf$
to its $m$-tensor product $\otimes^m(\mathbb{Z}\cdot\Conf)$,  
which we denote by the same  $\Phi_{m}$ and call this the $m$th
{\it coproduct}.
The coproduct $\Phi_{m}$ can be further extended to a coproduct 
on the completed configuration algebra. 

\begin{assertion}
1. {\it The $m$th coproduct $\Phi_{m}$ ($ m \in \mathbb{Z}_{\geq 0}$)
on the polynomial type configuration algebra is continuous with respect to the adic topology. 
The induced homomorphism is denoted again by $\Phi_m$ 
and called the $m$th coproduct:}
\begin{equation}\label{eq:4.1.4}
\Phi_{m}\ :\ \mathbb{A}\db[\Conf\db]\ \longrightarrow \
\widehat{\otimes}^m\mathbb{A}\db[\Conf\db]:=\mathbb{A}\db[\Conf\db]\widehat{\otimes} \cdots \widehat{\otimes} \mathbb{A}\db[\Conf\db]
\end{equation}

2. The completed homomorphism $\Phi_{m}$ 
has the mulitiplicativity 
\begin{equation}
\label{eq:4.1.5}
\Phi_m(f\cdot g)\ =\ \Phi_m(f)\cdot\Phi_m(g)
\end{equation}
for any $f,g \in \mathbb{A}\db[\Conf\db]$,
\end{assertion}

3. Any saturated subalgebra $R$ of $\mathbb{A}\db[\Conf\db]$ 
is preserved by  $\Phi_{m}$:  
\begin{equation}\label{eq:4.1.6}
\Phi_m(R)\ \ \subset\ \ \widehat\otimes^m R.
\end{equation}
\begin{proof} 
1. Recall the fundamental system $({\otimes^ m}\A\mathcal{J})_{n}$ 
\eqref{eq:3.5.2} of neighborhoods of 
the $m$-tensor algebra $\otimes^m(\Z\cdot\Conf)$. Let us show the inclusion
\begin{equation}\label{eq:4.1.7}
\Phi_m(\A\mathcal{J}_{n})\ \ \subset\ \ ({\otimes^ m}\A\mathcal{J})_{n}
\end{equation}
for any $m,n\in \Z_{\ge0}$. The ideal $\mathcal{J}_{n}$ is generated  by 
$U \in \Conf$ with $\deg(U):=\#U\ge n$, and $\Phi_{m}(U)$ 
is a sum of monomials
$S_{1} \otimes \cdots \otimes S_{m}$ for $S_{i} \in \Conf$
such that (${S_{1},\ldots,S_{m}}\atop{U}$) $\neq 0$.
We have  $\sharp S_{1} +\cdots+ \sharp S{_m}\ge \sharp(U)\ge n$ because of 
(\ref{subsec:2.5}) i), implying $\Phi_m(U)\in (\otimes^m\mathcal{J})_{n}$.

2. The multiplicativity of the monomials \eqref{eq:4.1.3}  
implies the multiplicativity of the configuration algebra of polynomial type.
This extends to multiplicativity on infinite series \eqref{eq:3.2.4} 
because of the continuity of the product with respect to the 
adic topology. 

3. Let  $f$ be an element of $R$ and $f=\sum_S S f_S$ be its expansion. 
Then $\Phi_m(f)$ is a series of the form 
$\sum_S S_1\otimes \cdots \otimes S_m ({{S_{1},\ldots,S_{m}}\atop{S}})f_S$.
Thus,
$({{S_{1},\ldots,S_{m}}\atop{S}})f_S\not=0$ implies  
each factor $S_i$ satisfies $S_i\le S$ and $S\in \operatorname{Supp}(f)\subset\operatorname{ Supp}(R)$. By the definition of saturation,  $S_i\in  \operatorname{Supp}(R)$ and 
$\Phi_m(f)\in \widehat\otimes^mR$.
\end{proof}

{\bf Co-commutativity} of the coproduct $\Phi_{m}$.

\noindent
The symmetric group $\frak{S}_{m}$ acts naturally on the m-tensors 
\eqref{eq:3.5.1} by permuting the tensor factors.
The image of  $\Phi_{m}$ lies in the subalgebra consisting of
$\frak{S}_{m}$-invariant elements, because of \ref{subsec:2.5} ii):
$\Phi_{m}(\mathbb{A}\db[\Conf\db])\subset
(\widehat{\otimes}^m\mathbb{A}\db[\Conf\db])^{\frak{S}_{m}}$.
We shall call this property the {\it co-commutativity} of 
the coproduct $\Phi_{m}$.

\subsection{Co-associativity}\label{subsec:4.2}

\begin{assertion} 
For $m,n \in \mathbb{Z}_{\ge0}$, one has the formula:
\begin{equation}\label{eq:4.2.1}
(\ \underbrace{1 \widehat\otimes \cdots \widehat\otimes 1}_{n}\ \widehat\otimes \Phi_{m}) \circ \Phi_{n+1}\ \ =\ \ \Phi_{m+n}
\end{equation}
\end{assertion}

\begin{proof}
This follows immediately  from the composition rule \eqref{eq:2.6.1}.
\end{proof}

\noindent
Using the co-commutativity of $\Phi_{2}$, $\Phi_{3}$ can be expressed 
in two different ways:  
\begin{equation*}
(\Phi_{2} \widehat\otimes 1) \circ \Phi_{2}\ \ =\ \ (1 \widehat\otimes \Phi_{2}) \circ \Phi_{2}.
\end{equation*}
This equality is the {\it co-associativity} of the coproduct $\Phi_{2}$.
More generally, $\Phi_{m}$ is expressed by a composition of $m-1$ copies of
$\Phi_{2}$'s in any order.

\subsection{The augmentation map $\Phi_0$.}\label{subsec:4.3}

The augmentation map for the algebra is defined by $\Phi_0$ 
(recall \eqref{eq:2.5.2}):
\begin{equation}
\label{eq:4.3.1}
\aug:=\Phi_0 :\ \mathbb{A}\db[\Conf\db]\ \longrightarrow\ \mathbb{A},
\quad   S \in \Conf_{+} \mapsto 0,\ \ [\emptyset] \mapsto 1 
\end{equation}

\begin{assertion}
The map $\aug$ is the co-unit with respect to the coproduct $\Phi_{2}$.
\begin{equation}\label{eq:4.3.2}
({\aug \widehat\otimes \id}) \circ \Phi_{2}\ \ =\ \ {\rm id}_{\mathbb{Z}\db[\Conf\db]}.
\end{equation}
\end{assertion}
\begin{proof} This is the case $m=0$ and $n=1$ of the formula 
\eqref{eq:4.2.1}. Alternatively, for any $S \in \Conf_{+}$,
using (\ref{subsec:2.5}) iii) and iv), one calculates:
$
(\operatorname{aug} \widehat\otimes \id) \circ \Phi_{2} (S) = \sum_{T,U \in \Conf}
\footnotesize
{\begin{pmatrix}
S\\ T, U
\end{pmatrix}
}
T \cdot \operatorname{aug}(U) = \sum_{T \in \Conf}
\footnotesize{
\begin{pmatrix}
S\\ T, [\emptyset]
\end{pmatrix}
}
= S.
$
\end{proof}

\subsection{The antipodal map $\iota$}\label{subsec:4.4}

The coproduct and the co-unit exist 
both on the polynomial type and the completed configuration algebras.
The co-inverse (or antipode), which we construct in the present 
section, exists only on the localized configuration algebra.

\begin{assertion}{\it 
There exists an $\mathbb{A}$-algebra endomorphism
\begin{equation}\label{eq:4.4.1}
\iota\ :\ \mathbb{A}\db[\Conf\db]\ \longrightarrow\ \mathbb{A}\db[\Conf\db],
\end{equation}
satisfying following properties {\rm i)-v)}. 

\noindent
\ \ {\rm i)}\ 
$\iota$ is an involution. 
That is, $\iota$ is an automorphism with $\iota^{2} = \id_{\mathbb{A}\db[\Conf\db]}$.

\noindent
\ {\rm ii)}\
$\iota$ is the co-inverse map with respect to the coproduct $\Phi_{2}$, 
that is,
\begin{equation}\label{eq:4.4.2}
M\circ (\iota \widehat\otimes \id) \circ \Phi_{2}\ \ =\ \  \aug.
\end{equation}
 where $M$ is the product defined on the completed tensor product (recall {\rm \ref{subsec:3.5}}).

\noindent
{\rm iii)}\
$\iota$ is continuous with respect to  the adic topology.
More precisely,
\begin{equation}\label{eq:4.4.3}
\iota (\overline{\mathcal{J}_{n}})\ \ \subset\ \ \overline{\mathcal{J}_{n}}
\end{equation}
for $n \in \mathbb{Z}_{\ge0}$,
where $\overline{\mathcal{J}_{n}}$ is the closure of the ideal 
$\mathcal{J}_{n}$ \eqref{eq:3.2.1}.

\noindent
{\rm iv)}\
$\iota$ leaves any saturated subalgebra of $\mathbb{A}\db[\Conf\db]$ invariant.

\noindent
{\rm v)}\
Any $\mathbb{A}$-endomorphism of $\mathbb{A}\db[\Conf\db]$ satisfying 
{\rm ii)} and {\rm iii)} is equal to $\iota$. 
}
\end{assertion}

\begin{proof}
The proof is divided into two parts. 
In Part 1, we construct an  endomorphism $\varphi$ of the 
algebra $\mathbb{A}\db[\Conf\db]$, satisfying i), ii), iii) and iv). 
In Part 2, we show that any endomorphism  $\psi$ of $\mathbb{A}\db[\Conf\db]$ 
satisfying the properties ii) and iii)  
coincides with $\varphi$.

\smallskip
Part 1. 
Let us fix a bijection $i\in\Z_{\ge1}\mapsto S_i\in\Conf_0$ such that 
if $S_i\le S_j$ then $i\le j$ (such linearization 
exists since one can linearize the partially ordered structure on the 
finite set of configurations for  a fixed number of vertices). 
Note that this condition implies that the set
$\{S_1,S_2,\cdots,S_i\}$ for $i\in\Z_{>0}$ 
is saturated in the sense of \ref{subsec:3.4}.
Consider the increasing sequence $R_0:=\A$, $R_i:=\A\db[S_1,S_2,\cdots,S_i\db]$ 
($i\in\Z_{>0}$) of saturated subalgebras of $\A\db[\Conf\db]$. 
Let us show:

Claim: {\it there exists a sequence $\{\varphi_i\}_{i\in\Z_{\ge0}}$ of continuous endomorphisms
 $\varphi_i:R_i\to R_i$ satisfying the following relations:

a) \ $\varphi_i^2\ =\ \id_{R_i}$ \ for $i\in\Z_{\ge0}$.

b) \ $\varphi_i|_{R_{i-1}}\ =\ \varphi_{i-1}$ \ for $i\in\Z_{\ge1}$. 

c) \ $M\circ(\varphi_i \cdot \id) \circ \Phi_{2}|_{R_{i}}\ =\ \aug|_{R_{i}}$ \ for $i\in\Z_{\ge0}$.

d) \ $\varphi_i (\overline{\mathcal{J}_{n}}\cap R_i)\ \subset\ 
\overline{\mathcal{J}_{n}}\cap R_i$ \ for $i\in\Z_{\ge0}$ and for $n\in\Z_{\ge0}$.

e) \ $\varphi_i(S_k)\in \Z\db[\langle S_k\rangle\db]$ \ for $1\le k\le i$ 
(see \eqref{eq:3.4.1} and \eqref{eq:3.4.3} for notation).
}

 \medskip 
{\it Proof of} Claim. We construct the sequence $\varphi_i$  inductively.
Put $\varphi_0:=\id_{\A}$.
For $j\in \Z_{\ge0}$, suppose that $\varphi_{0},\cdots,\varphi_j$ 
satisfying a)-e) for $i\le j$ are given. 

For any given element 
$X\!\in\! \Z\db[\langle S_{j+1}\rangle\db]\cap \overline{\mathcal{J}_{\#(S_{j+1})}}$,
define an endomorphism  $\psi_X$ of $R_{j}[S_{j+1}]$ 
by $\psi_X|R_j=\varphi_j$ and $\psi_X(S_{j+1})=X$. 
Since $X\in  \overline{\mathcal{J}_{\#(S_{j+1})}}$, 
we have $\psi_X(\overline{\mathcal{J}_{n}} \cap R_{j}[S]) \subset \overline
{\mathcal{J}_{n}}$ $ ^{\forall}n \in \mathbb{Z}_{n}$. 
Hence the endomorphism is continuous in the adic topology and is extended to 
an endomorphism of $R_{j+1}=R_{j}\db[S\db]$.
We denote the extended homomorphism again by $\psi_X$.
Let us show that $\varphi_{j+1}\!:=\!\psi_X$ for a suitable choice of $X$ 
satisfies a)-e) for $i=j+1$. 
Actually, b), d) and e) are already satisfied by the construction. 
In order to satisfy a) and c), we have only to solve the following 
two equations on $X$: 

\centerline{a)$^*$ \  $\psi_X^2(S_{j+1})\!=\!S_{j+1}$ \quad and\quad
 c)$^*$ \ $ M\circ (\psi_X \widehat\otimes 1) \circ \Phi_{2}(S_{j+1}) = 0$. 
}
\noindent
In the following, we show an existence of a simultaneous solution of a)$^*$ and c)$^*$.

c)$^*$  Let us write down the equation c)$^*$ explicitly by using \eqref{eq:4.1.1}.
\[
\begin{array}{ll}
M\circ (\psi_X\widehat\otimes \id)\circ\Phi_{2}(S_{j+1})\ =\ \sum_{U,V\in \Conf}
{\footnotesize 
\begin{pmatrix}S_{j+1}\\ U,V 
\end{pmatrix}
}
\psi_X(U)\cdot V \ =\ 0 \ .
\end{array}
\]
The summation index $(U,V)$ runs over the finite 
set $\langle S_{j+1}\rangle\!\times\! \langle S_{j+1}\rangle$ 
(\ref{subsec:2.5} i)). 
Decompose the set into three pieces:
$\{S_{j+1}\}\!\times\! \langle S_{j+1}\rangle$, 
$(\langle S_{j+1}\rangle\!\setminus\!\{S_{j+1}\})\!\times\! (\langle S_{j+1}\rangle\!\setminus\!\{S_{j+1}\})$
and $(\langle S_{j+1}\rangle\!\setminus\!\{S_{j+1}\})\!\times\! \{S_{j+1}\}$. 
Since $\langle S_{j+1}\rangle\!\setminus\!\{S_{j+1}\}\!\subset\! \langle S_1,\cdots, S_{j}\rangle$, on which $\psi_X$ coincides with $\varphi_j$, 
the equation consists of three parts:
\begin{equation}\label{eq:4.4.4}
X \cdot \mathcal{A} (S_{j+1})  + \mathcal{B} (S_{j+1}) 
+  \varphi_j \big(\mathcal{A} (S_{j+1}) - S_{j+1} \big) \cdot S_{j+1}\ =\ 0,
\vspace{-0.2cm}
\end{equation}
where
\vspace{-0.4cm}
\begin{equation}\label{eq:4.4.5}
\begin{array}{ll}
\mathcal{A} (S_{j+1})\ :\ =\ \sum_{V \in \langle S_{j+1}\rangle}
{\footnotesize
\begin{pmatrix}
S_{j+1}\\ S_{j+1},V
\end{pmatrix}
}
V,
\vspace{-0.6cm}
\end{array}
\end{equation}
and
\vspace{-0.4cm}
\begin{equation}
\begin{array}{ll}
\mathcal{B} (S_{j+1})\ :\ =\ \sum_{U,V \in \langle S_{j+1}\rangle\setminus\{S_{j+1}\}}
{\footnotesize 
\begin{pmatrix}
S_{j+1}\\ U,V
\end{pmatrix}
}
\varphi_j(U) \cdot V. \tag*{$(4.4.5)^*$}
\end{array}
\end{equation}

We have the following facts concerning the equation \eqref{eq:4.4.4}.

i) By definition \eqref{eq:4.4.5}, each term of 
$\mathcal{A}(S_{j+1}) - S_{j+1}$ is an element 
$\langle S_{j+1}\rangle\setminus\{S_{j+1}\}$, i.e.\ 
is a monomial of $S_k$'s for $1\le k\le j$. 
Therefore, by the induction hypothesis e), 
$\varphi_j(\mathcal{A}(S_{j+1}) - S_{j+1})\in R_j$.

ii) By the hypothesis d) for $\varphi_j$,
$\varphi_j(U)$ belongs to $\overline{\mathcal{J}_{\sharp(U)}} \cap 
\mathbb{Z}\db[\langle U\rangle\db]$ for $^{\forall}U \leq S_{j+1}$ 
with $U \neq S_{j+1}$.
Hence, in view of 2.5 i), 
{\small
$\begin{pmatrix}
S_{j+1}\\ U,V
\end{pmatrix}\ne0$ 
}
implies $\varphi_j(U) \cdot V \in \overline{\mathcal{J}_{\sharp(S_{j+1})}}$,
and, therefore,
(4.5.5)* implies $\mathcal{B}(S_{j+1}) \in
\overline{\mathcal{J}_{\sharp(S_{j+1})}} \cap \mathbb{Z}\db[\langle S_{j+1}\rangle\db]$.

iii) By definition \eqref{eq:4.4.5}, 
$\mathcal{A} (S_{j+1})-1$ belongs to the augmentation ideal.
Hence, in view of the inclusion \eqref{eq:3.2.5}, 
the sum $\sum_{m \geq 0}(1- \mathcal{A} (S_{j+1}))^{m}$ converges in
$\mathbb{Z}\db[\langle S_{j+1} \rangle\db]$ to the inverse $\mathcal{A} (S_{j+1})^{-1}$.

Therefore the  equation \eqref{eq:4.4.4} for $X$ has a unique 
solution in $\mathbb{Z}\db[\langle S_{j+1} \rangle\db]$:
\begin{equation}\label{eq:4.4.6} 
X\ :\ =\ \frac{-1}{ \mathcal{A}(S_{j+1})} \big(\mathcal{B}(S_{j+1}) +
\varphi_j \big(\mathcal{A}(S_{j+1}) -S_{j+1} \big) \cdot S_{j+1} \big). 
\end{equation}

As a consequence of the above Facts i) and ii), we have 

\medskip
\noindent
$*$) \ \textit{The right hand side of \eqref{eq:4.4.6} belongs to 
$\overline{\mathcal{J}_{\sharp(S_{j+1})}} \cap \mathbb{Z}\db[\langle S_{j+1}\rangle\db]$.}

\medskip
This what we have asked for $X$ at the beginning. 
Thus, c)$^*$ is solved.

a)$^*$  We need to show:  $\psi_X^{2}(S_{j+1}) = S_{j+1}$ 
under the choice \eqref{eq:4.4.6}.
Apply $\psi_X$ to the equality \eqref{eq:4.4.4}.
Using the induction hypothesis a) and b), one gets
\begin{equation}\tag*{$**)$}
\psi_X^{2}(S_{j+1})\big(\varphi_j(\mathcal{A}(S_{j+1}) - S_{j+1}) + \psi_X(S_{j+1})\big) +
\varphi_j \mathcal{B}(S_{j+1}) + \big(\mathcal{A}(S_{j+1}) - S_{j+1}\big)X = 0
\end{equation}

Here, we have $\varphi_j \mathcal{B}(S) =  \mathcal{B}(S)$, by applying the symmetry
(\ref{subsec:2.5}) ii) and the induction hypothesis a) to the expression 
(4.4.5)*. 
Subtract \eqref{eq:4.4.4} from $**$):
\begin{equation*}
\big(\psi_X^{2}(S_{j+1}) - S_{j+1} \big) \big(\varphi_j (\mathcal{A}(S_{j+1}) - S_{j+1}) + 
\psi_X(S_{j+1}) \big) = 0.
\end{equation*}  
Since $\mathcal{A}(S_{j+1})-1 \in \mathcal{J}_{1}$ and  
$\varphi_j(\mathcal{A}(S_{j+1}) - S_{j+1}) + \psi_X(S_{j+1})-1 \in \mathcal{J}_{1}$,
$\varphi_j(\mathcal{A}(S_{j+1}) - S_{j+1}) + \psi_X(S_{j+1})$ 
is invertible in the algebra $R_{j+1}$.
This implies $\psi_X^{2}(S_{j+1}) - S_{j+1} = 0$. That is, $\psi_X$ for 
\eqref{eq:4.4.6} satisfies a)$^*$.

Thus, by the choice $\varphi_{j+1}\!:=\!\psi_X$, 
the induction step for $j\!+\!1$ is achieved. 
 $\Box$

We define the endomorphism $\varphi$ of the subalgebra 
$R:=\cup_{i=1}^\infty R_i$ by $\varphi|R_i:=\varphi_i$ for $i\in\Z_{\ge0}$.
Here, we note that $R$ comprise exactly the finite type elements, 
i.e.\ $R=\mathbb{A}\db[\Conf\db]_{finite}$, which is 
dense in the configuration algebra.
Then d) implies continuity of $\varphi$ on $R$, and 
therefore $\varphi$ extends to the full configuration algebra.
The extended homomorphism, denoted by $\varphi$ again, satisfies 
involutivity since this is so on the dense subalgebra $R$. 
This implies $\varphi$ is invertible.

Finally in Part 1, let us show that $\varphi$ satisfies iv).

We remark that, for any element $S\!\in\! \Conf$, one has 
$\varphi(S)\!\in\! \Z\db[\langle S\rangle\db]$ ({\it proof.} apply e) to each 
connected component of $S$).
Let $R$ be any saturated subalgebra of the configuration algebra. 
For any $f\!=\!\sum_S Sf_S\! \in\! R$, by applying the above considerations,
one has $\operatorname{Supp}(\varphi(f))\!\subset\!
\cup_{f_S\not=0} \operatorname{Supp}(\varphi(S))\!\subset \!
\cup_{f_S\not=0}$ semigroup generated by $\langle S\rangle
\subset \operatorname{Supp}(R)$.
That is, $\varphi(f)\in R$ and $\varphi(R)\subset R$.

Part 2. Uniqueness of $\varphi$.  Let $\psi$ be any endomorphism of 
the configuration algebra satisfying ii) and iii). 
Let $S_1,S_2,\cdots$ be the linear ordering of $\Conf_0$ used in Part 1.
We show that $\psi(S_j)=\varphi(S_j)$ by induction on $j\in\Z_{\ge1}$. 
Let $j\in \Z_{\ge 0}$, and assume $\varphi(S_i)=\psi(S_i)$ for $1\le i\le j$ 
(there is no assumption if $j=0$).
By ii), $\psi(S_{j+1})$ should satisfy the same equation  
\eqref{eq:4.4.4} for $X$, where, by the induction hypothesis, 
one has the equality:
$\varphi \big(\mathcal{A} (S_{j+1}) - S_{j+1} \big) 
= \psi \big(\mathcal{A} (S_{j+1}) - S_{j+1} \big) $. 
The uniqueness of the solution \eqref{eq:4.4.6} 
implies $\psi(S_i)=\varphi(S_i)$.
This implies the coincidence of $\varphi$ and $\psi$ on $\Z\cdot\Conf$. 
Then, by the continuity iii), we have the coincidence 
of $\varphi$ and $\psi$ on the completed configuration algebra.
\end{proof}

Equation \eqref{eq:4.4.4} for $n=1$ implies that $\iota$ preserves 
the augmentation ideal of $\A\db[\Conf\db]$. Hence, we have 
\begin{equation}\label{eq:4.4.7}
\aug \ \circ\ \iota \ \ =\ \ \aug.
\end{equation} 

Let us state an important consequence of our construction.

\begin{assertion}
{\it Any saturated subalgebra of the configuration algebra is a 
topological Hopf algebra. In particular, 
for any monoid $\Gamma$ with a finite generating system $G$ and commutative 
ring $\A$ with a unit, $\A\db[\langle \Gamma,G\rangle\db]$ is a Hopf algebra.
}
\end{assertion}
\begin{proof} We need only to remember that $\Phi_m$ ($m\ge0$) and 
$\iota$ preserve any saturated subalgebra (4.1 Assertion 3. and 
4.4 Assertion iv)).
\end{proof}

\subsection{Some remarks on $\iota$.}\label{subsec:4.5}
\begin{rem}
1. In section 5., the functions $\mathcal{A}(S)$ \  
$(S \in \Conf)$ will be 
re-introduced and investigated.
In particular we shall show the equality:
\begin{equation}\label{eq:4.5.1}
 \iota (\mathcal{A}(S)) \cdot \mathcal{A}(S)  \ \ =\ \ 1
\end{equation} 
for $S \in \Conf$ \eqref{eq:5.4.1}.
This can be also directly shown by use of \eqref{eq:2.7.1} 
and \eqref{eq:4.2.1}.
This relation gives a more natural definition of $\iota$.

2.
The polynomial ring $\A \cdot \Conf$ for any $\A$ 
is not closed under the map $\iota$. 
For example, let $X$ (resp.\ $Y$) be a graph of one (resp.\ two)
vertices. Then,
{\small
\[
\iota(X)\ =\ -\frac{X}{1 + X} \quad \text{and}\quad 
\iota(Y)\ =\ \frac{-Y + 2X^{2} + XY}{(1 + X)(1 + 2X + Y)}.
\vspace{-0.4cm}
\]
}
       
3.
Because of above {\it Remark} 2, the localization:
${ \big(\mathbb{Z} \cdot \Conf \big)}_{\mathfrak{M}} = \{f/g: f \in \mathbb{Z}
\cdot \Conf, g \in \mathfrak{M}\}$ for the multiplicative set
$\mathfrak{M}:= \{\mathcal{A}(S): S \in \Conf \}$ is the smallest necessary
extension of the algebra $\mathbb{Z} \cdot \Conf$ to define $\iota$.
However, the space 
${\big(\mathbb{Z} \cdot \Conf \big)}_{\mathfrak{M}}$ is still 
too small for our later applications (see \ref{subsec:6.3} {\it Remark}).

4.
There  is another coalgebra structure studied in combinatorics (\cite{R}).
\end{rem}


\section{Growth functions for configurations}\label{sec:5}

For any $S \in \Conf$, the sum of isomorphism classes of all subgraphs 
of a graph representing $S$ is denoted by $\mathcal{A}(S)$.
It is a group-like element in the Hopf algebra $\A\db[\Conf\db]$
and shall play a fundamental role in the sequel. We shall call it 
a {\it growth function} (one shuld not confuse with 
the same terminology in [M]).

\subsection{Growth functions}\label{subsec:5.1}

For $S$ and $T \in \Conf$, we introduce a numerical invariant 
\begin{equation}\label{eq:5.1.1}
A(S,T)\ \ :\ =\ \ \sharp \mathbb{A}(S, \mathbb{T}), 
\end{equation}
by the following steps i)-iii).

\ \ i) 
Fix a graph $\mathbb{T}$, with $[\mathbb{T}] = T$.

\ ii)
Put
\begin{equation}\label{eq:5.1.2}
\mathbb{A}(S,\mathbb{T})\ \ :\ =\ \ \sharp \{\ \mathbb{S} \ \mid \ \mathbb{S}\text{ is a subgraph of } \mathbb{T} \text{ such that } [\mathbb{S}]=S\}.
\end{equation}

iii)
Show that $\mathbb{A}(S,\mathbb{T}) \simeq \mathbb{A}(S,\mathbb{T}')$
if $[\mathbb{T}] = [\mathbb{T}']$.
(The proof is omitted.)

\noindent
We shall call $A(S,T)$ the growth coefficient of $T$ at $S\in \Conf$. 
\begin{align}\label{eq:5.1.3}
&A([\emptyset], T)\ =\ 1 \qquad {\rm for} \quad T \in \Conf,\\
&\ \ A(S, T) \ \neq\ 0 \qquad \text{if and only if} \quad S \in \langle T\rangle.
\label{eq:5.1.4}
\end{align}

Let us introduce the generating polynomial of the growth coefficients:
\begin{equation}
\label{eq:5.1.5}
\begin{array}{ll}
\mathcal{A}(T)\ \ :\ =\  \sum_{S \in \Conf} S \cdot A(S,T),
\end{array}
\end{equation}
and  call it the growth function of $T$.  
In fact, this is a finite sum and $\mathcal{A}(T)\in \Z\cdot\Conf$.
The definition of $\mathcal{A}(T)$ can be reformulated as:
\begin{equation}\label{eq:5.1.6}
\begin{array}{ll}
\mathcal{A}(T)\ \ =\ \ \sum_{S \in 2^{\mathbb{T}}} [\mathbb{S}],
\end{array}
\end{equation} 
where $2^{\mathbb{T}}$ denote the set of all subgraphs of $\mathbb{T}$
(cf. \ref{subsec:2.1} Definition 2.).

The following multiplicativity follows immediately from the expression
\eqref{eq:5.1.6}. For $T_{1}$ and $T_{2} \in \Conf$
\begin{equation}\label{eq:5.1.7}
 \mathcal{A}(T_{1} \cdot T_{2} )\ \ =\ \  \mathcal{A}(T_{1})\ \mathcal{A}(T_{2}).
\end{equation} 

\begin{rem}
1. By comparing the definition \eqref{eq:5.1.1} with \eqref{eq:2.4.1}, we see immediately 
${\small 
A(S,T) =
\begin{pmatrix}
T\\ T,S
\end{pmatrix}
}$
for $S$ and $T \in \Conf$.
Hence the two definitions \eqref{eq:4.4.5} 
and \eqref{eq:5.1.5} for $\mathcal{A}(T)$ coincide.

2.
By definition \eqref{eq:5.1.1}, we have additivity:
\begin{equation}\label{eq:5.1.8}
A(S,T_{1} \cdot T_{2})\ \ =\ \  A(S,T_{1}) + A(S,T_{2})
\end{equation} 
for $S \in \Conf_{0}$ and $T_{i} \in \Conf$.
\end{rem}

\subsection{A numerical bound of the growth coefficients}
\label{subsec:5.2}
In our later study on the existence of limit elements in \S10, 
the following estimates on the 
growth rates of growth coefficients play a crucial role.

\begin{lemma}
For $S, T \in \Conf$, we have
\begin{equation}\label{eq:5.2.1}
A(S,T)\ \ \leq\ \ \frac{1}{\sharp \Aut(S)} \cdot \sharp T^{n(S)}
\cdot (q-1)^{\sharp S - n(S)}.
\end{equation} 
Here $n(S):= \sharp$ of connected components of $S$, $q$ is 
the upper-bound of the number of edges at each vertex of $T$ \em{({\it recall} \ref{subsec:2.2})}, and $\Aut(S)$ means the isomorphism class of 
$\Aut(\mathbb{S})$ for a representative $\mathbb{S}$ of $S$ 
and we put $\#\Aut(S):=\#\Aut(\mathbb{S})$.
\end{lemma}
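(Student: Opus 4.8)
The statement is a counting inequality, so I would prove it by directly estimating the number of subgraphs $\mathbb{S}\subset\mathbb{T}$ with $[\mathbb{S}]=S$. The natural approach is to bound instead the number of \emph{injections} $\mathbb{S}_0\hookrightarrow\mathbb{T}$ (as colored graphs, i.e.\ graph embeddings onto full subgraphs) for a fixed representative $\mathbb{S}_0$ of $S$, and then observe that each subgraph of $\mathbb{T}$ in the isomorphism class $S$ is hit exactly $\#\Aut(S)$ times by such injections. This gives $A(S,T)=\frac{1}{\#\Aut(S)}\cdot\#\{\text{embeddings }\mathbb{S}_0\hookrightarrow\mathbb{T}\}$, which explains the leading factor $\frac{1}{\#\Aut(S)}$ in \eqref{eq:5.2.1}. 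So it remains to bound the number of embeddings by $(\#T)^{n(S)}\cdot(q-1)^{\#S-n(S)}$.

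For that bound I would argue component by component and, within each component, vertex by vertex along a spanning tree. Write $\mathbb{S}_0$ as a disjoint union of its $n(S)$ connected components. An embedding is determined by its restriction to each component, and the components are placed independently (subject to disjointness, which only helps the inequality), so it suffices to bound the number of embeddings of one connected component $\mathbb{C}$ with $\#\mathbb{C}=c$ vertices by $(\#T)\cdot(q-1)^{c-1}$; multiplying over the $n(S)$ components and using $\sum c = \#S$ then yields the claimed product. For a single connected component: pick a spanning tree of $\mathbb{C}$ and order its vertices $v_1,\dots,v_c$ so that each $v_i$ ($i\ge 2$) is adjacent in the tree to some earlier $v_j$. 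The image of $v_1$ can be any of the $\le\#T$ vertices of $\mathbb{T}$. Once the image of $v_j$ is fixed, the image of $v_i$ must be an endpoint of an edge in $\mathbb{T}$ emanating from the image of $v_j$ and carrying the prescribed color $c(v_j,v_i)$; since the total valency at any vertex of $\mathbb{T}$ is at most $q$ and at least one of those incident edges is ``used up'' connecting back toward $v_1$ (more precisely, the edge to the image of the tree-parent is one of the $\le q$ incident edges, so at most $q-1$ remain for a genuinely new choice) — this is where I must be slightly careful — the number of admissible images of $v_i$ is at most $q-1$. Iterating over $i=2,\dots,c$ gives the factor $(q-1)^{c-1}$.

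The main obstacle, and the step deserving the most care, is the ``$q-1$ rather than $q$'' refinement in the inductive vertex-by-vertex count. The naive bound gives $q$ choices per new vertex (any incident edge of the correct color at the already-placed parent vertex), producing $q^{c-1}$; getting down to $(q-1)^{c-1}$ requires noting that when we place $v_i$ adjacent to its already-placed tree-parent $v_j$, the edge realizing that adjacency is itself one of the at most $q$ edges incident to the image of $v_j$, and for $i\ge 2$ the image of $v_j$ already has at least one previously-committed incident edge (the one toward the root direction, i.e.\ toward $v_1$), so at most $q-1$ edges at the image of $v_j$ are available to host the new vertex $v_i$. One has to check this bookkeeping is consistent — that we never double-count an edge and that ``one incident edge already committed'' genuinely holds for every non-root vertex of the tree; this is straightforward but is the only place the argument is not completely mechanical. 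Everything else (the $\Aut(S)$ orbit-counting, independence of components, reduction to a spanning tree) is routine.
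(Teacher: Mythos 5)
Your plan is essentially the paper's own proof: you pass from $A(S,T)$ to embeddings of a fixed representative via the free action of $\Aut(\mathbb{S}_0)$, so that $A(S,T)=\#\Emb(\mathbb{S}_0,\mathbb{T})/\#\Aut(S)$, and you bound the embeddings of each connected piece by adjoining one vertex at a time along a spanning tree, which is the same device as the paper's increasing chain of connected subgraphs $\mathbb{S}_1\subset\cdots\subset\mathbb{S}_a$; treating the disconnected case directly at the level of embeddings (instead of the paper's detour through $A(S,T)\le\frac{1}{k_1!\cdots k_m!}\prod_i A(S_i,T)^{k_i}$ and the wreath-product structure of $\Aut(S)$) is a harmless simplification. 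The gap is exactly at the step you single out, and your fix does not close it. The justification ``the image of $v_j$ already has a previously committed incident edge, namely the one toward the root'' is valid only when the parent $v_j$ of the new vertex is not itself the root. For the first vertex attached to the root of a component ($i=2$ in the paper's chain) one has $v_j=v_1$, which has no parent and, at that moment, no committed incident edge, so the number of admissible images is bounded only by $q$. Your consistency check (``one committed edge for every non-root vertex'') tests the wrong vertex: what is needed is a committed edge at the \emph{parent}, and the parent is the root precisely at this step. As written, your count gives $\#T\cdot q\cdot(q-1)^{c-2}$ for a component with $c\ge2$ vertices, i.e.\ one factor $q$ per non-singleton component, which is weaker than \eqref{eq:5.2.1}.

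This is not removable by better bookkeeping: the first-step fiber can genuinely have $q$ elements, and the inequality with $q-1$ can fail at this level of generality. Take a single unoriented color ($G=\{g\}$, $\sigma_G=\mathrm{id}$), $q=2$, $S$ the class of a single edge, and $\mathbb{T}$ an $n$-cycle ($n\ge3$), whose valency is $2$ everywhere; then $A(S,T)=n$, while $\frac{1}{\#\Aut(S)}\,\#T\,(q-1)^{\#S-1}=\frac{n}{2}$ since $\#\Aut(S)=2$. So the refinement from $q$ to $q-1$ at the first extension step needs extra input (for instance, that at each vertex a given color occurs on at most one edge, as in Cayley graphs) and cannot follow from valency counting alone; what your argument proves unconditionally is the bound with $q$ at the first step, in particular $\frac{1}{\#\Aut(S)}(\#T)^{n(S)}q^{\#S-n(S)}$, the form the paper's Note attributes to the earlier version [S2]. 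Be aware that the paper's proof asserts the ``at most $q-1$ points in the fiber'' claim for all $i\ge2$, including $i=2$, without justification at exactly this point, so your proposal reproduces the published argument together with its weak spot; judged as a self-contained proof of the lemma as stated, that step is a genuine gap.
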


\begin{note}
In the original version [S2], the factor $q-1$ in 
\eqref{eq:5.2.1} 
was $q$. The author is grateful to the readers pointing out this 
improvement.
\end{note}

\begin{proof}
Let $\mathbb{S}$ and $\mathbb{T}$ be representatives of the $G$-colored graphs 
$S$ and $T$ respectively. We divide the proof into three steps.

i) 
Assume $S$ is connected. Let us show:
\begin{equation}\label{eq:5.2.2}
A(S,T)\ \ \leq\ \  \frac{1}{\sharp \Aut(S)} \sharp T
\cdot (q-1)^{\sharp S - 1}.
\end{equation}
\begin{proof}
Let $\mathbb{S}_{1}, \ldots, \mathbb{S}_{a}$ be an increasing sequence of 
connected subgraphs of $\mathbb{S}$ such that $\sharp \mathbb{S}_{i}=i$\ 
($i = 1, \ldots, a = \sharp \mathbb{S}$).
Put $\Emb(\mathbb{S}_{i}, \mathbb{T}):= \{\varphi : \mathbb{S}_{i} 
\rightarrow \mathbb{T}\mid$ embeddings as a $G$-colored graph\}.
Then, for $i\ge2$, the natural restriction map $\Emb(\mathbb{S}_{i}, \mathbb{T})
\rightarrow \Emb(\mathbb{S}_{i - 1}, \mathbb{T})$ has at most $q-1$
points in its fiber.
Hence $\sharp\Emb(\mathbb{S}_{i}, \mathbb{T}) \leq(q-1) \cdot
\sharp\Emb(\mathbb{S}_{i - 1}, \mathbb{T})$ ($i = 2, \ldots, a$).
On the other hand, since 

\smallskip
\centerline{
$A(S,T)\ =\ \sharp\Emb(\mathbb{S}, \mathbb{T}) / 
\sharp \Aut(\mathbb{S})$,
}

\smallskip
\noindent
one has the inequality: 

\smallskip
\!\!$A(S,T) = \sharp\Emb(\mathbb{S}_{a},
\mathbb{T}) / \sharp \Aut(\mathbb{S}_{a})$ 

\qquad \quad\  $\leq (q-1)^{a-1} \cdot
\sharp\Emb(\mathbb{S}_{1}, \mathbb{T}) / \sharp \Aut(\mathbb{S}_{a})
= (q-1)^{a-1} \cdot \sharp T/ \sharp \Aut(S)$.
\end{proof} 
ii)
Assume that $S$ decomposes as: $S=S^{k_{1}}_{1} \coprod .. \coprod
S^{k_{m}}_{m}$ for pairwise distinct $S_{i} \in \Conf_{0}$ 
($i=1,\cdots,m$) so that $\sum_{i=1}^m k_{i} = n(S)$. 
Let us show 
\begin{equation}\label{eq:5.2.3}
A(S,T)\ \ \leq\ \ \frac{1}{k_{1}! \ldots k_{m}!} \prod^{m}_{i = 1} 
A(S_{i},T)^{k_{i}},
\end{equation}
\begin{proof} For $1\le i\le m$, 
the subgraph of  $\mathbb{S} \in \mathbb{A}(S, \mathbb{T})$ 
 corresponding to the factor $S_i^{k_i}$,
denoted by $\SS|_{S_i^{k_i}}$, defines an off-diagonal element of
$( \prod^{k_{i}} \mathbb{A}(S_{i},\mathbb{T}) 
) /\frak{S}_{k_{i}}$ where $\frak{S}_{k_{i}}$ is the symmetric 
group of $k_i$ elements acting freely on the 
set of off-diagonal elements.
Then, the association $\SS\mapsto  (\SS|_{S_i^{k_i}})_{i=1}^m$ 
defines an embedding:
$
\mathbb{A}(S,\mathbb{T}) \rightarrow \prod^{m}_{i = 1} 
\Big( \Big( \prod^{k_{i}} \mathbb{A}(S_{i},\mathbb{T}) 
\Big) /\frak{S}_{k_{i}}\Big)
$ into the off-diagonal part.
\end{proof} 

iii) Let $S$ be as in ii). Then, $\Aut(S)=\prod_{i=1}^m\Aut(S_i^{k_i})$
and each factor $\Aut(S_i^{k_i})$ is a wreath direct product of
$\Aut(S_i)$ and $\frak{S}_{k_i}$.
Then \eqref{eq:5.2.1} is a consequence of a combination of 
\eqref{eq:5.2.2} and \eqref{eq:5.2.3}. 

This completes the proof of the Lemma. \end{proof}

\subsection{Product-expansion formula for growth coefficients}
\label{subsec:5.3}
The coefficients of a growth function of $T$ are not algebraically 
independent.
\begin{lemma}
Let $S_{1}, \ldots, S_{m} \ (m \geq 0)$ and $T \in \Conf$ be given.
Then,
\begin{equation}\label{eq:5.3.1}
\prod^{m}_{i = 1}A(S_{i},T)\ \ =\ \sum_{S \in \Conf} 
{\small
\begin{pmatrix}
S\\ S_{1}, \ldots, S_{m}
\end{pmatrix}
}
A(S,T).
\end{equation}
\end{lemma}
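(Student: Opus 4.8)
The plan is to reduce the identity to the composition rule for covering coefficients (Assertion in \ref{subsec:2.6}), using the combinatorial interpretation $A(S,T)=\sharp\mathbb{A}(S,\mathbb{T})$ together with the observation already recorded in \ref{subsec:5.1} Remark~1, namely $A(S,T)={\small\begin{pmatrix}T\\ T,S\end{pmatrix}}$. Fix a representative graph $\mathbb{T}$ with $[\mathbb{T}]=T$. First I would interpret the left-hand side $\prod_{i=1}^m A(S_i,T)$ as the cardinality of the set of tuples $(\mathbb{S}_1,\ldots,\mathbb{S}_m)$ of subgraphs $\mathbb{S}_i\subset\mathbb{T}$ with $[\mathbb{S}_i]=S_i$, with \emph{no} condition relating the $\mathbb{S}_i$ to each other. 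The key move is then to stratify this set according to the isomorphism class $U:=\bigl[\bigcup_{i=1}^m\mathbb{S}_i\bigr]$ of the union subgraph they span inside $\mathbb{T}$: for each fixed $U$, the stratum is in bijection with the set of pairs consisting of a subgraph $\mathbb{U}\subset\mathbb{T}$ with $[\mathbb{U}]=U$ and a tuple $(\mathbb{S}_1,\ldots,\mathbb{S}_m)$ of subgraphs of $\mathbb{U}$ with $[\mathbb{S}_i]=S_i$ and $\bigcup_i|\mathbb{S}_i|=|\mathbb{U}|$. The first factor is counted by $A(U,T)$ and the second, by the very definition \eqref{eq:2.4.2}, has cardinality ${\small\begin{pmatrix}U\\ S_1,\ldots,S_m\end{pmatrix}}$. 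Summing over $U\in\Conf$ gives exactly the right-hand side.

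Alternatively — and this is the route I expect to be cleanest to write — one can avoid re-deriving the stratification from scratch by invoking the composition rule \eqref{eq:2.6.1} directly with the substitution $S\rightsquigarrow T$, $n=1$, $T_1\rightsquigarrow T$: it reads $\sum_{U}{\small\begin{pmatrix}U\\ S_1,\ldots,S_m\end{pmatrix}}{\small\begin{pmatrix}T\\ U,T\end{pmatrix}}={\small\begin{pmatrix}T\\ S_1,\ldots,S_m,T\end{pmatrix}}$. Then I would identify ${\small\begin{pmatrix}T\\ U,T\end{pmatrix}}=A(U,T)$ and ${\small\begin{pmatrix}T\\ S_1,\ldots,S_m,T\end{pmatrix}}=\prod_{i=1}^m A(S_i,T)$ via a direct bijective argument: a subgraph $\mathbb{T}'\subset\mathbb{T}$ with $[\mathbb{T}']=T$ must equal $\mathbb{T}$ itself when $\mathbb{T}$ is chosen as a representative (indeed any subgraph of $\mathbb{T}$ isomorphic to $T=[\mathbb{T}]$ has the full vertex set $|\mathbb{T}|$, since $\sharp\mathbb{T}'=\sharp T=\sharp\mathbb{T}$, and a full subgraph on all vertices is $\mathbb{T}$), so the condition $\bigcup_i|\mathbb{S}_i|\cup|\mathbb{T}'|=|\mathbb{T}|$ in \eqref{eq:2.4.2} becomes vacuous, leaving the $\mathbb{S}_i$ completely unconstrained; hence ${\small\begin{pmatrix}T\\ S_1,\ldots,S_m,T\end{pmatrix}}=\prod_i\sharp\mathbb{A}(S_i,\mathbb{T})=\prod_i A(S_i,T)$, and similarly with $U$ in place of the list. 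Finiteness of the sum over $U$ is guaranteed by \ref{subsec:2.5}~i), since the summand vanishes unless $U\le T$.

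The main obstacle is the bookkeeping in the bijective step — making precise that a subgraph of the chosen representative $\mathbb{T}$ isomorphic to $T$ is forced to be $\mathbb{T}$, so that the "surjectivity onto the vertex set" clause of \eqref{eq:2.4.2} drops out and the $S_i$-factors decouple into the honest unconstrained count $\prod_i A(S_i,T)$. Once that normalization is in place, the identity is just the $n=1$ instance of the composition rule, and nothing further is needed.
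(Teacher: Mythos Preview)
Your proposal is correct, and your first approach---stratifying $\prod_{i}\mathbb{A}(S_i,\mathbb{T})$ by the union $\mathbb{S}=\bigcup_i\mathbb{S}_i$ and identifying each fiber with the set $\bigl(\begin{smallmatrix}\mathbb{S}\\ S_1,\ldots,S_m\end{smallmatrix}\bigr)$---is exactly the paper's proof. Your alternative route via the composition rule \eqref{eq:2.6.1} with $n=1$, $T_1=T$ is also valid (the key observation that any full subgraph $\mathbb{T}'\subset\mathbb{T}$ with $[\mathbb{T}']=T$ must equal $\mathbb{T}$ is correct), but the paper does not take it; the paper's Remark after the proof instead uses \eqref{eq:2.6.1} only to reduce inductively to the case $m=2$.
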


\begin{proof}
Let $\mathbb{T}$ be a graph representing $T$. For $m\in\Z_{\ge 0}$,
consider a map
\begin{equation*}
(\mathbb{S}_{1}, \ldots, \mathbb{S}_{m}) \in \prod^{m}_{i = 1}\mathbb{A}
(S_{i},\mathbb{T})\ \longmapsto\ 
\mathbb{S}:= \bigcup_{i = 1}^{m} \mathbb{S}_{i} \in 2^\mathbb{T},
\end{equation*}
whose fiber over $\mathbb{S}$ is ($\begin{smallmatrix}
S_{1}, \ldots, S_{m}\\
{\mathbb{S}}
\end{smallmatrix}$)
so that one has the decomposition
\begin{equation*}
\prod^{m}_{i = 1}\mathbb{A}(S_{i},\mathbb{T})\ \ \simeq\ \ 
{\bigcup_{\mathbb{S} \in 2^{\mathbb{T}}}} 
{\small
\begin{pmatrix}
\mathbb{S}\\ S_{1}, \ldots, S_{m}
\end{pmatrix}
}
.
\end{equation*}
By counting the cardinality of the both sides, one obtains the formula.
\end{proof}

\begin{remark}
The formula \eqref{eq:5.3.1} is trivial for $m\in\{0,1\}$, and 
can be reduced to the case $m=2$ for $m\ge2$ by an induction 
on $m$ as follows.

Multiply $A(S_{m+1}, T)$ to \eqref{eq:5.3.1} and apply the formula for $m=2$.
\begin{align*}
\begin{array}{ll}
\prod^{m+1}_{i=1} A(S_{i}. T) &\  = \sum_{S \in \Conf}
{\small
\begin{pmatrix}
S\\ S_{1}, \ldots, S_{m}
\end{pmatrix}
}
A(S,T)A(S_{m+1}, T)\\
&\  = \sum_{S \in \Conf}
{\small
\begin{pmatrix}
S\\ S_{1}, \ldots, S_{m}
\end{pmatrix}
}
\sum_{U \in \Conf} 
{\small
\begin{pmatrix}
U\\ S, S_{m+1}
\end{pmatrix}
}
A(U,T)\\
\intertext{Using the composition rule \eqref{eq:2.6.1}, this is equal to}
&\ = \sum_{U \in \Conf}
{\small
\begin{pmatrix}
U\\ S_{1}, \ldots, S_{m+1}
\end{pmatrix}
}
A(U,T).
\end{array}
\end{align*}
\end{remark}

\subsection{Group-like property of the growth function}\label{subsec:5.4}

An element $g \in \mathbb{A} \db[\Conf\db]$  is called 
group-like if it satisfies
\begin{equation}\label{eq:5.4.1}
\Phi_{m}(g) \ =\ \underbrace{g \ \widehat\otimes\ \cdots \ \widehat\otimes\ g}_m
\vspace{-0.3cm}
\end{equation}
for all $m \in \mathbb{\Z}_{\ge0}$.
This in particular implies the conditions
$\Phi_0(g)=1$ and $\iota(g)=g^{-1}$ (c.f.\ 
\eqref{eq:4.3.1} and \eqref{eq:4.4.2}). For any group-like elements
$g$ and $h$, the power product $g^ah^b$ for $a,b\in\A$ 
(c.f.\ \eqref{eq:3.6.3}) is also group-like.
 We put
\begin{align}
\label{eq:5.4.2}
& \frak{G}_{\mathbb{A}}\ \ :\ =\ \  \{ \text{the set of all group-like elements in }\mathbb{A} \db[\Conf\db]\}\\
& \frak{G}_{\mathbb{A}, {\rm finite}}\ \ :\ =\ \ \{g \in \frak{G}_{\mathbb{A}} \mid g \ \text{is of 
finite type.}\}
\end{align}

\begin{lemma}
The generating polynomial $\mathcal{A}(T)$ for any $T \in \Conf$ 
is group-like.
 That is, for any $m \in \mathbb{Z}_{\ge0}$ 
and $T \in \Conf$, we have 
\begin{equation}
\label{eq:5.4.4}
 \mathcal{A} (T) \otimes \cdots \otimes  \mathcal{A} (T)
\ \ =\ \ 
\Phi_{m}(\mathcal{A} (T)).
\end{equation}
\end{lemma}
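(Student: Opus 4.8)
The plan is to prove \eqref{eq:5.4.4} by a direct coefficient comparison inside the tensor algebra, so that the whole statement collapses onto the product-expansion formula \eqref{eq:5.3.1} already established in \S\ref{subsec:5.3}. Since $\mathcal{A}(T)\in\Z\cdot\Conf$ is a \emph{finite} sum (only the $S\le T$ contribute, by \eqref{eq:5.1.4}), the computation takes place entirely inside $\otimes^m(\Z\cdot\Conf)$, and an identity proved there transports to $\widehat\otimes^m\Z\db[\Conf\db]$ via the dense inclusion \eqref{eq:3.5.3}; in particular no convergence considerations intervene, and $\Phi_m$ may be evaluated on $\mathcal{A}(T)$ term by term.

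First I would expand the left-hand side: by linearity and the defining formula \eqref{eq:4.1.1} for the coproduct,
\[
\Phi_m(\mathcal{A}(T))\ =\ \sum_{S\in\Conf}A(S,T)\,\Phi_m(S)\ =\ \sum_{S_1,\ldots,S_m\in\Conf}\Bigl(\sum_{S\in\Conf}{\small\begin{pmatrix}S\\ S_1,\ldots,S_m\end{pmatrix}}A(S,T)\Bigr)\,S_1\otimes\cdots\otimes S_m .
\]
Next I would invoke \eqref{eq:5.3.1} to rewrite the inner sum as $\prod_{i=1}^m A(S_i,T)$, which gives
\[
\Phi_m(\mathcal{A}(T))\ =\ \sum_{S_1,\ldots,S_m\in\Conf}\Bigl(\prod_{i=1}^m A(S_i,T)\Bigr)\,S_1\otimes\cdots\otimes S_m .
\]
Finally I would expand the right-hand side of \eqref{eq:5.4.4} straight from the definition \eqref{eq:5.1.5}: distributing the (completed, but here ordinary) tensor product over the finite sums yields exactly $\sum_{S_1,\ldots,S_m}\bigl(\prod_{i=1}^m A(S_i,T)\bigr)\,S_1\otimes\cdots\otimes S_m$, matching the expression just obtained. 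This settles the case $m\ge1$. For $m=0$ the assertion is $\aug(\mathcal{A}(T))=1$, which is immediate from \eqref{eq:5.1.3} (the constant term of $\mathcal{A}(T)$ is $A([\emptyset],T)=1$), or equivalently is the $m=0$ instance of \eqref{eq:5.3.1} together with \eqref{eq:2.5.2}.

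I do not expect a genuine obstacle here: all of the combinatorial substance has been pre-packaged into \eqref{eq:5.3.1}, whose proof rests on the fiber decomposition of $\prod_i\mathbb{A}(S_i,\mathbb{T})$ over $2^{\mathbb{T}}$. The only points deserving an explicit word are the two bookkeeping remarks above — that $\Phi_m$ applies termwise to $\mathcal{A}(T)$ because the latter is a finite sum, and that an identity in $\otimes^m(\Z\cdot\Conf)$ remains an identity in $\widehat\otimes^m\Z\db[\Conf\db]$. As an alternative route one could instead combine the multiplicativity \eqref{eq:4.1.3} of $\Phi_m$ with the multiplicativity \eqref{eq:5.1.7} of $\mathcal{A}$ to reduce to the case where $T$ is connected; but since this reduction does not itself dispose of the connected case, the coefficient comparison through \eqref{eq:5.3.1} remains the most economical argument.
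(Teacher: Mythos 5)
Your proof is correct and is essentially the paper's own argument: both reduce \eqref{eq:5.4.4} to the product-expansion formula \eqref{eq:5.3.1}, the only difference being that you expand $\Phi_m(\mathcal{A}(T))$ and rewrite the covering-coefficient sum as $\prod_i A(S_i,T)$, while the paper expands $\mathcal{A}(T)\otimes\cdots\otimes\mathcal{A}(T)$ and applies \eqref{eq:5.3.1} in the other direction. Your extra remarks (finiteness of $\mathcal{A}(T)$, transport along \eqref{eq:3.5.3}, the $m=0$ case) are harmless additions, not a different route.
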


\begin{proof} 
By the definition of $\mathcal{A}(T)$ \eqref{eq:5.1.3},
the tensor product of $m$-copies
\begin{equation} 
 \mathcal{A}(T) \otimes \cdots \otimes  \mathcal{A}(T)
\tag*{$*$)}
\end{equation}
can be expanded into a sum of $m$ variables $S_{1}, \ldots, S_{m}$:
\begin{equation} 
\sum_{S_{1} \in \Conf} \cdots \sum_{S_{m} \in \Conf} S_{1} \otimes
\cdots \otimes S_{m} \Big( \prod^{m}_{i=1} A(S_{i}, T) \Big). \tag*{$**$)} 
\end{equation}
By use of the product-expansion formula \eqref{eq:5.3.1},  this is 
equal to 
\begin{equation*} 
\sum_{S_{1} \in \Conf} \cdots \sum_{S_{m} \in \Conf} S_{1} \otimes
\cdots \otimes S_{m} \Big( \sum_{S \in \Conf} 
{\footnotesize
\begin{pmatrix}
S\\ S_{1}, \cdots, S_{m}
\end{pmatrix}
}
A(S,T)\Big)
\end{equation*}
Recalling the definition of the map $\Phi_{m}$ \eqref{eq:4.1.4}, 
this is equal to
\begin{align*}
\sum_{S \in \Conf} \Phi_{m} (S) \cdot A(S,T)\  
=\  \Phi_{m} \Big( \sum_{S \in \Conf}S \cdot A(S,T) \Big)
=\  \Phi_{m} (\mathcal{A}(T)). \tag*{$***$)}
\end{align*}
\vspace{-0.6cm}
\end{proof}

\subsection{A characterization of the antipode.}\label{subsec:5.5}
Equation \eqref{eq:5.4.4} 
provides formulae,
\begin{align}\label{eq:5.5.1}
 &  \iota (\mathcal{A}(T)) \mathcal{A}(T) 
\ \ =\ \ 1 \qquad {\rm for} \ T \in \Conf,\\
&\Phi_{m} \circ \iota\ \ =\ \ (\iota \widehat\otimes \cdots \widehat\otimes \iota) \circ \Phi_{m} 
\hskip1.2cm {\rm for} \ m \in \mathbb{Z}_{\ge0}.\label{eq:5.5.2}
\end{align}

\begin{proof}[Proof of \eqref{eq:5.5.1}]
Apply  \eqref{eq:5.4.1} to  $(\iota \cdot 1) \circ
\Phi_{2}(T) = \aug(T)$ \eqref{eq:4.4.2}.
\end{proof}
\begin{proof}[Proof of \eqref{eq:5.5.2}]
 It is enough to show the case $m=2$ due to \eqref{eq:4.2.1}.
Apply $\Phi_{2}$ to \eqref{eq:5.5.1}. Recalling \eqref{eq:5.4.1},
one obtains a relation
\begin{equation*}
\Phi_{2}( \iota(\mathcal{A}(T))\cdot (\mathcal{A}(T)\otimes ( \mathcal{A}(T)))\ =\ 1 \ .
\end{equation*}
Multiply $ \iota(\mathcal{A}(T))\iota(\mathcal{A}(T))$ and apply again
\eqref{eq:5.5.1} so that one obtains
\begin{align*}
\Phi_{2}(\iota(\mathcal{A}(T))) &\ =\  \iota(\mathcal{A}(T) \otimes 
\iota(\mathcal{A}(T))\\
& \ =\ (\iota \otimes \iota)\big(  \mathcal{A}(T) \otimes 
 \mathcal{A}(T)\big)\\
&\ =\ (\iota \otimes \iota) \Phi_{2} (\mathcal{A}(T)).  
\end{align*} 
Thus \eqref{eq:5.5.2} 
is true for $\mathcal{A}(T)$  ($T \in \Conf$).
Since  $\mathcal{A}(T)$ ($T \in \Conf$) span $\A\cdot\Conf$, 
which is dense in the whole algebra,  \eqref{eq:5.5.2}
holds on $\A\db[\Conf\db]$.
\end{proof}


\section{The logarithmic growth function}\label{sec:6}
The growth coefficients $A(S,T)$ in $S\in \langle T\rangle$ 
were bounded from above in \eqref{eq:5.2.1}.
However in the sequel, we also need to bound its lower terms.
This is achieved by introducing a logarithmic growth coefficient 
$M(S,T) \in \mathbb{Q}$ in $S\in \langle T\rangle$, 
and showing linear relations \eqref{eq:6.2.2}.

\subsection{The logarithmic growth coefficient}\label{subsec:6.1}

For $T \in \Conf$, define the logarithm  of the growth function:
\begin{align}\label{eq:6.1.1}
\mathcal{M}(T)\ \ :\ =\ \ & \log(\mathcal{A}(T)),\\
\intertext{in $\mathbb{Q}\db[\langle T \rangle\db]$ (cf \eqref{eq:5.1.5}
and \eqref{eq:3.6.2}). Expand $\mathcal{M}(T)$ in a series}  
\mathcal{M}(T)\ \ =&\ \ \sum_{S \in \Conf} S \cdot M(S,T).\label{eq:6.1.2}
\end{align}
The coefficient $M(S,T)$ is the {\it logarithmic growth coefficient} at 
$S\in\langle T\rangle$. 

By definition, $\mathcal{M}(T)$ does not have a constant term, i.e.\   
\begin{equation}\label{eq:6.1.3}
M([\emptyset], T)\ \ :\ =\ \ 0 \qquad {\rm for} \quad T \in \Conf.
\end{equation}

For later applications, we write the explicit relations among growth-functions 
and logarithmic growth-functions (cf. \eqref{eq:3.6.4} and \eqref{eq:3.6.5}).
\begin{align}\label{eq:6.1.4}
A(S, T)\ &= \sum_{S=S_{1}^{k_{1}} \coprod \cdots \coprod S_{m}^{k_{m}}}
\frac{1}{k_{1}! \ldots k_{m}!}M(S_{1},T)^{k_{1}} \cdots A(S_{m},T)^{k_{m}}\\
M(S, T)\ &= \sum_{S=S_{1}^{k_{1}} \coprod \cdots \coprod S_{m}^{k_{m}}}
\frac{(k_{1}+ \cdots +k_{m}-1)!(-1)^{k_{1} + \cdot 
+k_{m} -1}}{k_{1}! \ldots k_{m}!} \label{eq:6.1.5}\\
&\hskip4cm \times\ A(S_{1},T)^{k_{1}} \cdots A(S_{m},T)^{k_{m}}.\notag
\end{align}

\begin{rem}
1. From the formula, we see that for a connected $S \in \Conf_{0}$,
\vspace{-0.1cm}
\begin{equation}
\label{eq:6.1.6}
\vspace{-0.1cm}
A(S,T)\ \  =\ \  M(S, T).
\end{equation}
That is, {\it the logarithmic growth coefficients coincide with the 
growth coefficients at connected configurations.} 
This elementary fact shall be used repeatedly.

\medskip
2.
The multiplicativity of $\mathcal{A}(T)$ \eqref{eq:5.1.7} implies 
the additivity 
\begin{align}\label{eq:6.1.7}
\mathcal{M}(T_{1} \cdot T_{2})\ \  &=\ \ \mathcal{M}(T_{1}) + \mathcal{M}(T_{2})\\
\intertext{for $T_{i} \in \Conf$ and hence the additivity:}
M(S, T_{1} \cdot T_{2})\ \ &=\ \ M(S,T_{1}) 
+ M(S,T_{2})\quad \text{for $S \in \Conf.$}\tag*{$(6.1.7)^{*}$}
\end{align}

3. The invertibility \eqref{eq:5.5.1} implies
\begin{equation}
\iota (\mathcal{M}(T))\ \ =\ \ -\ \mathcal{M}(T).
\end{equation}
\end{rem}

\subsection{The linear dependence relations on the coefficients}
\label{subsec:6.2}
\begin{lemma}
The polynomial relation \eqref{eq:5.4.4} implies the linear relation:
\begin{equation}\label{eq:6.2.1}
\sum_{i=1}^{m} 1 \widehat\otimes \cdots \widehat\otimes 1 \widehat\otimes \overset{i{\rm th}}{\mathcal{M}(T)}
 \widehat\otimes 1 \widehat\otimes \cdots \widehat\otimes 1\ =\ \Phi_{m}(\mathcal{M}(T)), 
\end{equation}
on the logarithmic growth-function for $T \in \Conf$ and 
$m \in\Z_{\ge0}$.
\end{lemma}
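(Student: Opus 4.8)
\emph{Proof proposal.} The plan is to apply the $m$th coproduct $\Phi_m$ to the defining equality $\mathcal{M}(T)=\log(\mathcal{A}(T))$ and to exploit the group-like property \eqref{eq:5.4.4} of $\mathcal{A}(T)$; in other words, to ``take the logarithm'' of the polynomial relation \eqref{eq:5.4.4}. Recall from \ref{subsec:4.1} that $\Phi_m:\mathbb{A}\db[\Conf\db]\to\widehat{\otimes}^m\mathbb{A}\db[\Conf\db]$ is a continuous algebra homomorphism which carries the augmentation ideal into the augmentation ideal of the target (this is essentially \eqref{eq:4.1.7}). Since the constant term of $\mathcal{A}(T)$ is $A([\emptyset],T)=1$ by \eqref{eq:5.1.3}, the element $\mathcal{A}(T)-1$ lies in the augmentation ideal, so $\log(\mathcal{A}(T))$ is defined; applying $\Phi_m$ termwise to the defining series \eqref{eq:3.6.2} and using the continuity and the multiplicativity \eqref{eq:4.1.5} of $\Phi_m$ yields
\begin{equation*}
\Phi_m(\mathcal{M}(T))\ =\ \Phi_m(\log\mathcal{A}(T))\ =\ \log\bigl(\Phi_m(\mathcal{A}(T))\bigr).
\end{equation*}

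Now invoke the group-like property \eqref{eq:5.4.4}, i.e.\ $\Phi_m(\mathcal{A}(T))=\mathcal{A}(T)\widehat{\otimes}\cdots\widehat{\otimes}\mathcal{A}(T)$. In the commutative algebra $\widehat{\otimes}^m\mathbb{A}\db[\Conf\db]$ this factors as
\begin{equation*}
\mathcal{A}(T)\widehat{\otimes}\cdots\widehat{\otimes}\mathcal{A}(T)\ =\ \prod_{i=1}^m\bigl(1\widehat{\otimes}\cdots\widehat{\otimes}\overset{i{\rm th}}{\mathcal{A}(T)}\widehat{\otimes}\cdots\widehat{\otimes}1\bigr),
\end{equation*}
a product of $m$ pairwise commuting factors, each of the form $1$ plus an element of the augmentation ideal. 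Applying $\log$ and using the functional relation $\log(xy)=\log x+\log y$ for such elements (\ref{subsec:3.6}) gives
\begin{equation*}
\log\bigl(\Phi_m(\mathcal{A}(T))\bigr)\ =\ \sum_{i=1}^m\log\bigl(1\widehat{\otimes}\cdots\widehat{\otimes}\overset{i{\rm th}}{\mathcal{A}(T)}\widehat{\otimes}\cdots\widehat{\otimes}1\bigr).
\end{equation*}
For each $i$ the slot-insertion map $f\mapsto 1\widehat{\otimes}\cdots\widehat{\otimes}\overset{i{\rm th}}{f}\widehat{\otimes}\cdots\widehat{\otimes}1$ is a continuous algebra homomorphism of $\mathbb{A}\db[\Conf\db]$ into $\widehat{\otimes}^m\mathbb{A}\db[\Conf\db]$, hence commutes with $\log$; so the $i$th summand equals $1\widehat{\otimes}\cdots\widehat{\otimes}\overset{i{\rm th}}{\mathcal{M}(T)}\widehat{\otimes}\cdots\widehat{\otimes}1$, and summing over $i$ produces exactly \eqref{eq:6.2.1}.

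The only point that is more than bookkeeping is the compatibility of the power-series map $\log$ with the continuous homomorphisms in play ($\Phi_m$ and the slot-insertions), carried out in the completed tensor-product topology of \ref{subsec:3.5}. This is the same kind of verification already signalled in \ref{subsec:3.6} (``the map is equivariant with respect to any continuous endomorphism''), extended to homomorphisms between the algebra and its completed tensor powers: one checks the identity first on the dense polynomial subalgebra $\mathbb{A}\cdot\Conf$, where modulo each $\mathcal{J}_n$ the logarithm of a $1$-unipotent element is a finite expression, and then passes to the inverse limit. I expect this topological compatibility to be the main (though routine) obstacle; everything else is the group-like identity \eqref{eq:5.4.4} together with the elementary factorization of $\mathcal{A}(T)\widehat{\otimes}\cdots\widehat{\otimes}\mathcal{A}(T)$ into commuting one-slot factors.
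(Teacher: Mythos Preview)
Your proof is correct and follows essentially the same approach as the paper: both arguments pass between $\mathcal{M}(T)=\log\mathcal{A}(T)$ and the group-like identity \eqref{eq:5.4.4} using that $\Phi_m$ and the slot-insertion maps are continuous algebra homomorphisms (hence commute with $\exp$/$\log$), and then invoke the functional equation for $\exp$/$\log$ on the commuting one-slot factors. The paper merely organizes the computation around $\exp$ rather than $\log$, writing $\exp(\sum_i\mathcal{M}_i(T))=\Phi_m(\exp\mathcal{M}(T))$ and then taking logarithms, which is the same argument in reverse order.
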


\begin{proof}
Put $\mathcal{M}_{i}(T) := 1 \widehat\otimes \cdots \widehat\otimes 1 \widehat\otimes 
\overset{i{\rm th}}{\mathcal{M}(T)} \widehat\otimes 1 \widehat\otimes \cdots \widehat\otimes 1$
so that 
$\exp(\mathcal{M}_{i}(T)) = 1 \otimes \cdots \otimes 1 \otimes 
\mathcal{A}(T) \otimes 1 \otimes \cdots \otimes 1$.
Then \eqref{eq:5.4.4} can be rewritten as:
\begin{equation}
\exp(\mathcal{M}_{1}(T)) \cdot \ldots \cdot \exp(\mathcal{M}_{m}(T))
\ =\
\Phi_{m}(\exp(\mathcal{M}(T))) \tag*{$*$)}
\end{equation}
where the left hand side is equal to $\exp(\mathcal{M}_{1}(T) + \cdots +
\mathcal{M}_{m}(T))$ due to the commutativity of the $\mathcal{M}_{i}$ 's and 
the addition rule for $\exp$.
The right hand side of $*$) can be rewritten as 
$\Phi_{m}(\exp(\mathcal{M}(T))) = \Phi_{m}(\displaystyle
\sum^{\infty}_{n = 0}
\frac{1}{n!} \mathcal{M}(T)^{n}) = \sum^{\infty}_{n = 0}
\frac{1}{n!} \Phi_{m} (\mathcal{M}(T))^{n}$ $= \exp(\Phi_{m} (\mathcal{M}(T)))$.
By taking the logarithm of both sides,
we obtain \eqref{eq:6.2.1}.
\end{proof}

\begin{cor}
Let $m \geq 2$. For $S_{1}, \ldots, S_{m} \in \Conf_{+}$ and 
$T \in \Conf$, 
\begin{equation}\label{eq:6.2.2}
\sum_{S \in \Conf}
{\small
\begin{pmatrix}
S\\ S_{1},\ldots,S_{m}
\end{pmatrix}
}
M(S,T) = 0.
\end{equation}
\end{cor}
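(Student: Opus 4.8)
The plan is to read off the coefficient of a single monomial $S_{1}\otimes\cdots\otimes S_{m}$ with all factors in $\Conf_{+}$ from the two sides of the linear identity \eqref{eq:6.2.1} provided by Lemma \ref{subsec:6.2}. Since $\widehat{\otimes}^{m}\A\db[\Conf\db]$ is the inverse limit \eqref{eq:3.5.1}, the monomials $S_{1}\otimes\cdots\otimes S_{m}$ for $S_{i}\in\Conf$ form a topological $\A$-basis (this is the obvious $m$-fold analogue of the expansion \eqref{eq:3.2.4}), so comparing coefficients in \eqref{eq:6.2.1} is legitimate.

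First I would expand the right-hand side of \eqref{eq:6.2.1}. Using the $\A$-linearity and continuity of $\Phi_{m}$, the expansion \eqref{eq:6.1.2} of $\mathcal{M}(T)$, and the definition \eqref{eq:4.1.1} of $\Phi_{m}$ on a configuration, one gets
\[
\Phi_{m}(\mathcal{M}(T))\ =\ \sum_{S\in\Conf}M(S,T)\,\Phi_{m}(S)\ =\ \sum_{S_{1},\ldots,S_{m}\in\Conf}\Bigl(\sum_{S\in\Conf}\begin{pmatrix}S\\ S_{1},\ldots,S_{m}\end{pmatrix}M(S,T)\Bigr)\,S_{1}\otimes\cdots\otimes S_{m},
\]
where for each fixed $(S_{1},\ldots,S_{m})$ the inner sum over $S$ is finite by \ref{subsec:2.5} i). Thus the coefficient of $S_{1}\otimes\cdots\otimes S_{m}$ on the right of \eqref{eq:6.2.1} is precisely the left-hand side of \eqref{eq:6.2.2}.

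Next I would expand the left-hand side of \eqref{eq:6.2.1}. Since $1=[\emptyset]$ in $\A\db[\Conf\db]$, its $i$th summand is
\[
1\widehat\otimes\cdots\widehat\otimes\overset{i\text{th}}{\mathcal{M}(T)}\widehat\otimes\cdots\widehat\otimes 1\ =\ \sum_{S\in\Conf}M(S,T)\,[\emptyset]\otimes\cdots\otimes\overset{i\text{th}}{S}\otimes\cdots\otimes[\emptyset],
\]
so every basis monomial occurring in it carries $[\emptyset]$ in each of its $m-1$ slots other than the $i$th. Because $m\ge 2$, any such monomial has at least one tensor factor equal to $[\emptyset]$; hence, if $S_{1},\ldots,S_{m}\in\Conf_{+}$, the monomial $S_{1}\otimes\cdots\otimes S_{m}$ occurs nowhere in the left-hand side of \eqref{eq:6.2.1}, i.e.\ its coefficient there is $0$. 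Equating this with the coefficient computed on the right yields \eqref{eq:6.2.2}.

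I do not expect a genuine obstacle: this is a coefficient extraction, and the only points demanding a little care are that the sum over $S$ in \eqref{eq:6.2.2} is finite (so no convergence issue arises), that one uses the hypothesis $S_{i}\in\Conf_{+}$, and that one uses $m\ge 2$. The latter two are essential, since for $m=1$ the monomial $S_{1}$ does appear on the left of \eqref{eq:6.2.1} with coefficient $M(S_{1},T)$, so the argument then only reproduces the tautology $M(S_{1},T)=M(S_{1},T)$ rather than a vanishing statement (in accordance with the triviality of \eqref{eq:6.2.1} for $m\le 1$ noted after Lemma \ref{subsec:5.3}).
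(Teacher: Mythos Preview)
Your proof is correct and follows essentially the same approach as the paper: both extract the coefficient of a mixed monomial $S_{1}\otimes\cdots\otimes S_{m}$ with $S_{i}\in\Conf_{+}$ from the two sides of \eqref{eq:6.2.1}, observing that the left-hand side contributes nothing to such a monomial when $m\ge 2$. Your write-up is somewhat more explicit about the finiteness of the inner sum and the role of the hypotheses; the only quibble is the parenthetical reference to ``Lemma \ref{subsec:5.3}'' at the end, which concerns a different formula and is not needed for the argument.
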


\begin{proof}
Expand both sides of \eqref{eq:6.2.1} in a series of the variables
$S_{i} := 1 \otimes \ldots \otimes 1 \otimes S_{i} \otimes 1 \otimes \ldots 
\otimes 1$ $(i = 1, \ldots, m)$.
Since the left hand side of \eqref{eq:6.2.1} does not have a mixed 
term $S_{1}\otimes\cdots \otimes S_m$ for $S_i\in \Conf_+$ 
and $m\ge2$, the corresponding 
coefficients in the right hand side vanish.
By \eqref{eq:4.1.1} and \eqref{eq:6.1.2}, this 
implies the formula \eqref{eq:6.2.2}.
\end{proof}

\begin{rem}
1. The formula \eqref{eq:6.2.2} is reduced to the case $m=2$ with $S_{i} \neq
\emptyset$ $(i = 1,2)$ by induction on $m$.
Recalling the composition rule (2.6) 
\begin{align*}
\sum_{S}
{\footnotesize
\begin{pmatrix}
S\\ S_{1}, \ldots, S_{m}
\end{pmatrix}
}
M(S,T) & = \sum_{S} \Big( \sum_{U \in \Conf}
{\footnotesize 
\begin{pmatrix}
U\\ S_{1}, \ldots, S_{m-1}
\end{pmatrix}
}
{\footnotesize 
\begin{pmatrix}
U\\ U, S_{m}
\end{pmatrix}
}
\Big) M(S,T)\\
& =\sum_{U \in \Conf_{+}}
{\footnotesize
\begin{pmatrix}
U\\ S_{1}, \ldots, S_{m-1}
\end{pmatrix}
}
\Big( \sum_{S}
{\footnotesize
\begin{pmatrix}
U\\ U, S_{m}
\end{pmatrix}
}
M(S,T) \Big) \\
& \quad + 
{\footnotesize
\begin{pmatrix}
\emptyset\\ S_{1}, \ldots, S_{m-1}
\end{pmatrix}
}
\Big( \sum_{S}
{\footnotesize
\begin{pmatrix}
U\\ \emptyset, S_{m}
\end{pmatrix}
}
M(S,T) \Big) 
= 0 + 0 = 0.
\end{align*}

2.
The linear dependence relations \eqref{eq:6.2.2} among $M(S,T)$'s for 
$S \in \Conf$ are the key facts of the present  paper.
The Hopf algebra structure was introduced only to deduce 
this relation.
We shall solve this relation in \eqref{eq:8.3.2} 
by use of 
kabi coefficients, which we introduce in the next paragraph 
$\S$ 7.
\end{rem}

\subsection{Lie-like elements}\label{subsec:6.3}
An element $\mathcal{M}$ satisfying \eqref{eq:6.2.1} 
has a name in Hopf algebra theory [9].

\begin{definition}
Let $\mathbb{A}$ be a commutative algebra with a unit. 
An element $\mathcal{M}$ of
$\mathbb{A}\db[\Conf\db]$ is called {\it Lie-like} 
if it satisfies the relation:
\begin{equation}\label{eq:6.3.1}
\Phi_{m}(\mathcal{M})\ \  =\ \  \sum^{m}_{i=1} 1 \widehat\otimes \cdots \widehat\otimes 1 
\overset{i{\rm th}} {\widehat\otimes \mathcal{M}\widehat \otimes} 1 \widehat\otimes \cdots \widehat\otimes 1
\end{equation}
for all $m \in \Z_{\ge0}$. This, in particular, implies the conditions
$\Phi_0(\mathcal{M})=0$ and $\iota(\mathcal{M})+\mathcal{M}=0$ 
(c.f.\ \eqref{eq:4.3.1} and \eqref{eq:4.4.2}). 
The linear combinations (over $\A$) 
of Lie-like elements are also Lie-like.
We put
\begin{equation}\label{eq:6.3.2}
\mathcal{L}_{\mathbb{A}}\ \ :\ =\ \ \{\text{all Lie-like elements in} \ 
\mathbb{A}\db[\Conf\db]\},
\vspace {-0.2cm}
\end{equation}
\vspace {-0.2cm}
and
\begin{equation}\label{eq:6.3.3}
\mathcal{L}_{\mathbb{A}, \ {\rm finite}}\ \ :\ =\ \ \{ M \in \mathcal{L}_{\mathbb{A}} 
\mid M \ \text{is of finite type}\}.
\end{equation}
\end{definition}

In this terminology, (\ref{subsec:6.2}) Lemma can be rewritten as:
{\it suppose $\mathbb{Q} \subset \mathbb{A}$, 
then one has $\mathcal{M}(T)\! \in\! \mathcal{L}_{\mathbb{A}, {\rm finite}}$} for $T\! \in\! \Conf$.

\begin{rem}
We shall see in \ref{subsec:8.4} that 
$\mathcal{L}_{\mathbb{\R}}$ is essentially 
an extension of  
$\mathcal{L}_{\mathbb{\R}, \ {\rm finite}}$ by
a space $\mathcal{L}_{\mathbb{\R}, {\infty}}$,  
which is the main objective of the present paper.
On the other hand, one has 
$\mathcal{L}_{\mathbb{A}}\cap \big(\A\cdot\Conf\big)_\mathfrak{M}
\subset\mathcal{L}_{\mathbb{A}, \ {\rm finite}}$ 
(actually equality holds, see \S8), 
since $\big(\A\cdot\Conf\big)_\mathfrak{M}$ consists only of finite 
type elements. 
\end{rem}


\section{Kabi coefficients}\label{sec:7}

We describe the inverse matrix 
of the infinite matrix
$
A\ :\ =\ (A(S,T))_{S,T\in \Conf_{0}}
$
explicitly in terms of kabi coefficients introduced in 
(\ref{subsec:7.2}).
The construction  shows that  
the inverse matrix  has only bounded number of nonzero entries 
(\ref{subsec:7.5}). This fact leads to the 
comparison of the two topologies on $\mathcal{L}_{\mathbb{A},finite}$, 
which plays a key role in the sequel in construction of the infinite space 
$\LL_{\A,\infty}$.

\subsection{The unipotency of $A$}\label{subsec:7.1}

The matrix $A$ is unipotent in the sense that i) $A(S,S)=1$ and
ii) $A(S,T)=0$ for $S\nleq T$ \eqref{eq:5.1.5}. Then a matrix
$A^{-1}:=E+A^*+A^{*2}+A^{*3}+\cdots$, where 
$E:=(\delta(U,V))_{U,V\in \Conf_0}$ and $A^*:=E-A$, is well
defined. Precisely, 
\begin{equation*}
A^{-1}(S,T)\ =\
\begin{cases}
0&\text{for $S\nleq T$}, \\
1&\text{for $S=T$},\\
\underset{k>0}{\sum}(-1)^k(\underset{S=S_0< \cdots < S_k=T}{\sum}
(\overset{k}{\underset{i=1}{\prod}}A(S_{i-1},S_i)))
&\text{for $S<T$}.
\end{cases}
\end{equation*}
The matrix $A^{-1}$ is unipotent in the same sense as $A$, and,
hence, the products $A^{-1}\cdot A$ and $A\cdot A^{-1}$ are well
defined and are equal to $E$.

\subsection{Kabi coefficients}\label{subsec:7.2}

\begin{definition}
1. A graph $\mathbb{U}$ is called a {\it kabi} over its subgraph
$\mathbb{S}$ if 
for all $x\in \mathbb{U}\backslash \mathbb{S}$, 
there exists $y\in \mathbb{S}$ 
such that $(x,y)$ is an edge.

2. Let $U\in \Conf_0$ and let $\mathbb{U}$ be a graph with 
$[\mathbb{U}]= U$. For $S\in \Conf_0$, put 
\begin{align}\label{eq:7.2.1}
\mathbb{K}(S,\mathbb{U})\  :\ =\ \ &\{\ \mathbb{S} \mid  \mathbb{S}\subset \mathbb{U}\ \text{ such that }
[\mathbb{S}]=S  \text{ and $\mathbb{U}$ is  kabi 
over  } \SS\},\\
K(S,U) \ :\ =\ \ &\# \mathbb{K}(S,\mathbb{U}).\label{eq:7.2.2}
\end{align}
We call 
$K(S,U)$ a {\it kabi-coefficient}.
The definition of the coefficient does not depend on the choice
of $\mathbb{U}$. If $K(S,U)\not= 0$, we say that $U$ has a 
kabi structure over $S$ or simply $U$ is  kabi over $S$.
\end{definition}

Directly from definition, we have
\begin{align}\label{eq:7.2.3}
K(S,U)\ &=\ 0 \ \ \text{\ for $S\nleq U$},\\
K(S,S)\ &=\ 1\ \ \text{\ for $S\in \Conf_0$}.\label{eq:7.2.4}
\end{align}

\begin{note}
The word ``kabi'' means ``mold'' in Japanese.
\end{note}

\subsection{Kabi inversion formula }\label{subsec:7.3}

\begin{lemma}
For $S\in \Conf_0$ and $T\in \Conf$, one has the formula:
\begin{equation}\label{eq:7.3.1}
\sum_{U\in \Conf_0}(-1)^{\#U-\#S}K(S,U)\cdot A(U,T)\ =\ \delta(S,T),
\end{equation}
where $\delta(S,T)$ means the \# of connected components of $T$ 
isomorphic to $S$.
\end{lemma}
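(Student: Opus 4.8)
The plan is to prove the identity by exhibiting, for a fixed representative graph $\mathbb{T}$ of $T$, a sign-reversing involution on a suitable set of configurations of subgraphs, so that all contributions cancel except the ones contributing $\delta(S,T)$. More precisely, first I would unwind the left-hand side: by \eqref{eq:5.1.1}–\eqref{eq:5.1.2}, $A(U,T)=\#\mathbb{A}(U,\mathbb{T})$ counts subgraphs $\mathbb{V}\subset\mathbb{T}$ with $[\mathbb{V}]=U$, and by \eqref{eq:7.2.1}–\eqref{eq:7.2.2}, $K(S,U)$ counts subgraphs $\mathbb{S}$ of a representative of $U$ over which $U$ is kabi. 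Composing these, $\sum_{U\in\Conf_0}K(S,U)A(U,T)$ counts pairs $(\mathbb{S},\mathbb{V})$ with $\mathbb{S}\subset\mathbb{V}\subset\mathbb{T}$, $[\mathbb{S}]=S$, and $\mathbb{V}$ kabi over $\mathbb{S}$ (this rewriting is the place to be careful about connectedness: $S\in\Conf_0$, and a kabi extension of a connected graph stays connected, so indeed $U=[\mathbb{V}]\in\Conf_0$, and I would record that the sum over $U$ is finite since $\mathbb{V}\subset\mathbb{T}$). Thus the LHS becomes
\begin{equation*}
\sum_{\substack{\mathbb{S}\subset\mathbb{T}\\ [\mathbb{S}]=S}}\ \sum_{\substack{\mathbb{S}\subset\mathbb{V}\subset\mathbb{T}\\ \mathbb{V}\text{ kabi over }\mathbb{S}}}(-1)^{\#\mathbb{V}-\#S},
\end{equation*}
so it suffices to show the inner sum equals $1$ if $\mathbb{S}$ is a connected component of $\mathbb{T}$ and $0$ otherwise.

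Next I would fix $\mathbb{S}\subset\mathbb{T}$ with $[\mathbb{S}]=S$ and analyze which $\mathbb{V}$ are kabi over $\mathbb{S}$: by Definition \ref{subsec:7.2}.1, $\mathbb{V}$ is kabi over $\mathbb{S}$ iff every vertex of $\mathbb{V}\setminus\mathbb{S}$ is adjacent in $\mathbb{T}$ to some vertex of $\mathbb{S}$. Let $N(\mathbb{S})\subset|\mathbb{T}|$ be the set of vertices outside $|\mathbb{S}|$ having a neighbour in $|\mathbb{S}|$. Then $\mathbb{V}$ ranges exactly over the full subgraphs of $\mathbb{T}$ whose vertex set is $|\mathbb{S}|\sqcup W$ for an arbitrary subset $W\subseteq N(\mathbb{S})$. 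Hence the inner sum is $\sum_{W\subseteq N(\mathbb{S})}(-1)^{\#W}=(1-1)^{\#N(\mathbb{S})}$, which is $0$ if $N(\mathbb{S})\neq\emptyset$ and $1$ if $N(\mathbb{S})=\emptyset$. Finally, $N(\mathbb{S})=\emptyset$ means no vertex of $\mathbb{S}$ has a neighbour outside $\mathbb{S}$, i.e.\ (since $\mathbb{S}$ is connected) $\mathbb{S}$ is a connected component of $\mathbb{T}$; summing over the $\delta(S,T)$ such components and getting $0$ from all other placements of $\mathbb{S}$ gives exactly $\delta(S,T)$.

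The only genuinely delicate point is the bookkeeping in the first paragraph: making sure the double sum over $U\in\Conf_0$ and over embedded subgraphs really is the set of chains $\mathbb{S}\subset\mathbb{V}\subset\mathbb{T}$ without over- or under-counting, and that the sign $(-1)^{\#U-\#S}$ matches $(-1)^{\#\mathbb{V}-\#\mathbb{S}}$; once the LHS is in the ``chain'' form, the cancellation argument via $\sum_{W\subseteq N(\mathbb{S})}(-1)^{\#W}$ is immediate. I would also double-check the two boundary cases that feed \eqref{eq:7.3.1} consistency, namely $S\not\le T$ (then there is no $\mathbb{S}$ at all, both sides are $0$) and $S$ connected with $S\le T$ but $S$ not a component (handled by $N(\mathbb{S})\neq\emptyset$). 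No results beyond \eqref{eq:5.1.1}–\eqref{eq:5.1.2} and Definition \ref{subsec:7.2} are needed.
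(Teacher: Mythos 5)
Your proposal is correct and is essentially the paper's own proof: after rewriting the left-hand side as a sum over chains $\mathbb{S}\subset\mathbb{V}\subset\mathbb{T}$ and fixing $\mathbb{S}$, your set $N(\mathbb{S})$ is exactly the paper's $\mathbb{U}_{\max}\setminus\mathbb{S}$, and the cancellation $\sum_{W\subseteq N(\mathbb{S})}(-1)^{\#W}=0$ unless $N(\mathbb{S})=\emptyset$ is the same argument. The only cosmetic difference is that the paper treats the cases $S\nleq T$ and $S=T$ separately before the re-summation, while you absorb them into the general count.
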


\begin{proof}
The summation index $U$ on the left hand side runs over the range 
$S\le U\le T$ (otherwise $K(S,U)\cdot A(U,T)=0$).
Hence if $S\nleq T$, then the sum equals  $0$. If $S\!=\!T$,
the only term in the sum is $K(S,S)A(S,S)$ which equals $1$.

Let $S\in \Conf_0$ and $T\in \Conf$. Assume $S\le T$ and 
$S\not =T$. Let $\mathbb{T}$ be a $G$-colored 
graph with $T=[\mathbb{T}]$.
Applying the definition of $K(S,U)$ and $A(U,T)$ (cf. \eqref{eq:5.1.1}),
the left hand side of \eqref{eq:7.3.1} can be  rewritten as
\begin{align*}
&\sum_{U\in \Conf_0}(-1)^{\#U-\#S}K(S,U)\cdot \#\mathbb{A}(U,\mathbb{T})\\
=&\sum_{U\in \Conf_0}(-1)^{\#U-\#S}K(S,U)\cdot \#\{ \mathbb{U}\ \mid\
\mathbb{U}\subset \mathbb{T}\ \text{ such that } [\mathbb{U}]= U\}\\
=&\sum_{U\in \Conf_0}(-1)^{\#U-\#S}\#\left \{(\mathbb{S},\mathbb{U})
\ \mid \
\begin{matrix}
&\mathbb{S}\subset\mathbb{U}\subset\mathbb{T}\ \text{ such that } \\
& [\mathbb{S}]= S, [\mathbb{U}]= U 
\text{ and } \mathbb{U} \text{\ is kabi over } \SS
\end{matrix}\right \}
\intertext{Now we make a re-summation of this by fixing the subgraph 
$\mathbb{S}$ in $\mathbb{T}$.}
=&\sum_{\mathbb{S}\in A(S,\mathbb{T})}
\left (\sum_{U\in \Conf_0}(-1)^{\#U-\#S}\#\left \{ \mathbb{U}\mid
\begin{matrix}
&\mathbb{S}\subset \mathbb{U}\subset \mathbb{T} \ \text{ such that }
\\ 
& [\mathbb{U}]\simeq U \text{\ and }
\mathbb{S}
\text{\ is a kabi over } \SS. 
\end{matrix}\right \} \right )
\end{align*}
For a fixed subgraph $\mathbb{S}$ of $\mathbb{T}$, let 
$\mathbb{U}_{\max}$ be the maximal 
subgraph of $\mathbb{T}$ such that $\mathbb{U}_{\max}$ is
a kabi over $\SS$, i.e.\ $\mathbb{U}_{\max}$ consists of vertices of
$\mathbb{T}$, which are either in $\mathbb{S}$ or connected to 
$\mathbb{S}$ by an edge. Then a subgraph $\mathbb{U}$ of 
$\mathbb{T}$ becomes a kabi over $\mathbb{S}$, if and only if it is
a subgraph of $\mathbb{U}_{\max}$ containing $\mathbb{S}$.
Hence the sum is equal to 
\begin{align*}
&=\sum_{\mathbb{S}\in A(S,\mathbb{T})}
\Big(\sum_{\mathbb{S}\subset \mathbb{U}\subset \mathbb{U}_{\max}}
(-1)^{\#\mathbb{U}-\#\mathbb{S}}\Big)
\ \ =\sum_{\mathbb{S}\in A(S,\mathbb{T})}
\Big(\sum_{\mathbb{W}\subset \mathbb{U}_{\max} \backslash \mathbb{S}}
(-1)^{\#\mathbb{W}}\Big).
\end{align*}
where the last summation index $\mathbb{W}$ runs over all subsets of 
$\mathbb{U}_{\max} \backslash \mathbb{S}$. Hence the summation in the
parenthesis becomes $1$ or $0$ according to whether 
$\mathbb{U}_{\max} \backslash \mathbb{S}$ is $\emptyset$ or not.
It is clear that $\mathbb{U}_{\max} \backslash \mathbb{S}=\emptyset$
is equivalent to the fact that $\mathbb{S}$ is a connected 
component of $\mathbb{T}$. Hence the sum is equal to 
$ 
\ \delta(S,T).
$ 
\end{proof}

\subsection{Corollaries to the inversion formula.}\label{subsec:7.4}

The left inverse matrix of $A:=(A(S,T))_{S,T\in \Conf_0}$ is given by
\begin{equation}\label{eq:7.4.1}
A^{-1}\ =\ ((-)^{\# T-\# S}K(S,T))_{S,T\in \Conf_0}.
\end{equation}
Since the left inverse matrix of $A$ coincides with the right 
inverse, one has 
\begin{equation}\label{eq:7.4.2}
\sum_{U\in \Conf_0}(-1)^{\# T-\# U}A(S,U)\cdot K(U,T)\ =\ \delta(S,T)
\end{equation}
for $S\in\Conf_0$. 
Specializing $S$ in \eqref{eq:7.4.2} to $pt:=$ [one point graph], 
one gets,
\begin{equation}\label{eq:7.4.3}
\sum_{U\in \Conf_0}(-1)^{\# U}\# U\cdot K(U,T)\ =\ (-)^{\#T}\delta(pt,T).
\end{equation}

\subsection{Boundedness of non-zero entries of $K$}\label{subsec:7.5}

One of the most important consequences of \eqref{eq:7.4.1} is the boundedness of
the non-zero entries of the matrix $A^{-1}$, as follows.

Suppose $K(S,T)\not= 0$. Then, by definition, $T$ must have at least one
structure of kabi over $S$. This implies that for each fixed
$S$ and $q\ge 0$, there are only a finite number of $T\in \Conf_0$
with $K(S,T)\not =0$. Precisely,

\begin{assertion}
For $S\in \Conf_0$, $K(S,T)= 0$  unless 
$\# T\le \# S\cdot (q-1)+2$.
\end{assertion}

\begin{proof}
Let $\mathbb{T}$ be kabi over $\mathbb{S}$. Every vertex of
$\mathbb{S}$ is connected to at most $q$ number of points of $\mathbb{T}$.
Since $\mathbb{S}$ is connected, it has at least $\# S-1$ number 
of edges. Hence, $\#T-\#S\le \#$ \{edges connecting 
$\mathbb{S}$ and $\mathbb{T}\backslash \mathbb{S}\} \le
q\cdot \#S-2\cdot (\#S-1)$. This implies the Assertion.
\end{proof}

\begin{remark}
The above boundedness implies that $K$
induces a continuous map between the two differently completed modules of 
$\mathcal{L}_{\A,finite}$ (cf.\ \ref{subsec:8.4}).
\end{remark}


\section{Lie-like elements $\mathcal{L}_\mathbb{A}$}\label{sec:8}

Under the assumption $\mathbb{Q}\!\subset\! \mathbb{A}$, we introduce
two basis systems 
$\{\mathcal{M}(T)\}_{T\in\Conf_0}$ and $\{\varphi(S)\}_{S\in\Conf_0}$
for the module of Lie-like elements 
$\mathcal{L}_{\mathbb{A},finite}$, where the base change 
between them is given by the kabi-coefficients. 
The basis $\{\varphi(S)\}_{S\in\Conf_0}$ is compatible with the 
adic topology and gives a topological basis of 
$\mathcal{L}_\mathbb{A}$.

\subsection{The splitting map $\partial$}\label{subsec:8.1}

First, we introduce a useful but somewhat technical map $\partial$. 
One reason for its 
usefulness can be seen from the formula \eqref{eq:9.3.6}. 
For
$S\in \Conf_0$, let us define an $\mathbb{A}$-linear map
$\partial_S: \mathbb{A}\db[\Conf\db]\to \mathbb{A}$ by associating
to a series $f$ its coefficient
at $S$, \ i.e. \ $\partial_Sf:=f_S\in \mathbb{A}$ for $f$ given by 
\eqref{eq:3.2.4}. 
By the use of this, we define 
\begin{equation}\label{eq:8.1.1}
\begin{matrix}
\partial : \mathbb{A}\db[\Conf\db] &\ \longrightarrow  &
\underset{S\in \Conf_0}{\Pi}\mathbb{A}\cdot e_S .\\
f&\ \longmapsto &\ \sum_{S\in \Conf_0}(\partial_Sf)\cdot e_S
\end{matrix}
\end{equation}
Here, the right hand side is an abstract direct product module 
of rank one modules $\A\cdot e_S$ with the base $e_S$ 
for  $S\in \Conf_0$.
Let us verify that the map is well-defined.
First, define the map 
$\partial$ from the polynomial ring $\mathbb{A}\cdot \Conf$
to $\underset{S\in \Conf_0}{\oplus}\mathbb{A}\cdot e_S$.
Since $\partial(\mathcal{J}_n)\subset 
\underset{\substack{S\in \Conf_0\\ \# S\ge n}}\oplus
\mathbb{A}\cdot e_S$, 
the map is continuous with respect to the adic topology 
(3.2) on the LHS and the direct product topology on the RHS. 
Then, $\partial$ \eqref{eq:8.1.1} is obtained by
completing this polynomial map.

We note that the restriction of the map $\partial$  \eqref{eq:8.1.1} 
induces a map
\[
\partial  : \mathbb{A}\db[\Conf\db]_{finite}\longrightarrow 
\oplus_{S\in \Conf}\mathbb{A}\cdot e_S,
\]
even though the domain of this map is not a polynomial ring 
but the ring of elements of finite type (recall the definition 
in \ref{subsec:3.3}).

\subsection{Bases $\{\varphi(S)\}_{S\in\Conf_0}$ of $\mathcal{L}_{\mathbb{A},finite}$
and $\mathcal{L}_\mathbb{A}$}\label{subsec:8.2}

\begin{lemma}
Let $\mathbb{A}$ be a commutative algebra containing 
$\mathbb{Q}$. Then,

{\rm i)} The system $(\mathcal{M}(T))_{T\in \Conf_0}$ give a $\mathbb{A}$-free
basis for $\mathcal{L}_{\mathbb{A},finite}$.
\begin{equation}\label{eq:8.2.1}
\mathcal{L}_{\mathbb{A},finite}\ \ \simeq\ \ \oplus_{S\in \Conf_0}
\mathbb{A}\cdot \mathcal{M}(S).
\end{equation}

{\rm ii)} The map $\partial$ \eqref{eq:8.1.1} induces a bijection of 
$\mathbb{A}$-modules:
\begin{equation}\label{eq:8.2.2}
\partial |_{\mathcal{L}_{\mathbb{A},finite}}: 
\mathcal{L}_{\mathbb{A},finite}\ \ \simeq\ \ \oplus_{S\in \Conf_0}
\mathbb{A}\cdot e_S
\end{equation}
Put $\varphi(S):=\partial |_{\mathcal{L}_{\mathbb{A},finite}}^{-1}(e_S)$
for $S\in \Conf_0$ so that $\{\varphi(S)\}_{S\in \Conf_0}$ form
another $\mathbb{A}$-free basis of $\mathcal{L}_{\mathbb{A},finite}$.

{\rm iii)} The two basis systems $\{\mathcal{M}(S)\}_{S\in \Conf_0}$ and
$\{\varphi(S)\}_{S\in \Conf_0}$ for $\mathcal{L}_{\mathbb{A},finite}$
are related by the following formula.
\begin{align}\label{eq:8.2.3}
\mathcal{M}(T)\ =\ &\sum_{S\in \Conf_0}\varphi(S)\cdot A(S,T)\\
\varphi(S)\ =\ &\sum_{T\in \Conf_0}\mathcal{M}(T)\cdot (-1)^{\#T-\#S}K(T,S).
\label{eq:8.2.4}
\end{align}

{\rm iv)} $\mathcal{L}_{\mathbb{A},finite}$ is dense in $\mathcal{L}_{\mathbb{A}}$
with respect to the adic topology on the configuration algebra
(cf. (3.2)).

{\rm v)} The map $\partial$ induces an isomorphism of topological
$\mathbb{A}$-modules:
\begin{equation}\label{eq:8.2.5}
\mathcal{L}_\mathbb{A}\ \ \simeq\ \ 
\underset{S\in \Conf_0}{\Pi}\mathbb{A}\cdot e_s.
\end{equation} 
This means that any $\mathcal{M}\in \mathcal{L}_{\mathbb{A}}$
is expressed uniquely as an infinite sum 
\begin{equation}\label{eq:8.2.6}
\mathcal{M}\ \ =\ \ \sum_{S\in \Conf_0}\varphi(S)\cdot a_S
\vspace{-0.1cm}
\end{equation} 
for $a_S\in \mathbb{A}$
($S\in \Conf_0$).
That is, $(\varphi(S))_{S\in \Conf_0}$ is a topological basis of 
$\mathcal{L}_\mathbb{A}$.
We shall, sometimes, call $a_S$ the coefficient of $\mathcal{M}$ at $S\in\Conf_0$.
\end{lemma}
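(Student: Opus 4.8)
The plan is to establish the structural statements (ii) and (v) first, and then to obtain (i), (iii) and (iv) as consequences. The engine is a \emph{reconstruction principle}: a Lie-like element is determined by its coefficients at connected configurations. Concretely, I would show that if $\mathcal{M}=\sum_{R\in\Conf}R\cdot M_R$ is Lie-like with $M_R=0$ for every $R\in\Conf_0$, then $\mathcal{M}=0$. The tool is the family of linear relations \eqref{eq:6.2.2} — these hold for \emph{every} Lie-like element, since the computation proving the Corollary in \S6.2 only uses \eqref{eq:6.3.1} — together with one combinatorial observation. Given a disconnected $R=S_1^{k_1}\cdots S_m^{k_m}$ with $S_1,\ldots,S_m\in\Conf_0$ pairwise distinct, apply \eqref{eq:6.2.2} to the multiset consisting of $k_i$ copies of each $S_i$. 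Any configuration $S$ for which the covering coefficient $\left(\begin{smallmatrix}S\\ S_1^{k_1},\ldots,S_m^{k_m}\end{smallmatrix}\right)$ is nonzero has $\#S\le\sum_i k_i\#S_i=\#R$; when $\#S=\#R$ the covering subgraphs must be pairwise disjoint, so $S$ has at most $\sum_i k_i=n(R)$ connected components, with equality exactly when $S=R$; and then $\left(\begin{smallmatrix}R\\ S_1^{k_1},\ldots,S_m^{k_m}\end{smallmatrix}\right)=\prod_i k_i!$, which is a unit of $\mathbb{A}$ because $\mathbb{Q}\subset\mathbb{A}$. Hence \eqref{eq:6.2.2} reads $\prod_i k_i!\cdot M_R=-\sum M_S$, the sum running over $S$ with $(\#S,n(S))$ strictly smaller than $(\#R,n(R))$ in the lexicographic order, and induction in that order gives $\mathcal{M}=0$. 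The same induction, applied to the element $\varphi(S)$ built below, shows $\operatorname{Supp}(\varphi(S))\subset\{R:\#R\ge\#S\}$, i.e.\ $\varphi(S)\in\overline{\mathcal{J}_{\#S}}$ — this is what makes the infinite expansions in (v) converge, and it is the single input from \S7.5.

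For the section: put $\varphi(S):=\sum_{T\in\Conf_0}(-1)^{\#T-\#S}K(T,S)\,\mathcal{M}(T)$. This is a finite sum, since $K(T,S)=0$ unless $T\le S$ by \eqref{eq:7.2.3}, and it lies in $\mathcal{L}_{\mathbb{A},finite}$ because every $\mathcal{M}(T)$ does (\S6.3). Using $A(U,T)=M(U,T)$ for $U\in\Conf_0$ \eqref{eq:6.1.6} and the kabi inversion in the form \eqref{eq:7.4.2}, one gets $\partial_U\varphi(S)=\delta(U,S)=\delta_{U,S}$ for $U\in\Conf_0$, i.e.\ $\partial\varphi(S)=e_S$. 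Together with the injectivity above this proves (ii): the $\mathbb{A}$-span of the $\varphi(S)$ maps onto $\oplus_S\mathbb{A}e_S$; conversely any $f\in\mathcal{L}_{\mathbb{A},finite}$ has $\operatorname{Supp}(f)$ inside a finitely generated subsemigroup $P\subset\Conf$, and since $\Conf$ is free on $\Conf_0$, a product of generators of $P$ is connected only if it equals one of those generators, so $\Conf_0\cap P$ is finite and $\partial f\in\oplus_S\mathbb{A}e_S$; thus $\partial|_{\mathcal{L}_{\mathbb{A},finite}}$ is bijective with inverse $e_S\mapsto\varphi(S)$, and $\{\varphi(S)\}_{S\in\Conf_0}$ is an $\mathbb{A}$-basis.

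Statement (iii) follows: \eqref{eq:8.2.4} is the definition of $\varphi(S)$, and for \eqref{eq:8.2.3} one notes that $\sum_{S\in\Conf_0}\varphi(S)A(S,T)$ is a finite sum in $\mathcal{L}_{\mathbb{A},finite}$ whose image under $\partial$ is $\sum_S A(S,T)e_S=\partial\mathcal{M}(T)$ by \eqref{eq:6.1.6}, so injectivity of $\partial|_{\mathcal{L}_{\mathbb{A},finite}}$ gives the formula; equivalently, \eqref{eq:8.2.3} and \eqref{eq:8.2.4} are mutually inverse by the identity $A^{-1}=\bigl((-1)^{\#T-\#S}K(S,T)\bigr)$ of \eqref{eq:7.4.1}. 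Since this base change is unitriangular and locally finite, \eqref{eq:8.2.4} also exhibits $\{\mathcal{M}(S)\}_{S\in\Conf_0}$ as an $\mathbb{A}$-basis of $\mathcal{L}_{\mathbb{A},finite}$, which is (i).

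For (iv) and (v): $\partial$ is continuous on all of $\mathbb{A}\db[\Conf\db]$ (\S8.1), hence on $\mathcal{L}_\mathbb{A}$, and the reconstruction principle applied degree by degree shows that a Lie-like $\mathcal{M}$ whose connected coefficients of degree $<n$ all vanish lies in $\overline{\mathcal{J}_n}$, which is the continuity of $\partial^{-1}$; so $\partial$ is a topological isomorphism of $\mathcal{L}_\mathbb{A}$ onto its image. That image is all of $\prod_{S\in\Conf_0}\mathbb{A}e_S$: for arbitrary $(a_S)$, the series $\sum_{S\in\Conf_0}\varphi(S)a_S$ converges since $\varphi(S)\in\overline{\mathcal{J}_{\#S}}$, its sum is Lie-like ($\mathcal{L}_\mathbb{A}$ is closed, each $\Phi_m$ being continuous) and has $\partial$-image $\sum_S a_S e_S$. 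Applying injectivity to $\mathcal{M}-\sum_S\varphi(S)\,\partial_S\mathcal{M}$ yields the unique expansion \eqref{eq:8.2.6}, and the partial sums are finite-type Lie-like elements converging to $\mathcal{M}$, giving the density (iv). The one substantive step is the reconstruction principle; its delicate point is the purely combinatorial claim that \eqref{eq:6.2.2}, specialized to the component-multiset of $R$, is triangular with unit pivot in the order $(\#R,n(R))$, and everything after it is bookkeeping over the kabi calculus of \S7.
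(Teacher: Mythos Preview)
Your proof is correct and follows essentially the same route as the paper: injectivity of $\partial$ via the relations \eqref{eq:8.2.7} and induction on the number of connected components, explicit construction of $\varphi(S)$ through the kabi inversion \eqref{eq:7.4.2}, the filtration property $\varphi(S)\in\overline{\mathcal{J}_{\#S}}$ by the same induction, and then completion. Two cosmetic remarks: the paper observes directly that every $T\neq R$ in the relation has $n(T)<n(R)$ (not merely $(\#T,n(T))<(\#R,n(R))$ lexicographically), so the induction can run on $n(\cdot)$ alone; and your aside about \S7.5 is misplaced --- the finiteness of the defining sum for $\varphi(S)$ comes from \eqref{eq:7.2.3}, not from the bound in \S7.5, which is used only later in \S8.4.
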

\vspace{-0.1cm}
\begin{proof}
That $\mathcal{M}(T)\in \mathcal{L}_{\mathbb{A},finite}$ for
$T\in \Conf$ is shown in (6.2) Lemma.

In the following a), b) and c), 
we prove i), ii) and iii) simultaneously.

a) {\it The restriction of the map 
$\partial $ \eqref{eq:8.1.1} on $\mathcal{L}_{\mathbb{A}}$ 
is injective.}

\begin{proof}
If $\mathcal{M}=\underset{S\in \Conf}{\sum}S\cdot 
M_S\in \mathbb{A}\db[\Conf\db]_+$ is Lie-like (6.3), then one has
\begin{equation}
\label{eq:8.2.7}
\sum_{S\in \Conf}
{\small
\begin{pmatrix}
S\\ S_1,\ldots,S_m
\end{pmatrix}
}
M_S\ =\ 0
\end{equation}
for any $S_1,\ldots,S_m\not =\emptyset$ and $m\ge 2$ (the proof is the 
same as that for \eqref{eq:6.2.2}).
We have to prove that $\partial \mathcal{M}=0$ implies $M_S=0$ for all 
$S\in \Conf$. This will be done by induction on $n(S)=\# \{$connected
components of $S\}$ as follows. The case $n(S)=1$
follows from the assumption $\partial \mathcal{M}=0$. Let
$n(S)>1$ and $S=S^{k_1}_1\sqcup\cdots \sqcup S^{k_l}_l$ be an irreducible 
decomposition of $S$ (so $S_i\in \Conf_0$ ($i=1,\cdots,l$) 
are pairwise distinct).
Apply \eqref{eq:8.2.7} for $m=k_1+\cdots +k_l (=n(S))$ and 
take $S_1,\ldots, S_1$ ($k_1$ times), $\ldots ,S_l,\ldots ,S_l$
($k_l$ times) for $S_1,\ldots S_m$.
\begin{equation}
\begin{array}{ll}
k_1!\ldots k_l! M_S+\sum_{\substack{T\in \Conf\\ n(T)<n(S)}}
{\small
\begin{pmatrix}
T\\ S_1,\ldots,S_m
\end{pmatrix}
}
M_T\ =\ 0
\tag*{$**$)}
\end{array}
\end{equation}
By the induction hypothesis, the second term in $**$) is $0$, 
and hence $M_S=0$.
\end{proof}

b) {\it For $T\in \Conf$, one has the formula:}
\begin{equation}\label{eq:8.2.8}
\partial(\mathcal{M}(T))\ \ =\sum_{S\in \Conf_0}e_S\cdot A(S,T).
\end{equation}

({\it Proof.}
Recall that $\mathcal{M}(T)=\log(\mathcal{A}(T))$ and the 
coefficients of $\mathcal{M}(T)$ and $\mathcal{A}(T)$ at a connected 
$S\in\Conf_0$ coincide (\eqref{eq:3.6.7} and \eqref{eq:6.1.6}). That is,
$\partial(\mathcal{M}(T))=\partial(\mathcal{A}(T))$. By definition,
$\partial(\mathcal{A}(T))=$ the right hand side of \eqref{eq:8.2.8}. $\Box$

c) {\it The map \eqref{eq:8.2.2} is surjective, and hence, is bijective.}

{\it Proof.}
It was shown in \S 7 that the infinite matrix 
$(A(S,T))_{S,T\in \Conf_0}$ is invertible as a unipotent matrix
(7.1). Then \eqref{eq:8.2.8} implies surjectivity. 

Using again \eqref{eq:8.2.8}, we have that the system 
$\{ \mathcal{M}(T)\}_{T\in \Conf_0}$ is $\mathbb{A}$-linearly
independent and spans $\mathcal{L}_{\A,finite}$,
i.e.\ i) holds.
The formula \eqref{eq:8.2.8} can be rewritten as \eqref{eq:8.2.3}.
Then \eqref{eq:8.2.4} follows from \eqref{eq:8.2.3} and
\eqref{eq:7.4.2} $\Box$.

Proof of iv) and v) is done in the following a), b) and c).

a) {\it $\mathcal{L}_\A$ is closed  
in $\A\db[\Conf\db]$ with respect to the adic topology},
since the co-product $\Phi_m$ is continuous (\ref{subsec:4.1}
Assertion). 
Thus: 
$(\mathcal{L}_{\A,finite})^{closure}\subset \mathcal{L}_\A$.

b) {\it The map \eqref{eq:8.2.2} is  
homeomorphic with respect to the topologies: 
the induced adic topology on the LHS and the restriction of the direct product 
topology on the RHS}.
 (To show this, 
it is enough to show the bijection:
\begin{equation}\label{eq:8.2.9}
\partial : (\mathcal{L}_{\A,finite})\cap 
\mathcal{I}_n\ \ \simeq\ \ \underset{\substack{S\in \Conf_0\\ \#S\ge n}}\oplus
\mathbb{A}\cdot e_S,
\vspace{-0.3cm}
\end{equation}
since the sets on the RHS for $n\in\Z_{\ge0}$ can be chosen as a 
system of fundamental neighborhoods for the direct 
product topology on $\underset{S\in \Conf_0}\oplus
\mathbb{A}\cdot e_S$.)

{\it Proof of} \eqref{eq:8.2.9}.
Due to the definition of the ideal $\mathcal{I}_n$
\eqref{eq:3.2.1}, the $\partial$-image of the left hand side 
is contained in the right hand side of \eqref{eq:8.2.9}. 
Thus, one has only to show surjectivity.
For $S\in \Conf_0$, let $\varphi(S)$ be the base of 
$\mathcal{L}_{\A,finite}$ such that 
$\partial (\varphi(S))=e_S$ as introduced in ii). It is enough
to show that if $\#S\ge n$ and $S\in \Conf_0$, then
$\varphi(S)$ belongs to $\mathcal{I}_n$. Expand 
$\varphi(S)=\sum U\cdot \varphi_U$.
We show that $\varphi_U\not= 0$ implies that $U$ is contained in 
the semi-group generated by $\langle S\rangle$ such that
$\# U\ge n$. More precisely, we show  
$(\begin{smallmatrix}U_1,\ldots,U_m\\ S\end{smallmatrix})\not= 0$,
where $U=U_1\sqcup \cdots \sqcup U_m$ is an irreducible
decomposition of $U$ (cf. (2.5) i)). 
The proof is achieved by induction on $m=n(U)$. 
For the case $n(U)\!=\!1$, $\varphi_U\!\not =\!0$ if and only if $U\!=\!S$ 
by the definition of $\varphi(S)$, and hence this is trivial.
If $n(U)>1$, then apply \eqref{eq:8.2.7}
similarly to $**$) for the irreducible 
decomposition of $U$. We get:
\begin{equation}
\begin{array}{ll}
k_1!\ldots k_l!\ \varphi_U+\sum_{\substack{T\in \Conf\\ n(T)<n(U)}}
{\small
\begin{pmatrix}
T\\ U_1,\ldots ,U_m
\end{pmatrix}
}
\varphi_T\ =\ 0
\tag*{$***$)}
\end{array}
\end{equation}
The fact that $\varphi_U\not= 0$
implies $\varphi_T\cdot(\begin{smallmatrix}U_1,\ldots,U_m\\ T\end{smallmatrix})\not= 0$
for some $T$. Since $\varphi_T\not=0$ with $n(T)<n(U)$, we 
apply the induction hypothesis to $T$, i.e.\ 
$(\begin{smallmatrix}T_1,\ldots,T_p\\ S\end{smallmatrix})\not= 0$
for an irreducible decomposition $T=T_1\sqcup \ldots \sqcup T_p$. 
Since $(\begin{smallmatrix}U_1,\ldots,U_m\\ T\end{smallmatrix})
\not= 0$, by composing the maps $U\to T\to S$, we conclude 
$(\begin{smallmatrix}U_1,\ldots,U_m\\ S\end{smallmatrix})
\not= 0$.
In particular, 
$U_i\!\in\! \langle S\rangle$ and $\# U\!=\!\sum \# U_i\!\ge\! \# T\!\ge\! \# S$.
This completes the proof of b). $\Box$)

c) By completing the map \eqref{eq:8.2.2}, one sees that the 
composition of the two injective maps
$(\mathcal{L}_{\mathbb{A},finite})^{closure}\subset \mathcal{L}_\mathbb{A}
\to \underset{p,q}{\varinjlim}
\underset{S\in \Conf_0}{\Pi}\mathbb{A}\cdot e_s$
is bijective. This shows that all the maps are bijective.
Hence,
$\mathcal{L}_{\mathbb{A},finite}$ is dense in $\mathcal{L}_\mathbb{A}$
and \eqref{eq:8.2.5} holds. The formula \eqref{eq:8.2.6} is another expression of
\eqref{eq:8.2.5}.

This completes the proof of the Lemma.
\end{proof}

\begin{rem}\label{rema:1}
1. It was shown in the above proof that for $S\in\Conf_0$
\begin{equation}\label{eq:8.2.10}
\varphi(S)\ \ \in\ \  \Z\db[\langle S\rangle\db]\ \cap\ \mathcal{J}_{\#S}. 
\end{equation}
In particular, $\varphi(U,S)=\delta(U,S)$ for $U\in \Conf_0$.

2. It was shown that the map $\partial|_{\mathcal{L}_{\mathbb{A},finite}}$ \eqref{eq:8.2.2} is a homeomorphism. But one
should note that \eqref{eq:8.2.1} is \textit{not} a homeomorphism.

3. In general, an element of $\mathcal{L}_\mathbb{A}$ cannot be expressed 
by an infinite sum of $\mathcal{M}(T)\ (T\in \Conf_0)$\ (cf. (9.4)).

4. The set of Lie-like elements of the localization 
$\mathbb{A}[\Conf]_\mathfrak{M}$ (cf. (4.6) Remark 4.) is equal
to $\mathcal{L}_{\mathbb{A},finite}$.
This is 
insufficient for our later application in \S10, so we employed the other 
localization \eqref{eq:3.2.2}.
\end{rem}

\subsection{An explicit formula for $\varphi(S)$}\label{subsec:8.3}

Let us expand $\varphi(S)$ for $S\in \Conf_0$ in the series:
\begin{equation}\label{eq:8.3.1}
\varphi(S)\ =\ \sum_{U\in \Conf}U\cdot \varphi(U,S)
\end{equation}
for $\varphi(U,S)\in \mathbb{Q}$. The formula \eqref{eq:8.2.3}
can be rewritten as a matrix relation
\begin{equation}\label{eq:8.3.2}
M(U,T)\ =\ \sum_{S\in \Conf_0}\varphi(U,S)\cdot A(S,T).
\end{equation}
We remark that  \eqref{eq:8.2.3} and \eqref{eq:8.3.2} 
are valid 
not only for $T\in \Conf_0$ but for all $T\in \Conf$, since both  
sides are additive with respect to $T$.

{\bf Formula.}
An explicit formula for the coefficients $\varphi(U,S)$.
\vspace{-0.3cm}
\begin{align}\label{eq:8.3.3}
&\!\!\!\sum_{\substack{U=U^{k_1}_1\sqcup \cdots \sqcup U^{k_m}_m\\
V\in\Conf,\ W\in\Conf_0}}\!\!\!\!\!\!\!\!\!\!\!\!\!\!\!
\frac{(|\underline{k}| -1)!(-1)^{|\underline{k}| -1+|W|+|S|}}
{k_1!\cdots k_m!}
{\footnotesize
\begin{pmatrix}
V\\ \underbrace{U_1,\ldots U_1}_{k_1},\ldots,\underbrace{U_m,\ldots U_{m}}_{k_m}
\end{pmatrix} 
A(V,W)K(W,S)}
.\!\!\!\!\!\!\!
\end{align}
\vglue -0.3cm
Here the summation index runs over all decompositions
$U=U_{1}^{k_{1}}\cdot\ldots\cdot U^{k_{l}}_{l}$ of $U$ in
the same manner explained at \eqref{eq:3.6.6},
where $|\underline{k}|=k_1+\cdots+k_k$.
\begin{proof}
By use of \eqref{eq:6.1.5}, rewrite the left hand side of 
(8.2.3)$^*$ into a polynomial of
$A(U_i,T)$. Then apply the product 
expansion formula \eqref{eq:5.3.1} to each monomial 
so that the left hand side is expressed linearly by  
$A(S,T)$'s. Using the invertibility of 
$\{ A(S,T)\}_{S,T\in \Conf}$ \eqref{eq:7.4.2}, one deduces  
\eqref{eq:8.3.3}.
\end{proof}

\begin{rem}
As an application of \eqref{eq:8.3.3}, we can explicitly determine the 
coefficients $\{M_U\}_{U\in\Conf}$ of any Lie-like element 
 $\mathcal{M}\ =\underset{U\in \Conf}{\sum}U\cdot M_U$
from the subsystem $\{M_S\}_{S\in\Conf_0}$ by the 
relation
$M_U\ =\sum_{S\in \Conf_0}
\varphi(U,S)\cdot M_S.$
Here, the summation index $S$ runs only over the finite set with 
$\#S\le \#U$.
\end{rem}

\subsection{Lie-like elements $\LL_{\A,\infty}$ at infinity}
\label{subsec:8.4}

 We introduce the space $\LL_{\A,\infty}$ of Lie-like elements at infinity 
for a use after \S10.

 Recall that the kabi coefficients relate the two basis systems of
 $\mathcal{L}_{\mathbb{A},finite}:
 \{ \varphi (S)\}_{S\in\Conf_0}$ and $\{\mathcal{M}(T)\}_{T\in\Conf_0}$
 (cf.(\ref{subsec:8.2}) lemma).
The  map:
\begin{align*}
\ \mathcal{L}_{\mathbb{A},finite}\quad &\ \  = \qquad \mathcal{L}_{\mathbb{A},finite}\\
K : \sum_{S\in \Conf_0}\varphi (S)\ a_S &\  \longmapsto \sum_{T\in\Conf_{0}}
\mathcal{M}(T)\sum_{S\in\Conf_0}(-1)^{\sharp T -\sharp S} K(T,S)\ a_S
\end{align*}
is the identity homomorphism between the same modules.
We define topologies on the modules of both sides: the fundamental 
system of neighborhoods of 0 are the linear subspaces spanned by the all
bases except for finite ones. Actually, the topology on the LHS coincides with the
 adic topology, which we have been studying (\ref{subsec:8.2} Lemma).
The map $K$ is continuous with respect to the topologies,
since for any base $\mathcal{M}(T)$, there are only a finite number
of bases $\varphi (S)$ whose image $K(\varphi (S))$ contains the
term $\mathcal{M}(T)$, namely $K(T,S)\neq 0$ only for such $S$ satisfying
$\sharp T \ge \frac{1}{q-1}(\sharp S-2)$, \ref{subsec:7.5} Assertion.
Let us denote by  $\overline{K}$ the map between the completed modules and call
it the $kabi~ map$.
\begin{equation}\label{eq:8.4.1}
\overline{K}\ :\ \mathcal{L}_{\mathbb{A}}\ \longrightarrow 
\prod_{T\in\Conf_{0}} \mathbb{A}\cdot\mathcal{M}(T).
\end{equation}
We consider the set of Lie-like elements which are annihilated by the 
kabi map:
\begin{equation} \label{eq:8.4.2}
\mathcal{L}_{\mathbb{A},\infty}\ \ :\ =\  
\ker(\overline{K}),
\end{equation}
and call it the space of {\it Lie-like elements at infinity}.
In fact, $\mathcal{L}_{\mathbb{A},\infty}$
 does not contain a non-trivial finite type element, i.e.\ 
$\mathcal{L}_{\mathbb{A},finite} \cap \mathcal{L}_{\mathbb{A},\infty}
 =\{0\}. 
$
However, the direct sum $\mathcal{L}_{\mathbb{A},finite}\oplus \mathcal{L}_{\mathbb{A},\infty}$ 
is a small submodule of $\mathcal{L}_{\mathbb{A}}$, and {\it one 
looks for a  submodule, 
say  $\mathcal{L}'$, of $\mathcal{L}_{\mathbb{A}}$ 
containing $\mathcal{L}_{\mathbb{A},finite}$, with a splitting }
$\mathcal{L}_{\mathbb{A}}=\mathcal{L}'\oplus \mathcal{L}_{\mathbb{A},\infty}$.
However, there is some difficulty in finding
such $\mathcal{L}'$ for general $\A$: 
an {\it infinite sum} 
 $\sum_{T\in\Conf_{0}}a_T\mathcal{M}(T)$ $\in Im(\overline{K})$ 
never converges  in $\mathcal{L}'\ (\simeq 
Im(\overline{K}))$ with respect to the adic topology. 
We shall come back to this problem in (\ref{subsec:10.2}) 
for the case $\A=\R$,
where the classical topology plays the crucial role.


\section{Group-like elements $\frak{G}_\mathbb{A}$}\label{sec:9}

We determine the groups $\frak{G}_\mathbb{A}$ and
$\frak{G}_{\mathbb{A},finite}$ of group-like elements in $ \mathbb{A}\db[\Conf\db]$ and 
$ \mathbb{A}\db[\Conf\db]_{finite}$, respectively, if 
$\mathbb{A}$ is $\Z$-torsion free. In particular, if $\A=\Z$, 
the group $\frak{G}_{\mathbb{Z},finite}$ is, 
 by the correspondence $\mathcal{A}(S) \leftrightarrow S$, 
isomorphic to $\langle\Conf \rangle=$the abelian group associated to 
the semi-group $\Conf$, 
and it forms a ``lattice in the continuous group'' $\frak{G}_{\R}$. 
Then, we introduce the set $\mathrm{EDP}$ of equal division 
points inside the positive cone in $\frak{G}_{\R}$
spanned by the basis $\{\mathcal{M}(S)\}$.

\subsection{$\frak{G}_{\A,finite}$ and $\frak{G}_\A$ for the case $\Q\subset \A$}\label{subsec:9.1}
We start with a general fact:
{\it Assume $\mathbb{Q}\subset \mathbb{A}$.
Then one has isomorphisms:}
\begin{equation}\label{eq:9.1.1}
\begin{matrix}
\exp\ :\ &\!\!\!\! \mathcal{L}_{\mathbb{A}}\ \ \ \ \  \ \simeq\ \ \ \  \frak{G}_\mathbb{A}.\\
\exp\ :\ &\mathcal{L}_{\mathbb{A},finite}\ \ \simeq\ \ \frak{G}_{\mathbb{A},finite}.\\
\end{matrix}
\end{equation}
\begin{proof}
Since $\aug(g)=1$, $\log(g)$ \eqref{eq:3.6.2} is well defined for 
$\mathbb{Q}\subset \mathbb{A}$. That $g$ is group-like 
\eqref{eq:5.4.1} implies that $\log(g)$ is Lie-like and belongs to 
$\mathcal{L}_\mathbb{A}$ (cf. proof of (6.2) Lemma). Then $g$ is
of finite type, if and only if $\log (g)$ is so (cf. (3.6)). 
Thus \eqref{eq:9.1.1} is shown. The homeomorphism
follows from that of exp (3.6).
\end{proof}

\subsection{Generators of $\frak{G}_{\A,finite}$ and $\frak{G}_{\A}$ 
for a $\Z$-torsion free $\A$.}\label{subsec:9.2}
\begin{lemma}
Let $\mathbb{A}$ be a commutative $\mathbb{Z}$-torsion-free
algebra with unit.

{\rm i)} Any element $g$ of $\frak{G}_{\mathbb{A},finite}$ is uniquely
expressed as
\begin{equation}\label{eq:9.2.1}
g\ =\ \prod_{i\in I}\mathcal{A}(S_i)^{c_i}
\end{equation}
for $S_i\!\in\! \Conf_0$ and $c_i\!\in\! \mathbb{A}$ ($i\!\in\! I$) 
with $\#I\!<\!\infty$. 
That is, one has an
isomorphism:
\begin{equation}\label{eq:9.2.2}
 \quad  
\langle \Conf\rangle \otimes_\mathbb{Z}\mathbb{A}\  \simeq\  \frak{G}_{\mathbb{A},finite}
\ ,\ \ S \leftrightarrow  \mathcal{A}(S)\ ,\!\!\!
\end{equation}
where $\langle \Conf\rangle$ is the group associated to the semi-group $\Conf$.

{\rm ii)} $\frak{G}_{\mathbb{A},finite}$ is dense in $\frak{G}_\mathbb{A}$ with respect to the adic topology.

{\rm iii)} We have the natural inclusion:
\begin{equation}\label{eq:9.2.3}
\{\exp(\varphi(S))\mid S\in \Conf_0\}\subset \frak{G}_{\mathbb{Z},finite}
\end{equation}
The set $\{ \exp(\varphi(S))\}_{S\in \Conf_0}$
is a topological free generating system of $\frak{G}_\mathbb{A}$.
This means that any element $g$ of $\frak{G}_\mathbb{A}$ 
is uniquely
expressed as an infinite product:
\begin{equation}\label{eq:9.2.4}
\prod_{S\in \Conf_0}\exp(\varphi(S)\cdot a_S)
\ \ :\ =\ \ \lim_{n\to \infty}
({\prod}_{\substack{S\in \Conf_0\\ \# S<n}}\exp(\varphi(S)\cdot a_S))
\end{equation}
for some $a_S\in \mathbb{A}$ ($S\in\Conf_0$).  
\vspace{-0.4cm}
\end{lemma}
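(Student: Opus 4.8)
The plan is to combine two structural results already at hand: first, the exponential isomorphisms $\exp\colon \mathcal{L}_{\mathbb{A},finite}\xrightarrow{\sim}\frak{G}_{\mathbb{A},finite}$ and $\exp\colon \mathcal{L}_{\mathbb{A}}\xrightarrow{\sim}\frak{G}_{\mathbb{A}}$ of \eqref{eq:9.1.1}; second, the structure of the Lie-like modules given in \ref{subsec:8.2} Lemma, namely that $\{\mathcal{M}(S)\}_{S\in\Conf_0}$ is an $\mathbb{A}$-free basis of $\mathcal{L}_{\mathbb{A},finite}$ and $\{\varphi(S)\}_{S\in\Conf_0}$ is a topological $\mathbb{A}$-basis of $\mathcal{L}_{\mathbb{A}}$. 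For part i), I would take $g\in\frak{G}_{\mathbb{A},finite}$, pass to $\log(g)\in\mathcal{L}_{\mathbb{A},finite}$, expand it uniquely as $\log(g)=\sum_{i\in I}c_i\,\mathcal{M}(S_i)$ with $\#I<\infty$ by \eqref{eq:8.2.1}, and then exponentiate: using the functional relation $\exp(f+h)=\exp(f)\exp(h)$ from \ref{subsec:3.6} (valid since the $\mathcal{M}(S_i)$ commute) together with $\exp(c\,\mathcal{M}(S))=\mathcal{A}(S)^c$ (which is $\exp(c\log\mathcal{A}(S))=\mathcal{A}(S)^c$, legitimate because $\mathcal{M}(S)=\log\mathcal{A}(S)$ by \eqref{eq:6.1.1} and the power map satisfies $\log((1+f)^c)=c\log(1+f)$), we obtain $g=\prod_{i\in I}\mathcal{A}(S_i)^{c_i}$. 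Uniqueness of the exponents is inherited from the uniqueness of the expansion of $\log(g)$ in the free basis $\{\mathcal{M}(S)\}$. This gives the bijection \eqref{eq:9.2.2}: the map $\langle\Conf\rangle\otimes_{\mathbb{Z}}\mathbb{A}\to\frak{G}_{\mathbb{A},finite}$ sending $S\mapsto\mathcal{A}(S)$ is well-defined and multiplicative by the multiplicativity \eqref{eq:5.1.7} of $\mathcal{A}$, surjective by the above, and injective because a relation $\prod\mathcal{A}(S_i)^{c_i}=1$ forces $\sum c_i\mathcal{M}(S_i)=0$, hence all $c_i=0$ by freeness of $\{\mathcal{M}(S)\}$ together with the fact that each $\mathcal{M}(S)=\mathcal{A}(S)-1+(\text{higher terms})$ is genuinely Lie-like and the map is on the group $\langle\Conf\rangle$, not merely the monoid $\Conf$ — here one must be slightly careful that $\langle\Conf\rangle$ is free abelian on $\Conf_0$, so $\otimes_{\mathbb{Z}}\mathbb{A}$ is free on $\Conf_0$ over $\mathbb{A}$, matching the basis $\{\mathcal{M}(S)\}_{S\in\Conf_0}$.

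For part ii), density of $\frak{G}_{\mathbb{A},finite}$ in $\frak{G}_{\mathbb{A}}$ follows by transporting, via the homeomorphism $\exp$, the density of $\mathcal{L}_{\mathbb{A},finite}$ in $\mathcal{L}_{\mathbb{A}}$ established in \ref{subsec:8.2} Lemma iv); since $\exp$ is a homeomorphism (\ref{subsec:3.6}), it carries dense sets to dense sets. For part iii), the inclusion $\{\exp(\varphi(S))\mid S\in\Conf_0\}\subset\frak{G}_{\mathbb{Z},finite}$ holds because $\varphi(S)\in\mathcal{L}_{\mathbb{Z},finite}$ is Lie-like with integer coefficients and lies in $\mathbb{Z}\db[\langle S\rangle\db]$ (Remark \ref{rema:1}), so $\exp(\varphi(S))$ is group-like and of finite type; one checks the coefficients are in $\mathbb{Z}$ using \eqref{eq:8.2.10}, $\varphi(S)\in\mathcal{J}_{\#S}$, which makes the exponential series terminate degree-by-degree into an integral combination. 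To see that $\{\exp(\varphi(S))\}_{S\in\Conf_0}$ is a topological free generating system of $\frak{G}_{\mathbb{A}}$: given $g\in\frak{G}_{\mathbb{A}}$, write $\log(g)=\sum_{S\in\Conf_0}\varphi(S)\,a_S$ uniquely as an infinite sum by \eqref{eq:8.2.6}, and then exponentiate. The key point is that $\exp(\sum_S\varphi(S)a_S)=\lim_{n\to\infty}\exp(\sum_{\#S<n}\varphi(S)a_S)=\lim_{n\to\infty}\prod_{\#S<n}\exp(\varphi(S)a_S)$, where the first equality is continuity of $\exp$ and the second uses the addition formula for $\exp$ on the finite commuting sum; uniqueness of the $a_S$ follows from the uniqueness in \eqref{eq:8.2.6} since $\log$ is the inverse of $\exp$.

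The main obstacle I anticipate is the convergence bookkeeping in \eqref{eq:9.2.4}: one must verify that the infinite product $\prod_{S\in\Conf_0}\exp(\varphi(S)a_S)$ genuinely converges in the adic topology and equals $\exp(\log g)=g$, i.e.\ that rearranging the exponential of an infinite sum into an infinite product is legitimate here. This is where $\varphi(S)\in\mathcal{J}_{\#S}$ \eqref{eq:8.2.10} is essential: it guarantees that only finitely many factors contribute modulo $\mathcal{J}_n$ for each $n$, so the partial products stabilize and the limit exists and is independent of the order — and it matches $\exp$ of the partial sums because $\exp$ is continuous and multiplicative on the finite-type subalgebra. A secondary subtlety is that in part iii) one wants $\mathbb{Z}$-integrality of $\exp(\varphi(S))$, which requires knowing that the denominators $1/n!$ in the exponential series cancel; this follows from $\varphi(S)$ being a primitive (Lie-like) element, so $\varphi(S)^n/n!$ expands with integer coefficients in the configuration basis — a standard divided-power phenomenon that one can verify using the coproduct identity $\Phi_m(\varphi(S))=\sum_i 1\widehat\otimes\cdots\widehat\otimes\varphi(S)\widehat\otimes\cdots\widehat\otimes 1$, or more concretely by induction on degree using \eqref{eq:8.2.10}.
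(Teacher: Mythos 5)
Your argument only covers the case $\Q\subset\A$, but the lemma is stated for an arbitrary $\Z$-torsion-free $\A$ — the crucial instance being $\A=\Z$ itself, which is exactly what is used afterwards (\ref{subsec:9.4}, \S10) to regard $\frak{G}_{\Z,finite}\simeq\langle\Conf\rangle$ as a lattice. When $\Q\not\subset\A$ you cannot ``pass to $\log(g)\in\mathcal{L}_{\A,finite}$'' nor invoke \ref{subsec:8.2} Lemma or the maps \eqref{eq:3.6.1}--\eqref{eq:3.6.2}: all of these explicitly require $\Q\subset\A$, so a priori $\log(g)$ and its coordinates live only over the localization $\tilde{\A}$ of $\A$ at $\Z\setminus\{0\}$. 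The paper's proof consists precisely of the step you omit: after settling the case $\Q\subset\A$ by the reduction you describe, it embeds $\A\subset\tilde{\A}$ (possible by torsion-freeness) and descends — one writes $g=\prod_{i\in I}\mathcal{A}(S_i)^{c_i}$ with $c_i\in\tilde{\A}$ and shows $c_i\in\A$ by comparing the coefficient of $g$ at a maximal element $S_1$ among the $S_i$ with $c_i\notin\A$; similarly for ii) one shows the truncated exponents $c_{T,n}$ lie in $\A$, giving $g\equiv\prod_{\#T<n}\mathcal{A}(T)^{c_{T,n}}\bmod\mathcal{J}_n$, and for iii) one recovers $a_S=\sum_{\#T<n}A(S,T)\,c_{T,n}\in\A$ via \eqref{eq:7.4.2}. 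Injectivity does descend trivially by extension of scalars (as you note), but the integrality of the exponents — i.e.\ surjectivity over $\A$ — is exactly what needs this argument, and it is absent from your proposal.

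A second gap is your justification of \eqref{eq:9.2.3}. Primitivity of $\varphi(S)$ does not give a divided-power structure: already in the simplest situation where $\Conf_0$ consists of the single one-point graph $X$ one has $\varphi(X)=\mathcal{M}(X)=\log(1+X)$, and $\varphi(X)^n/n!$ has non-integral coefficients for every $n\ge1$; only the full sum $\exp(\varphi(X))=1+X$ is integral. Likewise the observation that $\varphi(S)\in\mathcal{J}_{\#S}$ makes the exponential series terminate degree-by-degree controls convergence but does nothing to cancel the denominators $1/n!$. The correct route — the paper's — is to use the base change \eqref{eq:8.2.4} with its \emph{integer} kabi coefficients to write $\exp(\varphi(S))=\prod_{T\in\Conf_0}\mathcal{A}(T)^{(-1)^{\#T-\#S}K(T,S)}$, a finite product of integer (possibly negative, via $\mathcal{A}(T)^{-1}=\iota(\mathcal{A}(T))$) powers of the finite-type group-like elements $\mathcal{A}(T)$, hence an element of $\frak{G}_{\Z,finite}\cap\{1+\mathcal{J}_{\#S}\}$. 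With that formula in hand, the rest of your part iii) (topological generation and uniqueness of the $a_S$, for $\Q\subset\A$) is essentially the paper's reduction to \eqref{eq:8.2.6}, but over a general torsion-free $\A$ the integrality of the $a_S$ again requires the descent argument above rather than a direct appeal to \eqref{eq:8.2.6}.
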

\begin{proof}
If $\mathbb{Q}\subset \mathbb{A}$, then due to the isomorphisms
\eqref{eq:9.1.1} and \eqref{eq:6.1.1}, the Lemma is reduced to 
the corresponding statements for 
$\mathcal{L}_\mathbb{A}$ and $\mathcal{L}_{\mathbb{A},finite}$ 
in (\ref{subsec:8.2}) Lemma, where, due to \eqref{eq:8.2.4}, \eqref{eq:8.2.10} 
and the integrality of kabi $K$, 
we have
\[
\quad \exp(\varphi(S))\ =\! \prod_{T\in \Conf_0}\mathcal{A}(T)^{ (-1)^{\#T-\#S}K(T,S)}\ \in \frak{G}_{\Z,finite}\cap \{ 1+\mathcal{J}_{\#S}\},\!\!\!
\vspace{-0.2cm}
\]
where we note $\mathcal{A}(T)\in \frak{G}_{\Z,finite}$ (c.f.\ \eqref{eq:5.1.6} and 
\eqref{eq:5.4.4}).

Assume $\mathbb{Q}\not\subset\mathbb{A}$ and let
$\tilde{\mathbb{A}}$ be the localization of $\mathbb{A}$ with respect to 
$\mathbb{Z}\backslash\{ 0\}$. Since $\mathbb{A}$ is torsion
free, one has an inclusion $\mathbb{A}\!\subset\! \tilde{\mathbb{A}}$,
which induces inclusions $\frak{G}_\mathbb{A}\!\!\subset\! \frak{G}_{\tilde{\mathbb{A}}}$
and $\frak{G}_{\mathbb{A},finite}\!\!\subset\! \frak{G}_{\tilde{\mathbb{A}},finite}$, and
the Lemma is true for $\frak{G}_{\tilde{\mathbb{A}},finite}$ 
and $\frak{G}_{\tilde{\mathbb{A}}}$.

i)  Let us express an element
$g\in \frak{G}_{\mathbb{A},finite}$ as 
$\underset{i\in I}{\Pi}\mathcal{A}(S_i)^{c_i}$, where
$c_i\in \tilde{\mathbb{A}}$ for $i\in I$ and $\# I<\varpropto$.
We need to show that $c_i\in \mathbb{A}$ for $i\in I$. Suppose not.
Put $I_1:=\{ i\in I: c_i\notin \mathbb{A}\}$ and let $S_1$ be a 
maximal element of $\{ S_i: i\in I_1\}$ with respect to the 
partial ordering $\le $. Put 
$g_1 :=\underset{i\in I_1\backslash \{ 1\}}{\Pi}\mathcal{A}(S_i)^{c_i}$
and 
$g_2 :=\underset{i\in I\backslash I_1}{\Pi}\mathcal{A}(S_i)^{c_i}$.
Then
$
g_1\mathcal{A}(S_1)^{c_1} = g\cdot g_2^{-1}\ \in 
\frak{G}_{\mathbb{A},finite}$. In the left
hand side, $g_1$ does not contain the term $S_1$, whereas
$\mathcal{A}(S_1)^{c_1}$ contains the term $c_1S_1$. Hence, the left
hand side contains the term $c_1S_1$.

ii)
Let any $g\in \frak{G}_\mathbb{A}$ be given. For a fixed integer 
$n\in \mathbb{Z}_{\ge0}$, we calculate
\begin{align*}
\log(g)\ &=\sum_{S\in\Conf_0}\varphi(S)\cdot a_S
\qquad \text{for $a_S\in \tilde{\mathbb{A}}$\qquad  (c.f.\ \eqref{eq:8.2.6})}\\
&=\sum_{T\in\Conf_0}\mathcal{M}(T)\cdot c_{T,n}+R_n,\quad \text{where}\\
c_{T,n}\ \ :\ =\ &{\sum}_{\substack{S\in\Conf_0\\ \#S<n}}
(-1)^{\#T-\#S}K(T,S)\cdot a_S\in \tilde{\mathbb{A}},\tag*{$*)$}
\quad (c.f.\ \eqref{eq:8.2.4})\\
R_n\ \ :\ =\ &{\sum}_{\substack{S\in\Conf_0\\ \#S\ge n}}\varphi(S)\cdot a_S.
\tag*{$**)$}
\end{align*}
Here we notice that

$*$)\ \quad  $c_{T,n}\not= 0$ implies $\#T<n$, since $K(T,S)\not= 0$ implies $T\le S$
\quad \eqref{eq:7.2.3}.

$**$)\quad $R_n\in \mathcal{J}_n$, since $\#S\ge n$ implies 
$\varphi(S)\in \mathcal{J}_n$\quad \eqref{eq:8.2.10}.

\noindent
Therefore
\vspace{-0.2cm}
\begin{align*}
g\ =\ &{\sum}_{\substack{T\in\Conf_0\\ \#T<n}}
\mathcal{A}(T)^{c_{T,n}}\cdot \exp(R_n)\\
=\ &{\sum}_{\substack{T\in\Conf_0\\ \#T<n}}
\mathcal{A}(T)^{c_{T,n}}\quad \mod \mathcal{J}_n.
\end{align*}
Let us show that $c_{T,n}\in \mathbb{A}$ for all $T\in \Conf_0$. 
Suppose not, and let $T_1$ be a maximal element of 
$\{ T\in \Conf_0: \#T<n$ and $c_{T,n}\not\in \mathbb{A}\}$. 
Then similar to the proof of i), the coefficient of $g$ at
$T_1\equiv c_{T_1,n} \mod \mathbb{A} \not\equiv 0
\mod \mathbb{A}$. This is a contradiction. Therefore, 
$c_{T,n}\in \mathbb{A}$ for all $T$ and hence,  
$g\in \frak{G}_{\mathbb{A},finite}\mod \mathcal{J}_n$.

iii) Applying \eqref{eq:7.4.2} to the relation $*$) in the proof of ii), 
one gets
\[
a_S\ ={\sum}_{\substack{T\in\Conf_0\\ \#T<n}}
A(S,T)\cdot c_{T,n}
\]
for $\#S<n$. Here the right hand side belongs to $\mathbb{A}$
due to the proof ii). On the other hand, the left hand side
$(= a_S)$ does not depend on $n$. Hence, by moving $n\in \mathbb{Z}_{\ge0}$,
one has proven that $a_S\in \mathbb{A}$ for all $S\in \Conf_0$. 
\end{proof}

\subsection{Additive characters on $\frak{G}_\mathbb{A}$}\label{subsec:9.3}

\begin{definition}\!\!
An {\it additive character} on $\frak{G}_{\A}$ is an additive homomorphism
\begin{equation}\label{eq:9.3.1}
\mathcal{X}\ :\  \frak{G}_\mathbb{A}\  \longrightarrow \ \mathbb{A},
\end{equation}
which is continuous with respect to the adic topology on $\frak{G}_\mathbb{A}$
such that
\[
\mathcal{X}(g^a)\ =\ \mathcal{X}(g)\cdot a
\]
for all $g\in \frak{G}_\mathbb{A}$ and $a\in \mathbb{A}$. 
The continuity of $\mathcal{X}$ \eqref{eq:9.3.1} is
equivalent to the statement that there exists $n\ge0$ such that 
$\mathcal{X}(\exp(\varphi(S)))=\mathcal{X}(1)=0$ for 
$S\in \mathcal{J}_n\cap \Conf_0$. Hence it is equivalent to
$\#\{ S\in \Conf_0: 
\mathcal{X}(\exp(\varphi(S)))\not =0\}<\infty$.

The set of all
additive characters will be denoted by
\begin{equation}\label{eq:9.3.2}
\Hom_\mathbb{A}(\frak{G}_\mathbb{A},\mathbb{A}).
\end{equation}
\end{definition}

\begin{assertion}
1. For any fixed $U\in \Conf_0$, the correspondence
\begin{equation}\label{eq:9.3.3}
\mathcal{X}_U\ :\ \mathcal{A}(S)\in \frak{G}_{\mathbb{Z},finite}
\ \longmapsto\
A(U,S)\in \mathbb{Z}
\end{equation}
extends uniquely to an additive $\mathbb{A}$-character on $\frak{G}_\mathbb{A}$,
denoted by $\mathcal{X}_U$. Then
\begin{equation}\label{eq:9.3.4}
\quad \quad \mathcal{X}_U(\exp(\varphi(S)))\ =\ \delta(U,S)
\qquad \text{ for $U,S\in \Conf_0$.}
\end{equation}

2. There is a natural isomorphism
\begin{equation}\label{eq:9.3.5}
\begin{array}{llllllll}
\Hom_\mathbb{A}(\frak{G}_\mathbb{A},\mathbb{A})&\quad\simeq&\!&\!
\underset{U\in \Conf_0}\otimes 
\mathbb{A}\cdot \mathcal{X}_U\\
\mathcal{X}&\ \longmapsto & &\sum_{U\in \Conf_0}\mathcal{X}_U(\exp(\varphi(S)))
\cdot \mathcal{X}_U.
\end{array}
\end{equation}

3. If $\mathbb{Q}\subset \mathbb{A}$, then for any 
$\mathcal{M}\in\mathcal{L}_\mathbb{A}$ and $U\in \Conf_0$ one has
\begin{equation}\label{eq:9.3.6}
\mathcal{X}_U(\exp(\mathcal{M}))\ =\ \partial _U\mathcal{M}.
\end{equation}
\end{assertion}

\begin{proof}
1. First we note that $A(U,S)$ for fixed $U\in \Conf_0$ is
additive in $S$ \eqref{eq:5.1.8}, so that $\mathcal{X}_U$ naturally
extends to an additive homomorphism on $\frak{G}_{\mathbb{A},finite}$.
For continuity (i.e.\ the finiteness of
$S$ with $\mathcal{X}_U(\exp(\varphi(S)))\not = 0$), 
it is enough to show \eqref{eq:9.3.4}. Recalling \eqref{eq:8.2.4}
and \eqref{eq:7.4.2}, this proceeds as:
\begin{align*}
\mathcal{X}_U(\exp(\varphi(S))\ =\ &\ \mathcal{X}_U(\exp({\sum}_{T\in \Conf_0}
\mathcal{M}(T)(-1)^{\#T-\#S}K(T,S)))\\
=\ &{\sum}_{T\in \Conf_0}\mathcal{X}_U(\exp(\mathcal{M}(T)))\cdot
(-1)^{\#T-\#S}K(T,S)\\
=\ &{\sum}_{T\in \Conf_0}\mathcal{X}_U\mathcal{A}(T)\cdot 
(-1)^{\#T-\#S}K(T,S)\\
=\ &{\sum}_{T\in \Conf_0}A(U,T)\cdot (-1)^{\#T-\#S}K(T,S)
\ \ =\ \delta(U,S).
\end{align*}

2. The continuity of $\mathcal{I}$ implies the sum in the
target space is finite.

3. Both sides of \eqref{eq:9.3.6} 
take the same values for the
basis $(\varphi (S))_{S \in \operatorname{Conf}_{0}}$.
\end{proof}

\subsection{Equal division points of $\frak{G}_{\mathbb{Z},finite}$}
\label{subsec:9.4}

Recalling $\langle \Conf \rangle \simeq \frak{G}_{\mathbb{Z},finite}
$ \eqref{eq:9.2.2}, we regard 
$\langle \Conf \rangle$ as a ``lattice'' 
in $\frak{G}_{\mathbb{R},finite}$. 
In the positive rational cone
$\frak{G}_{\mathbb{Q},finite}\cap \big(\prod_{S\in\Conf_0}\mathcal{A}(S)^{\R_{\ge0}}\big)$, we consider a particular point, which we call
the {\it equal division point} for $S \in \Conf$:
\begin{equation}\label{eq:9.4.1}
\mathcal{A}(S)^{1/ \sharp (S)}.
\end{equation}
Here, the exponent $1/ \sharp (S)$ is chosen so that we get the normalization:
\begin{equation}\label{eq:9.4.2}
\mathcal{X}_{pt} \Bigl( \mathcal{A}(S)^{1/ \sharp (S)}\Bigr)\ =\ 1.
\end{equation}
The set of all equal division points is denoted by 
\begin{equation}\label{eq:9.4.3}
\mathrm{EDP}\ :=\ \{\mathcal{A}(S)^{1/ \sharp (S)}
\mid S \in \Conf \}.
\end{equation}

The formulation of \eqref{eq:9.4.1} is inspired from the free
energy of Helmholtz in statistical mechanics.
Instead of treating equal division points in the form \eqref{eq:9.4.1}
in $\frak{G}_\mathbb{R}$, we shall treat their logarithms in 
$\mathcal{L}_\mathbb{R}$ in the next paragraphs.

\subsection{A digression to $\mathcal{L}_\mathbb{A}$ with $\mathbb{Q} 
\not\subset\mathbb{A}$}
\label{subsec:9.5}

We have determined the generators of $\frak{G}_{\A,finite}$ and $\frak{G}_{\A}$ without assuming 
$\Q\subset \A$ but assuming only $\Z$-torsion freeness of $\A$. 
The following Assertion seems to suggest that the 
Lie-like elements behave differently 
to the group-like elements.
However we do not pursue this subject any further in the present paper.

\begin{assertion}
Let $\mathbb{A}$ be a commutative algebra
with unit.
If there exists a prime number 
$p$ such that $\mathbb{A}$ is $p$-torsion free and 
$1/p \notin \mathbb{A}$, then $\mathcal{L}_\mathbb{A}$ is divisible by $p$ 
(i.e. $\mathcal{L}_\mathbb{A}=p \mathcal{L}_\mathbb{A}$).
In particular, if $\mathbb{A}$ is noetherian, 
$\mathcal{L}_\mathbb{A}  = \{ 0\}$.
\end{assertion} 

\begin{proof}[A sketch of the proof]
Consider an element $\mathcal{M}=\sum_{U\in\Conf}U\cdot M_U \in \mathcal{L}_{\mathbb{A}}$.
As an element of $\mathcal{L}_{\tilde{\mathbb{A}}}$, it can be  
expressed as $\textstyle\sum\limits_{S \in \operatorname{\Conf}_{0}} 
\varphi (S) \cdot a_S$ where $a_S\! =\! \partial_{S}\mathcal{M}
\!=\! M_S\!\in\!\mathbb{A}$ 
for $S \in \Conf_{0}$.
Recall the expression \eqref{eq:8.3.3} for $\varphi(U,S)$ ($U\in\Conf$) 
and the remark following it. We see that $ M_U$ is expressed as:

$$\kern-8ex\sum_{\kern8ex\substack{U=U^{k_1}_1\sqcup \cdots \sqcup U^{k_m}_m\\
V\in\Conf,\ W\in\Conf_0\\S\in \Conf_0}}\kern-10ex
\frac{(-1)^{|\underline{k}| -1+|W|+|S|}(|\underline{k}| -1)!}
{k_1!\cdots k_m!}
{\scriptsize
\begin{pmatrix}
[5pt] V\\ \underbrace{U_1,\ldots, U_1}_{k_1},\ldots,\underbrace{U_m,\ldots, U_{m}}_{k_m}
\end{pmatrix}}
A(V,W)K(W,S)a_S.
$$

Apply this formula for $U = T^{p}$ for a fixed $T\in\Conf_0$.
The summation index set is 
$\{ (k_1,k_2,\ldots )\! \in\! (\mathbb{Z}_{\ge 0})^{\mathbb{Z}_{\ge1}}\mid
p\! =\! \sum_{i\ge1}i \cdot k_{i}\}$,
as explained in \ref{subsec:3.6} {\it Example}.
Except for the case $k_1 = p$ and $k_i = 0~ (i > 1)$,
the denominator $k_1 !\ldots k_m !$ is a product of prime 
numbers smaller than $p$.
The coefficient 
${\scriptsize
\begin{pmatrix}
V\\ \underbrace{U_1,\ldots U_1}_{k_1},\ldots,\underbrace{U_m,\ldots U_{m}}_{k_m}
\end{pmatrix}}$
for this case
(i.e.\ $k_1 = p, k_i = 0~ (i > 1)$  and for $V=[\mathbb{V}]$ 
is equal to the cardinality of the set 
$\{ ({\mathbb{U}_{1}},\ldots ,{\mathbb{U}_{p}})\mid
\mathbb{U}_{i}$
is a subgraph of $\mathbb{V}$ such that
$[\mathbb{U}_{i}]=T$ and
$\cup_{i=1}^p\mathbb{U}_i = \mathbb{V}\}$.
Since the cyclic permutation of 
${\mathbb{U}_{1}},\ldots ,{\mathbb{U}_{p}}$ acts on the set, and 
the action has no fixed points except for $V=T$,
we see that the covering coefficient is
divisible by $p$ except for the case $V=T\in \Conf_0$. 
In that case $\sum_{W\in\Conf_0}(-1)^{|W|+|S|}A(T,W)K(W,S)=\delta(T,S)$.
Therefore
$ 
\frac{(-1)^p}{p}a_{T}\ \equiv\ 0\ 
\bmod \ \mathbb{A}_{\text{loc}}
$ 
where $\mathbb{A}_{\text{loc}}$ is the localization of the algebra
$\mathbb{A}$ with respect to the 
prime numbers smaller than $p$.
Hence 
$a_{T} \in p\mathbb{A}_{\text{loc}}\cap\mathbb{A} = p\mathbb{A}$. 
\end{proof}


\section{Accumulation set 
of logarithmic equal division points}\label{sec:10}

We consider the space of Lie-like elements $\LL_{\R}$ over 
the real number field $\R$ which is equipped with the classical topology. 
The set in $\mathcal{L}_{\mathbb{R}}$ 
of accumulation points of the logarithm of $\mathrm{EDP}$ (\ref{subsec:9.4}), 
denoted by $\Omega:=\overline{\log (\operatorname{EDP})}$,  
becomes a compact convex set. 
We decompose  $\Omega=\overline{\log (\operatorname{EDP})}$ 
into a join of
the finite (absolutely convergent) 
part $\Omega_{abs}:=\overline{\log (\operatorname{EDP})}_{abs}$ 
and the infinite part 
$\Omega_\infty:=\overline{\log (\operatorname{EDP})}_\infty$.

\subsection{The classical topology on $\mathcal{L}_\mathbb{R}$}
\label{subsec:10.1}

We equip the $\mathbb{R}$-vector space 
\begin{equation}\label{eq:10.1.1}
\mathcal{L}_\mathbb{R} \ =\
\varprojlim_{n}
\mathcal{L}_\mathbb{R} / 
\overline{\mathcal{J}_{n}}\cap\mathcal{L}_\mathbb{R}
\end{equation}
with the {\it classical topology} defined by the projective limit 
of the classical topology on the finite quotient $\mathbb{R}$-vector spaces.
Since the quotient spaces are 
\begin{equation*}
\mathcal{L}_\mathbb{R}/
\overline{\mathcal{J}_{n}}\cap\mathcal{L}_\mathbb{R} 
\ \simeq\ 
\oplus_{S\in\Conf_0,\#S<n}\R\cdot\varphi(S) 
\ \simeq \ \mathcal{L}_{\mathbb{R},finite}/
\mathcal{J}_{n}\cap\mathcal{L}_{\mathbb{R},finite},
\end{equation*}
we see that 1) $\mathcal{L}_\mathbb{R}$ is  homeomorphic to the direct 
product $\prod_{S\in\Conf_{0}}\mathbb{R}\cdot\varphi (S)$ (recall 
\eqref{eq:8.2.5}),  and 
2) $\mathcal{L}_{\mathbb{R},finite}\simeq 
\oplus_{S\in\Conf_0}\R\cdot\varphi(S)$ is dense in
$\mathcal{L}_\mathbb{R}$ with respect to the classical topology.
That is,
{\it the classical topology on $\LL_{\R}$ is the  
topology of the coefficient-wise convergence with respect 
to the basis $\{\varphi(S)\}_{S\in \Conf_0}$}. 
It is weaker than the adic topology.

Similarly, we equip $\mathbb{R}\db[\Conf\db]$ with the classical topology 
defined by
\begin{equation}\label{eq:10.1.2}
\mathbb{R}\db[\Conf\db]\ =\
\varprojlim_{n} \ 
\mathbb{R}\cdot\Conf/
\mathcal{J}_{n} \ =\   \prod_{S\in\Conf}
\mathbb{R}\cdot S .
\end{equation}
So, the classical topology on $\mathbb{R}\db[\Conf\db]$ is 
the same as the topology of coefficient-wise convergence with respect to
the basis $\{S\}_{S\in\Conf}$.
The next relation ii) between the two topologies
\eqref{eq:10.1.1} and \eqref{eq:10.1.2} is a consequence of \eqref{eq:8.3.3}.

\begin{assertion}
{\rm i)} The product and coproduct on 
$\mathbb{R}\db[\Conf\db]$ are
continuous with respect to the classical topology.

{\rm ii)} The classical topology on $\mathcal{L}_\mathbb{R}$ is
homeomorphic to the topology induced from that on
$\mathbb{R}\db[\Conf\db]$.

{\rm iii)} Let us equip $\frak{G}_\mathbb{R}$ with the classical topology 
induced from that on $\mathbb{R}\db[\Conf\db]$. Then
$
\exp : \mathcal{L}_\mathbb{R} \rightarrow  \frak{G}_\mathbb{R}
$
is a homeomorphism.
\end{assertion}

\begin{proof}
i) The product and coproduct are continuous with respect to
the adic topology (cf. (\ref{subsec:3.2}) and (\ref{subsec:4.2})),
which implies the statement.

ii) For a sequence in $\mathcal{L}_\mathbb{R}$,
we need show the equivalence of convergence in
$\mathcal{L}_\mathbb{R}$ and in 
$\mathbb{R}\db[\Conf\db]$.
This is true due to \eqref{eq:8.3.3}.

iii) The maps $\exp$ and $\log$  are bijective (cf. (\ref{subsec:9.2}) Assertion)
and homeomorphic with respect to the adic topology, which implies the statement.
\end{proof}

\subsection{Absolutely convergent sum in $\mathcal{L}_{\R}$}
\label{subsec:10.2}

Recall the problem posed in \ref{subsec:8.4}: find a subspace of $\mathcal{L}_{\A}$
containing $\mathcal{L}_{\A,finite}$ which is complementary to 
the subspace at infinity  $\mathcal{L}_{\A,\infty}$ \eqref{eq:8.4.2}.
In the present paragraph, we answer this problem 
for the case $\A=\R$ by introducing a sufficiently large submodule 
$\mathcal{L}_{\R,abs}$, which contains $\mathcal{L}_{\mathbb{R},finite}$
but does not intersect with $\mathcal{L}_{\mathbb{R},\infty}$ 
so that we obtain a splitting submodule 
$\mathcal{L}_{\R,abs}\oplus\mathcal{L}_{\mathbb{R},\infty}$ 
of $\mathcal{L}_{\R}$.

\begin{definition}
We say a formal sum $\sum_{T\in \Conf_0}a_T\mathcal{M}(T)
\in\prod_{T\in\Conf_{0}} \mathbb{\R}\cdot\mathcal{M}(T)$  
is {\it absolutely convergent} if, for any $S\!\in\! \Conf$,
the sum $\sum_{T\in\Conf_0} a_{T}M(S,T)$ of 
its coefficients at $S$ is absolutely 
convergent, i.e.\
$\sum_{T\in \Conf_0}|a_T|M(S,T)\!<\!\infty$ for all $S\!\in\!\Conf$.
Then, any series 
$\sum_{i=1}^\infty a_{Ti}\mathcal{M}(T_i)$ defined by 
any linear ordering $T_1\!<\!T_2\!<\!\cdots$ of the index set  $\Conf_0$  
converges in $\LL_{\R}$ to the same element with respect to the classical topology.
We denote the limit by  
$\sum_{T\in \Conf_0}^{abs}a_T\mathcal{M}(T)$. 
Define the space of absolutely convergent elements: 
\begin{equation}
\label{eq:10.2.1}
\begin{array}{ll}
\mathcal{L}_{\R,abs}\ :=\ \{ \text{ all absolutely convergent sums 
 }\sum_{T\in \Conf_0}^{abs}a_T\mathcal{M}(T)\ \} . 
\end{array}
\end{equation}
\end{definition}
\noindent 
By definition, $\mathcal{L}_{\R,abs}$ 
is an $\R$-linear subspace of $\LL_{\R}$ such that 
$\mathcal{L}_{\R,abs}\cap \mathcal{L}_{\R,\infty}=\{0\}$ and 
$\mathcal{L}_{\R,abs}\supset \mathcal{L}_{\R,finite}$. 
Hence, the restriction $K|_{\mathcal{L}_{\R,abs}}$ of 
the kabi-map \eqref{eq:8.4.1} is injective.
 We give a criterion for the absolute convergence, 
which guarantees that $\mathcal{L}_{\R,abs}$ will be large enough for 
our purpose \eqref{eq:10.4.3}.

\begin{assertion} A formal sum $\sum_{T\in \Conf_0}\!a_T\mathcal{M}(T)$ is
absolutely convergent if and only if the sum 
 $\sum_{T\in \Conf_0}|a_T| \#(T)$ is convergent. 
 The $\mathcal{L}_{\R,abs}$ is a Banach space
with respect to the norm 
$\big|\sum_{T\in \Conf_0}^{abs}a_T\mathcal{M}(T)\big|:=
\sum_{T\in \Conf_0}|a_T| \#(T)$.
\end{assertion}
\vspace{-0.2cm}
\begin{proof} The coefficient of $\mathcal{M}(T)$ at $[$one point graph$]$ is equal to 
$\#(T)$. So absolute convergence implies
the convergence of $\sum_{T\in \Conf_0}|a_T| \#(T)$. 

Conversely, under this assumption, let us show the absolute convergence of 
the sum  $\sum_{T\in\Conf_0} a_{T}M(S,T)$ for any $S\in \Conf$. 
We prove this by induction on  $n(S)=$ the number of connected 
components of $S$. 
If $S$ is connected (i.e.\ $n(S)=1$), then $A(S,T)=M(S,T)$ and by the use of 
\eqref{eq:5.2.1}, we have
$\sum_{T\in\Conf_0}|a_{T}|M(S,T)
\le (\sum_{T\in\Conf_0}  |a_{T}|\#T)\frac{(q-1)^{\#S-1}}{\#\Aut(S)}
$
which converges absolutely.
If $S$ is not connected, decompose it into connected components 
as $S=\prod_{i=1}^m S_i$ and apply \eqref{eq:6.2.2}.
Since  
${\footnotesize 
\begin{pmatrix}
S'\\ S_{1},\ldots,S_{m}
\end{pmatrix}}
\not=0$ implies either $n(S')<n(S)$ or $S'=S$,  
$M(S,T)$ is expressed as a finite linear combination of $M(S',T)$ 
for $n(S')<n(S)$ (independent of $T$). 
We are now done by the  induction hypothesis.
\end{proof}

\subsection{Accumulating set  $\Omega:=\overline{\log(\text{EDP})}$}
\label{subsec:10.3}

Recall that an equal dividing point in
$\frak{G}_\mathbb{Q}$ \eqref{eq:9.4.1} is, by definition, an element 
of the form
$\mathcal{A}(S)^{1/\sharp (S)}$ for a
$S\in \Conf_+$.
Let us consider the set in $\mathcal{L}_\mathbb{Q}$ of their logarithms 
(by use of the  homeomorphism in \ref{subsec:10.1} Assertion iii):
\begin{equation}\label{eq:10.3.1}
\log (\text{EDP} )\ :\ =\ \ 
\{\mathcal{M}(T)/\sharp T \mid T\in \Conf_+\}
\end{equation}
and its closure $\Omega:=\overline{\log(\text{EDP})}$ 
in $\mathcal{L}_\mathbb{R}$ with respect to the classical topology.
So, any element $\omega\in \Omega=\overline{\log(\text{EDP})}$ has an expression:
\begin{equation}\label{eq:10.3.2}
\omega\ \ :=\ \ \underset{n\to\infty}{\ \ {\lim}^{cl}}\ 
\frac{\mathcal{M}(T_n)}{\sharp T_n}
\end{equation}
for a sequence $\{T_n\}_{n\in\Z_{>0}}$ in $\Conf_+$, where 
we denote by ${\lim}^{cl}$ the limit with respect 
to the classical topology.
Recalling that the topology on $\mathcal{L}_\R$ is defined by 
the coefficient-wise convergence with respect to the basis $\{\varphi(S)\}_{S\in \Conf_0}$ 
and 
using \eqref{eq:8.2.3}, one has  
$\omega\!=\! \sum_{S\in\Conf_0}\varphi(S)\cdot a_S$ where 
$a_S=\underset{n\to\infty}{\ \ {\lim}}\frac{A(S,T_n}{\# T_n}$.

\begin{assertion}
{\rm 1.} The set $\Omega \overline{\log(\operatorname{EDP})}$ is  compact and convex.

{\rm 2.} Expand any element $\omega\in \Omega=\overline{\log(\operatorname{EDP})}$ as 
$\sum_{S \in \Conf_{0}}\varphi (S) \cdot a_S$.
Then

\vspace{0.1cm}
\noindent
{\rm i)}\quad \  $0 \ \le\ a_S\ \le\ (q-1)^{\sharp S-1}/\sharp \Aut(S)$ \ 
for \ $S \in \Conf_{0}$,

\noindent
{\rm ii)}\quad  $(q-1)^{\sharp S - \sharp S'} a_{S'} \ge a_S$ \ for \ $S' \le S$. 
In particular, if \ $a_S \neq 0$ \ then $a_{S'} \neq 0$.
%
%
\end{assertion}

\begin{proof}
1.  
%
Compactness:  it is enough to show
that the range of coefficients
$a_S$ for $\omega\! \in\! \log(\text{EDP})$ is bounded for each 
$S\! \in\! \Conf_{0}$.
Recalling the expansion formula \eqref{eq:8.2.3},
this is equivalent to the statement that
$\{A(S,T)/\sharp T\mid  T\!\in\! \Conf_{0}\}$
is bounded for any $S \in \Conf_{0}$.
Applying the inequality \eqref{eq:5.2.2}, we have
\begin{equation*}
0\ \le\ A(S,T)/\sharp T\ \le\ (q-1)^{\sharp S-1}/\sharp \Aut(S),
\end{equation*}
which clearly gives a universal bound for $A(S,T)/\sharp T$ independent of $T$.

Convexity: since for any $T$, $T'(\neq[\emptyset])$
and $r\in\Q$ with $0<r<1$, one can find positive
integers $p$ and $q$ such that for
$T'':= T^p \cdot T'^q$ one has
\begin{align*}
\mathcal{M}(T'')/\sharp T'' 
\ & =\ (p\cdot\mathcal{M}(T) + q\cdot\mathcal{M}(T'))/
(p\cdot\sharp T + q\cdot\sharp T')\\
\ & =\ r\cdot\mathcal{M}(T)/\sharp T + (1-r)\cdot\mathcal{M}(T')/\sharp T'.
\end{align*}
2.  \ \ i) This is shown already in 1.

ii) If $S' \le S$ and $S\in \Conf_{0}$,
then for any $T \in \Conf$ one has an inequality
$(q-1)^{\sharp S - \sharp S'}A(S',T)\ge A(S,T)$.
(This can be easily seen by fixing representatives of
$S$ and $S'$ as in proof of \eqref{eq:5.2.2}).
Therefore $(q-1)^{\sharp S - \sharp S'} a_{S'} \ge a_S$.
\end{proof}
\begin{remark}
The condition \eqref{eq:9.4.2} on EDP implies 
$a_{pt}=1$ for any element $\omega\in \Omega=\overline{\log(\text{EDP})}$. 
In particular, this implies $0\not\in \Omega=\overline{\log(\text{EDP})}$.
\end{remark}

\subsection{Join decomposition  
$\Omega=\overline{\log(\text{EDP})}_{abs}*\overline{\log(\text{EDP})}_{\infty}$}
\label{subsec:10.4}
We show that $\overline{\log(\text{EDP})}$ is embedded in
$\LL_{\R,abs}\oplus \LL_{\R,\infty}$, and, accordingly,
decompose $\overline{\log(\text{EDP})}$ into the join of 
a finite part 
and an infinite part, where the finite part is an infinite 
simplex with the vertex set 
$\{\frac{\mathcal{M}(T)}{\#T}\}_{T\in\Conf_0}$.

\begin{definition}
Define the {\it finite part} and the {\it infinite part} of 
$\overline{\log(\text{EDP})}$ by
\begin{eqnarray}\label{eq:10.4.1}
\overline{\log(\text{EDP})}_{abs}&\ : =\ & \overline{\log(\text{EDP})} \cap \LL_{\R,abs},\ \\
\label{eq:10.4.2}
 \overline{\log(\text{EDP})}_{\infty}&\  : =\ & \overline{\log(\text{EDP})} \cap \LL_{\R,\infty}.
\end{eqnarray}
\end{definition}

\begin{lemma}\!\!\!\!
{\bf 1.} $\overline{\log(\text{EDP})}$ is the join of the finite 
part and the infinite part:
\begin{equation}
\label{eq:10.4.3}
\begin{array} {lll}
\overline{\log(\text{EDP})}\ \ =\ \  \overline{\log(\text{EDP})}_{abs}
\ * \ \overline{\log(\text{EDP})}_{\infty}.
\end{array}
\end{equation}
Here, the join of subsets $A$ and $B$ in real vector spaces $V$ and $W$ 
is defined by 

\centerline{
$A*B:=\{\lambda p+(1-\lambda)q\in V\oplus W 
\mid p\in A, q\in B, \lambda\in [0,1]\}$.
}

{\bf 2.}   The finite part
is the infinite simplex of the vertex set $\{\frac{\mathcal{M}(S)}{\#S}\}_{S\in\Conf_0}$:
\[
\begin{array} {lll}
\overline{\log(\text{EDP})}_{abs}
=\left\{ {\sum}^{abs}_{S\in Conf_0} \mu_S \frac{\mathcal{M}(S)}{\#S}
\mid \mu_S\in \R_{\ge0} \ \text{ and }\ {\sum}_{S\in\Conf_0}\mu_S=1
\right\}.
\end{array}
\]
\end{lemma}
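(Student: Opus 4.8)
The plan is to prove both parts together by first showing that $\overline{\log(\text{EDP})}$ lies in the algebraic direct sum $\LL_{\R,abs}\oplus\LL_{\R,\infty}$ with a canonical splitting, then reading off the finite part from that splitting together with the convexity of $\overline{\log(\text{EDP})}$, and finally realizing the infinite part of an arbitrary limit element as itself a limit of equal division points.

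\textbf{The canonical splitting.} For $\omega\in\overline{\log(\text{EDP})}$, write $\omega=\lim_n\mathcal M(T_n)/\#T_n$ ($T_n\in\Conf_+$) and expand $\omega=\sum_{S\in\Conf_0}\varphi(S)a_S$, so $a_S=\lim_nA(S,T_n)/\#T_n$ by \eqref{eq:8.2.3} (the classical topology being coefficientwise convergence in the $\varphi$-basis). The kabi map $\overline K$ is continuous for the classical topology, since by the boundedness in \S7.5 each $\mathcal M(T)$-coordinate of $\overline K(\omega)$ is the \emph{finite} sum $b_T:=\sum_S(-1)^{\#T-\#S}K(T,S)a_S$; and by the inversion formula \eqref{eq:7.3.1} one has $\sum_S(-1)^{\#T-\#S}K(T,S)A(S,T_n)=\delta(T,T_n)$, the number of connected components of $T_n$ isomorphic to $T$, so $b_T=\lim_n\delta(T,T_n)/\#T_n\ge0$. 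Since $\sum_T\delta(T,T_n)\#T=\#T_n$ for every $n$, Fatou's lemma gives $\lambda:=\sum_Tb_T\#T\le1$; hence (criterion of \S10.2) $\omega_{abs}:=\sum^{abs}_Tb_T\mathcal M(T)$ is a well-defined element of $\LL_{\R,abs}$, and continuity of $\overline K$ gives $\overline K(\omega_{abs})=\sum_Tb_T\mathcal M(T)=\overline K(\omega)$, so $\omega_\infty:=\omega-\omega_{abs}\in\ker\overline K=\LL_{\R,\infty}$. As $\LL_{\R,abs}\cap\LL_{\R,\infty}=\{0\}$, this expresses $\omega$ uniquely as $\omega_{abs}+\omega_\infty$. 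Finally the functional $\partial_{pt}$ (coefficient at the one-point graph), continuous for the classical topology by \S10.1, satisfies $\partial_{pt}(\mathcal M(T)/\#T)=A(pt,T)/\#T=1$, whence $\partial_{pt}\omega=1$, $\partial_{pt}\omega_{abs}=\lambda$, $\partial_{pt}\omega_\infty=1-\lambda$.

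\textbf{Part 2 and the reduction of Part 1.} If $\omega\in\overline{\log(\text{EDP})}_{abs}=\overline{\log(\text{EDP})}\cap\LL_{\R,abs}$ then $\omega_\infty\in\LL_{\R,abs}\cap\LL_{\R,\infty}=\{0\}$, so $\omega=\sum^{abs}_Tb_T\mathcal M(T)$ with $\lambda=\partial_{pt}\omega=1$, and $\mu_T:=b_T\#T$ gives $\mu_T\ge0$, $\sum_T\mu_T=1$, $\omega=\sum^{abs}_T\mu_T\mathcal M(T)/\#T$; conversely, for $\mu_T\ge0$ with $\sum_T\mu_T=1$ the sum $\sum^{abs}_T\mu_T\mathcal M(T)/\#T$ is absolutely convergent (its $pt$-coordinate is $\sum_T\mu_T<\infty$) hence in $\LL_{\R,abs}$, and the normalized partial sums $(\sum_{\#T<N}\mu_T)^{-1}\sum_{\#T<N}\mu_T\mathcal M(T)/\#T$ are finite convex combinations of points $\mathcal M(T)/\#T\in\log(\text{EDP})$, hence lie in the convex compact set $\overline{\log(\text{EDP})}$ (\S10.3) and converge classically to $\sum^{abs}_T\mu_T\mathcal M(T)/\#T$; this gives Part 2. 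For Part 1, $\overline{\log(\text{EDP})}_{abs}*\overline{\log(\text{EDP})}_\infty\subseteq\overline{\log(\text{EDP})}$ is convexity; conversely, given $\omega$ with $\lambda=\partial_{pt}\omega_{abs}\in[0,1]$: the cases $\lambda=0$ (then $\omega=\omega_\infty$) and $\lambda=1$ (then $\omega_\infty=0$, shown below, so $\omega=\omega_{abs}$) are handled using $\mathcal M(pt)\in\overline{\log(\text{EDP})}_{abs}\ne\emptyset$ and the convention $A*\emptyset=A$, while for $0<\lambda<1$ the point $p:=\omega_{abs}/\lambda$ lies in $\overline{\log(\text{EDP})}_{abs}$ by Part 2 (with $\mu_T=b_T\#T/\lambda$), so $\omega=\lambda p+(1-\lambda)q$ provided $q:=\omega_\infty/(1-\lambda)\in\overline{\log(\text{EDP})}_\infty$.

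\textbf{The crux (and the expected main obstacle): the infinite part is a limit.} Decompose $T_n=\prod_{C\in\Conf_0}C^{k_C^{(n)}}$ into connected components and let $T_n^{(>N)}$ be the subgraph of components of size $>N$; with $\alpha_{S,N}:=\sum_{\#C\le N}b_CA(S,C)$ and $\lambda_N:=\sum_{\#C\le N}b_C\#C\le\lambda$, additivity of $A(S,-)$ and of $\#$ give $A(S,T_n^{(>N)})/\#T_n\to a_S-\alpha_{S,N}$ and $\#T_n^{(>N)}/\#T_n\to1-\lambda_N$. When $\lambda<1$ one has $1-\lambda_N\ge1-\lambda>0$, so $T_n^{(>N)}\in\Conf_+$ for $n\gg0$ and $\mathcal M(T_n^{(>N)})/\#T_n^{(>N)}$ converges classically to $q_N:=\sum_S\varphi(S)(a_S-\alpha_{S,N})/(1-\lambda_N)\in\overline{\log(\text{EDP})}$; letting $N\to\infty$, the bound \eqref{eq:5.2.2} shows the tails $\alpha_{S,N}$ increase to $\sum_Cb_CA(S,C)=(\omega_{abs})_S$ (the series being dominated by $\frac{(q-1)^{\#S-1}}{\#\Aut(S)}\lambda$) and $\lambda_N\to\lambda$, so $q_N$ converges classically to $\omega_\infty/(1-\lambda)=q$, which then lies in the closed set $\overline{\log(\text{EDP})}$ and, being a scalar multiple of $\omega_\infty$, in $\overline{\log(\text{EDP})}_\infty$. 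When $\lambda=1$, the same bound in the form $0\le A(S,T_n^{(>N)})\le\frac{(q-1)^{\#S-1}}{\#\Aut(S)}\#T_n^{(>N)}$ (valid also for $T_n^{(>N)}=\emptyset$), divided by $\#T_n$ and passed to the limit, gives $0\le a_S-\alpha_{S,N}\le\frac{(q-1)^{\#S-1}}{\#\Aut(S)}(1-\lambda_N)\to0$, forcing $(\omega_{abs})_S=a_S$ for all $S$, i.e.\ $\omega_\infty=0$. The real work is concentrated here: recognizing the non-absolutely-convergent ``tail'' $\omega_\infty/(1-\lambda)$ of a limiting free energy as again a limit of free energies, which requires the component-wise truncation of the $T_n$ together with the uniform growth estimate \eqref{eq:5.2.2} to control the tail sums $\sum_{\#C>N}b_CA(S,C)$; everything else (continuity of $\overline K$ and $\partial_{pt}$, Fatou, convexity of $\overline{\log(\text{EDP})}$) is routine bookkeeping.
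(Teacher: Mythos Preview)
Your proof is correct and follows essentially the same route as the paper: both extract the finite part via the kabi map together with the inversion formula \eqref{eq:7.3.1} to obtain $b_T=\lim_n\delta(T,T_n)/\#T_n$, and then realize the infinite remainder as a limit of equal division points by stripping the small connected components off $T_n$ and using the growth bound \eqref{eq:5.2.2}. The only cosmetic difference is that you run a clean double limit (first $n\to\infty$ for fixed size cutoff $N$, then $N\to\infty$) and invoke Fatou explicitly, whereas the paper carries out a single diagonal argument with an increasing exhaustion $C_m\uparrow\Conf_0$, tolerances $\varepsilon_m\downarrow0$, and a chosen $n(m)$.
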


\begin{proof} 
We prove 1. and 2. simultaneously in two steps A. and B.
We show only the inclusion LHS \!$\subset$\! RHS since 
the opposite inclusion LHS \!$\supset$\! RHS is trivial 
due to the closed compact convexity of 
$\overline{\log(\text{EDP})}$ (\ref{subsec:10.3} Assertion 1.).

\medskip
 A. {\bf Finite part.} \ Let us consider an element 
$\omega\in \overline{\log(\text{EDP})}$ of the expression 
\eqref{eq:10.3.2}.
For $S\in \Conf_0$, recall that $\delta(S,T_n)$ is the \# of connected 
components of $T_n$ isomorphic to $S$. Let us show that {\it the limit 
\begin{equation}
\label{eq:10.4.4}
\mu_S\ :\ =\ \ {\#S}\ \underset{n\to \infty}{\lim}\frac{\delta(S,T_{n})}{\sharp T_{n}}\
\end{equation}
converges to a finite real number $\mu_S$ such that 
}
\begin{equation}\label{eq:10.4.5}
0\ \le\ {\sum}_{S\in\Conf_0}\mu_S\ \le\ 1 \ .
\end{equation}

Note that the kabi-map $\overline{K}$ \eqref{eq:8.4.1} is also continuous 
with respect to the classical topology.
So, it commutes with the classical limiting process $\underset{n\to\infty}{\ \ \lim^{cl}}
\frac{\mathcal{M}(T_n)}{\sharp T_n}$. 
Recalling the kabi-inversion formula \eqref{eq:7.3.1}, we calculate
\begin{equation*}
\begin{array}{l}
\overline{K}(\omega )\ =\ 
\overline{K}(\underset{n\to \infty}{\ \ \lim^{cl}}\frac{\mathcal{M}(T_{n})}{\sharp T_{n}})
\ =\ 
\underset{n\to\infty}{\ \ \lim^{cl}}\frac{\overline{K}(\mathcal{M}(T_{n}))}{\sharp T_{n}}
\ =\ 
\underset{n\to\infty}{\lim}\!\sum_{S\in\Conf_{0}}\!\!\frac{\delta(S,T_{n})}{\sharp T_{n}}
\mathcal{M}(S).
\end{array}
\end{equation*}
Here, the convergence on the RHS is the coefficient-wise convergence with
respect to the basis $\mathcal{M}(S)$ for $S\in\Conf_{0}$.
This implies the convergence of \eqref{eq:10.4.4}.

Let $C$ be any finite subset of $\Conf_{0}$.
For any $n \in \mathbb{Z}_{\ge0}$, one has
\begin{equation*}
{\sum}_{T\in C}\delta (T,T_{n})\cdot\sharp T\ \  \le\ \  \sharp T_{n}
\end{equation*}
since the LHS is equal to the cardinality of the vertices of the 
union of connected components of $T_{n}$
which is isomorphic to an element of $C$.
Dividing both sides by $\sharp T_{n}$ and taking the limit
$n \to \infty$, one has \eqref{eq:10.4.5}.

Define the {\it finite part of} $\omega$ by the absolutely convergent sum
\begin{equation}\label{eq:10.4.6}
\omega_{finite}\ :\ =\ {\sum}^{abs}_{S\in\Conf_{0}}\mu_S \frac{\mathcal{M}(S)}{\sharp S}
\end{equation} 
(apply {\rm \ref{subsec:10.2}} Assertion to \eqref{eq:10.4.5}). 
We remark that the coefficients $\mu_S$ are uniquely determined from $\omega$  
and are independent of the sequence $\{T_n\}_{n\in\Z_{\ge0}}$, 
due to the formula:
\begin{equation}
\label{eq:10.4.7}
\begin{array}{l}
\overline{K}(\omega )\ =\ \sum_{S\in\Conf_0}\mu_S \frac{\mathcal{M}(S)}{\sharp S}.
\end{array}
\end{equation}

B. {\bf Infinite part.} \ 
Put $\mu_\infty:=1-\sum_{S\in\Conf_0}\mu_S$.
Let us show that 

i) {\it if $\mu_\infty= 0$, then we have
$\omega \ = \ \omega_{finite}$,}
and 
ii) {\it if $\mu_\infty>0$, then there exists a 
unique element $\omega_\infty\in \LL_{\R,\infty}$ so that
}
$
\omega\ =\ \mu_\infty \ \omega_\infty\ +\ \omega_{finite}
$

For any $S\in \Conf_0$, let us denote by
$T_n (S)$ the isomorphism class of 
the union of the connected components of $T_n$ 
isomorphic to $S\in\Conf_0 $. Thus,
$\sharp T_n (S)= \delta(S,T_n)\sharp S$ and $\sharp T_n (S)/\#T_n \to \mu_S$ as $n\to\infty$.
For any finite subset $C$ of $\Conf_0$, 
put $T_n^* (C^c):=T_n \backslash\textstyle\bigcup\limits_{S\in C}
T_n (S)$ so that one has 
%
\vspace{-0.1cm}
\[
\frac{\mathcal{M}(T_n )}{\sharp T_n}\ =\
\frac{\mathcal{M}(T_n^* (C^c))}{\sharp T_n}+
\sum\limits_{S\in C}
\frac{\delta (S,T_n )\sharp S}{\sharp T_n }\cdot
\frac{\mathcal{M}(S)}{\sharp S}.
\tag*{$*$)}
\vspace{-0.1cm}
\]
%
For the given $C$ and for $\varepsilon >0$, 
there exists $n(C,\varepsilon )$ such that 

\noindent
a)
\centerline{ 
$\textstyle\sum\limits_{S\in C}|\mu_S-\sharp T_n (S)/\sharp T_n 
|\ <\ \varepsilon$ }

\noindent
for $n\ge n(C,\varepsilon )$. 
This implies
$|\mu_\infty - \sharp T_n^* (C^c)/\sharp T_n |<\varepsilon +
\textstyle\sum\limits_{S\in\Conf_0 \backslash C} \mu_S$.

Let $\{\varepsilon_m\}_{m\in\mathbb{\Z}_{\ge0}}$ be any sequence of positive real
numbers with $\varepsilon_m \downarrow 0$. Choose an increasing 
sequence $\{C_m\}_{m\in\mathbb{Z}_{\ge0}}$ of finite subsets of $\Conf_0$ 
satisfying 

\noindent
b)
\centerline{
$\textstyle\bigcup\limits_{m\in\mathbb{Z}_{\ge0}}C_m\ =\ \Conf_0$ 
\quad and\quad 
$\textstyle\sum\limits_{S\in\Conf_0 \backslash C_m}\mu_S\ <\ \varepsilon_m$.
}

\noindent
Put $n(m):= n(C_m , \varepsilon_m )$.
Then,  by definition of $\mu_\infty$ and by a) and b), one has

\noindent
c)
\centerline{
$|\mu_\infty-\sharp T_{n(m)}^* (C_m^c )/\sharp T_{n(m)}|\ <\ 2\varepsilon_m $.
}

Substituting $n$ and $C$ in $*)$ by $n(m)$ and $C_m$, respectively, 
we obtain a sequence of equalities indexed by $m\in\Z_{\ge0}$.
Let us prove:

i) {\it the second term of $*)$ absolutely converges to $\omega_{finite}$.}

ii) {\it if $\mu_\infty = 0$, then the first term of $*)$ converges to 0.}

iii) {\it if $\mu_\infty \neq 0$, then $T_m^* : = T_{n(m)}^* (C_m^c)\neq \emptyset$
for large $m$ and $\mathcal{M}(T_m^* )/\sharp T_m^*$ converges to an
element $\omega_\infty\in\overline{\log(\text{EDP})}\cap\LL_{\R,\infty}$.
}

{\it Proof of} i). For $m\in\Z_{\ge0}$, 
the difference of $\omega_{finite}$ and the second term of $*)$ 
is 
$
 \sum_{S\in\Conf_0 } c_S \frac{\mathcal{M}(S)}{\sharp TS }
$
where  $c_S :=\mu_S -\frac{\delta (S,T_{n(m)})\sharp S}
{\sharp T_{n(m)}}$ for $S\in C_m$ and $c_S :=\mu_S$ for 
$S\in\Conf_0 \backslash C_m$.
Therefore, using a) and the latter half of b), one sees that 
the sum $\sum_{S\in \Conf_0}|c_S|$ is bounded by 
$2\varepsilon_m$.
Then, due to a criterion in \ref{subsec:10.2} Assertion, 
the difference tends to 0 absolutely as $m \uparrow \infty$. $\Box$

{\it Proof of} ii).
Recall $c)$ $|\sharp T_{n(m)}^* (C_m^c)/\sharp T_{n(m)}
|<2\varepsilon_m$. The first term of $*)$ is given by
$\frac{\mathcal{M}(T_{n(m)}^* (C_m ))}{\sharp T_{n(m)}} =
\textstyle\sum\limits_{S\in\Conf_0 } \varphi (S)
\frac{A(S,T_{n(m)}^* (C_m ))}{\sharp T_{n(m)}}$,
where the coefficient of $\varphi (S)$ is either 0 if
$T_{n(m)}^* (C_m )\!=\!\emptyset$ or equal to
$\frac{\sharp T_{n(m)}^* (C_m )}{\sharp T_{n(m)}}
\frac{A(S,T_{n(m)}^* (C_m ))}{\sharp T_{n(m)}(C_m )}$
otherwise, which is bounded by $2\varepsilon_m q^{\sharp S-1}/\sharp\Aut(S)$.
So it converges to 0 as $m \uparrow \infty$. $\Box$

{\it Proof of} iii).
The sequence of the first term of the RHS of $*)$
converges to $\omega - \omega_{finite}$, since the LHS of $*)$ and the second term
of the RHS of $*)$ converge to $\omega$ and $\omega_{finite}$,
respectively.
On the other hand, due to c), one has 
$\sharp T_{n(m)}^* (C_m^c )/\sharp T_{n(m)}
>\mu_\infty -2\varepsilon_m$ 
for sufficiently large $m$, and hence one has 
$T_{n(m)}^* (C_m^c )\neq\emptyset$.
The first term is decomposed as:
\begin{equation*}
\frac{\mathcal{M}(T_{n(m)}^* (C_m^c ))}{\sharp T_{n(m)}}\ =\
\frac{T_{n(m)}^* (C_m^c )}{\sharp T_{n(m)}}
\frac{\mathcal{M}(T_{n(m)}^* (C_m^c ))}{\sharp T_{n(m)}^* (C_m^c )} ,
\end{equation*}
whose first factor converges to $\mu_\infty \neq 0$ due to $c)$.
Therefore, the second factor converges to some $\omega_\infty :=(\omega -\omega_{finite} )
/\mu_\infty$, which belongs to $\overline{\log(\text{EDP})}$ by
definition. Since $\overline{K}(\omega) = \overline{K}(\omega_{\infty})$, $\omega_\infty$ belongs to $\ker(\overline{K})$. $\Box$

These complete a proof of the Lemma.
\end{proof}

\subsection{Extremal points in $\Omega_\infty=\overline{\log(\text{EDP})}_{\infty}$.}
\label{subsec:10.5}

A point $\omega$ in a subset $A$ in a real vector space 
is called
an $extremal~ point$ of $A$ whenever an interval $I$  contained in $A$ 
contains $\omega$ then $\omega$  is a terminal point of $I$.

\vspace{-0.2cm}
\begin{assertion}
The extremal point of $\overline{\log(\text{EDP})}$ 
is one of the following:

{\rm i)}\ \   $\frac{\mathcal{M}(S)}{\sharp S}$ for an element  $S\in\Conf_0$, 

{\rm ii)} $\underset{n\to\infty}{\ \ \lim^{cl}}\frac{\mathcal{M}(T_n)}{\sharp T_n}$ for a sequence $T_n\in \Conf_0$
with $\#T_n\to\infty$ ($n\to\infty$).
\end{assertion}

\begin{proof}
For $\omega\in \overline{\log(\text{EDP})}$, if 
$\mu_\infty\not=0,\ 1$,  then $\omega$ cannot be extremal. 
If $\mu_\infty=0$, due to Corollary 1, the only possibility for 
$\omega$ to be extremal is when it is of the form 
$\frac{\mathcal{M}(S)}{\sharp S}$ for an element  $S\in\Conf_0$. 
In fact, using the uniqueness of the  expression
(Lemma 3.), $\frac{\mathcal{M}(S)}{\sharp S}$ 
can be shown to be extremal.

Suppose $\mu_\infty=1$.
For any fixed $S\in\Conf_0$ and  real $\varepsilon > 0$,
let $T_n^+ (S,\varepsilon)$ (resp. $T_n^- (S,\varepsilon)$) be the
subgraph of $T_n$ consisting of the components $T$ such that
$A(S,T)/\sharp T \ge a_S + \varepsilon$ (resp. $\le a_S - \varepsilon$).
Let us show that $\overline{\underset{n}{\lim}}~ \sharp T_n^{\pm}
(S,\varepsilon)/\sharp T_n = 0$.
If not, then there exists a subsequence $\{\widehat{n}\}$ such that
$\underset{\widehat{n}}{\lim}~\sharp T_{\widehat{n}}^{\pm}(S,\varepsilon)/
\sharp T_{\widehat{n}} = \lambda >0$.
Due to the compactness of 
$\overline{\log(\text{EDP})}$ ((\ref{subsec:10.3}) Assertion 1.),
we can choose a subsequence such that 
$\mathcal{M}(T_{\widehat{n}}^{\pm}(S,\varepsilon))/\sharp T_{\widehat{n}}^{\pm}
(S,\varepsilon)$ and $\mathcal{M}(T_{\widehat{n}}\backslash T_{\widehat{n}}^{\pm}
(S,\varepsilon))/\sharp (T_{\widehat{n}}\backslash T_{\widehat{n}}^{\pm}
(S,\varepsilon))$ converges to some 
$\textstyle{\sum}_{T\in\Conf_0}\varphi (T)\cdot b_T$ and
$\textstyle{\sum}_{T\in\Conf_0}\varphi (T)\cdot c_T$,
respectively, so that 
\vspace{-0.3cm}
\begin{equation*}
\omega = \lambda\cdot\textstyle{\sum}_{T\in\Conf_0}\varphi (T)\cdot b_T
+ (1-\lambda)\cdot\textstyle{\sum}_{T\in\Conf_0}\varphi (T)\cdot c_T.
\vspace{-0.2cm}
\end{equation*}
In particular, the coefficient of $\varphi (S)$ has the relation 
$a_S = \lambda \cdot b_S + (1-\lambda )\cdot c_S$.
Since $|b_S-a_S|\ge\varepsilon$,
$\lambda$ cannot be 1.
This contradicts the extremity of $\omega$.

For any finite subset $C$ of $\Conf_0$, put 
$T_n^* (C,\varepsilon):= T_n \backslash\bigcup_{S\in C}
\bigl( T_n^+ (S,\varepsilon)\cup T_n^- (S,\varepsilon)\bigr)$.
Then $T_n^* (C,\varepsilon)\neq \emptyset$ for sufficiently large $n$,
since $\underset{n\to\infty}{\lim}~ \sharp T_n^* (C,\varepsilon)/\sharp T_n = 1$
due to the above fact.
Let $\{C_m\}_{m\in\mathbb{Z}_{\ge0}}$ be an increasing sequence of finite
subsets of $\Conf_0$ such that $\bigcup_{m\in\mathbb{Z}_{\ge0}}
C_m =\Conf_0$ and let$\{\varepsilon_m\}_{m\in\mathbb{Z}_{\ge0}}$ be a sequence of 
real numbers with $\varepsilon_m\downarrow 0$.
For each $m\in\mathbb{Z}_{\ge0}$, choose any connected component of
$T_n^* (C_{m},\varepsilon_{m})$, say $T_m^*$, for large $n$,
and put $\omega_m :=\mathcal{M}(T_m^* )/\sharp T_m^* =
\sum_{S\in\Conf_0}\varphi (S)\cdot a_S^{(m)}$.
By definition $|a_S - a_S^{(m)}| < \varepsilon_{m}$ for 
$S\in C_m$, which implies $\omega = \underset{m\to\infty}{\ \ \lim^{cl}}~ \omega_m$.
There are two cases to consider:
i) Suppose $\exists$ a subsequence $\{\widehat{m}\}$ such that
$\sharp T_{\widehat{m}}^*$ is bounded.
Since $\sharp \{T\in\Conf_0 | \sharp T\le c\}$ for any constant
$c$ is finite, there exists $T\in\Conf_0$ which appears in
$\{T_m^*\}_m$ infinitely often.
So $\omega = \mathcal{M}(T)/\sharp T$ and $\overline{K}(\omega ) = 
\mathcal{M}(T)/\sharp T \neq 0$.
ii) Suppose $\sharp T_m^* \to \infty$.
Then the formula \eqref{eq:10.4.4} and \eqref{eq:10.4.7} imply
$\overline{K}(\omega ) = 0$.
\end{proof}

\subsection{Function value representation of elements of 
$\Omega_\infty=\overline{\log(\text{EDP})}_\infty$}
\label{subsec:10.6}

The coefficients $a_S$ 
at $S\in\Conf_0$ of the sequential limit 
$\omega=\underset{n\to\infty}{\ \ \lim^{cl}}\mathcal{M}(T_n)/\sharp T_n$ 
\eqref{eq:10.3.2} 
are usually hard to calculate. 
However, in certain good cases, we represent the coefficient as a special 
value of a function in one variable $t$.

Given an expression of the form \eqref{eq:10.3.2} of 
$\omega\in\overline{\log(\text{EDP})}_\infty$ 
and an increasing sequence  of integers $\{n_m\}_{n=0}^{\infty}$, 
we consider the following two formal power series in $t$.
 \vspace{-0.2cm}
\begin{align}\label{eq:10.6.1}
\vspace{-0.2cm}
P(t) &\ :=\ \sum_{m=0}^{\infty}\sharp T_m \cdot t^{n_m}
\qquad \in\ \mathbb{Z}\db[t\db],\\
\vspace{-0.2cm}
P\mathcal{M}(t) &\ :=\ \sum_{m=0}^{\infty}
\mathcal{M}(T_m) \cdot t^{n_m}\  \in\  \mathcal{L}_{\mathbb{Q}}\db[t\db]
=\mathcal{L}_{\mathbb{Q}\db[t\db]},
\vspace{-0.1cm}
\label{eq:10.6.2}
\end{align}
where, using the basis expansion \eqref{eq:8.2.3}, 
the series $P\mathcal{M}(t)$ can be expanded as 
\centerline{
$
P\mathcal{M}(t)\ =\ \sum_{S\in\Conf_0}\varphi(S) PM(S,t),
$}

\noindent
whose coefficients at $S\in \Conf_0$ are given by 
\vspace{-0.2cm}
\begin{equation}\label{eq:10.6.3}
PM(S,t)\ :=\ \partial_S P\mathcal{M}(t)\ =\ 
\sum_{m=0}^{\infty} A(S,T_{m})\cdot t^{n_m}
\in \mathbb{Q}\db[t\db] \ .
\vspace{-0.2cm}
\end{equation}
Since $T_n\in \Conf_+$, 
one has $P(t)\not=0$ and its radius of 
convergence is at most 1.  
\begin{lemma}
Suppose that the series $P(t)$ has a positive radius of convergence $r$. 
Then, 
for any $S\in \Conf_0$ (c.f.\ Remark), we have

{\rm i)} The series $PM(S,t)$
converges at least in the radius $r$ for $P(t)$. 
The radius 
of convergence of $PM(S,t)$ coincides with $r$, 
if $a_S:=\underset{m\to \infty}{\lim}\frac{M(S,T_m)}{\#T_m}\not=0$.

{\rm ii)} The following two limits in LHS and RHS give the same value:
\begin{equation}\label{eq:10.6.4}
\lim\limits_{t \uparrow r}\frac{ PM(S,t)}{P(t)}\ 
=\ \underset{n\to \infty}{\lim}\frac{M(S,T_n)}{\#T_n}.
\vspace{-0.1cm}
\end{equation}
Here by the notation $t\! \uparrow\! r$ we mean that the real variable $t$ tends to $r$ 
from below.

{\rm iii)} The proportion $P\mathcal{M}(t)/P(t)$ for 
$t\uparrow r$ 
converges to $\omega$ \eqref{eq:10.3.2}:
\begin{equation}\label{eq:10.6.5}
\omega\ =\ {\lim\limits_{t \uparrow r}}^{cl}\frac{P\mathcal{M}(t)}{P(t)} 
\ =\ \sum_{S\in\Conf_0}\varphi(S)\ \lim\limits_{t \uparrow r}\frac{ PM(S,t)}{P(t)}.
\end{equation}
\end{lemma}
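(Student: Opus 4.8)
The plan is to prove the three assertions in sequence, deriving (iii) as a coefficient-wise consequence of (ii) and (i). Throughout, recall that on $\Conf_0$ connected, $A(S,T)=M(S,T)$ \eqref{eq:6.1.6}, and that the classical topology on $\LL_{\R}$ is coefficient-wise convergence in the basis $\{\varphi(S)\}_{S\in\Conf_0}$ \eqref{eq:8.2.3}; so it suffices to control each coefficient $PM(S,t)=\partial_S P\mathcal M(t)=\sum_m A(S,T_m)t^{n_m}$ separately.

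For (i), the key input is the upper bound \eqref{eq:5.2.1} (or \eqref{eq:5.2.2} for connected $S$), which gives $A(S,T_m)\le \frac{1}{\#\Aut(S)}(\#T_m)^{n(S)}(q-1)^{\#S-n(S)}$. Since $\#T_m$ is the coefficient of $t^{n_m}$ in $P(t)$, whose radius of convergence is $r$, we get $\limsup_m (\#T_m)^{1/n_m}=1/r$, hence $\limsup_m A(S,T_m)^{1/n_m}\le 1/r$ as well (the polynomial factor $(\#T_m)^{n(S)}$ does not change the exponential growth rate, and for $S$ connected $n(S)=1$ so this is immediate; for general $S$ one uses \eqref{eq:5.2.3} to reduce to the connected case). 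Thus $PM(S,t)$ converges at least for $|t|<r$. For the reverse, if $a_S=\lim_m M(S,T_m)/\#T_m\neq 0$, then $A(S,T_m)\ge (a_S/2)\#T_m$ for large $m$, so $PM(S,t)$ diverges wherever $P(t)$ does, forcing its radius to be exactly $r$.

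For (ii), I would prove the Tauberian-type statement that for nonnegative coefficient sequences $\{b_m\}=\{\#T_m\}$ and $\{c_m\}=\{A(S,T_m)\}$ with $c_m/b_m\to a_S$ and $\sum b_m t^{n_m}\to\infty$ as $t\uparrow r$, one has $\bigl(\sum c_m t^{n_m}\bigr)/\bigl(\sum b_m t^{n_m}\bigr)\to a_S$. This is elementary: given $\varepsilon>0$ pick $N$ with $|c_m/b_m-a_S|<\varepsilon$ for $m\ge N$; split both sums at $N$; the head contributes a bounded quantity over $\sum b_m t^{n_m}\to\infty$, so its ratio $\to 0$; the tail ratio is within $\varepsilon$ of $a_S$ by the pointwise comparison $|\,\sum_{m\ge N}c_m t^{n_m}-a_S\sum_{m\ge N}b_m t^{n_m}\,|\le \varepsilon\sum_{m\ge N}b_m t^{n_m}$. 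The only hypothesis needed is $P(t)\to\infty$ as $t\uparrow r$, which holds because $\#T_m\ge 1$ (as $T_m\in\Conf_+$) forces $P(t)\ge\sum t^{n_m}$ to diverge at $t=1\ge r$; if $r<1$ divergence at $r$ follows since the radius is exactly $r$ and coefficients are nonnegative, so by Pringsheim $t=r$ is a singularity and the sum, being monotone increasing in $t$ on $[0,r)$ with nonnegative terms, tends to $+\infty$. (If $r=1$ one still has $\sum t^{n_m}\uparrow\infty$ because infinitely many $n_m$ occur.) Finally (iii): by (ii) each coefficient $PM(S,t)/P(t)\to a_S=\lim_n M(S,T_n)/\#T_n$, which by \eqref{eq:10.3.2} and the expansion noted after it is precisely the $\varphi(S)$-coefficient of $\omega$; since classical convergence in $\LL_\R$ is coefficient-wise, $P\mathcal M(t)/P(t)\to\omega$ in the classical topology, giving \eqref{eq:10.6.5}.

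The main obstacle is the analytic point in (ii): justifying that $P(t)\to\infty$ as $t\uparrow r$ and that the ratio of the two power series is well-defined and controllable near the boundary. This requires a genuine (if soft) Tauberian argument rather than a formal manipulation, and one must be careful that $P\mathcal M(t)/P(t)$ is interpreted coefficient-wise — i.e., that dividing the $\LL_{\Q\db[t\db]}$-valued series $P\mathcal M(t)$ by the scalar series $P(t)$ and then taking $t\uparrow r$ commutes with the projection $\partial_S$ onto each basis coefficient. This commutation is exactly what reduces the vector-valued limit \eqref{eq:10.6.5} to the scalar limits \eqref{eq:10.6.4}, and it holds because $\partial_S$ is continuous for the classical topology and $P(t)$ is a scalar; once this is observed, everything else is the elementary estimate above combined with the growth bound \eqref{eq:5.2.1}.
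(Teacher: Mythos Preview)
Your proof follows the paper's approach essentially line for line: the Hadamard formula combined with the growth bound \eqref{eq:5.2.1} for (i), the identical head--tail splitting Tauberian argument for (ii), and the coefficient-wise passage via the basis $\{\varphi(S)\}$ for (iii). One small caution: your appeal to Pringsheim does not by itself give $P(t)\to+\infty$ as $t\uparrow r$ (a nonnegative-coefficient series can have $t=r$ as a singularity yet remain bounded there, e.g.\ $\sum t^n/n^2$); the paper, however, simply asserts this divergence as a ``general property'' without justification, and in the actual application to balls $\Gamma_n$ it follows from the submultiplicativity $\#\Gamma_{m+n}\le\#\Gamma_m\#\Gamma_n$, which via Fekete's lemma yields $\#\Gamma_n\ge r^{-n}$ and hence $P(t)\ge\sum(t/r)^n\to\infty$.
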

\vspace{-0.2cm}
\begin{proof}
Before proceeding to the proof, we recall two general properties of 
power series:

\noindent
A) The radius of convergence of P(t) is  
$r\!: =\! 1/\limsup\limits_{m\to\infty}\sqrt[\leftroot{3}n_m]{\sharp T_m}$ (Hadamard).
%

\noindent
B) Since the coefficients $\sharp T_m$ of $P(t)$ are non-negative real numbers, 
{\it $P(t)$ is an increasing positive real function
on the interval (0,$r$) and 
$\lim\limits_{t\uparrow r}P(t)=+\infty$}.

%
We now turn to the proof. Due to the linear relations among $M(S,T_m)$ for $S\!\in\!\Conf$ \eqref{eq:8.3.2}, 
it is sufficient to show the lemma only for the cases $S\! \in\! \Conf_0$.

i) Let us show that $PM(S,t)$ for $S \in \Conf_0$ has the radius $r$
of convergence. 
Since we have 
$M(S,T_m)=A(S,T_m)$ (\ref{subsec:6.1} Remark 1),
using \eqref{eq:5.2.1}, we have

$
\limsup\limits_{m\to\infty}\sqrt[\leftroot{3}n_m]{M(S,T_m)}\ 
\le\ \limsup\limits_{m\to\infty}\sqrt[\leftroot{3}n_m]{\#T_m}\sqrt[\leftroot{3}n_m]{q^{\#S-1}/\#\Aut(S)}\ =\ 1/r.
$

\noindent
This proves the first half of i). The latter half is shown in the next ii).

\medskip
ii) We show that
the convergence of the sequence 
$  
A(S,T_{m})/\sharp T_m$ to some $a_S \in \mathbb{R}$
implies the convergence of the values of the function
$  
PM(S,t)/P(t)$ to $a_S$ as $t\uparrow r$.
The assumption implies that
for any $\varepsilon>0$,
there exists $N > 0$ such that
$|A(S,T_{m})/\sharp T_m - a_S|\le\varepsilon$ for all $m \ge N$. 
Therefore, 
\vspace {-0.2cm}
\begin{align*}
\begin{array}{rl}
\Big| \ \frac{PM(S,t)}{P(t)} -a_S\ \Big|
\ =\  &\ \frac{ |\ Q_N(t) + \textstyle\sum_{m=N}^{\infty}
\Bigl( A(S,T_{m})-a_S \cdot \sharp T_{m} \Bigr) t^{n_m}\ |}{P(t)}\\
\le \  &\ \frac{|\ Q_N (t)-\varepsilon \textstyle\sum_{m=0}^{N-1}\#T_mt^{n_m}\ |}
{P(t)} + \varepsilon 
\end{array}
\end{align*}
where $Q_N (t):=\sum_{m<N} \Bigl(A(S,T_m)-a_S
\cdot \sharp T_m \Bigr) t^{n_m}$ is a polynomial in $t$.
Due to statement B) above,
the first term of the last line tends to 0 as $t\uparrow r$.
Hence, $|PM(S,t)/P(t)-a_S|\le2\varepsilon$
for $t$ sufficiently close to $r$.
This proves \eqref{eq:10.6.4}.

If $a_S\not=0$, then 
$\lim\limits_{t\uparrow r}PM(S,t)=\infty$ 
since $\lim\limits_{t\uparrow r}P(t)=\infty$. 
Thus, the radius of convergence of $PM(S,t)$ is 
less or equal than $r$. This proves the latter
half of i). 

iii) We have only to recall that the classical topology on 
$\mathcal{L}_\mathbb{R}$ is the same as  coefficient-wise convergence 
with respect to the basis $\{\varphi(S)\}_{S\in\Conf_0}$.
\end{proof}

\begin{cor}
If $P(t)$ and $PM(S,t)$ ($S\!\in\!\Conf_0$) extend to meromorphic functions at
t=r, then  $PM(S,t)/P(t)$ is regular at $t\!=\!r$ and 
one has 
%
\begin{equation}\label{eq:10.6.6}
\omega\ =\ \sum_{S\in\Conf_0}\varphi(S)\ \frac{PM(S,t)}{P(t)}\Big|_{t=r}
\end{equation}
\end{cor}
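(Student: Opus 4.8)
The plan is to deduce the Corollary from part (iii) of the Lemma by a short complex-analytic argument. By Lemma (iii) we already know
\[
\omega\ =\ {\lim\limits_{t \uparrow r}}^{cl}\frac{P\mathcal{M}(t)}{P(t)}
\ =\ \sum_{S\in\Conf_0}\varphi(S)\ \lim\limits_{t \uparrow r}\frac{ PM(S,t)}{P(t)},
\]
so the only thing left to prove is that, under the meromorphy hypothesis, each real one-sided limit $\lim_{t\uparrow r}PM(S,t)/P(t)$ exists and equals the value at $t=r$ of the meromorphic function $PM(S,t)/P(t)$ (in particular that this quotient has no pole at $t=r$). Since the classical topology on $\LL_{\R}$ is coefficient-wise convergence with respect to $\{\varphi(S)\}_{S\in\Conf_0}$, assembling these scalar statements over $S\in\Conf_0$ yields \eqref{eq:10.6.6} immediately.

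The key steps, in order: First I would fix $S\in\Conf_0$ and write the quotient $g_S(t):=PM(S,t)/P(t)$, which by hypothesis is meromorphic in a neighborhood of $t=r$. By statement B) in the proof of the Lemma, $P(t)$ is positive and increasing on $(0,r)$ with $\lim_{t\uparrow r}P(t)=+\infty$; in particular $r$ is a genuine pole (or at least a singularity where $|P|\to\infty$) of $P$, and $P$ does not vanish on $(0,r)$. Next, I would argue that $g_S$ is regular at $t=r$: from part (i) of the Lemma the radius of convergence of $PM(S,t)$ is also $r$, so $PM(S,t)$ too blows up along $(0,r)$ (when $a_S\neq0$) or stays bounded (when $a_S=0$); in either case the order of the pole of $PM(S,t)$ at $t=r$ is at most that of $P(t)$ — this is where I use that $M(S,T_m)=A(S,T_m)\le \#T_m\cdot(q-1)^{\#S-1}/\#\Aut(S)$ by \eqref{eq:5.2.1}, which forces $|PM(S,t)|\le C_S\, P(t)$ for $t\in(0,r)$ with a constant $C_S$ depending only on $S$. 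A meromorphic function whose modulus is dominated near $r$ by $|P(t)|$, divided by $P(t)$, is bounded near $r$, hence $g_S=PM(S,t)/P(t)$ is holomorphic at $t=r$. Finally, since $g_S$ is continuous at $r$, the one-sided real limit $\lim_{t\uparrow r}g_S(t)$ equals $g_S(r)=\frac{PM(S,t)}{P(t)}\big|_{t=r}$, which combined with Lemma (iii) gives the Corollary.

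I would then close by noting the removable-singularity point more carefully, since that is the one place requiring a genuine (if standard) argument: write $P(t)=(r-t)^{-d}u(t)$ and $PM(S,t)=(r-t)^{-e}v(t)$ near $t=r$ with $u,v$ holomorphic and nonvanishing at $r$ and $d,e\in\Z$; the domination $|PM(S,t)|\le C_S P(t)$ on $(0,r)$ together with $P(t)\to+\infty$ forces $e\le d$, so $g_S(t)=(r-t)^{d-e}v(t)/u(t)$ extends holomorphically across $t=r$. (When $a_S=0$ one even gets $e<d$ or $v(r)=0$, consistently with $g_S(r)=a_S=0$.) Everything else — interchanging the limit with the coefficient-wise sum over $\Conf_0$, and identifying the limit with $\omega$ — is already supplied by Lemma (iii), so no further work is needed.

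The main obstacle is precisely the regularity claim ``$PM(S,t)/P(t)$ is regular at $t=r$'': it is not automatic from meromorphy alone that the quotient has a removable singularity, and one must feed in the positivity of the coefficients together with the a priori growth bound \eqref{eq:5.2.1} to control the pole order of the numerator by that of the denominator. Once that domination is in hand the rest is routine complex analysis, and the passage from the scalar statements to the $\LL_{\R}$-valued formula \eqref{eq:10.6.6} is immediate from the description of the classical topology.
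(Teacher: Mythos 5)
Your proposal is correct and follows essentially the same route as the paper: reduce via Lemma iii) to the regularity of $PM(S,t)/P(t)$ at $t=r$, and obtain that regularity from the coefficient bound \eqref{eq:5.2.2}, which gives $0\le PM(S,t)\le C_S\,P(t)$ on $(0,r)$ and hence boundedness of the quotient, ruling out a pole of the meromorphic quotient at $t=r$. The paper phrases this as a contradiction (a pole would force divergence along $(0,r)$) while you compare pole orders explicitly, but the substance is identical.
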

\begin{proof} 
We have to show that $PM(S,t)/P(t)$ 
becomes holomorphic at 
$t=r$ under the assumption.
If it were not holomorphic, it would have a pole at $t=r$ and hence
$\lim\limits_{t\uparrow r}PM(S,t)/P(t)$ diverges.
On the other hand, 
in view of \eqref{eq:5.2.2}, one has the inequality
$0 \le PM(S,t) \le P(t)\cdot q^{\sharp S-1}/\Aut(S)$ for
$t \in (0,r)$.
Then the positivity of $P(t)$ implies the boundedness
$0 \le PM(S,t)/P(t) \le q^{\sharp S-1}/\Aut(S)$ for
$t \in (0,r)$.
This is a contradiction.
\end{proof}
We sometimes call \eqref{eq:10.6.6} a {\it residual expression of}
 $\omega$, since 
the coefficients are given by the proportions of residues of 
meromorphic functions.

\begin{remark} {\bf 1.} The equality \eqref{eq:10.6.4} gives the following 
important replacement.
Namely, the RHS, which  is a sequential limit of rational numbers and 
is hard to determine in general, is replaced by  
the LHS, which is the limit of value of a function in a variable $t$
at the special point $t=r$ where $r$ is often a real algebraic number whose 
defining equation is easily calculable.

{\bf 2.} The convergence of the sequence 
${\lim\limits_{n\to\infty}}^{\!\!\! cl}\frac{M(S,T_n)}{\#T_n}$ does not imply 
the convergence of the 
series $PM(S,t)$ and $P(t)$ in a positive radius. 
Conversely, the convergence of the series $PM(S,t)$ and $P(t)$ in a positive 
radius does not 
imply the convergence of the sequence ${\lim\limits_{n\to\infty}}^{\!\!\! cl}\frac{M(S,T_n)}{\#T_n}$.

\end{remark}


\section{Limit space $\Omega(\Gamma,G)$ for a finitely generated monoid.}

We apply the space $\LL_{\R,\infty}$ to the study of finitely generated monoids.

For a pair $(\Gamma,G)$ of a monoid $\Gamma$ and a 
finite generating system $G$ with Assumptions 1, 2, 
we introduce 1) the limit space $\Omega(\Gamma,G)$ as a subset of 
$\LL_{\R,\infty}$, 2) another limit space $\Omega(P_{\Gamma,G})$ associated with Poincare series $P_{\Gamma,G}(t)$ of $(\Gamma,G)$, and  
3) a proper surjective map $\pi_\Omega:\Omega(\Gamma,G)\to \Omega(P_{\Gamma,G})$ 
(see 11.2 Theorem).

The main result of the present paper is given in 11.5 Theorem, where the sum of elements of a fiber of $\pi_\Omega$ is expressed by a linear combination of the proportions  of 
residues of the
Poincare series $P_{\Gamma,G}(t)$ and $P_{\Gamma,G}\mathcal{M}(t)$ 
at the poles on the circle of the radius of convergence of the Poincare series.

\vspace{-0.1cm}

\subsection{The limit space $\Omega(\Gamma,G)$ for 
a finitely generated monoid}
\label{subsec:11.1}

Let $\Gamma$ be a monoid with left and right cancellation conditions and let $G$ be its finite generatig system with $e\! \notin\! G$.
We denote by $(\Gamma,G)$ the associated colored oriented 
Cayley graph (\ref{subsec:2.1} Example 1). In this and the next section, we use $G\cup G^{-1}$ as the color set and $q:=\#(G\cup G^{-1})$ for the definition of $\Conf$ in \eqref{eq:2.2.1}.
The set of all isomorphism classes of finite subgraphs of $(\Gamma,G)$ 
is denoted by $\langle\Gamma, G\rangle$. Put  
$\langle\Gamma, G\rangle_0:= \langle\Gamma, G\rangle\cap \Conf_0$.

The length of $\gamma \in \Gamma$ with respect to $G$ is defined  by 
\begin{equation}
\label{eq:11.1.1}
\begin{array}{lll}
\ell_G (\gamma) :
& =\  \inf\ \{n\in\Z_{\ge0}\mid \gamma = g_1 \cdots g_n ~\text{for some}~ 
g_i \in G \ (i=1,\ldots ,n)\}\\
\end{array}\!\!\!\!\!\!
\end{equation}
We remark that in the above definition \eqref{eq:11.1.1}, we admit the expressions of $\gamma$ only in positive powers of elements of $G$ (except when $G$ itself already contains the inverse). This means that we allow only edges whose ``orientation'' fits with the orientation of the path. In particular, $l_G(\gamma)$ may not coincide with the distance of $\gamma$ from $e$ in the Cayley garaph.\footnote{
  The length $l_G$ coincides with the distance from $e$ for  the case $G\!=\!G^{-1}$ when $\Gamma$ is a group. Besides this case, there is an important class of monoids, where both concepts coincides, namely, when the monoid is defined by positive homogeneous relations [S-I].
}
 For $n\in\Z_{\ge0}$, let us consider the ``balls'' of radius $n$ of $(\Gamma,G)$ 
defined by
\begin{equation}
\label{eq:11.1.2}
\Gamma_n\ :\ =\ \  \{\  \gamma\in\Gamma \mid  \ell_G (\gamma) \le n\ \}.
\end{equation}
We shall denote $\dot\Gamma_n:=\Gamma_n\!\setminus\! \Gamma_{n-1}$ for $n\in\Z_{\ge0}$.
So far there is no confusions, we shall denote by $\Gamma_n$ its isomorphism 
class  $[\Gamma_n]\in \Conf_0$ also.

\begin{definition}
The {\it set of limit elements for} $(\Gamma,G)$ is defined by
\begin{equation}\label{eq:11.1.3}
\Omega(\Gamma,G)\ :\ =\  \LL_{\R,\infty}\cap 
\overline{\left\{\frac{\mathcal{M}(\Gamma_n)}{\sharp \Gamma_n} \mid n\in\Z_{\ge0}\right\}}\ ,
\vspace{-0.2cm}
\end{equation}
where  $\overline{A}$ is the closure of 
a subset $A\subset \LL_{\R}$ with respect to the classical topology.
\end{definition}

\begin{fact}
{\it The limit space $\Omega(\Gamma,G)$ is non-empty if and only if $\Gamma$ is infinite.}
\end{fact}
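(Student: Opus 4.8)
The plan is to prove both directions of the equivalence separately. First I would dispose of the ``only if'' direction: if $\Gamma$ is finite, then the balls $\Gamma_n$ stabilize to the whole graph $\Gamma$ for all sufficiently large $n$, so the sequence $\frac{\mathcal{M}(\Gamma_n)}{\sharp\Gamma_n}$ is eventually constant, equal to $\frac{\mathcal{M}(\Gamma)}{\sharp\Gamma}$ with $\Gamma\in\Conf_0$ a fixed (connected) configuration. Its unique accumulation point is therefore $\frac{\mathcal{M}(\Gamma)}{\sharp\Gamma}$, which is a finite-type element. Since $\mathcal{L}_{\R,\infty}=\ker(\overline K)$ and $\overline K(\frac{\mathcal{M}(\Gamma)}{\sharp\Gamma})=\frac{\mathcal{M}(\Gamma)}{\sharp\Gamma}\neq 0$ (by Remark following \eqref{eq:8.2.6}, $\varphi(U,S)=\delta(U,S)$ on connected configurations, so the kabi map is the identity on the $\mathcal{M}(T)$-indexed basis), the element does not lie in $\LL_{\R,\infty}$, whence the intersection in \eqref{eq:11.1.3} is empty.

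For the ``if'' direction, suppose $\Gamma$ is infinite. Then $\sharp\Gamma_n\to\infty$ as $n\to\infty$ (an infinite monoid with finite generating system has balls of unbounded cardinality, since $\Gamma=\bigcup_n\Gamma_n$). By \ref{subsec:10.3} Assertion 1, $\overline{\log(\operatorname{EDP})}$ is compact, and the sequence $\{\frac{\mathcal{M}(\Gamma_n)}{\sharp\Gamma_n}\}_n$ lies inside it (each $\Gamma_n\in\Conf_+$), so it has at least one accumulation point $\omega\in\overline{\log(\operatorname{EDP})}$ with respect to the classical topology; say $\omega=\lim^{cl}_{m\to\infty}\frac{\mathcal{M}(\Gamma_{n_m})}{\sharp\Gamma_{n_m}}$ for an increasing subsequence $n_m$. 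It remains to show $\omega\in\LL_{\R,\infty}=\ker(\overline K)$. For this I would appeal to the join decomposition \ref{subsec:10.4} Lemma: $\overline K(\omega)=\sum_{S\in\Conf_0}\mu_S\frac{\mathcal{M}(S)}{\sharp S}$ where $\mu_S=\sharp S\cdot\lim_{m\to\infty}\frac{\delta(S,\Gamma_{n_m})}{\sharp\Gamma_{n_m}}$ counts the asymptotic proportion of connected components of $\Gamma_{n_m}$ isomorphic to $S$. But each ball $\Gamma_{n_m}$ is connected (it is a connected subgraph of the Cayley graph containing $e$), so $\delta(S,\Gamma_{n_m})=0$ unless $S\simeq\Gamma_{n_m}$, in which case $\delta(S,\Gamma_{n_m})=1$. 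Since $\sharp\Gamma_{n_m}\to\infty$, for each fixed $S\in\Conf_0$ we have $\delta(S,\Gamma_{n_m})=0$ for all large $m$, hence $\mu_S=0$ for every $S$. Therefore $\overline K(\omega)=0$, i.e.\ $\omega\in\LL_{\R,\infty}$, and $\omega$ witnesses that $\Omega(\Gamma,G)\neq\emptyset$.

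The only genuinely delicate point is the claim that an infinite monoid with a finite generating system has $\sharp\Gamma_n\to\infty$. This requires care because, under the length convention \eqref{eq:11.1.1}, one uses only positive powers of generators, so a priori it is conceivable that the balls $\Gamma_n$ stabilize to a proper (finite) subset even though $\Gamma$ is infinite — elements of large ``positive length'' would then be unreachable. I would handle this by noting that $\Gamma=\bigcup_{n\ge0}\Gamma_n$ holds by definition of a generating system (every $\gamma$ is a finite product $g_1\cdots g_k$ of elements of $G$, hence $\ell_G(\gamma)\le k<\infty$), so if all $\Gamma_n$ were contained in a common finite set then $\Gamma$ itself would be finite, contradiction; thus $\sharp\Gamma_n$ is unbounded, and since $n\mapsto\sharp\Gamma_n$ is monotone non-decreasing it tends to $\infty$. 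This is the main obstacle to get right, but it is essentially bookkeeping once the definitions are unwound.
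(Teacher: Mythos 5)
Your proof is correct and follows essentially the same route as the paper: compactness of $\overline{\log(\text{EDP})}$ supplies accumulation points of $\{\mathcal{M}(\Gamma_n)/\sharp\Gamma_n\}$, and the kabi condition $\overline K(\omega)=0$ is checked via \eqref{eq:10.4.4} and \eqref{eq:10.4.7}, reducing the statement to whether $\sharp\Gamma_n\to\infty$. You merely spell out details the paper leaves implicit (connectedness of the balls, and that an infinite monoid generated by $G$ via positive words has unbounded balls), which are fine as you state them.
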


\noindent
{\it Proof.}
Since 
$\big\{\frac{\mathcal{M}(\Gamma_n)}{\sharp \Gamma_n}\mid n\! \in\!\Z_{\ge0}\big\}
\!\subset\! \log(EDP)$ and $\overline{\log(EDP)}$ 
is compact (\ref{subsec:10.3}), 
the sequence $\big\{\frac{\mathcal{M}(\Gamma_n)}{\sharp \Gamma_n}\mid n\!\in\!\Z_{\ge0}\big\}$
always has accumulation 
points. 
Due to  \eqref{eq:10.4.4} and  \eqref{eq:10.4.7}, an accumulation point 
$\omega$ belongs to $\LL_{\R,\infty}$, i.e.\ 
it satisfies the kabi-condition $\overline{K}(\omega)=0$, 
if and only if $\#\Gamma_n\!\to\! \infty$.\qquad $\Box$

\medskip
Since $\overline{\log(EDP)}$ is metrizable, any element $\omega$ in 
$\Omega(\Gamma,G)$ can be expressed as a sequential limit.
That is, there exists a subsequence $n_m\!\!\uparrow\!\!\infty$ 
of $n\!\!\uparrow\!\!\infty$ such that 
\vspace{-0.4cm}
\begin{equation}
\label{eq:11.1.4}
\omega= \underset{n_m\to\infty}{{\lim}^{cl}}
\frac{\mathcal{M}(\Gamma_{n_m})}{\sharp \Gamma_{n_m}}
\ = \sum_{S\ \in\ \langle\Gamma,G\rangle_0}\!\!\varphi(S)\
\underset{n_m\to\infty}{\lim}
\frac{A(S,\Gamma_{n_m})}{\sharp \Gamma_{n_m}}
\vspace{-0.2cm}
\end{equation}
where the coefficient of $\varphi(S)$ is convergent for all $S$. 

\smallskip
\noindent
{\bf Definition.}
We call a finitely generated monoid $(\Gamma,G)$  {\it simple} 
(resp. {\it finite}) {\it accumulating} 
if $\Omega(\Gamma,G)$ consists of a single (resp. finite number of) 
element(s).

\bigskip
\noindent
{\bf Assumption 1.}  From now on until the end of the present paper, we assume that the monoid $\Gamma$ is embeddable into a group. That is, there exists an injective homomorphism from $\Gamma$ into a group. This is obviously satisfied if $\Gamma$ is a group.

\bigskip
In the following Examples 1. and 2., we show that any 
polynomial growth group and any free group 
is simple accumulating. 
We first state some general properties of 
the set $\Gamma_n$, which are immediate consequences of the definition.

\begin{fact}
 1. For $m,n\in\Z_{\ge0}$, one has a natural surjection:
\begin{eqnarray}
\label{eq:11.1.5}
\Gamma_m  \ \times\ \Gamma_n \longrightarrow \Gamma_{m+n},& \ 
\gamma\times\delta  \mapsto \  \gamma\delta 
\end{eqnarray}
 2. For any $S\!\in\! \Conf_0$ with $S\le \Gamma_k$ ($k\in\Z_{\ge0}$)
and for any $n\!\in\! \Z_{\ge0}$, 
one has:
\begin{equation}\label{eq:11.1.6}
\#\Gamma_{n-k}\ \le\ \#(\Aut(S))\cdot A(S,\Gamma_{n})\ \le\ \#\Gamma_{n}.
\end{equation}
\end{fact}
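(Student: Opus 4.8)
The plan is to prove the two assertions directly from the definitions; \textbf{Assumption 1} will not be needed, only the left and right cancellation laws on $\Gamma$. For assertion~1, well-definedness is just subadditivity of $\ell_G$: if $\gamma=g_1\cdots g_a$ with $a\le m$ and $\delta=g_{a+1}\cdots g_{a+b}$ with $b\le n$ (all $g_i\in G$), then concatenation exhibits $\ell_G(\gamma\delta)\le m+n$, so $\gamma\delta\in\Gamma_{m+n}$. For surjectivity, given $\omega\in\Gamma_{m+n}$ take a shortest expression $\omega=g_1\cdots g_c$ with $c\le m+n$, put $p:=\min(c,m)$, and split it as $\gamma:=g_1\cdots g_p$ and $\delta:=g_{p+1}\cdots g_c$ (an empty product, i.e.\ $e\in\Gamma_0$, when $c\le m$); then $\ell_G(\gamma)\le p\le m$, $\ell_G(\delta)\le c-p\le n$, and $\gamma\delta=\omega$.

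For assertion~2, fix a representative $\mathbb{S}$ of $S$ realised literally as a full subgraph of $\Gamma_k$ (possible since $S\le\Gamma_k$), and recall from the proof of (\ref{subsec:5.2})~Lemma the identity $\#\Aut(S)\cdot A(S,\Gamma_n)=\#\Emb(\mathbb{S},\Gamma_n)$, where $\Emb(\mathbb{S},\Gamma_n)$ is the set of $G$-colored graph embeddings of $\mathbb{S}$ into $\Gamma_n$ (each subgraph of $\Gamma_n$ isomorphic to $S$ being the image of exactly $\#\Aut(S)$ such embeddings). It then suffices to prove $\#\Gamma_{n-k}\le\#\Emb(\mathbb{S},\Gamma_n)\le\#\Gamma_n$, under the convention $\Gamma_m:=\emptyset$ for $m<0$ (which renders the left inequality vacuous when $n<k$). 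For the upper bound I would fix a vertex $v_0$ of $\mathbb{S}$ (one exists as $S\in\Conf_0$) and show that $\varphi\mapsto\varphi(v_0)$ is injective on $\Emb(\mathbb{S},\Gamma_n)$: in $(\Gamma,G)$ each vertex $u$ has, for every color $g\in G\cup G^{-1}$, at most one neighbour joined to $u$ by an edge of color $g$ — namely $ug$ when $g\in G$ (left cancellation), or the unique $y$ with $y\alpha=u$ when $g=\alpha^{-1}$ (right cancellation). Since $\mathbb{S}$ is connected, induction on the graph distance from $v_0$ in $\mathbb{S}$, using that $\varphi$ preserves colors, propagates the value of $\varphi$ from $v_0$ to all vertices, so $\varphi$ is determined by $\varphi(v_0)$ and hence $\#\Emb(\mathbb{S},\Gamma_n)\le\#\Gamma_n$.

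For the lower bound, to each $\gamma\in\Gamma_{n-k}$ I would attach the left translation $L_\gamma:x\mapsto\gamma x$, which by the Example in (\ref{subsec:2.1}) is a color-preserving graph embedding of $(\Gamma,G)$ into itself; restricting it to the full subgraph $\mathbb{S}\subseteq\Gamma_k$ gives $L_\gamma|_{\mathbb{S}}\in\Emb(\mathbb{S},\Gamma_n)$, since $\ell_G(\gamma x)\le\ell_G(\gamma)+\ell_G(x)\le(n-k)+k=n$ for $x\in|\mathbb{S}|$, and since $(\gamma x)^{-1}(\gamma y)=x^{-1}y$ by left cancellation, so that $\gamma\mathbb{S}$ is a full subgraph of $\Gamma_n$ and $L_\gamma|_{\mathbb{S}}$ reflects adjacency as well as preserving it. The assignment $\gamma\mapsto L_\gamma|_{\mathbb{S}}$ is injective, because $L_\gamma|_{\mathbb{S}}=L_{\gamma'}|_{\mathbb{S}}$ forces $\gamma v_0=\gamma'v_0$, hence $\gamma=\gamma'$ by right cancellation; therefore $\#\Gamma_{n-k}\le\#\Emb(\mathbb{S},\Gamma_n)$. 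The computations are routine; the only points requiring care are verifying that $L_\gamma|_{\mathbb{S}}$ is a genuine \emph{full}-subgraph embedding into $\Gamma_n$ (adjacency reflected, not merely preserved) and using the two cancellation laws correctly to obtain the colored-neighbour uniqueness underlying the upper bound — both being immediate from the construction of the colored Cayley graph in the Example of (\ref{subsec:2.1}).
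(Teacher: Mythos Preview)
Your proof is correct and in fact more elementary than the paper's. The paper invokes \textbf{Assumption~1} (embeddability of $\Gamma$ into a group $\hat\Gamma$) and works inside $\hat\Gamma$: it identifies $\Aut(\mathbb{S})$ with the finite subgroup $\{g\in\hat\Gamma\mid g\mathbb{S}=\mathbb{S}\}$ acting freely, then for the lower bound counts images of the map $g\mapsto g\mathbb{S}$ on $\Gamma_{n-k}$ modulo the equivalence $g\sim h\Leftrightarrow g^{-1}h\in\Aut(\mathbb{S})$, and for the upper bound exhibits a set $P\subseteq\Gamma_n$ closed under right multiplication by $x^{-1}\Aut(\mathbb{S})x$ that surjects onto $\mathbb{A}(S,\Gamma_n)$. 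You bypass the ambient group entirely by passing through the embedding count $\#\Emb(\mathbb{S},\Gamma_n)=\#\Aut(S)\cdot A(S,\Gamma_n)$ from the proof of the (\ref{subsec:5.2})~Lemma, together with the rigidity principle that a colored embedding of a connected graph into the Cayley graph is determined by the image of one vertex --- which you derive directly from the two cancellation laws. This rigidity is exactly what underlies the paper's identification $\Aut(\mathbb{S})\simeq\{g\in\hat\Gamma\mid g\mathbb{S}=\mathbb{S}\}$ and the surjectivity of its map $P\to\mathbb{A}(S,\Gamma_n)$, though the paper leaves it implicit. Your route buys independence from Assumption~1; the paper's route makes the group-theoretic structure of $\Aut(\mathbb{S})\subset\hat\Gamma$ explicit, which is reused in later arguments (e.g.\ in \S\ref{subsec:11.2}).
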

\begin{proof} 1. Obvious by definition.

2. By the assumption on $S$, there exists a subgraph 
$\mathbb{S}\!\subset\!\Gamma_k$ such that $S\!=\![\mathbb{S}]$. Note that 
$Aut(S)\!\simeq\! Aut(\mathbb{S})\!= \!\{g\in \hat \Gamma\mid g\mathbb{S}\!=\!\mathbb{S}\}$ {\it is finite and its action is fixed point free.}
Consider a map $p$ from $\Gamma$ to the set of subgraphs of $(\Gamma,G)$
defined by $p(g):= g\mathbb{S}$, and define an equivalence relation 
$\sim$ on $\Gamma$ by ``$g\sim h 
\Leftrightarrow  g\mathbb{S}= h\mathbb{S} 
\Leftrightarrow g^{-1}h\in \Aut(\mathbb{S})$''. Then, one has
$A(S,\Gamma_n)\ge \#({\rm Image}(p|_{\Gamma_{n-k}}))=\#(\Gamma_{n-k}/\sim)
\ge \#(\Gamma_{n-k})/\#(\Aut(\mathbb{S}))$. 
This implies the first inequality.

Choose a point $x\in \mathbb{S}$. 
Consider a set 
 $ P\!:=\!\{g\!\in\!\Gamma_{n}\mid gx^{-1}\mathbb{S}\!\subset\! \Gamma_n\}$.
Then, the map 
$p|_P\circ x^{-1}: P\!\to\! \mathbb{A}(S,\Gamma_n)$ is surjective and 
$P$ is closed under the right multiplication of $x^{-1}\Aut(\mathbb{S})x$.
Then, one has $A(S,\Gamma_n)\!=\! \#(P)/\#(\Aut(\mathbb{S}))\!\le\!  \#(\Gamma_{n})/\#(\Aut(\mathbb{S}))$.
This implies the second inequality.
\end{proof}

 Let $(\Gamma,G)$ be a monoid such that $\underset{n\to\infty}\lim (\#\Gamma_{n-k}/\#\Gamma_n)=1$ for any $k\in \Z_{\ge0}$. Then, as a consequence of (11.1.6),  one has
\begin{equation}\label{eq:11.1.7}
\vspace{-0.1cm}
\underset{n\to\infty}\lim \frac{A(S,\Gamma_n)}{\#\Gamma_n} \ =\ \frac{1}{ \#(\Aut(S))}.
\end{equation}

\begin{example}
{\bf 1.}
 If $\Gamma$ is a group of polynomial growth,
 then it is simple accumulating for any generating system $G$ and the limit element is given by  
 \begin{equation}\label{eq:11.1.8}
 \omega_{\Gamma,G}\ :\ = \sum_{S \in \langle\Gamma,G\rangle_0}\frac{1}{ \#(\Aut(S))}\varphi(S).
 \end{equation}
\begin{proof}
For a group $(\Gamma,G)$ of polynomial growth (i.e.\ $\Gamma$ contains a 
finitely generated nilpotent group of finite index,  
Wolf and Gromov \cite{Gr1}), there exist constants
$c,d\in \Z_{>0}$ such that $\#\Gamma_n=cn^d +o(n^d)$ (Pansu \cite{P}).
\end{proof}

{\bf 2.}  Let $F_f$ be a free group with the generating system
$G\!=\!\{g_1^{\pm1},\!\cdots\!,g_f^{\pm1}\}$ for $f\!\in\!\Z_{\ge2}$. 
Then $(F_f,G)$ is simple accumulating. The limit element is given by
{\small
\begin{equation}\label{eq:11.1.9}
\omega_{F_f,G} :=
 \sum_{k=0}^{\infty}(2f-1)^{-k}
\left(\sum_{\substack{S\in \langle \Gamma,G\rangle_0\\d(S)=2k}}\varphi(S)+f^{-1}\sum_{\substack{S\in \langle \Gamma,G\rangle_0\\d(S)=2k+1}}\varphi(S)\right),
\end{equation}
}
where $d(S):=\max\{d(x,y)\mid x,y\in S\}$ is the {\it diameter} of $S$ for 
$S\!\in\! \langle F_f,G\rangle_0$.
\end{example}

\vspace{-0.3cm}
\begin{proof}
The induction relation: $\#\Gamma_{n+1}\!-\!(2f\!-1\!)\#\Gamma_n\!=\!2$ with 
the initial condition $\#\Gamma_0\!=\!1$ implies
$\#\Gamma_n\!=\!\frac{f(2f-1)^n-1}{f-1}$ for $n\!\in\!\Z_{\ge0}$ so that 
$P_{F_f,G}(t)=\frac{1+t}{(1-t)(1-(2f-1)t)}$.
On the other hand, for $S\in \langle F_f,G\rangle_0$ and  
for $n\!\ge\![d(S)/2]$, 
{\small
\begin{eqnarray}
\label{eq:11.1.10}
A(S,\Gamma_n)&\ =\ \begin{cases}\frac{f(2f-1)^{n-[d(S)/2]}-1}{f-1}& \text{if $d(S)$ is even,}\\
\frac{(2f-1)^{n-[d(S)/2]}-1}{f-1}& \text{if $d(S)$ is odd.}
\end{cases} \\
\label{eq:11.1.11}
\underset{n\to\infty}\lim\frac{A(S,\Gamma_n)}{\#\Gamma_n}& \ \ \ =\ 
\begin{cases}
(2f-1)^{-[d(S)/2]} &\text{ if $d(S)$ is even,}\\
f^{-1}(2f-1)^{-[d(S)/2]} &\text{ if $d(S)$ is odd.}
\end{cases}
\end{eqnarray}
}
We have only to prove the first formula. 
Depending on whether $d(S)$ is even or odd, 
$S$ has either one or two central points. 
 Then it is easy to see the following one to one correspondence:	
{\it an embedding of $S$ in $\Gamma_n$ $\Leftrightarrow$
an embedding of the central point(s) of $S$ in $\Gamma_n$ 
such that the distance from the point to the boundary of $\Gamma_n$ is at least
half of the diameter $[d(S)/2]$.}
Taking this into account, we can calculate directly the formula.
\end{proof}

\subsection{The space $\Omega(P_{\Gamma,G})$ of opposite sequences}
\label{subsec:11.2}

We introduce another accumulation set $\Omega(P)$, 
called  {\it the space of opposite sequences},
associated to certain real power series $P(t)$.
Under a suitable assumption on $(\Gamma,G)$, we have a fibration   
$\pi_\Omega: \Omega(\Gamma,G)\to\Omega(P_{\Gamma,G})$ for the Poincare series 
$P_{\Gamma,G}$ of $(\Gamma,G)$.
We construct semigroup actions on $ \Omega(\Gamma,G)$ and 
$\Omega(P_{\Gamma,G})$ generated by $\tilde \tau_\Omega$ and $\tau_\Omega$, 
respectively, which are equivariant with $\pi_\Omega$.

We start with a general definition. Consider a power series in $t$
\begin{equation}
\label{eq:11.2.1}
\begin{array}{l}
P(t) \ =\ \sum_{n=0}^\infty \gamma_nt^n
\end{array}
\vspace{-0.1cm}
\end{equation}
whose coefficients are real numbers. We assume that there exist  
positive real numbers $u,v$ (depending on $P$) such that 
$u\!\le\! \gamma_{n-1}/\gamma_n\! \le\! v$ for all $n\!\in\!\Z_{\ge1}$.
This, in particular, implies that $P$ is convergent of radius 
$r$ with $u\!\le\! r\! \le\! v$.

\medskip
{\bf Example.\ }If the sequence 
$\{\gamma_n\}_{n\in\Z_{\ge0}}$ is increasing and semi-multiplicative
$\gamma_{m+n}\!\le\!\gamma_m\gamma_n$, 
we may choose $u=1/\gamma_1$ and $v=1$. For example,
let $\gamma_n\!:=\!\#\Gamma_n$ ($n\!\in\!\Z_{\ge0}$) in the setting of 11.1, 
then \eqref{eq:11.1.5}
implies semi-multiplicativity.

Associated to $P$, consider 
a sequence $\{X_n(P)\}_{n\in\Z_{\ge0}}$ of polynomials:
\begin{equation}
\label{eq:11.2.2}
\begin{array}{ll}
\qquad \qquad 
X_n(P)\ := \ \sum_{k=0}^n\frac{\gamma_{n-k}}{\gamma_n}\ s^k, \ \ 
\ n=0,1,2,\cdots ,
\end{array}
\vspace{-0.1cm}
\end{equation}
in the space $\R\db[s\db]$
of formal power series, where $\R\db[s\db]$ is equipped with the formal 
classical topology, i.e.\ the product topology of  convergence of every coefficient in classical topology.
Since each coefficients of $X_n(P)$ are bounded, i.e.\ $u^k\!\le\! \frac{\gamma_{n-k}}{\gamma_n}\!\le\! v^k$, 
the sequence accumulates to a non-empty compact set:
\begin{eqnarray}
\label{eq:11.2.3}
&\qquad \Omega(P)  := \text{the set of accumulation points of 
the sequence \eqref{eq:11.2.2}}.
\vspace{-0.1cm}
\end{eqnarray} 
An element $a(s)\!=\!\Sigma_{k=0}^\infty a_ks^k$ of $\Omega(P)$ 
is called an {\it opposite series}.
The coefficients $\{a_k\}_{k=0}^\infty$
satisfies $u^k\le a_k\!\le\!v^k$. We call $a_1$ 
the 
{\it initial} of the opposite series $a$, denoted by $\iota(a)$.
Let us introduce the space of initials:
\begin{equation}
\label{eq:11.2.4}
\Omega_1(P)  := \text{the set of accumulation points of 
the sequence $\Big\{\!\!\frac{\gamma_{n-1}}{\gamma_n}\!\!\Big\}_{\!n\in\Z_{\ge0}}$\!\!}\!,
\!\!\!\!\!\!\!
\vspace{-0.1cm}
\end{equation}
which is a compact subset of the positive interval $[u,v]$. 
The projection map 
$a\!\in\!  \Omega(P)\mapsto \iota(a)\!\in\!\Omega_1(P)$ 
is a continuous surjective map.  

\begin{assertion} 
{\bf 1.} If a sequence $\{X_{n_m}(P)\}_{m\in\Z_{\ge0}}$ converges 
to an opposite sequence $a$, then the sequence $\{X_{n_m-1}(P)\}_{m\in\Z_{\ge0}}$
 converges also to an opposite sequence, denoted by $\tau_{\Omega}(a)$. 
We have 
\vspace{-0.2cm}
\begin{equation}
\label{eq:11.2.5}
 \tau_\Omega(a)\ =\ (a-1)/\iota(a) s.
\end{equation}

\vspace{-0.2cm}
{\bf 2.}  Consider a map 

\noindent
{\rm (11.2.5)*} \quad  \qquad  $\tau:\Omega(P)\longrightarrow \overline{\R\Omega}(P), 
\quad  a \ \mapsto \ \iota(a) \tau_\Omega(a)$ 

\medskip
\noindent
where $\overline{\R\Omega}(P)$ 
is a closed $\R$-linear subspace of $\R\db[s\db]$ generated by 
$\Omega(P)$. Then, the map $\tau$ naturally extends to an endomorphism of
$\overline{\R\Omega}(P)$.
\vspace{-0.3cm}
\end{assertion}
\begin{proof} 1.\ By definition, the sequence $\{\gamma_{n_m\!-\!1}\!/\gamma_{n_m}\!\}_m$
converges to the non-zero initial $\iota(a)\!\not=\!0$. 
Then, for any fixed $k\!>\!0$, the $(k-1)$th coefficient of $\tau_{\Omega}(a)$ is given by the 
limit of sequence
$\{\gamma_{n_m\!-\!k}/\gamma_{n_m\!-\!1}\}_m$ converging to 
$a_k/a_1$.

2. Let $\sum_{i\in I} c_i a_i(s)\!=\!0$ be a linear 
relation among opposite sequences $a_i(s)$ ($i\!\in\! I$) 
with $\#I\!<\!\infty$, then we also have  a  
linear relation $\sum_{i\in I} c_i a_{i,1}\tau_\Omega(a_i(s))\!=\!0$, 
since, using the expression \eqref{eq:11.2.5}, 
this follows from the original relation $\sum_{i=1}^\infty c_i a_i(s)\!=\!0$
and another one $\sum_{i=1}^\infty c_i \!=\!0$, which is obtained by
substituting $s\!=\!0$ in the first relation. This implies that $\tau$ 
is extended to a linear map: $\R\Omega(P)\to \overline{\R\Omega}(P)$. 
On the other hand, $a(s)\in \R\db[s\db]\mapsto (a(s)-a(0))/s\in \R\db[s\db]$ 
is a well-defined continuous map, so that it induces a map 
$\mathrm{End}_\R(\overline{\R\Omega}(P))$.
\end{proof}

\vspace{-0.2cm}
We return to the setting in \ref{subsec:11.1} and consider 
a Cayley graph $(\Gamma,G)$.
For the sequence $\{\Gamma_n\}_{n\in\Z_{\ge0}}$ \eqref{eq:11.1.2},  
we consider two series \eqref{eq:10.6.1} and \eqref{eq:10.6.2}:
\begin{equation}
\label{eq:11.2.6}
 \vspace{-0.4cm}
\begin{array}{llll}
P_{\Gamma,G}(t)\ &:=\ &\sum_{n=0}^{\infty}\sharp\Gamma_n\cdot t^n ,
\end{array}
\end{equation}
\begin{equation}
\label{eq:11.2.7}
\begin{array}{llll}
P_{\Gamma,G}\mathcal{M}(t)\ &:=\ &\sum_{n=0}^{\infty}
\mathcal{M}(\Gamma_n )\cdot t^n .
\end{array}
\end{equation}

\vspace{-0.1cm}
Here \eqref{eq:11.2.6} is well known \cite{M} 
as the growth (or Poincare) series
for  $(\Gamma,G)$, and \eqref{eq:11.2.7} 
 is the series which we study in the present paper.
Due to  \eqref{eq:11.1.5}, 
it is well known that the growth series converges 
with positive radius:
\begin{equation}
\label{eq:11.2.8}
r_{\Gamma,G}\ :=\ 1\ /\ 
\underset{n\to\infty}\lim \sqrt[\leftroot{3}n]{\#\Gamma_n}\ \ge\ 
1\ /\ \#\Gamma_1.
 \vspace{-0.1cm}
\end{equation}
Due to \ref{subsec:10.6} {\it Lemma} i), the series 
$P_{\Gamma,G}\mathcal{M}(t)$ converges in the same radius as 
$P_{\Gamma,G}(t)$. 
This fact can be directly confirmed by  using  
\eqref{eq:11.1.6} for $S\le [\Gamma_k]$ as 

\centerline{
$
\lim\limits_{n\to\infty}(\sqrt[\leftroot{3}n-k]{\#\Gamma_{n-k}})^{\frac{n-k}{n}}
\le 
\lim\limits_{n\to\infty}\sqrt[\leftroot{3}n]{\#(\Aut(S))A(S,\Gamma_{n})}
\le
\lim\limits_{n\to\infty}\sqrt[\leftroot{3}n]{\#\Gamma_{n}}.
$
}

\medskip
Let us consider the continuous linear projection map:
\begin{equation}
\label{eq:11.2.9}
\begin{array}{l}
\pi\ :\ \mathcal{L}_\R\langle\Gamma,G\rangle \longrightarrow\ \R\db[s\db],\quad 
\sum_{S\in \Conf_0}\varphi(S)\cdot a_S  \mapsto \sum_{k=0}^\infty a_{\Gamma_k}s^k .\!\!\!
\end{array}
\end{equation}
In order that the map $\pi$ induces the map 
$\pi_\Omega$ \eqref{eq:11.2.12},  
we consider the next two conditions {\bf S} and {\bf I} 
on the graph $(\Gamma,G)$.

First, let us reformulate the concept of {\it dead} element (c.f.\ Bogopolski \cite{Bo}, \cite{E2}) to a monoid: an element $g\in \Gamma$ 
is called {\it dead} with respect to $G$ if $\ell_G(gx)\le \ell_G(g)$ $\forall x\in G$.
\footnote
{The author is grateful to Takefumi Kondo for the information on some works on the subject.}
We denote by $D(\Gamma,G)$ the set of dead elements in $\Gamma$.

\begin{itemize}
\item 
{\bf S:}  \ The portion $\frac{\#(\Gamma_n\cap D(\Gamma,G))}{\#(\Gamma_n)}$ 
tends to 0 as $n \to \infty$.

 \vspace{-0.2cm}

\item
{\bf I:} \ For any connected subgraph $\mathbb{S}$ of $(\Gamma,G)$ and 
any element $g\in \hat \Gamma$, the 

 \vspace{-0.2cm}

\quad\   equality $\mathbb{S}\Gamma_1=g\mathbb{S}\Gamma_1$
implies $\mathbb{S}=g\mathbb{S}$, where 
$\mathbb{S}\Gamma_1\!:=\!\cup_{\alpha\in\mathbb{S}}\alpha \Gamma_1$.
\end{itemize}	

\noindent
{\bf Assumption 2.} From now on until the end of the present paper, we assume the conditions {\bf S} and {\bf I} hold for  $(\Gamma,G)$.

\begin{rem}
{\bf 1.} Bogopolski (\cite{Bo} Question(2)) asked whether 
{\bf S} holds for arbitrary finite generating system $G$ of a group $\Gamma$.  We ask the same question for a monoid $\Gamma$ satisfying {\bf Assumption 1.} and any finite generating system $G$.

{\bf 2.} Since $\Aut(S)$ is a finite subgroup of $\hat\Gamma$ for  
$S\!\in\! \langle\Gamma,G\rangle_0$, it is trivial
if $\hat \Gamma$ is  torsion free. Then,  {\bf I}
holds automatically for arbitrary finite generating system.

{\bf 3.} If $\Gamma$ has a torsion element $g$ of order 
$d>1$, define a new generating system 
$G':=\cup_{i,j=0,\cdots,d-1}(g^i\Gamma_1g^j)\setminus \{e\}$ for a given $G$. Then, 
the new unit ball $\Gamma_1'\!:=\!G'\cup\{e\}$ satisfies  $\Gamma_1'\!=\! g\Gamma_1'$.
That is, the condition {\bf I} fails for $\mathbb{S}:=\{e\}$. 
This suggests that in order to satisfy {\bf I}, $G$ should be small relative to torsion elements. 
It is an open question whether, for any finitely generated infinite group $\Gamma$, there always exists a generating system $G$
satisfying {\bf I}.
\end{rem}

\noindent
{\it Notation.} We define the {\it fattening} $S \Gamma_1$ for $S\!\in\! \langle\Gamma,G\rangle_0$ by the isomorphism class 
$[\mathbb{S}\Gamma_1]$ 
for any representative $\mathbb{S}$ of $S$ (the isomorphism class $[\mathbb{S}\Gamma_1]$ does not depend on a choice of $\mathbb{S}$ due to the embeddability of $\Gamma$ into a group).

We regard $ \mathcal{L}_\R\langle\Gamma,G\rangle$ as an 
$\R\db[s\db]$-module by letting $s$ 
 act on the basis by $\varphi(S)\!\mapsto\!\varphi(S\Gamma_1\!)$ and extending the action 
 formally to $\R\db[s\db]$. 
However, 
 {\it the map $\pi$ \eqref{eq:11.2.9} is not an $\R\db[s\db]$-homomorphism}
($S\Gamma_1\!=\!\Gamma_{k+1}$ does not imply  $S\!=\!\Gamma_k$).

Let us state some important consequences of the assumptions {\bf S} and {\bf I}. 
Recall the notation \eqref{eq:5.1.1} and \eqref{eq:5.1.2}). 

\begin{assertion}
For any $S\!\in\! \langle\Gamma,G\rangle_0$, one has 
the inequalities:
\begin{equation}
\begin{array}{l}
\label{eq:11.2.10}
0\le A(S\Gamma_1,\Gamma_{n})-A(S,\Gamma_{n-1})
\le \#S\cdot\#(\dot\Gamma_n\cap D(\Gamma,G)).
\end{array}
\end{equation}
\end{assertion}
\begin{proof}
 Consider a map 
$\mathbb{S}\!\in\! \mathbb{A}(S,\Gamma_{n-1})\! \mapsto\! \mathbb{S}\Gamma_1
\!\in\! \mathbb{A}(S\Gamma_1,\Gamma_{n})$. Then the condition 
{\bf I} implies the injectivity of the map.
This implies the first inequality.
Any element of  
$\mathbb{A}(S\Gamma_1,\Gamma_{n})$ is expressed as $\mathbb{S}\Gamma_1$ for a unique $\mathbb{S}\!\subset\!\Gamma_n$
with $[\mathbb{S}]\!=\!S$. 
If $\mathbb{S}\Gamma_1$ is not in the image of the above map (i.e.\ $\mathbb{S}\not\subset \Gamma_{n-1}$), then $\mathbb{S}\cap \dot\Gamma_n\!\not=\!\emptyset$ 
is a subset of $D(\Gamma,G)$. Thus, 
such $\mathbb{S}$ is of the form $ds^{-1}\mathbb{S}_0$ for some
$d\in \dot\Gamma_n\cap D(\Gamma,G)$ and some $s\in \mathbb{S}_0$ for a fixed 
$\mathbb{S}_0$ with $[\mathbb{S}_0]\!=\!S$. Thus 
the number of such  $\mathbb{\mathbb{S}}\Gamma_1$
is at most $\#(S)\cdot \#(\dot\Gamma_n\cap D(\Gamma,G))$. 
This implies the second inequality.
\end{proof}
\begin{cor}  For $n,k\in \Z_{\ge0}$ with $n-k\ge0$, one has the 
inequalities:
\vspace{-0.1cm}
\begin{equation}
\vspace{-0.1cm}
\label{eq:11.2.11}
0\le A(\Gamma_k,\Gamma_{n})-\#(\Gamma_{n-k}) \le  \#(\Gamma_{k-1})\#(\Gamma_n\cap D(\Gamma,G)).
\end{equation}
\end{cor}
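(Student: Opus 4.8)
The plan is to deduce Corollary \eqref{eq:11.2.11} from Assertion \eqref{eq:11.2.10} by a telescoping argument along the chain of fattenings $\Gamma_{n-k}\subset\Gamma_{n-k}\Gamma_1\subset\cdots$. First I would observe that, under {\bf Assumption 1} (embeddability of $\Gamma$ into a group), one has the identity $\Gamma_{j}\Gamma_1=\Gamma_{j+1}$ as configurations, since in the Cayley graph $\Gamma_j\Gamma_1=\bigcup_{\gamma\in\Gamma_j}\gamma\Gamma_1$ is precisely the ball of radius $j+1$; hence applying \eqref{eq:11.2.10} with $S:=\Gamma_{n-k+i-1}$ (which indeed lies in $\langle\Gamma,G\rangle_0$ and satisfies $S\Gamma_1=\Gamma_{n-k+i}$) gives, for each $i=1,\dots,k$,
\[
0\le A(\Gamma_{n-k+i},\Gamma_n)-A(\Gamma_{n-k+i-1},\Gamma_n)\le \#\Gamma_{n-k+i-1}\cdot\#(\dot\Gamma_{?}\cap D(\Gamma,G)).
\]
Here I need to be careful: \eqref{eq:11.2.10} as stated relates $A(S\Gamma_1,\Gamma_n)$ to $A(S,\Gamma_{n-1})$, so the telescoping is not between $A(\cdot,\Gamma_n)$ at consecutive radii but mixes the outer ball as well. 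The cleaner route is to set $T:=\Gamma_n$ fixed and iterate \eqref{eq:11.2.10} in the form $A(\Gamma_{j+1},\Gamma_n)\ge A(\Gamma_j,\Gamma_{n-1})$ — but this changes $\Gamma_n$ to $\Gamma_{n-1}$, forcing a double induction on both $n$ and $k$.

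So the key steps, in order, are: (1) prove the configuration identity $\Gamma_j\Gamma_1=\Gamma_{j+1}$ using surjectivity of \eqref{eq:11.1.5} together with embeddability; (2) for the lower bound $A(\Gamma_k,\Gamma_n)\ge\#\Gamma_{n-k}$, note this is already \eqref{eq:11.1.6} applied with $S=\Gamma_k$ (whose automorphism group, being a ball, acts freely and is detected inside $\hat\Gamma$), giving $\#\Aut(\Gamma_k)\cdot A(\Gamma_k,\Gamma_n)\ge\#\Gamma_{n-k}$, and then using {\bf I} to conclude $\Aut(\Gamma_k)$ is relevant — actually for the lower bound we just need the map $\gamma\mapsto\gamma\Gamma_k$ restricted to a translating set, so I would give a direct argument: the $\#\Gamma_{n-k}$ left-translates $\gamma\Gamma_k$ for $\gamma\in\Gamma_{n-k}$ all lie in $\Gamma_n$ and are distinct as subgraphs because {\bf I} (or embeddability) lets us recover $\gamma$ up to $\Aut$, and one must quotient — so in fact the sharp statement $A(\Gamma_k,\Gamma_n)\ge\#\Gamma_{n-k}$ uses {\bf I} to show these translates are genuinely distinct configurations inside $\Gamma_n$; (3) for the upper bound, telescope \eqref{eq:11.2.10}: write $A(\Gamma_k,\Gamma_n)-\#\Gamma_{n-k}$ as a sum of $k$ increments, each controlled by $\#\Gamma_{j}\cdot\#(\dot\Gamma_{m}\cap D(\Gamma,G))$ for appropriate $j<k$ and $m\le n$, then bound $\#\Gamma_j\le\#\Gamma_{k-1}$ and $\sum_m\#(\dot\Gamma_m\cap D)\le\#(\Gamma_n\cap D(\Gamma,G))$ since the $\dot\Gamma_m$ partition $\Gamma_n$.

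The main obstacle I anticipate is step (3): getting the telescoping to close cleanly, because \eqref{eq:11.2.10} trades a fattening on the inner configuration for a shrinking of the outer ball ($\Gamma_{n-1}$ instead of $\Gamma_n$), so a naive telescope does not stay at fixed outer radius $\Gamma_n$. The fix is to reorganize: apply \eqref{eq:11.2.10} with $S=\Gamma_{j}$ and outer graph $\Gamma_{n}$ directly — but \eqref{eq:11.2.10} is stated for $A(S\Gamma_1,\Gamma_n)$ versus $A(S,\Gamma_{n-1})$, so I would instead first prove the fixed-outer-radius variant $0\le A(\Gamma_{j+1},\Gamma_n)-A(\Gamma_j,\Gamma_n)\le\#\Gamma_j\cdot\#(\dot\Gamma_{n-j}\cap D(\Gamma,G))$ by repeating the proof of Assertion \eqref{eq:11.2.10} verbatim (the map $\mathbb S\mapsto\mathbb S\Gamma_1$ from $\mathbb A(\Gamma_j,\Gamma_n)$ into $\mathbb A(\Gamma_{j+1},\Gamma_n)$ is injective by {\bf I}, and the cokernel is indexed by subgraphs meeting the dead-element part of the relevant shell). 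Summing this variant over $j=n-k,\dots,n-1$ and collapsing the telescoping sum, together with the partition estimate $\sum_{m}\#(\dot\Gamma_m\cap D(\Gamma,G))=\#(\Gamma_n\cap D(\Gamma,G))$ and $\#\Gamma_j\le\#\Gamma_{k-1}$ for $j\le k-1$ (rescaling indices, using that the innermost radius is at most $k-1$ for the increment-weight when measured appropriately), yields exactly \eqref{eq:11.2.11}. I would double-check the index bookkeeping so that the weight multiplying each dead-shell count is $\#\Gamma_{k-1}$ and not something larger.
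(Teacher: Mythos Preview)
Your proposed ``fixed-outer-radius variant''
\[
0\le A(\Gamma_{j+1},\Gamma_n)-A(\Gamma_j,\Gamma_n)\le\#\Gamma_j\cdot\#(\dot\Gamma_{n-j}\cap D(\Gamma,G))
\]
is false already in its left inequality: as $j$ grows the balls $\Gamma_j$ become harder to embed in $\Gamma_n$, and indeed $A(\Gamma_0,\Gamma_n)=\#\Gamma_n$ while $A(\Gamma_n,\Gamma_n)=1$, so $A(\Gamma_j,\Gamma_n)$ is \emph{decreasing} in $j$. The reason the proof of \eqref{eq:11.2.10} does not carry over verbatim is precisely that the map $\mathbb{S}\mapsto\mathbb{S}\Gamma_1$ does not send $\mathbb{A}(\Gamma_j,\Gamma_n)$ into $\mathbb{A}(\Gamma_{j+1},\Gamma_n)$: if $\mathbb{S}$ meets $\dot\Gamma_n$, the fattening $\mathbb{S}\Gamma_1$ exits $\Gamma_n$. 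And even if such a variant held, telescoping it at fixed outer radius would yield $A(\Gamma_k,\Gamma_n)-A(\Gamma_0,\Gamma_n)=A(\Gamma_k,\Gamma_n)-\#\Gamma_n$, not $A(\Gamma_k,\Gamma_n)-\#\Gamma_{n-k}$.

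The route you mention and then discard---the double induction---is the correct one, and it is what the paper does. Apply \eqref{eq:11.2.10} with $S=\Gamma_{k-1}$ (so $S\Gamma_1=\Gamma_k$) and outer ball $\Gamma_n$:
\[
0\ \le\ A(\Gamma_k,\Gamma_n)-A(\Gamma_{k-1},\Gamma_{n-1})\ \le\ \#\Gamma_{k-1}\cdot\#(\dot\Gamma_n\cap D(\Gamma,G)).
\]
Now induct on $k$: the hypothesis applied to the pair $(k-1,n-1)$ (note that $n-k$ is preserved) gives $0\le A(\Gamma_{k-1},\Gamma_{n-1})-\#\Gamma_{n-k}\le\#\Gamma_{k-2}\cdot\#(\Gamma_{n-1}\cap D(\Gamma,G))$. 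Add the two displayed estimates and use $\#\Gamma_{k-2}\le\#\Gamma_{k-1}$ together with the partition $\#(\dot\Gamma_n\cap D)+\#(\Gamma_{n-1}\cap D)=\#(\Gamma_n\cap D)$. The base case $k=0$ is $A(\Gamma_0,\Gamma_n)=\#\Gamma_n$. Both inequalities of \eqref{eq:11.2.11} come out of this single telescope, so no separate argument for the lower bound is needed.
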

\noindent
{\it Proof.} 
We show by induction on k, where $k\!=\!0$ is trivial (put 
$\#\Gamma_{-1}\!:=\!0$).
Assume for $k-1$. Let $n$ be an integer with $n\ge k$. 
Applying \eqref{eq:11.2.10} for $S=\Gamma_{k-1}$, one has
$0\le A(\Gamma_{k},\Gamma_{n})\!-\! \A(\Gamma_{k-1},\Gamma_{n-1})\!\le\! \#\Gamma_{k-1}\cdot\#(\dot\Gamma_n\cap D(\Gamma,G))$. 
This together with the induction hypothesis implies  \eqref{eq:11.2.11}.
\quad  $\Box$

\bigskip
Under {\bf Asumpptions 1} and {\bf 2}, we show the main result of the present section: the map $\pi$ (11.2.9) 
induces the fibration map $\pi_\Omega:\Omega(\Gamma,G)\!\to\! \Omega(P_{\Gamma,G})$. 
The fibration is the key structure of the whole present  paper.

\begin{thm} 
{\bf 1.} If $\underset{n_m\to\infty}{{\lim}^{cl}}
\frac{\mathcal{M}(\Gamma_{n_m})}{\sharp \Gamma_{n_m}}$ converges 
to an element $\omega\in\Omega(\Gamma,G)$, then 
$\underset{n_n\to\infty}{{\lim}^{cl}}
X_{n_m}(P_{\Gamma,G})$
converges to $\pi(\omega)\in \R \db[s\db]$.
We denote by 
\begin{equation}
\label{eq:11.2.12}
\pi_\Omega\ :\ \Omega(\Gamma,G) \ \longrightarrow\ \Omega(P_{\Gamma,G})
\end{equation}
the induced map. The $\pi_\Omega:=\pi|_{\Omega(\Gamma,G)}$ is 
a surjective and continuous map. 

{\bf 2.} If a sequence 
$\left\{\!\frac{\mathcal{M}(\Gamma_{n_m})}{\#\Gamma_{n_m}}\!\right\}_{m\in\Z_{\ge0}}$ 
\!\!\!converges to an element $\omega\in\Omega(\Gamma,G)$, 
then the sequence 
$\left\{\!\frac{\mathcal{M}(\Gamma_{n_m-1})}{\#\Gamma_{n_m-1}}\!\right\}_{m\in\Z_{\ge1}}$\!\!\! 
converges also to an element, depending only on $\omega$, denoted by $\tilde\tau_\Omega(\omega)$.
For 
$\omega=\sum_{S\in\langle\Gamma,G\rangle_0}a_S\varphi(S)\in \Omega(\Gamma,G)$, 
one has
\vspace{-0.2cm}
\begin{equation}
\label{eq:11.2.13}
\begin{array}{l}
\tilde \tau_{\Omega}\left(\omega\right)\ =\ \frac{1}{\iota(\pi_\Omega(\omega))}\sum_{S\in \langle\Gamma,G\rangle_0}. a_{S\Gamma_1}\varphi(S).
\end{array}
\end{equation}
Using the notation $\partial_S$\! and $\partial_{S\Gamma_1}$\!
for $S\!\in\! \langle\Gamma,G\rangle_0$ in\! \S{\rm 8.1}, {\rm (11.2.13)} is equivalent to\!\! 
\[\begin{array}{ll}
\partial_S(\tilde{\tau}_\Omega\omega)\ =\ \frac{1}{\iota(\pi_\Omega(\omega))}\partial_{S\Gamma_1}(\omega) .
\end{array}
\leqno{(11.2.13)^*}
\]
Then, $\pi_\Omega$ \eqref{eq:11.2.12} 
is equivariant with respect to the actions 
$\tilde \tau_\Omega$ and $\tau_\Omega$.

{\bf 3.} Let us denote by $\overline{\R\Omega}(\Gamma,G)$ the closed $\R$-linear subspace 
of $\mathcal{L}_{\R,\infty}$ generated by $\Omega(\Gamma,G)$. Define 
a map $\tilde \tau$ from $\Omega(\Gamma,G)$ to  $\overline{\R\Omega}(\Gamma,G)$ by 
\begin{equation}
\label{eq:11.2.14}
\begin{array}{l}
\tilde \tau(\omega) \ :=\ \iota(\pi_\Omega(\omega))\ \tilde \tau_{\Omega}(\omega).
\end{array}
\end{equation}
Then, $\tilde \tau$ naturally extends to an $\R$-linear endomorphism of 
$\overline{\R\Omega}(\Gamma,G)$.

{\bf 4.} The restriction of $\pi$ \eqref{eq:11.2.9} ($=$ the $\R$-linear extension of $\pi_\Omega$):
\begin{equation}
\label{eq:11.2.15}
\pi\ :\ \overline{\R \Omega}(\Gamma,G) \longrightarrow \overline{\R \Omega}(P_{\Gamma,G}).
\end{equation}
is equivariant with respect to the  endomorphisms $\tilde \tau$ and $\tau$, \ 
i.e.\ $\tau\circ\pi=\pi\circ\tilde\tau$.
\end{thm}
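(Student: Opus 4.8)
The plan is to establish the four assertions in the natural logical order, since each builds on the previous one. First I would prove part 1 by combining Corollary (11.2.11) with Lemma 10.6. Given a sequence $n_m\to\infty$ with $\frac{\mathcal M(\Gamma_{n_m})}{\#\Gamma_{n_m}}\to\omega=\sum_S\varphi(S)a_S$ in the classical topology, the coefficient $a_{\Gamma_k}$ equals $\lim_m\frac{A(\Gamma_k,\Gamma_{n_m})}{\#\Gamma_{n_m}}$. By (11.2.11) and Assumption {\bf S}, $\frac{A(\Gamma_k,\Gamma_{n_m})}{\#\Gamma_{n_m}}$ and $\frac{\#\Gamma_{n_m-k}}{\#\Gamma_{n_m}}$ differ by at most $\#\Gamma_{k-1}\cdot\frac{\#(\Gamma_{n_m}\cap D(\Gamma,G))}{\#\Gamma_{n_m}}\to 0$; hence $a_{\Gamma_k}=\lim_m\frac{\#\Gamma_{n_m-k}}{\#\Gamma_{n_m}}$, which is precisely the $k$-th coefficient of $\lim_m X_{n_m}(P_{\Gamma,G})$. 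So $\pi(\omega)=\lim^{cl}X_{n_m}(P_{\Gamma,G})\in\Omega(P_{\Gamma,G})$, giving well-definedness of $\pi_\Omega$. Continuity is inherited from continuity of the linear projection $\pi$ in (11.2.9); surjectivity follows because every opposite series in $\Omega(P_{\Gamma,G})$ arises from some subsequence $\{X_{n_m}\}$, and by compactness of $\overline{\log(\mathrm{EDP})}$ (10.3 Assertion 1) one may pass to a further subsequence along which $\frac{\mathcal M(\Gamma_{n_m})}{\#\Gamma_{n_m}}$ itself converges, necessarily to an element of $\Omega(\Gamma,G)$ lying over the given opposite series.

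For part 2, I would again use compactness to extract, from any subsequence realizing $\omega$, a further subsequence along which $\frac{\mathcal M(\Gamma_{n_m-1})}{\#\Gamma_{n_m-1}}$ converges, say to $\omega'=\sum_S\varphi(S)b_S$; then I must identify $\omega'$ intrinsically in terms of $\omega$ (so it does not depend on choices). The key computation: $b_S=\lim_m\frac{A(S,\Gamma_{n_m-1})}{\#\Gamma_{n_m-1}}$. Using Assertion (11.2.10) together with {\bf S}, $\frac{A(S\Gamma_1,\Gamma_{n_m})}{\#\Gamma_{n_m}}-\frac{A(S,\Gamma_{n_m-1})}{\#\Gamma_{n_m}}\to 0$; dividing through by $\frac{\#\Gamma_{n_m-1}}{\#\Gamma_{n_m}}\to\iota(\pi_\Omega(\omega))\neq 0$ yields $b_S=\frac{1}{\iota(\pi_\Omega(\omega))}\,a_{S\Gamma_1}$, which is exactly (11.2.13) and its restatement $(11.2.13)^*$. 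Since the right-hand side depends only on $\omega$, the limit $\tilde\tau_\Omega(\omega)$ is well-defined and independent of the extracting subsequence; a standard subsequence argument then shows the full sequence $\frac{\mathcal M(\Gamma_{n_m-1})}{\#\Gamma_{n_m-1}}$ converges. Equivariance $\pi_\Omega\circ\tilde\tau_\Omega=\tau_\Omega\circ\pi_\Omega$ follows by comparing (11.2.13) with the formula (11.2.5) for $\tau_\Omega$, checking coefficient-by-coefficient: the $k$-th coefficient of $\pi_\Omega(\tilde\tau_\Omega\omega)$ is $\frac{1}{\iota}a_{\Gamma_k\Gamma_1}=\frac{1}{\iota}a_{\Gamma_{k+1}}$, which is the $k$-th coefficient of $(\pi_\Omega(\omega)-1)/\iota\, s=\tau_\Omega(\pi_\Omega(\omega))$.

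Parts 3 and 4 are then formal. For part 3, I would argue exactly as in the proof of 11.2 Assertion 2 for $\tau$: any finite linear relation $\sum_{i\in I}c_i\omega_i=0$ among elements of $\Omega(\Gamma,G)$ forces, via $(11.2.13)^*$ and the fact that the constant term of $\pi_\Omega(\omega_i)$ is $1$ (so substituting ``$s=0$'' in $\pi$ gives $\sum c_i=0$, equivalently $\sum_i c_i\,a_{S}^{(i)}\big|_{S=\Gamma_0}$-type normalization), the relation $\sum_{i\in I}c_i\,\iota(\pi_\Omega(\omega_i))\,\tilde\tau_\Omega(\omega_i)=\sum_i c_i\,\tilde\tau(\omega_i)=0$; concretely, $\partial_S(\tilde\tau\omega_i)=\partial_{S\Gamma_1}(\omega_i)$, so $\tilde\tau$ agrees on $\overline{\R\Omega}(\Gamma,G)$ with the continuous linear ``de-fattening'' operator $\varphi(S\Gamma_1)\mapsto\varphi(S)$ restricted to that subspace, hence extends to an $\R$-linear endomorphism. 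For part 4, equivariance $\tau\circ\pi=\pi\circ\tilde\tau$ on $\overline{\R\Omega}(\Gamma,G)$ is obtained by extending the identity $\pi_\Omega\circ\tilde\tau_\Omega=\tau_\Omega\circ\pi_\Omega$ from $\Omega(\Gamma,G)$ to the generated closed linear subspace by linearity and continuity of all four maps. The main obstacle I anticipate is part 2: one must be careful that $\iota(\pi_\Omega(\omega))\neq 0$ (which uses the two-sided bound $u\le\gamma_{n-1}/\gamma_n\le v$ from the Example following (11.2.8), i.e. $r_{\Gamma,G}\ge 1/\#\Gamma_1>0$) so that dividing by it is legitimate, and that Assertion (11.2.10) is applied uniformly in $S$ while the error term $\#S\cdot\#(\dot\Gamma_n\cap D(\Gamma,G))$ is controlled against $\#\Gamma_n$ — this is exactly where Assumption {\bf S} is essential, and getting the bookkeeping right for all $S\in\langle\Gamma,G\rangle_0$ simultaneously (rather than one $S$ at a time) is the delicate point.
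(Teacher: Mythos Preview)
Your proposal is correct and follows essentially the same approach as the paper's own proof: part~1 via (11.2.11) and Assumption~{\bf S}, part~2 via (11.2.10) and the identity rewriting $\frac{A(S,\Gamma_{n-1})}{\#\Gamma_{n-1}}$ in terms of $\frac{A(S\Gamma_1,\Gamma_n)}{\#\Gamma_n}$ and $\frac{\#\Gamma_{n-1}}{\#\Gamma_n}$, and parts~3--4 by the observation that $\partial_S(\tilde\tau\omega)=\partial_{S\Gamma_1}(\omega)$ so that $\tilde\tau$ is the restriction of a globally defined continuous linear operator. Two minor remarks: the reference to Lemma~10.6 in part~1 is not needed (the paper argues directly from the coefficient identity), and your worry about handling ``all $S$ simultaneously'' is unwarranted, since the classical topology is precisely coefficient-wise convergence, so one $S$ at a time is all that is required; also, the detour through $\sum c_i=0$ in part~3 is superfluous for $\tilde\tau$ (it is needed for $\tau$ on $\overline{\mathbb R\Omega}(P)$, but for $\tilde\tau$ the relation $\sum_i c_i a_{S\Gamma_1,i}=0$ is already a special case of $\sum_i c_i a_{S,i}=0$).
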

\vspace{-0.3cm}
\begin{proof} 
1. 
Using \eqref{eq:8.2.3}, \eqref{eq:11.2.2} and \eqref{eq:11.2.6}, 
we see that the difference
$\pi\left( \frac{\mathcal{M}(\Gamma_n)}{\#\Gamma_n}\right)\!- \! X_n(P_{\Gamma,G})$ is a polynomial in $s$ of degree $\le n$ whose 
$k$th coefficient is
$(A(\Gamma_k,\Gamma_n)-\#\Gamma_{n-k})/\#\Gamma_n$.
Put $n=n_m$ and take the limit by letting $m\to \infty$. Applying 
\eqref{eq:11.2.11} and the assumption {\bf S}, we see 
that this tends to 0. That is, $k$th coefficient of $X_{n_m}(P_{\Gamma,G})$ 
tends to the coefficient $a_{\Gamma_k}$ at $\Gamma_k$ of $\omega$.
That is, $X_{n_m}(P_{\Gamma,G})$  converges to the 
$\pi$-image of $\omega$. Thus the map $\pi_\Omega$ \eqref{eq:11.2.12} is defined.
To show the surjectivity of $\pi_\Omega$, for any subsequence $\{X_{n_m}(P_{\Gamma,G})\}_{m}$ converging to an opposite sequence, we  choose a convergent sub-subsequence $\{\frac{\mathcal{M}(\Gamma_{n_{m_l}})}{\sharp \Gamma_{n_{m_l}}}\}_{l}$ due to the compactness of $\overline{\log(EDP)}$ (10.3).

\medskip
2. For 
$S\!\in\! \langle \Gamma,G\rangle_0$ and $n\!\in\!\Z_{\ge 1}$,
one has  
\[
\begin{array}{ll}
\frac{A(S,\Gamma_{n-1})}{\#\Gamma_{n-1}}\ =\ 
\left(\frac{A(S\Gamma_1,\Gamma_{n})}{\#\Gamma_{n}}-
\frac{A(S\Gamma_1,\Gamma_{n})-A(S,\Gamma_{n-1})}{\#\Gamma_{n}}\right)
\Big/\frac{\#\Gamma_{n-1}}{\#\Gamma_{n}}
.
\end{array}
\leqno{*)}
\]
Let the sequence $\frac{\mathcal{M}(\Gamma_{n_m})}{\#\Gamma_{n_m}}$ 
associated to a subsequence $\{n_m\}_{m\in\Z_{\ge0}}$ of $\Z_{\ge0}$
converges to an element 
$\omega=\sum_{S\in\langle\Gamma,G\rangle_0}a_S\varphi(S)\in \Omega(\Gamma,G)$. 
Put $n\!=\!n_m$ in $*)$ and let the index $m$ run to $\infty$.
The first (resp.\ second) term in the bracket  in the RHS of $*)$
converges to $a_{S\Gamma_1}$ (resp.\ 0  due to 
\eqref{eq:11.2.10} and the assumption {\bf S}). 
The denominator of the RHS of $*)$ converges to the initial $\iota(\pi(\omega))$ (11.2.4).  Consequently,
the RHS of $*)$ converges to $\frac{1}{\iota(\pi(\omega))}a_{S\Gamma_1}$ 
for all $S$.
This implies the convergence of ${\lim\limits_{m\to\infty}}^{\!\!\!cl} \frac{\mathcal{M}(\Gamma_{n_m-1})}{\#\Gamma_{n_m-1}}$ 
and the formula \eqref{eq:11.2.13} and (11.2.13)$^*$.

\noindent
\ \ \ Let  $a\!=\!\pi_\Omega(\omega) (:=\!\!\sum_{k=0}^\infty \!a_{\Gamma_k}s^k)$.
Comparing the formulae \eqref{eq:11.2.5} and \eqref{eq:11.2.13}, 
one calculates: 
%
  $ \pi_\Omega(\tilde\tau_{\Omega}(\omega)) 
=  \frac{1}{\iota(\pi(\omega))}\sum_{k=0}^\infty a_{\Gamma_k\Gamma_1}s^k 
=  \frac{1}{\iota(\pi(\omega))}\sum_{k=0}^\infty a_{\Gamma_{k+1}}s^k =$ 

\noindent
$ \frac{1}{\iota(\pi(\omega))}\sum_{l=1}^\infty a_{\Gamma_{l}}s^{l-1}
=\tau_\Omega(a)=\tau_\Omega(\pi_\Omega(\omega))$.
This implies that the map $\pi_\Omega$ is equivariant with the $(\tilde\tau_{\Omega},\tau_{\Omega})$-action.

\medskip
3. Let 
(r): $\sum_{i\in I} c_i\omega_i\!=\!0$ be a linear  relation 
for $\omega_i\!\in\! \Omega(\Gamma,G)$ 
and $c_i\!\in\! \R$ ($i\!\in\! I$) with $\#(I)\!<\!\infty$. 
Let us show the linear  relation 
(s): $\sum_{i\in I} c_i\tilde \tau(\omega_i)\!=\!0$.
Let us expand $\omega_i\!=\!\sum_S a_{S,i}\varphi(S)$. 
Then, the relation (r) is expressed as the relations 
$\sum_{i\in I} c_i a_{S,i}\!=\!0$ of coefficients for all $S\!\in\! \langle\Gamma,G\rangle_0$.
Then the relation (s) is expressed as $\sum_{i\in I}c_i a_{S\Gamma_1,i}=0$ 
for all $S\!\in\! \langle\Gamma,G\rangle_0$, which is a part of the former relations of the 
coefficients and is automatically satisfied.

This implies that $\tilde \tau$ extends to a linear map 
${\R\Omega}(\Gamma,G)\to \overline{\R\Omega}(\Gamma,G)$.
On the other hand, the correspondence $\sum_{S\in \langle\Gamma,G\rangle}a_S\varphi(S) \mapsto \sum_{S\in \langle\Gamma,G\rangle}a_{S\Gamma_1}\varphi(S)$
defines a redefined continuous linear map from a closed subspace of 
$\mathcal{L}_{\R}$ to itself, which induces the endomorphism 
$\tilde\tau\in \mathrm{End}_\R(\overline{\R\Omega}(\Gamma,G))$.

\medskip
4. Let the notation be as in 1.
Comparing (11.2.5)* and \eqref{eq:11.2.14}, one calculates: \ 
 $ \pi(\tilde\tau(\omega))= \pi(\iota(\pi(\omega)) \tilde\tau_{\Omega}(\omega))
= \iota(\pi(\omega))\pi_\Omega (\tilde\tau_{\Omega}(\omega))
= \iota(\pi(\omega)) \tau_\Omega(\pi_\Omega(\omega))
= \iota(\pi(\omega)) \tau_\Omega(a)
= \ \tau(a)=\tau(\pi(\omega))$.
This implies the equivariance of $\pi$.
\end{proof}

The map $\pi_\Omega$ \eqref{eq:11.2.12} is conjecturally a finite map. 
In that case, the sum of elements in a fiber is called a {\it trace}, and we, in 11.5, represent the traces  by
suitable ``residue values'' of the functions  \eqref{eq:11.2.6} and 
\eqref{eq:11.2.7}.
The key to understand this formula is the 
{\it ``duality'' between the limit space $\Omega(P_{\Gamma,G})$ 
and the space of singularities $Sing(P_{\Gamma,G})$ of the series 
$P_{\Gamma,G}(t)$
on the circle of the convergent radius $r_{\Gamma,G}$.
} 
In next sections 11.3 and 11.4, we study the ``duality''  
in case  $\Omega(P_{\Gamma,G})$ is finite
(see 11.4 {\bf Theorem} and \eqref{eq:11.4.3} and \eqref{eq:11.4.4}). 

\medskip
In the following, we give an example of $(\Gamma,G)$, 
where $\Omega(P_{\Gamma,G})$ consists of two elements $a^{[0]}$ 
and $a^{[1]}$, and $\tau_\Omega$ 
acts on $\Omega(P_{\Gamma,G})$ as their transposition. However, 
we note that  $\tau^2\not=\id$ and $\det(t\cdot\id-\tau)=t^2-1/2$.
\vspace{-0.1cm}
\begin{example}\!\!\!\!\!\!(Mach\`i)\ \ \ 
Let $\Gamma:=\Z/2\Z *\Z/3\Z$ and 
$G:=\{a,b^{\pm1}\}$ where $a,b$ are the generators of $\Z/2\Z$ and $\Z/3\Z$, 
respectively.
Then, Mach\`i has shown
\vspace{-0.1cm}
\[\begin{array}{cccc}
P_{\Gamma,G}(t)&\ :=\ & \sum_{k=0}^\infty \#\Gamma_{k}t^{k} & =\ \frac{(1+t)(1+2t)}{(1-2t^2)(1-t)},
\end{array}
 \vspace{-0.1cm}
\] 
so that
$\#\Gamma_{2k}\!=\! 7\cdot2^k\!-\!6$ and 
$\#\Gamma_{2k+1}\! =\! 10\cdot2^k\!-\!6$ for $k\!\in\!\Z_{\ge0}$.
Then, one has   

\vspace{0.1cm}

\centerline{
$\Omega_1(P_{\Gamma,G})\! =\! \Big\{
\iota(a^{[0]})\!:=\!\underset{n\to \infty}{\lim}\!\frac{\#\Gamma_{2n-1}}{\#\Gamma_{2n}}\!=\!\frac{5}{7} \ \&\  
\iota(a^{[1]})\!:=\!\underset{n\to \infty}{\lim}\!\frac{\#\Gamma_{2n}}{\#\Gamma_{2n+1}}\!=\!\frac{7}{10}\Big\}$, 
}

\vspace{0.1cm}

\noindent
and, hence $h_{\Gamma,G}\!=\!2$.
In fact, $\Omega(P_{\Gamma,G})$ consists of two 
opposite sequences: 

\vspace{-0.4cm}

 {\footnotesize
\[
\qquad \quad\quad
\begin{array}{lll}
a^{[0]}(s)&\!: =\!\sum_{k=0}^\infty 2^{-k}s^{2k}\!+\!\frac{5}{7}s\sum_{k=0}^\infty 2^{-k}s^{2k} &\! =\! 
{(1\!+\!\frac{5}{7}s)}/{(1\!-\!\frac{s^2}{2})}
 \! =\! \frac{\frac{7\!+\!5\sqrt{2}}{14}}{1\!-\!\frac{s}{\sqrt{2}}}\!+\!\frac{\frac{7\!-\!5\sqrt{2}}{14}}{1\!+\!\frac{s}{\sqrt{2}}}.
 \\
a^{[1]}(s)&\!: =\!\sum_{k=0}^\infty 2^{-k}s^{2k}\!+\!\!\frac{7}{10}s\sum_{k=0}^\infty 2^{-k}s^{2k} &\! =\! 
{(1\!+\!\!\frac{7}{10}s)}/{(1\!-\!\frac{s^2}{2})}
\! =\! \frac{\frac{10\!+\!7\sqrt{2}}{20}}{1\!-\!\frac{s}{\sqrt{2}}}\!+\!\frac{\frac{10\!-\!7\sqrt{2}}{20}}{1\!+\!\frac{s}{\sqrt{2}}}.
\end{array}
\vspace{-0.1cm}
\]
}
\end{example}

\subsection{Finite rational accumulation}
\label{subsec:11.3}

We introduce the concept of a {\it finite rational accumulation}, 
and study the series $P(t)$ \eqref{eq:11.2.1} 
from that viewpoint.
First, we start with preliminary definitions.

\begin{definition}
{\bf 1.} A subset $U$ of $\Z_{\ge0}$ is called a {\it rational subset} 
if the sum 
$U(t):=\sum_{n\in U}t^n$ is the Taylor expansion at $0$ of 
a rational function in $t$. 

{\bf 2.}\ A {\it finite rational partition} of $\Z_{\ge0}$
is a finite collection $\{U_a\}_{a\in \Omega}$ of rational 
subsets $U_a\!\subset\! \Z_{\ge0}$ 
indexed by a finite set $\Omega$ 
such that 
 there is a finite subset $D$ of $\Z_{\ge0}$ so that
one has the disjoint decomposition 
$\Z_{\ge0}\setminus D=\sqcup_{a\in \Omega}(U_a\setminus D)$. 
\end{definition}

\begin{assertion}
\!\!\!For any rational subset $U$ of $\Z_{\ge0}$, there exist a positive 
integer $h$, a subset $u\!\subset\!\Z/h\Z$ and a finite subset 
$D\!\subset\! \Z_{\ge0}$ 
such that $U\!\setminus\! D\! =\! \cup_{[e]\in u}U^{[e]}\!\setminus\! D$, 
where 
$[e]$ denotes the element of $\Z/h\Z$ corresponding to $e\!\in\!\Z$
and $U^{[e]}\!:=\!\{n\!\in\!\Z_{\ge0}\mid n\! \equiv\! e \bmod h \}$.\
We call $\cup_{[e]\!\in\! u}U^{[e]}$ the standard expression of $U$.\!
\end{assertion}
\begin{proof} 
The fact that $U(t)$ is rational implies that the function $\chi:\Z_{\ge0}\to\{0,1\}\ (\chi(n)\!=\!1\leftrightarrow n\!\in\!U$) is recursive, i.e.\ there exist $N\!\in\!\Z_{\ge1}$ 
and numbers $\al_1,\!\cdots\!,\al_N$ such that
one has the recursive relation 
$\chi(n)\!+\!\chi(n\!-\!1)\al_1\!+\!\cdots\!+\!\chi(n\!-\!N)\al_N\!=\!0$
for sufficiently large $n\gg0$. Since the range of 
$\chi$ is finite, there exist two large numbers $n\!>\!m$ such that
$\chi(n\!-\!i)\!=\!\chi(m\!-\!i)$ for $i\!=\!0,\!\cdots\!,N$. Due to the recursive relation,
this means $\chi$ 
is $h\!:=\!n-m$-periodic after $m$.
\end{proof}

\begin{cor} Any finite rational partition of  $\Z_{\ge0}$ has a 
subdivision of the form $\mathcal{U}_h:=\{U^{[e]}\}_{[e]\in \Z/h\Z}$ 
for some $h\in\Z_{>0}$, called a {\it period} of the partition. If $h$ is the 
minimal period,
$\mathcal{U}_h$ is called the {\it standard subdivision} of the partition.
\end{cor}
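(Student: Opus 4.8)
The plan is to reduce the statement to the previous Assertion applied to the individual rational subsets $U_a$. First I would take a finite rational partition $\{U_a\}_{a\in\Omega}$ of $\Z_{\ge0}$, with exceptional finite set $D$, so that $\Z_{\ge0}\setminus D=\sqcup_{a\in\Omega}(U_a\setminus D)$. Each $U_a$ is a rational subset, so by the preceding Assertion there exist a period $h_a\in\Z_{\ge1}$, a subset $u_a\subset\Z/h_a\Z$, and a finite set $D_a\subset\Z_{\ge0}$ such that $U_a\setminus D_a=\bigl(\bigcup_{[e]\in u_a}U^{[e]}_{h_a}\bigr)\setminus D_a$, where I write $U^{[e]}_{h_a}:=\{n\in\Z_{\ge0}\mid n\equiv e\bmod h_a\}$ to keep track of the modulus.

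The key step is then to pass to a common period. Set $h:=\operatorname{lcm}_{a\in\Omega}h_a$ (a finite lcm since $\Omega$ is finite). Each residue class $U^{[e]}_{h_a}$ mod $h_a$ is a disjoint union of residue classes mod $h$, namely $U^{[e]}_{h_a}=\bigsqcup_{\substack{[f]\in\Z/h\Z\\ f\equiv e\ (h_a)}}U^{[f]}_h$. Hence, after enlarging the exceptional set to the finite union $D':=D\cup\bigcup_{a\in\Omega}D_a$, every $U_a\setminus D'$ is a union of classes $U^{[f]}_h$, i.e.\ the collection $\mathcal{U}_h=\{U^{[f]}_h\}_{[f]\in\Z/h\Z}$ refines the partition $\{U_a\}$ modulo the finite set $D'$. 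This is exactly the assertion that any finite rational partition admits a subdivision of the form $\mathcal{U}_h$; such an $h$ is by construction a period of the partition.

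For the statement about the standard subdivision, I would argue that the set of periods of a given finite rational partition is nonempty (just shown) and closed under taking divisors that still work — more precisely, among all $h$ for which $\mathcal{U}_h$ refines the partition mod a finite set, there is a minimal one $h_{\min}$, and every other such $h$ is a multiple of $h_{\min}$ (if $h$ and $h'$ both work, so does $\gcd(h,h')$, because a class mod $h$ and a class mod $h'$ that each lie in a single $U_a\setminus D''$ force, via the overlapping classes mod $\gcd(h,h')$, that each $\gcd$-class lies in a single $U_a\setminus D''$ as well). Taking $h:=h_{\min}$ gives the standard subdivision. I expect the only genuinely delicate point to be this last minimality/$\gcd$-stability argument — checking carefully that replacing $h$ by $\gcd(h,h')$ does not spoil the ``each class lands in a single part'' property outside a finite set; the rest is bookkeeping with residue classes and finite exceptional sets, which I would not spell out in full.
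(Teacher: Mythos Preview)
Your argument is correct and is exactly the intended one: the paper states the Corollary without proof, as an immediate consequence of the preceding Assertion, and your lcm construction is the natural way to spell it out. One remark: the second half of your write-up proves more than the statement asks. The sentence ``If $h$ is the minimal period, $\mathcal{U}_h$ is called the \emph{standard subdivision}'' is purely a definition; once you have shown that \emph{some} period exists, the minimal one exists trivially (nonempty subset of $\Z_{>0}$), and nothing in the Corollary claims that every period is a multiple of the minimal one. Your $\gcd$-stability argument is correct (and your worry about staying above the exceptional set is easily handled: given $m,n>\max D''$ with $m\equiv n\pmod{\gcd(h,h')}$, the chain $m,\,m+h',\,m+2h',\ldots,m+kh'$ followed by a single mod-$h$ step to $n$ stays above $\max D''$), but it is an extra fact, not part of what you were asked to prove.
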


\begin{definition}
A sequence $\{X_n\}_{n\in\Z_{\ge0}}$\! in a Hausdorff space 
is {\it finite rationally accumulating} if the sequence
accumulates to a finite set, say $\Omega$, such that for a system
 of open neighborhoods $\mathcal{V}_a$ 
for $a\!\in\!\Omega$ 
with $\mathcal{V}_a\!\cap\!\mathcal{V}_b\!=\!\emptyset$ if $a\!\not=\!b$,
the system $\{U_a \}_{a\in\Omega}$ for 
$U_a\!:=\!\{n\!\in\!\Z_{\ge0}\mid X_n\!\in\!\mathcal{V}_a\}$ 
is a finite rational partition of $\Z_{\ge0}$. 
We say also that {\it $\Omega$ is 
a finite rationally accumulation set of period $h$}.
 \end{definition}

The next and 11.5 Lemmas are key facts, 
which justify the introduction of the concept
``rational accumulation''. They are also the 
starting point of the concept of {\it periodicity} which is the thorough bass 
of the whole study in sequel.

\begin{lemma}
Let $P(t)$ be a power series in $t$ as given in \eqref{eq:11.2.1}.\ 
If $\Omega(P)$ is finite, then it is a finite rationally 
accumulation set with respect to the standard partition 
$\mathcal{U}_h$\! of $\Z_{\ge0}$ for some $h\!\!>\!\!0$,\!
and $\tau_\Omega$\! acts transitively on $\Omega(P)$\! of period\! $h$.
\end{lemma}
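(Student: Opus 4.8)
The goal is to show that if $\Omega(P)$ is finite then (i) the sequence $\{X_n(P)\}_{n\ge0}$ accumulates along a standard partition $\mathcal{U}_h=\{U^{[e]}\}_{[e]\in\Z/h\Z}$, and (ii) $\tau_\Omega$ permutes $\Omega(P)$ transitively with period $h$. The overall strategy is to reduce (i) to the rationality assertion in 11.3 Assertion by establishing that each ``level set'' $U_a:=\{n\mid X_n(P)\in\mathcal{V}_a\}$ is a rational subset of $\Z_{\ge0}$, and then to exploit the shift formula \eqref{eq:11.2.5} for $\tau_\Omega$ together with the obvious near-shift relation $X_{n-1}(P)$ versus $X_n(P)$ to get transitivity.

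\textbf{Step 1 (the partition is rational).} Since $\Omega(P)$ is a finite subset of the Hausdorff space $\R\db[s\db]$ (with the product/classical topology), choose pairwise disjoint open neighborhoods $\mathcal{V}_a$, $a\in\Omega(P)$, whose union contains all but finitely many terms of the sequence; let $D$ be the finite set of ``bad'' indices. The key point to prove is that each $U_a=\{n\mid X_n(P)\in\mathcal{V}_a\}$ is eventually periodic in $n$, i.e. its generating function $U_a(t)=\sum_{n\in U_a}t^n$ is rational. For this I would use the explicit recursion governing the coefficients: because $\Omega(P)$ is finite, the set of accumulation points of the sequence $\{\gamma_{n-1}/\gamma_n\}$ — and more generally of each coefficient sequence $\{\gamma_{n-k}/\gamma_n\}_n$ — is finite, hence the vector-valued sequence $X_n(P)$ takes values, up to arbitrarily small error, in a finite set of ``profiles''. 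One then argues, as in the proof of 11.3 Assertion, that the assignment $n\mapsto a$ (``which $\mathcal{V}_a$ does $X_n(P)$ lie in'') satisfies a linear recurrence with bounded finite range and is therefore eventually periodic. This gives that $\{U_a\}$ is a finite rational partition; applying 11.3 Corollary produces a common period $h$ and the standard subdivision $\mathcal{U}_h$.

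\textbf{Step 2 ($\tau_\Omega$ acts on $\Omega(P)$ and is compatible with the shift $n\mapsto n-1$).} By 11.2 Assertion 1, if $X_{n_m}(P)\to a$ then $X_{n_m-1}(P)\to\tau_\Omega(a)=(a-1)/\iota(a)s$; in particular $\tau_\Omega$ maps $\Omega(P)$ into itself, and since $\Omega(P)$ is finite and $\tau_\Omega$ is injective on it (invert \eqref{eq:11.2.5}: $a=1+\iota(a)\,s\,\tau_\Omega(a)$, and $\iota(a)$ is recovered from $\tau_\Omega(a)$ via the constraint $\iota(a)=\lim\gamma_{n-1}/\gamma_n$), $\tau_\Omega$ is a permutation of $\Omega(P)$. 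Concretely: if $X_n(P)\in\mathcal{V}_a$, i.e. $n\in U_a$, then for $n$ large $X_{n-1}(P)$ lies in $\mathcal{V}_{\tau_\Omega(a)}$, i.e. $n-1\in U_{\tau_\Omega(a)}$, equivalently $n\in U_{\tau_\Omega(a)}+1$. Hence on the standard partition, $\tau_\Omega$ realizes exactly the shift $[e]\mapsto[e+1]$ on $\Z/h\Z$ (after matching the labelling $U^{[e]}$ with the appropriate $\mathcal{V}_a$). Since the shift by $1$ is a cyclic generator of $\Z/h\Z$, $\tau_\Omega$ acts transitively on the $h$ pieces, hence on $\Omega(P)$, and the period is exactly $h$ provided $h$ was chosen minimal (11.3 Corollary). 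One small point to address: distinct residue classes $[e]\ne[e']$ could a priori give the same accumulation point, so one must note that minimality of $h$ forces the map $[e]\mapsto(\text{accumulation point along }U^{[e]})$ to be injective — otherwise one could merge classes and reduce $h$.

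\textbf{Main obstacle.} The delicate part is Step 1: turning ``$\Omega(P)$ finite'' into ``each $U_a$ is a rational (eventually periodic) subset.'' The cleanly-stated hypothesis is only about accumulation points of $\R\db[s\db]$-valued data, and finiteness of a limit set does not by itself force periodicity of the approximating sequence without some recursion or automaticity input. The right move is to observe that $P(t)$ itself need not be rational, so the periodicity must come from the combinatorial structure of $\Omega(P)$ and $\tau_\Omega$ rather than from $P$; I expect the argument to run by first proving that $\iota$-values $\{\gamma_{n-1}/\gamma_n\}$ accumulate to the finite set $\iota(\Omega(P))$ with the index sets of each value being eventually periodic (this is where a recursion/pigeonhole argument in the style of 11.3 Assertion enters, using that once $X_n(P)$ is $\varepsilon$-close to some $a$, the whole tail of $X_n(P)$ is determined up to small error by iterating $\tau_\Omega^{-1}$), and then bootstrapping from $\iota$ to the full $X_n(P)$. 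Once eventual periodicity of the level sets is in hand, everything else is formal, and the transitivity of $\tau_\Omega$ drops out of the identification of $\tau_\Omega$ with the unit shift on $\Z/h\Z$.
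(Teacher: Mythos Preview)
Your proposal has the right ingredients but assembled in the wrong order, and Step~1 as written is a genuine gap. You try to prove rationality of the level sets $U_a$ first, by a vague ``recursion/pigeonhole'' argument referring to the 11.3 Assertion; but that Assertion goes the other way (rationality $\Rightarrow$ periodicity), and you yourself flag that this is the ``main obstacle'' without giving a real mechanism. The observation you put in Step~2 --- that $n\in U_a$ forces (for large $n$) $n-1\in U_{\tau_\Omega(a)}$ --- is exactly the tool that produces periodicity, but you use it only \emph{after} assuming the standard partition already exists, so the argument is circular as stated. Also, your claim that $\tau_\Omega$ is injective (``$\iota(a)$ is recovered from $\tau_\Omega(a)$ via $\iota(a)=\lim\gamma_{n-1}/\gamma_n$'') is not justified: that limit depends on which subsequence you take, so you cannot read $\iota(a)$ off $\tau_\Omega(a)$ alone.

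The paper's proof reverses your order and thereby avoids both difficulties. Since $\Omega(P)$ is finite and $\tau_\Omega$ maps it to itself, there is \emph{some} periodic point $a$ with minimal period $h$, i.e.\ $\tau_\Omega^{\,h}a=a$. Now your Step~2 observation, iterated $h$ times, gives: for $n\in U_a$ large, $n-h\in U_{\tau_\Omega^{\,h}a}=U_a$. Hence $U_a$ is (up to finitely many indices) closed under $n\mapsto n-h$, so is a union $\bigcup_{[e]\in A}U^{[e]}$ of residue classes mod $h$; likewise each $U_{\tau_\Omega^{\,i}a}$ is $\bigcup_{[e]\in A}U^{[e-i]}$. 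These $h$ sets are pairwise disjoint and their union already covers $\Z_{\ge0}$ up to finitely many indices, so a counting argument on $\Z/h\Z$ forces $\#A=1$. It follows at once that every accumulation point lies in the single orbit $\{a,\tau_\Omega a,\ldots,\tau_\Omega^{\,h-1}a\}$, giving both rationality (with the standard partition $\mathcal U_h$) and transitivity of $\tau_\Omega$ in one stroke --- no separate injectivity argument needed.
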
 
\begin{proof}
Recall the $\tau_\Omega$-action on the set $\Omega(P)$ in 11.2. 
Since $\Omega(P)$ is finite, there exists a non-empty $\tau_\Omega$-invariant
subset of $\Omega(P)$. More explicitly, there exists an element 
$a\in\Omega(P)$ and a positive integer $h\in\Z_{>0}$ such that 
$(\tau_\Omega)^ha\!=\!a\!\not=\!(\tau_\Omega)^{h'}a$ for $0\!<\!h'\!<\!h$.
Put 
$U_a\!:=\!\{n\in\Z_{\ge0}\!\mid\! X_n(P)\!\in\! \mathcal{V}_a\}$ where
$\{\mathcal{V}_a\}_{a\in \Omega(P)}$ is a system of open neighborhoods 
of points of $\Omega(P)$ such that 
$\mathcal{V}_a\!\cap\! \mathcal{V}_b\!=\!\emptyset$ for 
%
any $a\not=b \!\in\!\Omega(P)$. 
By the definition of $\tau_\Omega$, the relation 
$(\tau_\Omega)^ha\!=\!a$ implies that 
the sequence $\{X_{n-h}(P)\}_{n\!\in\! U_a}$ converges to $a$. 
That is, there 
exists a positive number $N$ such that for 
any $n\!\in\! U_a$ with $n\!>\!N$, 
$n\!-\!h$ is contained in $U_a$. Consider the set $A\!:=\!\{[e]\!\in\! \Z/h\Z\mid $
there are infinitely many elements of $U_a$ which are congruent to 
$[e]$ modulo $h$ $\}$. Then, $U_a$ is, up to a finite number 
of elements, equal to the rational set $\cup_{[e]\!\in\! A} U^{[e]}$. This implies 
$A\!\not=\!\emptyset$. Further more, $U_{(\tau_\Omega)^ia}$ is 
also, up to a finite number of elements, equal to the rational set 
$\cup_{[e]\in A} U^{[e-i]}$. Then, the union 
$\cup_{i=0}^{h-1} U_{(\tau_\Omega)^ia}$ already covers $\Z_{\ge0}$ 
up to finite elements. Since there should not be an overlapping, 
$\#A\!=\!1$, say $A\!=\!\{[0]\}$. If a subsequence $\{X_{n_m}(P)\}$ converges to an element
in $\Omega(P)$, then there is at least one $[e]\in\Z/h\Z$ such that 
$\#(\{n_m\}_{m=0}^\infty\! \cap\! U^{[e]})\!=\!\infty$ so that it converges 
to $(\tau_\Omega)^{h-e}a$.
That is, 
$\Omega(P)$ is equal to the set 
$\{a,\tau_\Omega a,\cdots,(\tau_\Omega)^{h-1}a\}$, 
%
which  
is a finite rationally accumulating set with the $h$-periodic 
action of $\tau_\Omega$.
\end{proof}

In the sequel, we analyze the finite accumulation set $\Omega(P)$ in detail.
\begin{assertion} 
Let $P(t)$ be a power series in $t$  as given in \eqref{eq:11.2.1}. 

{\bf 1.} $\Omega(P)$ is a finite rationally accumulation set of period 
$h\!\in\!\Z_{\ge1}$ if and 
only if $\Omega_1(P)$ is.\
We say $P$ is finite rationally accumulating of period $h$.

{\bf 2.} Let  $P$ be finite rationally accumulating of period  
$h\in\Z_{\ge1}$. 
Then the opposite series $a^{[e]}=\sum_{k=0}^\infty a_k^{[e]} s^k$ 
in $\Omega(P)$ associated to 
the rational subset $U^{[e]}$
for $[e]\in \Z/h\Z$ of 
\vspace{-0.3cm}
the $h$-partition of $\Z_{\ge0}$ converges to a rational function   
\begin{equation}
\label{eq:11.3.1}
a^{[e]}(s)\ =\ \frac{A^{[e]}(s)}
{1-r^hs^h},
\vspace{-0.3cm}
\end{equation}
where the numerator $A^{[e]}(s)$ is a polynomial in $s$ of degree 
$h\!-\!1$ given by
\vspace{-0.2cm}
\begin{equation}
\label{eq:11.3.2}
\vspace{-0.2cm}
 \begin{array}{rll}
\qquad A^{[e]}(s)\ := \
\sum_{j=0}^{h-1}\left(\prod_{i=1}^{j}a_1^{[e-i+1]}\right)s^{j} \quad\&
\end{array}
\vspace{-0.3cm}
\end{equation}
\begin{equation}
\begin{array}{rll}
 \label{eq:11.3.3}
r^h \ :=\ \ \prod_{i=0}^{h-1}a_1^{[i]}.
\end{array}
\vspace{-0.1cm}
\end{equation}
The $h$th positive root $r\!>\!0$ of \eqref{eq:11.3.3} 
is the radius of convergence of $P(t)$.

{\bf 3.} If the period $h$ is minimal, then 
the  opposite sequences $a^{[e]}(s)$ for $[e]\in \Z/h\Z$ are mutually distinct.
That is, $\Omega(P)\simeq \Z/h\Z$, $a^{[e]}(s) \leftrightarrow [e]$
and the standard 
partition $\mathcal{U}_h$ 
is the exact partition of $\Z_{\ge0}$ 
for the opposite series $\Omega(P)$.
\end{assertion}
\vspace{-0.2cm}
\begin{proof} 
1. The necessity is obvious. 
To show sufficiency, assume that
$\{\gamma_{n-1}/\gamma_n\}_{n\in\Z_{\ge0}}$ 
accumulate finite rationally of period $h$. 
Let 
the subsequence 
$\{\gamma_{n-1}/\gamma_n\}_{n\in U_{[e]}}$ 
for $[e]\in\Z/h\Z$ accumulate to a unique value $a_1^{[e]}$. 

For any $k\in \Z_{\ge0}$, one has the obvious relation:
\vspace{-0.2cm}
\[
\frac{\gamma_{n-k}}{\gamma_n}\ =\
\frac{\gamma_{n-1}}{\gamma_n}\frac{\gamma_{n-2}}{\gamma_{n-1}}\cdots
\frac{\gamma_{n-k}}{\gamma_{n-k+1}}.
\vspace{-0.2cm}
\]
For $n\!\in\!  U_{[e]}\!=\!\{n\!\in\!\Z_{\ge0}\mid n\!\equiv\! e \bmod h\}$
for  $[e]\!\in\! \Z/h\Z$, we see that the RHS converges to
$a_1^{[e]}a_1^{[e-1]}\ldots a_1^{[e-k+1]}$. 
Then, for $[e]\!\in\! \Z/h\Z$ and $k\in\! \Z\!_{\ge0}$, by putting
\begin{equation}
\label{eq:11.3.4}
a_k^{[e]}\ :=\ a_1^{[e]}a_1^{[e-1]}\ldots a_1^{[e-k+1]},
\end{equation}
the sequence
$\{X_n(P)\}_{n\in  U_{[e]}}$ 
converges to 
$a^{[e]}\!:=\!\sum_{k=0}^\infty a_k^{[e]}s^k$ 
with $a_1^{[e]}=\iota(a^{[e]})$.

2. Define $r^h$ by the relation \eqref{eq:11.3.3}. 
Then, the formula \eqref{eq:11.3.4} implies the ``{\it periodicity}'' 
$a_{mh+k}^{[e]}\!=\!r^{mh}a_k^{[e]}$ for $m\!\in\!\Z_{\ge0}$. This implies 
\eqref{eq:11.3.1}. 

To show that $r$ is the radius of convergence of $P(t)$, it is sufficient to
show:

\medskip
\noindent
{\bf Fact.} {\it Let $P(t)$ be finite rationally accumulating of period $h$. 
Define $r\ge0$ by the relation \eqref{eq:11.3.3}.
There exist 
positive real constants $c_1$ and $c_2$ such that for any $k\in\Z_{\ge0}$
there exists $n(k)\in\Z_{\ge0}$ and for any integer $n\ge n(k)$, one has
$
c_1 r^k\le \frac{\gamma_{n-k}}{\gamma_n}\le c_2r^k
$.
%
}
 
{\it Proof.}
Choose $c_1,c_2\in\R_{>0}$ satisfying 
$c_1\!\!<\!\!\min\{\frac{a_i^{[e]}}{r^i}\!\mid\! [e]\!\in\!\Z/h\Z, i\!\in\!\Z\cap[0,h\!-\!1]\}$
and
$c_2\!>\!\max\{\frac{a_i^{[e]}}{r^i}\mid [e]\in\Z/h\Z, i\!\in\!\Z\cap[0,h\!-\!1]\}$.
\qquad $\Box$
        
3. Suppose $a^{[e]}(s)=a^{[f]}(s)$ for some $[e],[f]\in\Z/h\Z$. 
Then, by comparing the coefficients of 
$A^{[e]}(s)$ and $A^{[f]}(s)$, we get 
$a_1^{[e-i]}=a_1^{[f-i]}$ for $i\!=\!0,\!\cdots\!,h\!-\!1$. 
This means $e\!-\!f$ is a period. The minimality of $h$ implies $[e\!-\!f]=0$.
\end{proof}

Even if, as in the Assertion, the opposite series $a^{[e]}(s)$ 
for $[e]\!\in\! \Z/h\Z$ are 
mutually distinct for the minimal period $h$ of $P(t)$, 
they may be linearly dependent. 
This phenomenon occurs at the zero-loci of the determinant
\begin{equation}
\begin{array}{l}
\label{eq:11.3.5}
D_h(a_1^{[0]},\cdots,a_1^{[h-1]})\ :=\ \det\left((\prod_{i=1}^fa_1^{[e-i+1]})_{e,f\in\{0,1,\cdots,h-1\}}\right).
\end{array}
\end{equation}

\vspace{-0.2cm}
Regarding $a_1^{[0]},\cdots,a_1^{[h-1]}$ as indeterminates,
$D_h$ is an irreducible homogeneous polynomial  
of degree $h(h-1)/2$, which is neither symmetric nor anti-symmetric, 
but anti-invariant under a cyclic permutation 
(depending on the parity of $h$). 
Let us formulate more precise statements for an arbitrary field $K$.

\begin{assertion}
Let $h\!\in\!\Z_{>\!0}$. For an $h$-tuple 
$\bar{a}\!=\!(a_1^{[0]},\cdots\!,a_1^{[h-1]})\!\in\! (K^\times)^h$, 
define polynomials $A^{\![e]}(s)$ $([e]\!\in\!\!\Z\!/h\Z)$ 
and $r^h\!\in\!\! K^{\!\times}$ by \eqref{eq:11.3.2} and \eqref{eq:11.3.3}. 

{\rm i)} In the ring $K[s]$, the greatest common divisors   
 $\gcd(A^{[e]}(s), 1\!-\!r^hs^h)$  and $\gcd(A^{[e]}(s),A^{\![e+1]}(s))$
for all $[e]\!\in\!\Z/h\Z$ are the same up to factors in $K^{\!\times}$.
Let $\delta_{\bar{a}}(s)$ be the common divisor 
whose constant term is normalized to 1.
Put
\vspace{-0.1cm}
\begin{equation}
\label{eq:11.3.6}
\Delta_{\bar{a}}^{op}(s)\ :=\ (1-r^hs^h)/\delta_{\bar{a}}(s). 
\vspace{-0.1cm}
\end{equation}

{\rm ii)}
For $[e]\!\in\! \Z/h\Z$, let $a^{[e]}(s)=b^{[e]}(s)/\Delta_{\bar{a}}^{op}(s)$ 
be the reduced expression (i.e.\ $b^{[e]}(s) $ is a polynomial of degree 
$<\!\deg(\Delta_{\bar{a}}^{op})$ and 
$\gcd(b^{[e]}(s),\Delta_{\bar{a}}^{op}(s))=1$).
Then, the polynomials $b^{[e]}(s)$ for $[e]\!\in\! \Z/h\Z$ 
span the space $K[s]_{<\deg(\Delta_{\bar a}^{op})}$ of 
polynomials of degree less than $\deg (\Delta_{\bar{a}}^{op})$.
One has the equality:
\vspace{-0.1cm}
\begin{equation}
\begin{array}{c}
\label{eq:11.3.7}
\rank\left((\prod_{i=1}^fa_1^{[e-i+1]})_{e,f\in\{0,1,\cdots,h-1\}}\right)
\ =\ \deg (\Delta_{\bar{a}}^{op}).
\end{array}
\end{equation}

\vspace{-0.1cm}

{\rm iii)} Let $K\!=\!\R$ and $\bar{a}\in(\R_{>\!0})^h$. 
Then, $\Delta_{\bar{a}}^{op}$ is divisible by  $1\!-\!rs$. Conversely, let
$\Delta^{op}$ be a factor of $1\!-\!r^hs^h$ which is divisible by $1\!-\!rs$ 
for  $r\in \R_{>0}$ 
with the constant term 1. 
Then there exists a smooth non-empty semialgebraic set $C_{\Delta^{op}}\!\subset\! (\R_{>0})^h$ 
of dimension $\deg(\Delta^{op})\!-\!1$ such that 
$\Delta^{op}\!=\!\Delta_{\bar{a}}^{op}$ for all $\bar{a}\!\in\! C_{\Delta^{op}}$.
\end{assertion}

\vspace{-0.1cm}
\begin{proof} 
i) By the definitions \eqref{eq:11.3.3} and \eqref{eq:11.3.4}, 
we have the relations:
\begin{equation}
\label{eq:11.3.8}
a_1^{[e+1]}s A^{[e]}(s)+ (1-r^hs^h)\ =\ A^{[e+1]}(s) 
\end{equation}
for  $[e]\in \Z/h\Z$. 
This implies
$\gcd(A^{[e]}(s),1-r^hs^h)\mid \gcd(A^{[e+1]}(s),1-r^hs^h)$ for 
$[e]\in \Z/h\Z$ so that one concludes that all the elements
$ \gcd(A^{[e]}(s),1-r^hs^h)$ $=\gcd(A^{[e]}(s),A^{[e+1]}(s))$ 
for $[e]\in \Z/h\Z$ are the same up to a constant factor.

ii) Let us show that 
the images in $K[s]/(\Delta_{\bar{a}}^{op})$ of the polynomials 
$A^{[e]}(s)/\delta_{\bar{a}}(s)$ for $[e]\in \Z/h\Z$ 
span the entire space over $K$.
Let $V$ be the space spanned by the images.
The relation \eqref{eq:11.3.8} implies that $V$ is closed under the 
multiplication of $s$. On the other hand,  $A^{[e]}(s)/\delta_{\bar{a}}(s)$ 
and $\Delta_{\bar{a}}^{op}$ are relatively prime so that they
generate 1 as a $K[s]$-module. That is, $V$ contains the class [1] of 1, and, 
hence, $V$ contains the whole $K[s]\cdot[1]$. 
Since $\deg(A^{[e]}(s)/\delta_{\bar{a}}(s))<\deg(\Delta_{\bar{a}}^{op})$, 
this means that the polynomials 
$A^{[e]}(s)/\delta_{\bar{a}}(s)$ for $[e]\in \Z/h\Z$ span the space of polynomials 
of degree less than $\deg(\Delta_{\bar{a}}^{op})$. 
In particular, one has $\rank_KV=\deg(\Delta_{\bar{a}}^{op})$. 

By definition,
$\rank(\left((\prod_{i=1}^fa_1^{[e-i+1]})_{e,f\in\{0,1,\cdots,h-1\}}\right))$
is equal to the rank of the space spanned by 
$A^{[e]}(s)$ for $[e]\in \Z/h\Z$, which is equal to 
the rank of the space spanned by 
$A^{[e]}(s)/\delta_{\bar{a}}(s)$ for $[e]\in \Z/h\Z$ 
and is equal to  $\deg(\Delta_{\bar{a}}^{op})$.

iii) If $(1\!-\!rs\!) \not\!|  \ \Delta_{\bar a}^{op}$, then 
$1\!-\!rs\mid \delta_{\bar{a}}\mid A^{[e]}(s)$ and 
$A^{[e]}(1/r)\!=\!0$. This is impossible 
since all coefficients of $A^{[e]}$ and $1/r$ are positive.
Conversely, let $\Delta^{op}$ be a factor of $1\!-\!r^hs^h$ which is divisible 
by $1-rh$, whose degree is $d\!>\!0$. Put 
$\R[s]_{d-1}\! :=\! \{c(s)\in \R[s]\mid \deg(c(s))\!=\!d\!-\!1, \ c(0)\!=\!1\}$. Consider the set 

\smallskip
\noindent
\quad  $\overline C_{\Delta^{op}}
\!:=\!\{ c(s)\!\in\! \R[s]_{d-1}  
\ \mid$ 
all coefficients of 
$ c'(s)\!:=\!c(s)\frac{1\!-\!r^hs^h}{\Delta^{op}} \text{ are positive}\}$.

\smallskip
\noindent
Since $\overline C_{\Delta^{op}}$ is defined by strict inequalities, it is an open subset of $\R[s]_{d-1}$. 
Further,  it is non-empty 
since it contains $\Delta^{op}/ (1\!-\!rs)$.
For any $c(s)\!\in \! \overline C_{\Delta^{op}}$, we note that 
$\deg(c'(s))\!=\!h\!-\!1$, and hence one can find 
a unique $\overline a\!\in\!\!(\R_{>0}\!)^h$ satisfying $c'(s)\!=\!A^{[0]}(s)$ 
\eqref{eq:11.3.2} and \eqref{eq:11.3.3}. 
By this correspondence $c(s)\mapsto \overline a$,
 we embed  $\overline C_{\Delta^{op}}$ 
smoothly to a smooth semialgebraic subset of $(\R_{>0})^h$ 
of dimension $d\!-\!1$. If $\overline a$ is the image of 
$c(s)\in \overline C_{\Delta^{op}}$, then 
$\delta_{\overline a}\!:=\!\gcd\{ c'(s), 1-r^hs^h\}$ is divisible 
by $(1\!-\!r^hs^h)/\Delta^{op}$. That is, $\Delta_{\overline a}^{op}:=(1-r^hs^h)/\delta_{\overline a}$ is a factor of $\Delta^{op}$. This implies that 
the $c(s)$ is a point of the embedded image $C_{\Delta_{\overline a}^{op}}\to C_{\Delta^{op}}$ (defined by the multiplication of $\frac{\Delta^{op}}{\Delta_{\overline a}^{op}}$). 
Define the semialgebraic set
$C_{\Delta^{op}}\!:=\! \overline C_{\Delta^{op}}\!\setminus\! \cup_{\Delta'} \overline C_{\Delta'}$, 
where the index $\Delta'$ runs over all factors of $\Delta^{op}$ (over $\R$)
which are not equal to $\Delta^{op}$ and are divisible by $1\!-\!rs$. 
Since  $\dim_\R(\overline C_{\Delta})=d\!-\!1>\dim_\R(\overline C_{\Delta'})$ so that 
the difference $C_\Delta$ is non-empty.
\end{proof}
\noindent
Suppose the characteristic of the field $K$ is equal to zero. 
Let $\tilde K$ be the 
splitting field of $\Delta_{\bar{a}}^{op}$ with the decomposition 
$\Delta_{\bar{a}}^{op}\!=\!\prod_{i=1}^{d}(1\!-\!x_is)$ in $\tilde K$ 
for $d:=\deg(\Delta_{\bar{a}}^{op})$. 
Then, one has the partial fraction decomposition:
\begin{equation}
\label{eq:11.3.9}
\begin{array}{l}
\frac{A^{[e]}(s)}{1-r^hs^h}\ =\ \sum_{i=1}^{d}\frac{\mu^{[e]}_{x_i}}{1-x_is}
\end{array}
\end{equation}
for $[e]\in \Z/h\Z$, where $\mu^{[e]}_{x_i}$ is a constant in $\tilde K$ 
given by the residue:
\begin{equation}
\label{eq:11.3.10}
\begin{array}{l}
\qquad
\mu^{[e]}_{x_i}\ =\ \frac{A^{[e]}(s)(1-x_is)}{1-r^hs^h} \Bigr|_{s=(x_i)^{-1}} 
\ =\ \frac{1}{h}A^{[e]}(x_i^{-1}) .
\end{array}
\end{equation}
Here, one has the equivariance 
$\sigma(\mu^{[e]}_{x_i})\!=\!\mu^{[e]}_{\sigma(x_i)}$ 
with respect to the action of $\sigma\in {\rm Gal}(\tilde K,K)$. 
The matrix $(\mu^{[e]}_{x_i})_{[e],x_i}$ 
is of maximal rank $d\!=\!\deg(\Delta_{\bar{a}}^{op})$. 

\medskip
\noindent
{\it Remark.}\
 The index $x_i$ in \eqref{eq:11.3.10} may run over all roots $x$ of 
the equation $x^h\!-\!r^h\!=\!0$. However, if $x$ is not a root of $\Delta_{\bar{a}}^{op}$
(i.e.\ $\Delta_{\bar{a}}^{op}(x^{-1})\!\not=\!0$), then $\mu^{[e]}_{x}\!=\!0$.

\medskip
We return to the series $P(t)$ \eqref{eq:11.2.1} with positive
radius $r\!>\!0$ of convergence.
If $P(t)$ is finite rationally accumulating  
of period $h$ and $a_1^{[e]}:=\iota(a^{[e]})$ for 
$[e]\!\in\! \Z/h\Z$ (recall \eqref{eq:11.3.1}), 
then $\Delta_{\bar{a}}^{op}(s)$ depends only on $P$ 
but not on the choice of the period $h$. Therefore, we shall denote it  
by $\Delta_P^{op}(s)$.
The previous Assertion ii) says that we have the $\R$-isomorphism:
\begin{equation}
\label{eq:11.3.11}
\overline{\R\Omega}(P)\ \simeq\ \R[s]/(\Delta_P^{op}(s)), 
\quad a^{[e]}\ \mapsto\ \Delta_P^{op}\cdot a^{[e]}\bmod\Delta_P^{op}.\!
\end{equation} 
Since the action of $\tau$ is invertible, we
define an endomorphism $\sigma$ on $\overline{\R\Omega}(P)$ by 
\begin{equation}
\begin{array}{lll}
\label{eq:11.3.12}
\sigma(a^{[e]})\ :=\ \tau^{-1}(a^{[e]})\ =\ \frac{1}{a_1^{[e+1]}}a^{[e+1]}.
\end{array}
\end{equation}
{\it The action of $\sigma$ on the LHS and the 
multiplication of $s$ on the RHS 
are equivariant with respect to the isomorphism \eqref{eq:11.3.11}.} 
Hence, {\it the linear dependence relations 
among the generators $a^{[e]}$ ($[e]\!\in\!\Z/h\Z$) are 
obtained by the relations}
 $\Delta_{P}^{op}(\sigma)a^{[e]}\!=\!0$  for $[e]\!\in\!\Z/h\Z$.
However, one should note that the $\sigma$-action 
on  $\overline{\R\Omega}(P)$ is not the same as the 
multiplication of $s$ 
as the subspace  of $\R\db[s\db]$.

\subsection{Duality between $\Delta_P^{op}(s)$ and $\Delta_P^{top}(t)$}
\label{subsec:11.4}

Assuming that $P(t)$ extends to a meromorphic function in a 
neighborhood of the closure of its convergent disc,  
we show a duality between the 
poles of opposite sequences of $P(t)$ 
and the poles of $P(t)$ on its convergent circle.


\medskip
\!{\bf Definition.}
For a positive real number $r$, 
let us denote by $\C\{t\}_r$ the space consisting of complex powers series 
$P(t)$ such that i) $P(t)$ converges (at least) on the open disc 
centered at 0 of radius $r$, and 
ii) $P(t)$ analytically continues to a meromorphic function 
on a disc centered at 0 of radius $>r$.
Let $\Delta_P(t)$ 
be the monic polynomial in $t$ of minimal degree such that $\Delta_P(t) P(t)$ 
is holomorphic in a neighborhood of the circle 
$|t|\!\!=\!r$. Put $\Delta_P(t)=\prod_{i=1}^N(t-x_i)^{d_i}$ where 
 $x_i$ ($i\!=\!1\!,\!\cdots\!,\!N$, $N\!\in\!\Z_{\ge 0}$) 
are mutually distinct complex numbers 
with $|x_i|\!=\!r$ and $d_i\!\in\!\Z_{>0}$ ($i\!=\!1,\!\cdots\!,N$). 
Define the equation for the set of poles of highest order:
\begin{equation}
\begin{array}{l}
\label{eq:11.4.1}
\Delta_P^{top}(t)\!:=\!\prod_{i,d_i=d_{m}}\!\!(t-x_i) \quad\text{ where}\quad  
d_{m}\!:=\!\max\{d_i\}_{ i\!=\!1}^N. \!\!\!\!\!\!\!\!
\end{array}   
\end{equation}

\begin{definition}
Define an action $T_U$ on $\C\db[t\db]$ for 
a rational set $U$ of $\Z_{\ge0}$ 
by  
\begin{equation}
\label{eq:11.4.2}
\begin{array}{l}
P=\sum_{n\in \Z_{\ge0}}\gamma_nt^n
\quad \mapsto\quad   T_UP:=\sum_{n\in U}\gamma_nt^n. 
\end{array}
\end{equation}
One may regard $T_UP$ as a product of $P$ with the  
function $U(t)$ in the sense of Hadamard \cite{H}. 
The radius of convergence of $T_UP$ is not less than that of $P$. 
\end{definition}

\medskip
\noindent
{\it Fact 1.} 
{\it The action of $T_U$ 
preserves the space $\C\{t\}_r$ for any $r\in\R_{>0}$.}

\medskip
{\it Proof.}
Let us  expand the meromorphic function $P(t)$ into partial fractions
%
\[
\begin{array}{ll}
P(t)= \sum_{i=1}^N\sum_{j=1}^{d_i}\frac{c_{i,j}}{(t-x_i)^j}\ + \ Q(t),
\end{array}
\leqno{*)}
\]
%
where   the coefficients $c_{i,j}$  of the principal part 
$\sum_{i=1}^N\sum_{j=0}^{d_i}\frac{c_{i,j}}{(t-x_i)^j}$ of $P(t)$
are constants with $c_{i,d_i}\!\not=\!0$ for all $i$, 
and $Q(t)$ is a holomorphic function on 
a disc of radius $>\!r$. Then, 
$T_UP=\sum_{i,j}T_U\frac{c_{i,j}}{(t-x_i)^{j}} + T_U Q$ where  
$T_U Q$ is a holomorphic function on a disc of radius $>r$. 
It is sufficient to show that, for any standard rational set 
$U^{[e]}\!:=\!\{n\!\in\!\Z_{\ge0}\mid n\!\equiv\! [e] \bmod h\}$ of 
period $h\!\in\!\Z_{>0}$ and
$[e]\!\in\!\Z/h\Z$, one has 
$T_{U^{[e]}}\frac{1}{(t-x_i)^{j}}\!=\!\frac{B_{i,j}(t)}{(t^h-x_i^h)^{j}}$ 
where $B_{i,j}(t)$ is a polynomial in $t$. 
We calculate this explicitly as follows. 
For the purpose, we claim a 
``semi-commutativity'' 
$T_{U^{[e]}}\!\cdot\! \frac{d}{dt}\!=\!\frac{d}{dt}\!\cdot\! T_{U^{[e+1]}}$ 
(proof is trivial and is omitted). 
Then,

\smallskip
$T_{U^{[e]}}\frac{1}{(t-x_i)^j}=T_{U^{[e]}}\frac{(-1)^{j-1}}{(j-1)!}(\frac{d}{dt})^{j-1}\frac{1}{t-x_i}=\frac{(-1)^{j-1}}{(j-1)!}(\frac{d}{dt})^{j-1}\!T_{U^{[e+j-1]}}\!\frac{1}{t-x_i}
\!$

\quad \qquad\qquad \ $=\!\frac{(-1)^{j-1}}{(j-1)!}(\frac{d}{dt})^{j-1}\!\frac{t^f}{t^h-x_i^h}$ \quad where $f\!:\!=\!e\!+\!j\!-\!1\!-\!h[(e\!+\!j\!-\!1)/h]$.

\smallskip
\noindent
This gives the required result.
\quad $\Box$.

\medskip
The following is the goal of the present subsection.
\begin{thm}\!\!\!\!\!\!\!\!{\bf 5. (Duality)} \quad 
Suppose $P(t)$ \eqref{eq:11.2.1} 
belongs in $\C\{t\}_r$ for $r=$ the radius of convergence of $P$,  
and is finite accumulating. Then, we have 
\begin{eqnarray}
\label{eq:11.4.3}
t^{\deg(\Delta_P^{op})}\Delta_P^{op}(t^{-1})
\ & =\ &\Delta_P^{top}(t) ,\\
\label{eq:11.4.4}
\rank(\overline{\R\Omega}(P))\ & =\ &\deg(\Delta_P^{op})\ =\ \deg(\Delta_P^{top}).
\end{eqnarray}
\end{thm}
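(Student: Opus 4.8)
The rank statement in \eqref{eq:11.4.4} is already available: \eqref{eq:11.3.7} together with the isomorphism \eqref{eq:11.3.11} gives $\rank(\overline{\R\Omega}(P))=\deg(\Delta_P^{op})$. Hence the whole theorem reduces to the single identity $\Delta_P^{op}(s)=\prod_{i\,:\,d_i=d_m}(1-x_is)$, with $x_i,d_i,d_m$ as in \eqref{eq:11.4.1}: granting it, \eqref{eq:11.4.3} is the substitution $s=t^{-1}$ (both sides being normalized of the same degree), and the remaining equality in \eqref{eq:11.4.4} merely counts linear factors. So the plan is to compute the poles of an opposite series $a^{[e]}(s)$ from the analytic behaviour of $P$ near its circle of convergence and to match them with the maximal-order poles of $P$ on that circle.

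First I would expand the meromorphic continuation of $P$ into partial fractions near $|t|=r$, exactly as in the proof of \textit{Fact 1} of \ref{subsec:11.4}, writing $P(t)=\sum_{i=1}^{N}\sum_{j=1}^{d_i}c_{i,j}(t-x_i)^{-j}+Q(t)$ with $c_{i,d_i}\neq0$ and $Q$ holomorphic on a disc of radius $>r$. Reading off the coefficient of $t^n$ and keeping the dominant terms (those from the poles of maximal order $d_m$ on $|t|=r$) yields
\[
\gamma_n\ =\ \frac{n^{d_m-1}}{(d_m-1)!}\,r^{-n}\bigl(f(n)+O(1/n)\bigr),\qquad f(n)\ :=\ \sum_{i\,:\,d_i=d_m}\kappa_i\,w_i^{\,n},
\]
where $w_i:=r/x_i$ lies on the unit circle, the $w_i$ are pairwise distinct, and $\kappa_i:=(-1)^{d_m}c_{i,d_m}x_i^{-d_m}\neq0$ (here $\gamma_n$, $f(n)$ and the $\kappa_i$ are real, since $P$ and the lower-order contributions have real coefficients). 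Since $P$ is finite rationally accumulating of some period $h$, 11.3 Assertion gives $a_k^{[e]}=\lim_{n\equiv e\,(h)}\gamma_{n-k}/\gamma_n$ together with two-sided bounds $c_1r^k\le a_k^{[e]}\le c_2r^k$; feeding this, the positivity $\gamma_n>0$, and these bounds into the displayed asymptotics first forces (through a compactness argument on the orbit $\{(w_i^{\,n})_i\}$, using linear independence of distinct characters to exclude $f\equiv0$) that $f(n)$ is bounded below by a positive constant for $n\gg0$, and then gives $a_k^{[e]}=r^k\lim_{n\equiv e\,(h)}f(n-k)/f(n)$.

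The crux is to show that every $w_i$ is an $h$-th root of unity. Because $a_{hm}^{[e]}$ is a product over $m$ full periods, $a_{hm}^{[e]}=r^{hm}$ for all $m$ by \eqref{eq:11.3.3}--\eqref{eq:11.3.4}, so $f(n-hm)/f(n)\to1$ along $n\equiv e\,(h)$. Passing to the closed subgroup $H$ of a finite-dimensional torus obtained as the closure of the orbit $\{(w_i^{\,n})_i:n\in\Z\}$, and to the continuous extension $g$ of $f$ to $H$ — which is nowhere zero by the lower bound just obtained — this forces $g$ to be invariant under translation by the dense subgroup $\overline{\langle(w_i^{\,h})_i\rangle}\subset H$. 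As the coordinate characters $(\cdot)_i|_H$ are pairwise distinct (the $w_i$ being distinct) hence linearly independent, invariance of $g=\sum_i\kappa_i(\cdot)_i$ under that subgroup forces every $(\cdot)_i$ to restrict trivially to it, i.e.\ $w_i^{\,h}=1$. Then $f$ is exactly $h$-periodic, $f(e)\neq0$ for every $e$, and summing the geometric series one obtains
\[
a^{[e]}(s)\ =\ \sum_{k\ge0}r^k\,\frac{f(e-k)}{f(e)}\,s^k\ =\ \frac{1}{f(e)}\sum_{i\,:\,d_i=d_m}\frac{\kappa_i\,w_i^{\,e}}{1-x_is}.
\]
Thus $a^{[e]}(s)$ has simple poles with nonzero residues at exactly the points $s=1/x_i$ with $d_i=d_m$, so its reduced denominator — which is $\Delta_P^{op}(s)$ by definition (cf.\ 11.3 Assertion) — equals $\prod_{i\,:\,d_i=d_m}(1-x_is)$. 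Hence $t^{\deg(\Delta_P^{op})}\Delta_P^{op}(t^{-1})=\prod_{i\,:\,d_i=d_m}(t-x_i)=\Delta_P^{top}(t)$, which is \eqref{eq:11.4.3}, and counting factors gives $\deg(\Delta_P^{op})=\deg(\Delta_P^{top})$, completing \eqref{eq:11.4.4}.

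I expect the step ``every $w_i$ is an $h$-th root of unity'' to be the main obstacle: it is precisely here that finiteness of $\Omega(P)$ is converted into the rigidity statement that the maximal-order poles of $P$ on $|t|=r$ sit at $r$ times roots of unity, and making it rigorous requires the Kronecker--Weyl equidistribution input together with the positivity $\gamma_n>0$ (which keeps $f$ away from $0$). A subsidiary point demanding care is the uniformity of the $O(1/n)$ remainder in the asymptotic expansion of $\gamma_n$, for which the hypothesis $P\in\C\{t\}_r$ — genuine meromorphic continuation past the circle $|t|=r$ — is used.
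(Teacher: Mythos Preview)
Your proof is correct and takes a genuinely different route from the paper's.

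The paper proceeds in two stages. First it establishes a special case (\textit{Fact 2}): if $P$ is \emph{simply} accumulating then $\Delta_P^{top}(t)=t-r$. The argument there works with the sequence $v_n=\sum_i c_{i,d_m}x_i^{-n-1}$ directly, using Ces\`aro averaging and a Vandermonde elimination to kill the non-real-$x_i$ terms. In the general case the paper applies the rational-set operators $T^{[e]}$ to reduce each residue class to a simply accumulating series in $t^h$, concludes that $\Delta_P^{top}(t)\mid t^h-r^h$, and then proves the two divisibilities $\Delta_P^{top}\mid t^d\Delta_P^{op}(t^{-1})$ and its converse separately, the first via the formula $T^{[e]}P/P\,|_{t=x}=1/A^{[e]}(x^{-1})$ and the second via a recursion-coefficient argument (\textit{Fact 3}). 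Your approach bypasses this reduction and the two-sided divisibility entirely: you extract the asymptotic $\gamma_n\sim C\,n^{d_m-1}r^{-n}f(n)$ with $f(n)=\sum_i\kappa_i w_i^{\,n}$, prove $f$ is bounded away from $0$ and then exactly $h$-periodic via an orbit-closure (Kronecker--Weyl) argument in a torus together with linear independence of characters, and read off $\Delta_P^{op}$ from the explicit partial-fraction form of $a^{[e]}(s)$. The paper's method is more elementary (no appeal to compact-group structure) and yields the intermediate identity \eqref{eq:11.4.6}, which is reused later in the proof of Theorem~6; your method is shorter and more conceptual, and gives the closed formula $a^{[e]}(s)=\frac{1}{f(e)}\sum_i\frac{\kappa_i w_i^{\,e}}{1-x_is}$ as a byproduct.

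One small correction: the $\kappa_i$ need not be individually real when $x_i\notin\R$; what is true (and sufficient) is that $f(n)\in\R$ by conjugate symmetry of the pole data, and that each $\kappa_i\neq0$. Your character-independence step only uses the latter, so the argument is unaffected.
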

\begin{proof}
We first show some special case followed by the general case.

\medskip
\noindent
{\it Fact} 2. {\it If $P(t)$, above, is simple accumulating
(i.e.\ $\#\Omega(P)\!=\!1$),
then $\Delta_P^{top}\!=\!t\!-\!r$.}

\smallskip
{\it Proof.}
That  $P(t)$ is simply accumulating means $\underset{n\to\infty}{\lim}\frac{\gamma_{n-1}}{\gamma_n}=r$ and hence, for any small $\varepsilon\!>\!0$, there exists $c\!>\!0$ such 
that $\gamma_n\!\ge\! c(r\!+\!\varepsilon)^{-n}$ for $n\!\in\!\Z_{\ge0}$. 
Let $\delta_n$ be the $n$th Taylor coefficients of $Q$ in 
the splitting $*)$. By assumption on $Q$,
there exists $r'\!>\!r$ and a constant $c'\!>\!0$ such that
$\delta_n\!\le\!c'r'^{-n}$ for $n\!\in\!\Z_{\ge0}$. 
Therefore, choosing $\varepsilon$ such that $r+\varepsilon\!<\!r'$, we have $\delta_n/\gamma_n\to 0$.
Since the $n$th Taylor coefficient of the principal part of the splitting $*)$ 
of $P$ is given by $\gamma_n\!-\!\delta_n$, the principal part, say $P'$, 
is also simply accumulating.
That is, $X_n(P')\!=\!\sum_{k=0}^n \!\frac{\sum_{i=1}^N\sum_{j=1}^{d_i}c_{i,j}x_i^{k-n-1}(n-k;j)/(j-1)!}
{\sum_{i=1}^N\sum_{j=1}^{d_i}c_{i,j}x_i^{-n-1}(n;j)/(j-1)!} s^k$ converges to 
$\frac{1}{1-rs}\!=\!\sum_{k=0}^\infty r^ks^k$. 
Under this setting, we want to show that if $c_{i,d_{m}}\!\not=\!0$ then $x_i\!=\!r$. 
For convenience in the proof, we may assume $r\!=\!1$ 
and hence $|x_i|\!=\!1$ for all $i$. 

Consider the sequence $v_n\!:=\!\sum_{i\!=\!1}^Nc_{i,d_{m}}x_i^{-n-1}$ 
in $n\!\in\!\Z_{\ge0}$. Since $|v_n|\!\le\! \sum_i|c_{i,d_m}|$ is bounded, the sequence accumulates to a compact set in $\C$.  
If the sequence has a unique accumulating value, say $v_0$ 
then the result is already true. ({\it Proof.} Consider the mean sequence:
$\{(\sum_{n\!=\!0}^{M\!-\!1}v_n)/M\}_{M\!\in\!\Z_{\!>\!0}}$. On one side, it converges to 
$v_0$ by the assumption. On the other side, 
$\sum_{i\!=\!1}^N c_{i,d_{m}}\frac{\sum_{n\!=\!0}^{M-1}x_i^{-n-1}}{M}$ 
converges to $c_{1,d_m}$, where we assume $x_1\!=\!1$.
That is, the sequence $v_n'\!:=\!\sum_{i\!=\!2}^Nc_{i,d_m}x_i^{-n-1}$ converges to 0. 
For a fixed $n_0\!\in\! \Z_{>0}$, consider the relations:
$v'_{n_0\!+\!k}\!=\!\sum_{i\!=\!2}^N (c_{i,d_m}x_i^{-n_0}) x_i^{-k+1}$ for $k\!=\!1,\cdots,N\!-\!1$.
Regarding $c_{i,d_m}x_i^{-n_0}$ ($i\!=\!2,\!\cdots\!,N$) as the unknown, we can solve
the linear equation by a use of the van del Monde determinant for the 
matrix $(x_i^{-k+1})_{i\!=\!2,\!\cdots\!,N, k\!=\!1,\!\cdots\!,N\!-\!1}$.  
So, we obtain a linear approximation: $|c_{i,d_m}x_i^{-n_0}|\le c\cdot \max\{|v'_{n_0\!+\!k}|\}_{k\!=\!1}^{N\!-\!1}$ ($i\!=\!2,\!\cdots\!,N$) 
for a constant $c>0$ which is independent of $n_0$. 
The RHS tends to zero as $n_0\!\to\! \infty$, whereas the LHS is unchanged.
This implies $|c_{i,d_m}|\!=\!0$ ($i\!=\!2,\!\cdots\!,N$).

Next, consider the case that the sequence $v_n$ has more than two 
accumulating values. Suppose the subsequence $\{v_{n_m}\}_{m\in \Z_{>0}}$ 
converges to a non-zero value, say $c$. 
Recall the assumption that the sequence $\gamma_{n-1}/\gamma_n$ converges 
to 1. So, the subsequence 
$\frac{\gamma_{n_m-1}}{\gamma_{n_m}}=\frac{v_{n_m-1}+ \text{lower terms}}
{v_{n_m}+ \text{lower terms}}$ 
should also converges to 1 as $m\to \infty$. 
In the denominator, the first term tends to $c\!\not=\!0$ 
and the second term tends to 0. Similarly, in the numerator, the second term 
tends to 0. This implies that the first term in the numerator converges 
to $c$. Repeating the same argument, we see that for any $k\in\Z_{\ge0}$,
 the subsequence $\{v_{n_m-k}\}_{m\in\Z_{>\!\!>0}}$ converges to the same $c$.
Then, for each fixed $M\in \Z_{>0}$, 
the average sequence $\{(\sum_{k=0}^{M-1}v_{n_m-k})/M\}_{m\in \Z_{>\!\!>0}}$ 
converges to $c$, whereas, for sufficiently large $M$, the values are close 
to $c_{1,d_m}$. This implies $c=c_{1,d_m}$. 
In other words, the sequences 
$\{v'_{n_m-k}\}_{m\in\Z_{>\!\!>0}}$ for any $k\ge0$ converge to 0.
Then, an argument as in the previous case implies 
$|c_{i,d_m}|\!=\!0$ ($i\!=\!2,\!\cdots\!,N$).

This is the end of the proof of Fact 2. \quad  $\Box$

\medskip
We return to the general case, where 
$P$ is finite rational accumulating of period $h$.
For the standard partition 
$\{U^{[e]}\mid  [e]\!\in\! \Z/h\Z\}$, put 
$T^{[e]}:=T_{U^{[e]}}$. 
They decompose the unity: $\sum_{[e]\in \Z/h\Z}T^{[e]}\!=\!1$. 
By the assumption, for each {\small $0\le\! f\!<\!h$}, 
the series 
{\small 
$T^{[f]}P=t^f\!\sum_{m=0}^\infty\! \gamma_{f+mh}\tau^m$, 
}
considered as a series in $\tau\!=\!t^h$, is simple accumulating. 
Then Fact 2 implies that the highest order poles of 
$T^{[f]}P$  are only at solutions $x$ of the  
equation $t^h\!-\!r^h\!=\!0$. In view of the fact that the highest 
order of poles in $t$ on the circle $|t|\!=\!r$
of  $T^{[f]}P$ cannot exceed that of $P$ (recall the explicit expression 
in {\it Fact} 1.) and the fact $P\!=\!\sum_{[e]\in \Z/h\Z}T^{[e]}P$,
the highest order poles of $P$ are also only at solutions $x$ of the  
equation $t^h\!-\!r^h\!=\!0$. 
That is, $\Delta_P^{top}(t)$ is a factor of 
$t^h-r^h$.
For $0\!\le\! e,f\!<\!h$ and a root $x$ of the equation $t^h-r^h$, 
we evaluate  
(\eqref{eq:10.6.4} for  
$\{n_m\!=\!e\!+\!mh\}_{m=0}^\infty$ and $\{n_m\!=\!f\!+\!mh\}_{m=0}^\infty$)
\[
\begin{array}{l}
\frac{T^{[f]}P}{T^{[e]}P}(t)\bigr|_{t=x}\ =\ x^{f-e}
\frac{\sum_{m=0}^\infty\! \gamma_{f+mh}\tau^m}
{\sum_{m=0}^\infty\! \gamma_{e+mh}\tau^m}\bigr|_{\tau=x^h=r^h}
\ =\ x^{f-e}\ \lim\limits_{m\to\infty}
\frac{\gamma_{f+mh}}{\gamma_{e+mh}}.
\end{array}
\]
Then, a similar argument to that for \eqref{eq:11.3.4} shows
the formula 
{\small
\begin{equation}
\label{eq:11.4.5}
\frac{T^{[f]}P}{T^{[e]}P}(t)\Bigr|_{t=x}
\ =\ 
\begin{cases} 
x^{f-e}/a_1^{[f]}a_1^{[f-1]}\cdots a_1^{[e+1]}  &\text{if $e< f$}\\
\qquad 1&\text{if $e= f$}\\
x^{f-e} a_1^{[e]} a_1^{[e-1]} \cdots a_1^{[f+1]}  &\text{if $e> f$}. 
\end{cases}
\end{equation}
}
This implies that the order of poles of $T^{[e]}P(t)$ at a solution $x$
of the equation $t^h-r^h$ is independent of $[e]\in \Z/h\Z$. 
On the other hand, \eqref{eq:11.4.5} implies
{\small
\begin{equation}
\label{eq:11.4.6} 
{\small
\frac{T^{[e]}P}{P}(t)\Bigr|_{t=x}
=\ \frac{1}{A^{[e]}(x^{-1})}.
}
\end{equation}
}
(recall the $A^{[e]}(s)$ \eqref{eq:11.3.2}).
Let $x$ be a solution of $t^h\!-\!r^h\!=\!0$ but 
$\Delta_P^{op}(x^{-1})\!\not=\!0$. Then 
$\delta_a(x^{-1})\!=\!0$ (see \eqref{eq:11.3.6}) and $A^{[e]}(x^{-1})\!=\!0$
for $[e]\!\in\! \Z/h\Z$ (see Assertion i)). 
That is, $\frac{T^{[e]}P}{P}(t)$ has a pole at $t\!=\!x$. 
This implies that the pole of $P(t)$ at $t\!=\!x$ is of order $\!<\!d_m$
(otherwise, the pole at $t\!=\!x$ of $T^{[e]}P$ is of order $d_m$ at most
and can be canceled by dividing by $P$). 
That is, $\Delta_P^{top}(t)\mid t^d\Delta_P^{op}(t^{-1})$.

\medskip
\noindent
{\it Fact} 3. {\it Let $P(t)$ \eqref{eq:11.2.1} belong to $\C\{t\}_r$ and 
be finitely accumulating. Then

i) There exists a positive constant $c$ such that $\gamma_n\ge cr^{-n}n^{d_m-1}$ 
for $n>\!>0$.

ii) 
\centerline{
$t^d \Delta_P^{op}(t^{-1}) \ | \ \Delta_P^{top}(t)$\ .\qquad\qquad
}
}
{\it Proof.} 
i)  Consider the Taylor expansion of the function $*$). 
Using notation $v_n$ in {\it Fact} 2., 
we have $\gamma_n=-v_n\frac{ r^{-n-1}(n;d_m)}{(d_m-1)!} + \text{terms coming from 
poles of order $<d_m$} + \text{terms coming from $Q(t)$}$, 
where $v_n=\sum_{i}c_{i,d_m}(x_i/r)^{-n-1}$ depends only on $n \bmod h$ since 
$x_i$ is the root of the equation $t^h-r^h=0$. Not all of these are zero 
(otherwise $c_{i,d_m}=0$ for all i). 
Let us show that none of the  $v_n$ are zero. Suppose 
the contrary and $v_e\!=\!0\!\not=\!v_f$. Then, one observes easily
$\lim_{m\to\infty}\frac{\gamma_{e+mh}}{\gamma_{f+mh}}=0$. This contradicts 
the assumption $\Omega_1(P)\subset [u,v]$ (positivity of initials).

ii) By definition, the fractional expansion of $\Delta_P^{top}(t)P(t)$ has 
poles of order at most $d_m\!-\!1$. This means that its $(n\!-\!k)$th Taylor coefficient:

\vspace{-0.7cm}
\[
\gamma_{n-k}\cdot\al_l+\gamma_{n-k-1}\cdot\al_{l-1}+\cdots+\gamma_{n-k-l}\cdot1\ \ \sim\ \ o((n-k)^{d_m-1}r^{-(n-k)})
\leqno{**)}
\]
as $n-k \to \infty$ ($k,n\in\Z_{\ge0}$) (here, $\Delta_P^{top}(t)\!=\!t^l\!+\!\al_1t^{l-1}\!+\!\cdots\!+\!\al_l$). 
Let $\sum_ka_ks^k\!\in\!\Omega(P)$ be 
the limit of a subsequence $\{X_{n_m}(P)\}_{m\in\Z_\ge0}$ \eqref{eq:11.2.2}.
Divide $**)$ by $\gamma_{n}$.
Then, using the part i), one has  
\vspace{-0.1cm}
\[
a_{k}\al_l+a_{k+1}\al_{l-1}+\cdots+a_{k+l} \ =\ 0
\]
for any $k\!\ge\!0$. 
Thus $s^l\Delta_P^{top}(1/s) a(s)$ is a polynomial in $s$ of degree 
$<l$.  
Thus the denominator $\Delta_P^{op}(s)$ of $a(s)$ divides $s^l\Delta_P^{top}(s^{-1})$.
So, ii) is shown. \qquad $\Box$

We showed \eqref{eq:11.4.3}. \eqref{eq:11.4.4} follows from \eqref{eq:11.3.11} and \eqref{eq:11.4.3}.
\end{proof}

\noindent
{\it Example.} Recall Mach\`i's example \ref{subsec:11.2} for the modular group $\Gamma$. We have
\vspace{-0.3cm}
\[
{\footnotesize
\begin{array}{l}
T_eP(t)=\sum_{k=0}^\infty \#\Gamma_{2k}t^{2k}=\! \frac{1+5t^2}{(1-2t^2)(1-t^2)}, 
\quad T_oP(t)= \sum_{k=0}^\infty \#\Gamma_{2k+1}t^{2k+1} =\! \frac{2t(2+t^2)}{(1-2t^2)(1-t^2)},
\vspace{-0.1cm}
\end{array}
}
\] 
Then the transformation matrix is given by
\[
\!\!\!\left[\!
\begin{array}{cc}
\frac{T_eP(t)}{P_{\Gamma,G}(t)}\!\!=\!\!
\frac{1+5t^2}{(1+t)^2(1+2t)}\!\mid_{t=\frac{1}{\sqrt{2}}}&
\frac{T_oP(t)}{P_{\Gamma,G}(t)}\!\!=\!\!\frac{2t(2+t^2)}{(1+t)^2(1+2t)}\!\mid_{t=\frac{1}{\sqrt{2}}}\\
\frac{T_eP(t)}{P_{\Gamma,G}(t)}\!\!=\!\!
\frac{1+5t^2}{(1+t)^2(1+2t)}\!\mid_{t=\frac{-1}{\sqrt{2}}} & 
\frac{T_oP(t)}{P_{\Gamma,G}(t)}\!\!=\!\!
\frac{2t(2+t^2)}{(1+t)^2(1+2t)}\!\mid_{t=\frac{-1}{\sqrt{2}}}
\end{array}\!
\right]
\!\!=\!\!
\left[
\begin{array}{cc}
\footnotesize{7(5\sqrt{2}\!-\!7)} & \footnotesize{5(10\!-\!7\sqrt{2})} \\
\!\!\!\!\footnotesize{-7(5\sqrt{2}\!+\!7)} & \footnotesize{5(10\!+\!7\sqrt{2})}
\end{array}
\right]
\]
whose determinant is equal to $\frac{5\cdot7}{\sqrt{2}}\not=0$.

\subsection{The residual representation of trace elements}
\label{subsec:11.5}

As the goal of the present paper, under further asummptions i) $\#(\Omega(\Gamma,G))<\infty$ and ii) $P_{\Gamma,G}\in\C\{t\}_{r_{\Gamma,G}}$, 
we show a trace formula, which states that 
{\it the sum of the limit elements in a orbit of the inertia group 
is
expressed by a linear combination of the proportions of residues of the Poincare series $P_{\Gamma,G}(t)$ and $P_{\Gamma,G}\mathcal{M}(t)$ {\rm (11.2.6,7)} at the poles on the circle of their convergent radius, where the coefficients are given by special values of the opposit polynomials $A^{[e]}(s)$.
} 

\medskip
\noindent
\ \ We first show the following basic consequence of the finiteness $\#(\Omega(\Gamma,G))\!<\!\infty$.

\begin{lemma} Let $(\Gamma,G)$ be the pair of a monoid and its finite 
generating system, which satsfies Assumption 1 but not necessarily 2. If the limit space $\Omega(\Gamma,G)$ is finite, then it is 
finite rationally accumulating with respect to the standard  partition 
$\mathcal{U}_{\tilde h}$ of $\Z_{\ge0}$ for some $\tilde h>0$,
and $\tilde \tau_\Omega$ acts transitively 
on $\Omega(\Gamma,G)$ of period $\tilde h$. 
In particular, $\tilde \tau_\Omega$ is invertible.
\vspace{-0.1cm}
\end{lemma}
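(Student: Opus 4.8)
The statement is the exact analogue for $\Omega(\Gamma,G)$ of the Lemma in \ref{subsec:11.3} for $\Omega(P)$, with $\tilde\tau_\Omega$ in place of $\tau_\Omega$ and the sequence $\{\mathcal{M}(\Gamma_n)/\#\Gamma_n\}_{n\in\Z_{\ge0}}$ in place of $\{X_n(P)\}_{n\in\Z_{\ge0}}$. So the plan is to transcribe that proof, checking that each ingredient is available. First I would recall from {\rm 11.2 Theorem 2} that $\tilde\tau_\Omega$ is a well-defined map on $\Omega(\Gamma,G)$: if $\{\mathcal{M}(\Gamma_{n_m})/\#\Gamma_{n_m}\}_m$ converges to $\omega$, then $\{\mathcal{M}(\Gamma_{n_m-1})/\#\Gamma_{n_m-1}\}_m$ converges to $\tilde\tau_\Omega(\omega)$, which depends only on $\omega$. (Note that {\rm 11.2 Theorem} was proved under Assumption 2; but the \emph{existence} of the map $\tilde\tau_\Omega$ on accumulation points, which is all we need here, should be re-examined — see the obstacle paragraph below.) Since $\Omega(\Gamma,G)$ is a finite non-empty set stable under $\tilde\tau_\Omega$, there is an element $\omega\in\Omega(\Gamma,G)$ and a minimal $\tilde h\in\Z_{>0}$ with $\tilde\tau_\Omega^{\tilde h}\omega=\omega$ and $\tilde\tau_\Omega^{h'}\omega\ne\omega$ for $0<h'<\tilde h$.

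Next I would set up the rational partition. Choose a system $\{\mathcal{V}_a\}_{a\in\Omega(\Gamma,G)}$ of pairwise disjoint open neighborhoods (possible since $\overline{\log(\text{EDP})}$ is metrizable, hence Hausdorff, and $\Omega(\Gamma,G)$ is finite), and put $U_a:=\{n\in\Z_{\ge0}\mid \mathcal{M}(\Gamma_n)/\#\Gamma_n\in\mathcal{V}_a\}$. Up to finitely many $n$, these cover $\Z_{\ge0}$ disjointly. By definition of $\tilde\tau_\Omega$ as the ``shift $n\mapsto n-1$'' operation, the relation $\tilde\tau_\Omega^{\tilde h}\omega=\omega$ forces: for $n\in U_\omega$ large enough, $n-\tilde h\in U_\omega$ as well. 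Letting $A:=\{[e]\in\Z/\tilde h\Z\mid U_\omega\cap U^{[e]}\text{ is infinite}\}$, this periodicity shows $U_\omega$ coincides up to finitely many elements with $\bigcup_{[e]\in A}U^{[e]}$, so $A\ne\emptyset$; and $U_{\tilde\tau_\Omega^i\omega}$ coincides up to finitely many elements with $\bigcup_{[e]\in A}U^{[e-i]}$. Since $\bigcup_{i=0}^{\tilde h-1}U_{\tilde\tau_\Omega^i\omega}$ already exhausts $\Z_{\ge0}$ up to a finite set and these sets do not overlap, we must have $\#A=1$, say $A=\{[0]\}$. Then any subsequence $\{\mathcal{M}(\Gamma_{n_m})/\#\Gamma_{n_m}\}$ converging to a point of $\Omega(\Gamma,G)$ has infinitely many indices in some single class $[e]$, hence converges to $\tilde\tau_\Omega^{\tilde h-e}\omega$; therefore $\Omega(\Gamma,G)=\{\omega,\tilde\tau_\Omega\omega,\ldots,\tilde\tau_\Omega^{\tilde h-1}\omega\}$ and the $U_{\tilde\tau_\Omega^i\omega}$ form a rational partition whose standard subdivision is $\mathcal{U}_{\tilde h}$. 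This also shows $\tilde\tau_\Omega$ acts transitively and is a bijection of the finite set $\Omega(\Gamma,G)$, hence invertible.

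\textbf{Main obstacle.} The delicate point is the very first step: verifying that $\tilde\tau_\Omega$ is genuinely well-defined on \emph{all} of $\Omega(\Gamma,G)$ and that the ``$n-1$ shift'' description is valid. In {\rm 11.2 Theorem 2} this uses Assumption 2 (specifically {\bf S} and {\bf I}, via \eqref{eq:11.2.10} and the convergence of $\#\Gamma_{n_m-1}/\#\Gamma_{n_m}$ to a non-zero initial). Since the present Lemma is stated to require only Assumption 1, I would either (a) isolate the part of the argument that survives without {\bf S}, {\bf I} — namely, one still needs that whenever $\{\mathcal{M}(\Gamma_{n_m})/\#\Gamma_{n_m}\}$ converges, so does $\{\mathcal{M}(\Gamma_{n_m-1})/\#\Gamma_{n_m-1}\}$, and the limit depends only on the first limit — or (b) extract from the finiteness of $\Omega(\Gamma,G)$ itself that the relevant ratios stabilize. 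For (b): finiteness of $\Omega(\Gamma,G)$ combined with compactness of $\overline{\log(\text{EDP})}$ means $n\mapsto \mathcal{M}(\Gamma_n)/\#\Gamma_n$ eventually lands in the disjoint neighborhoods $\mathcal{V}_a$, and within each ``class'' $U_a$ the sequence converges to $a$; passing from $n$ to $n-1$ along $U_a$ then lands (for large $n$) in a fixed neighborhood $\mathcal{V}_{a'}$, so $\{\mathcal{M}(\Gamma_{n-1})/\#\Gamma_{n-1}\}_{n\in U_a}$ converges, and this defines $\tilde\tau_\Omega(a):=a'$ unambiguously. I expect this reformulation — proving the map is well-defined directly from finiteness rather than quoting the Assumption-2 theorem — to be the crux, after which the combinatorial partition argument above goes through verbatim as in \ref{subsec:11.3}.
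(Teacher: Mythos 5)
Your combinatorial argument (periodic orbit from finiteness, the sets $U_a$, the residue set $A\subset\Z/\tilde h\Z$, the conclusion $\#A=1$ and $\Omega(\Gamma,G)=\{\omega,\tilde\tau_\Omega\omega,\dots,\tilde\tau_\Omega^{\tilde h-1}\omega\}$) is exactly the paper's: the published proof simply invokes the $\tilde\tau_\Omega$-action from \S11.2 and then repeats, with $a,h$ replaced by $\omega,\tilde h$, the argument of the Lemma in \S11.3. So the second half of your proposal matches the source.

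The gap is in the step you yourself call the crux, namely your substitute argument (b) for the well-definedness of $\tilde\tau_\Omega$ without Assumption 2. Finiteness of $\Omega(\Gamma,G)$ does give that $x_n:=\mathcal{M}(\Gamma_n)/\#\Gamma_n$ eventually lies in the union of the disjoint neighborhoods $\mathcal{V}_a$ and that $\{x_n\}_{n\in U_a}$ converges to $a$; it does \emph{not} give that the shifted family $\{x_{n-1}\}_{n\in U_a}$ eventually lies in a single $\mathcal{V}_{a'}$. That family is just a shifted subsequence of $\{x_n\}$, so all one can conclude is that its accumulation points lie in the finite set $\Omega(\Gamma,G)$; a priori there may be two or more of them (an abstract sequence with accumulation points $a,b,c$ arranged as $\dots b\,a\dots c\,a\dots$ shows finiteness of the accumulation set alone cannot exclude this). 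So (b) assumes precisely the statement to be proved. In the paper this well-definedness is the content of Theorem 2 in \S11.2, whose proof genuinely uses Assumption 2: conditions {\bf S} and {\bf I} enter through the inequality (11.2.10) and through the convergence of $\#\Gamma_{n_m-1}/\#\Gamma_{n_m}$ to the initial $\iota(\pi_\Omega(\omega))$. Your unease about the phrase ``not necessarily 2'' is legitimate — the paper's own proof also only cites \S11.2 — but your proposal does not close the hole: either you quote 11.2 Theorem 2 (thereby in effect using Assumption 2), as the paper does, or you must produce an argument specific to the sequence $\mathcal{M}(\Gamma_n)/\#\Gamma_n$ showing that $\lim x_{n_m-1}$ exists and depends only on $\lim x_{n_m}$; your option (a) merely restates what is needed, and option (b) does not supply it.
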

\begin{proof}
Recall the action $\tilde \tau_\Omega$ on $\Omega(\Gamma,G)$ 
(Lemma in 11.2).
Then, finiteness of $\Omega(\Gamma,G)$ implies that 
there exists an element $\omega\in \Omega(\Gamma,G)$ and an 
integer $\tilde h\in \Z_{>0}$ such that 
$(\tilde \tau_\Omega)^{\tilde h}\omega=\omega$ and  
$(\tilde \tau_\Omega)^{\tilde h'}\omega\not=\omega$ for $0<\tilde h'<\tilde h$. Consider the set 
$U_\omega\!:=\!\{n\in\Z_{\ge0}\mid \frac{\mathcal{M}(\Gamma_{n})}{\#\Gamma_{n}}\in \mathcal{V}_\omega\}$ (here, $\mathcal{V}_\omega$ is an open 
neighborhood of $\omega$ in $\mathcal{L}_{\R,\infty}$ 
such that  
$\overline{\mathcal{V}_\omega}\cap\Omega({\Gamma,G})=\{\omega\}$). 
Then, the periodicity of the action of $\tilde \tau_\Omega$ on $\omega$ implies 
(using an argument similar to that found  in the proof of 11.2 Lemma, replacing 
$a\in \Omega(P)$ by $\omega\in \Omega(\Gamma,G)$ and $h$ by $\tilde h$, respectively) 
that   $U_\omega$ 
is, up to a finite number of elements, equal to a rational 
set $U^{[\tilde e]}$ for some $[\tilde e]\in \Z/\tilde h\Z$, 
and the following equality holds:
\vspace{-0.1cm}
\[
\Omega(\Gamma,G)\ =\ \{\ \omega,\ \tilde\tau_\Omega\omega,\ \cdots\ ,\ (\tilde\tau_\Omega)^{\tilde h-1}\omega\ \}.
\vspace{-0.1cm}
\]
This implies the finite rationality of $\Omega(\Gamma,G)$ and 
the periodicity of $\tilde\tau_\Omega$.
\end{proof}

\newpage
Let $\Omega(\Gamma,G)$ be finite rationally accumulating
of period $\tilde h$, which consists of\!\!  
\begin{equation}
\begin{array}{l}
\label{11.5.1}
\omega^{[\tilde e]}_{\Gamma,G}\ :=\ {\lim\limits_{m\to\infty}}^{\!\!\!\!cl}\ 
\frac{\mathcal{M}(\Gamma_{\tilde e+m \tilde h})}{\#\Gamma_{\tilde e+m\tilde h}}
\end{array}
\vspace{-0.2cm}
\end{equation}
for $[\tilde e]\!\in\! \Z/\tilde h\Z$. 
Then, 
$\Omega(P_{\Gamma,G})$ is also finite rationally 
accumulating of period $h$ such that $h|\tilde h$ (c.f.\ 11.2 Lemma), 
since the sequence 
$\{\pi(\frac{\mathcal{M}(\Gamma_n)}{\sharp \Gamma_n})\!=\!X_n(P_{\Gamma,G})\}_{n\!\in\! U^{[\tilde e]}}$ 
for the rational set 
$U^{[\tilde e]}\!:=\!\{n\!\in\!\Z_{\ge0}\mid n\bmod \!\tilde h\equiv\! [e]\}$ 
for any $[\tilde e]\!\in\! \Z/\tilde h\Z$ 
is convergent to 
$\pi(\omega^{[\tilde e]}_{\Gamma,G})$.
Let $\tilde h_{\Gamma,G}$ and $h_{\Gamma,G}$ 
be the minimal period of $\Omega(\Gamma,G)$ and $\Omega(P_{\Gamma,G})$,
 respectively.
Then $\pi$ is equivariant under the $\tilde \tau_{\Omega}$-action on 
$\Omega(\Gamma,G)$ and the
$\tau_{\Omega}$-action on $\Omega(P_{\Gamma,G})$ so that 
the subgroup 
$h_{\Gamma,G}\Z/\tilde h_{\Gamma,G}\Z$ of 
$\Z/\tilde h_{\Gamma,G}\Z\simeq\langle\tilde \tau_\Omega\rangle$, 
called the {\it inertia subgroup}, acts simply and transitively on 
the fibers of $\pi$. That is, 
$\Omega(\Gamma,G)/(h_{\Gamma,G}\Z/\tilde h_{\Gamma,G}\Z) \simeq \Omega(P_{\Gamma,G})$.
We call $m_{\Gamma,G}:=\tilde h_{\Gamma,G}/h_{\Gamma,G}$ the {\it inertia} of $(\Gamma,G)$ 
so that the inertia subgroup is isomorphic to 
$\Z/m_{\Gamma,G}\Z$.

\medskip
{\bf Definition.}
The {\it trace element }  for $[e]\in\Z/h_{\Gamma,G}\Z$ is  the sum of 
the elements in the fiber $\pi_\Omega^{-1}(a^{[e]})$ 
(=an orbit of the inerta group $h_{\Gamma,G}\Z/\tilde h_{\Gamma,G}\Z)$:
\begin{equation}
\begin{array}{ll}
\label{eq:11.5.2}
\mathrm{Trace}^{[e]}\ \Omega(\Gamma,G)\ :=
\sum_{\substack{
[\tilde e]\in \Z/\tilde h_{\Gamma,G}\Z,\
[\tilde e]\subset [e]}} \omega_{\Gamma,G}^{[\tilde e]}
\ =\sum_{i=1}^{m_{\Gamma,G}}\omega_{\Gamma,G}^{[\tilde e+ih_{\Gamma,G}]}
\end{array}
\end{equation}
which belongs to the space $\overline{\R\Omega}(\Gamma,G)$.

The  periodicity of $\tilde \tau_{\Omega}$ implies the invertibility of 
$\tilde \tau$ \eqref{eq:11.2.14}. As its consequence, 
let us introduce a $\tilde \sigma$-action on the module 
$\overline{\R\Omega}(\Gamma,G)$. 

\smallskip
{\bf Definition.} \  For any $[\tilde e]\in \Z/\tilde h_{\Gamma,G}\Z$, put $[e]\equiv [\tilde e] 
\bmod h_{\Gamma,G}$ and define 
\begin{equation}
\begin{array}{ll}
\label{eq:11.5.3}
\tilde \sigma (\omega_{\Gamma,G}^{[\tilde e]})\ :=\ \tilde\tau^{-1}(\omega_{\Gamma,G}^{[\tilde e]})\ =\ \frac{1}{a_1^{[e+1]}}\omega_{\Gamma,G}^{[\tilde e+1]}.
\end{array}
\vspace{-0.3cm}
\end{equation}
%
The endomorphism $\tilde \sigma$ is semi-simple since one has
$\tilde \sigma^{\tilde h_{\Gamma,G}}=r_{\Gamma,G}^{\tilde h_{\Gamma,G}} 
\ \mathrm{id}_{\overline{\R\Omega}(\Gamma,G)}$ (c.f.\ \eqref{eq:11.3.3}).
The $\R$-linear map $\pi$ \eqref{eq:11.2.15} is equivariant with respect to 
the endomorphisms $\tilde \sigma$ and $\sigma$ \eqref{eq:11.3.12}. 
By the definition, 
we see that the $\tilde \sigma$-action brings, up to a constant factor, 
a trace element to the other trace element 
\begin{equation}
\begin{array}{ll}
\label{eq:11.5.4}
\tilde \sigma \left(\mathrm{Trace}^{[e]}\ \Omega(\Gamma,G)\right)\ :=\ \frac{1}{a_1^{[e+1]}}\mathrm{Trace}^{[e+1]}\ \Omega(\Gamma,G) 
\end{array}
\vspace{-0.1cm}
\end{equation}
for all $[e]\in \Z/h_{\Gamma,G}\Z$. 
In view of \eqref{eq:11.3.3}, this, in particular, implies
\begin{equation}
\label{eq:11.5.5}
(1-(r_{\Gamma,G}\ \tilde\sigma)^h) \left(\mathrm{Trace}^{[e]}\ \Omega(\Gamma,G)\right)\ =\ 0.
\vspace{-0.1cm}
\end{equation}
After the results of 11.3 and 11.4, the next theorem  
is now straightforward.

\begin{thm}\!\!\!\!\!\!\!{\bf 6.}
Let $(\Gamma,G)$ be a pair of a monoid and its finite generating 
system with $1\not\in G$, satisfying {\bf Assumptions 1} and {\bf 2}. 
Suppose i) $\Omega(\Gamma,G)$ is  finite, and ii) $P_{\Gamma,G}\!\in\!\C\{t\}_{r_{\Gamma,G}}$. 
Let $\tilde h_{\Gamma,G}$ and $h_{\Gamma,G}$ 
be the minimal period of $\Omega(\Gamma,G)$ and $\Omega(P_{\Gamma,G})$,
 respectively, and put $\tilde m_{\Gamma,G}\!:=\!\tilde h_{\Gamma,G}/h_{\Gamma,G}$.
Then, for any  $[e]\!\in\! \Z/h_{\Gamma,G}\Z$, 
the following equality holds.
\begin{equation}
\label{eq:11.5.6}
\begin{array}{lll}
& &h_{\Gamma,G}\mathrm{Trace}^{[e]} \Omega_{\Gamma,G} 
-\left(\sum_{x^{-1}\in V(\delta_{P_{\Gamma,G}})}\!\!
\frac{\delta_{P_{\Gamma,G}}(\tilde\sigma)}{1-x\tilde \sigma}\right) \Delta_{P_{\Gamma,G}}^{op}\!(\tilde \sigma)
 \mathrm{Trace}^{[e]} \Omega_{\Gamma,G}\!\! \\
&\!\!\! = &\  
m_{\Gamma,G}\sum_{x\in V(\Delta_{P_{\Gamma,G}}^{top})}A^{[e]}(x^{-1})
\frac{P_{\Gamma,G}\mathcal{M}(t)}{P_{\Gamma,G}(t)}\biggr|_{t=x} .
\end{array}\!\!\!\!\!
\end{equation}
where we put $\delta_{P_{\Gamma,G}}(\tilde \sigma):=(1-r^{h_{\Gamma,G}}\tilde \sigma^{h_{\Gamma,G}})/\Delta_{P_{\Gamma,G}}^{op}(\tilde \sigma)$ (c.f.\ \eqref{eq:11.3.6}) and we denote by $V(P)$ the set of zero loci of the polynomial $P$.
\end{thm}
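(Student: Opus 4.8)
The plan is to assemble the statement from the three structural ingredients already in place: (a) the residual expression of an individual limit element, (b) the action of $\tilde\sigma$ (equivalently $\tilde\tau$) on trace elements together with its semisimplicity, and (c) the duality $t^{\deg(\Delta^{op}_P)}\Delta^{op}_P(t^{-1})=\Delta^{top}_P(t)$ of Theorem 5. First I would fix $[e]\in\Z/h_{\Gamma,G}\Z$ and apply \S10.6 Corollary to each $\omega^{[\tilde e]}_{\Gamma,G}$ in the fiber $\pi_\Omega^{-1}(a^{[e]})$, using the series $P_{\Gamma,G}(t)$ and $P_{\Gamma,G}\mathcal M(t)$ restricted to the rational subset $U^{[\tilde e]}$ via the Hadamard operators $T^{[\tilde e]}=T_{U^{[\tilde e]}}$ from \S11.4. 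This gives $\omega^{[\tilde e]}_{\Gamma,G}=\sum_{S}\varphi(S)\,\bigl(T^{[\tilde e]}(P_{\Gamma,G}M(S,\cdot))/T^{[\tilde e]}P_{\Gamma,G}\bigr)\big|_{t=r_{\Gamma,G}}$, and summing over $[\tilde e]\subset[e]$ produces $m_{\Gamma,G}$-many contributions grouped into $\mathrm{Trace}^{[e]}\Omega(\Gamma,G)$.

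Next I would use the key identity \eqref{eq:11.4.6}, $\frac{T^{[\tilde e]}P}{P}(t)\big|_{t=x}=1/A^{[\tilde e]}(x^{-1})$, now applied with $P=P_{\Gamma,G}$ and $x$ ranging over the roots of $t^{h_{\Gamma,G}}-r_{\Gamma,G}^{h_{\Gamma,G}}=0$. Combined with the fact (from the proof of Theorem 5) that $T^{[\tilde e]}P_{\Gamma,G}$ has highest-order poles only at those roots, and with the meromorphy hypothesis $P_{\Gamma,G}\in\C\{t\}_{r_{\Gamma,G}}$, this rewrites each residual proportion $\frac{T^{[\tilde e]}(P_{\Gamma,G}M(S,\cdot))}{T^{[\tilde e]}P_{\Gamma,G}}\big|_{t=x}$ in terms of $\frac{P_{\Gamma,G}M(S,\cdot)}{P_{\Gamma,G}}\big|_{t=x}$ times the rational factor $A^{[\tilde e]}(x^{-1})$, after separating the contributions of the top-order poles (the roots of $\Delta^{top}_{P_{\Gamma,G}}$) from the lower-order poles (the roots $x^{-1}\in V(\delta_{P_{\Gamma,G}})$, where $A^{[\tilde e]}(x^{-1})=0$ by Assertion \S11.3 i) so that $T^{[\tilde e]}P_{\Gamma,G}/P_{\Gamma,G}$ has a genuine pole there). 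The two sums on the left-hand side of \eqref{eq:11.5.6} are precisely the bookkeeping of these two pole strata: the bare term $h_{\Gamma,G}\mathrm{Trace}^{[e]}\Omega_{\Gamma,G}$ accounts for the naive residue at $t=x$ with $|x|=r_{\Gamma,G}$ summed over a full period, and the correction term $\bigl(\sum_{x^{-1}\in V(\delta_{P_{\Gamma,G}})}\frac{\delta_{P_{\Gamma,G}}(\tilde\sigma)}{1-x\tilde\sigma}\bigr)\Delta^{op}_{P_{\Gamma,G}}(\tilde\sigma)\,\mathrm{Trace}^{[e]}\Omega_{\Gamma,G}$ subtracts off the spurious contributions coming from the non-top poles, expressed through the $\tilde\sigma$-action using \eqref{eq:11.5.4} and the partial-fraction decomposition \eqref{eq:11.3.9}--\eqref{eq:11.3.10}.

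To turn the coefficients into the polynomial values $A^{[e]}(x^{-1})$ appearing on the right, I would invoke the $\R[s]$-module isomorphism $\overline{\R\Omega}(P_{\Gamma,G})\simeq\R[s]/(\Delta^{op}_{P_{\Gamma,G}})$ of \eqref{eq:11.3.11}, under which $\tilde\sigma$ on $\overline{\R\Omega}(\Gamma,G)$ lifts the multiplication by $s$, so that the linear dependence $\Delta^{op}_{P_{\Gamma,G}}(\tilde\sigma)$ annihilates $\mathrm{Trace}^{[e]}\Omega_{\Gamma,G}$ up to the correction already named; here \eqref{eq:11.5.5} is what makes the $\tilde\sigma$-polynomial expressions well-defined. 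Evaluating the matrix $(\mu^{[e]}_{x})_{[e],x}$ of residues, which has maximal rank $\deg(\Delta^{op}_{P_{\Gamma,G}})=\deg(\Delta^{top}_{P_{\Gamma,G}})$ by Theorem 5, then pins down the scalar $A^{[e]}(x^{-1})$ in front of $\frac{P_{\Gamma,G}\mathcal M(t)}{P_{\Gamma,G}(t)}\big|_{t=x}$, and the overall factor $m_{\Gamma,G}$ arises because each opposite series $a^{[e]}$ is hit by exactly $m_{\Gamma,G}$ indices $[\tilde e]\subset[e]$ in \eqref{eq:11.5.2}. Finally I would check that $\frac{P_{\Gamma,G}\mathcal M(t)}{P_{\Gamma,G}(t)}$ is genuinely regular at each such $x$, which follows as in \S10.6 Corollary from the bound $0\le P_{\Gamma,G}M(S,t)\le P_{\Gamma,G}(t)\,q^{\#S-1}/\#\Aut(S)$ and the positivity of $P_{\Gamma,G}$ on $(0,r_{\Gamma,G})$, transported to $|t|=r_{\Gamma,G}$ through $T^{[\tilde e]}$.

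The main obstacle I anticipate is not any single estimate but the careful separation of the top-order poles from the lower-order poles on $|t|=r_{\Gamma,G}$ and the verification that the $\tilde\sigma$-operator expression $\bigl(\sum_{x^{-1}\in V(\delta_{P_{\Gamma,G}})}\frac{\delta_{P_{\Gamma,G}}(\tilde\sigma)}{1-x\tilde\sigma}\bigr)\Delta^{op}_{P_{\Gamma,G}}(\tilde\sigma)$ exactly reproduces the residues at the non-top poles when applied to the trace element; this requires combining the Galois-equivariance $\sigma(\mu^{[e]}_{x_i})=\mu^{[e]}_{\sigma(x_i)}$ with the fact that $\delta_{P_{\Gamma,G}}(s)$ is precisely the gcd controlling where $A^{[e]}$ vanishes, so that the operator identity holds after passing to $\R[s]/(1-r^{h}s^{h})$ and then restricting to $\R[s]/(\Delta^{op}_{P_{\Gamma,G}})$. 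Once the pole bookkeeping is organized through the $T^{[\tilde e]}$-decomposition of unity $\sum_{[\tilde e]}T^{[\tilde e]}=1$ and the explicit formula \eqref{eq:11.4.5} for the ratios $T^{[f]}P/T^{[e]}P$ at $t=x$, the remaining manipulations are the routine linear algebra of the rank-$d$ residue matrix.
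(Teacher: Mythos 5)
Your proposal is correct and follows essentially the same route as the paper's own proof: the decomposition of unity by the Hadamard projectors $T^{[\tilde f]}$, evaluation at the roots of $\Delta_{P_{\Gamma,G}}^{top}$ via (11.4.5)--(11.4.6) to express each residue element through the trace elements (the paper's (11.5.7)), the shift of traces by $\tilde\sigma$ via (11.5.4), and the duality of Theorem 5 together with the factorization $1-(r s)^{h}=\delta_{P_{\Gamma,G}}(s)\,\Delta_{P_{\Gamma,G}}^{op}(s)$ and the identity $\sum_{x}\frac{1-(rs)^{h}}{1-xs}=h$, which is exactly where the correction term on the left of (11.5.6) comes from. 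One small adjustment: since assumption ii) gives meromorphy only of $P_{\Gamma,G}$, justify the evaluation of $T^{[\tilde e]}P_{\Gamma,G}\mathcal{M}/T^{[\tilde e]}P_{\Gamma,G}$ at the pole by the boundary-limit Lemma of 10.6 ii) after substituting $\tilde t=t^{\tilde h_{\Gamma,G}}$ (so the evaluation point becomes the positive real $r^{\tilde h_{\Gamma,G}}$), rather than by the 10.6 Corollary, which is precisely how the paper argues.
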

\begin{proof} 
Let us call 
$\frac{P_{\Gamma,G}\mathcal{M}(t)}{P_{\Gamma,G}(t)}\biggr|_{t=x}$
in RHS of the formula \eqref{eq:11.5.6} a {\it residue element}, since it is a 
proportion of the residues of $P_{\Gamma,G}\mathcal{M}(t)$ and 
$P_{\Gamma,G}(t)$ at the point $t=x$.
Let us, first, express the residue element by a sum of 
trace elements. 
For the purpose, consider the decomposition of unity:
%
\[
\begin{array}{l}
\frac{P_{\Gamma,G}\mathcal{M}(t)}{P_{\Gamma,G}(t)}\ =\ 
\sum_{[\tilde f]\in \Z/\tilde h_{\Gamma,G}\Z}\frac{T^{[\tilde f]}P_{\Gamma,G}(t)}{P_{\Gamma,G}(t)}\cdot
\frac{T^{[\tilde f]}P_{\Gamma,G}\mathcal{M}(t)}{T^{[\tilde f]}P_{\Gamma,G}(t)}.
\end{array}
\vspace{-0.1cm}
\leqno{*)}\]
where $T^{[\tilde f]}\!=\!T_{U^{[\tilde f]}}$ \eqref{eq:11.4.2} 
is the action of the 
rational set $U^{[\tilde f]}$ of the standard subdivision for 
$\Omega(\Gamma,G)$ so that $\sum_{\tilde f\in \Z/\tilde h_{\Gamma,G}\Z}T^{[\tilde f]}\!=\!1$. 
Let $x$ be a root of $\Delta_{P_{\Gamma,G}}^{top}(t)\!=\!0$, and consider 
the evaluation of both sides of $*$) at $t\!=\!x$. The LHS gives, 
by definition, the 
residue element at $x$. By a slight generalization of the formula 
\eqref{eq:11.3.6}, the first factor in the RHS is given by 
$1/A^{[\tilde f]}(x^{-1}\!)\!=\!1/(m_{\Gamma,G\!}\!\cdot\!\! A^{[f]}(x^{-1}\!))$ 
(note that $A^{[f]}(x^{-1}\!)\!\not=\!0$ since $\delta_{P_{\Gamma,G}}\!(x^{-1}\!)\!\not=\!0$),
where $[f]\!:=\![\tilde f] \bmod h_{\Gamma,G}$. The second factor 
in RHS is
\vspace{-0.3cm}
\[
\frac{\sum_{m=0}^\infty \mathcal{M}(\Gamma_{\tilde f+m \tilde h_{\Gamma,G}})
t^{\tilde f+m \tilde h_{\Gamma,G}}}
{\sum_{m=0}^\infty \#\Gamma_{\tilde f+m \tilde h_{\Gamma,G}}t^{\tilde f+m \tilde h_{\Gamma,G}}} \biggr|_{t=x}
=\ \frac{\sum_{m=0}^\infty \mathcal{M}(\Gamma_{\tilde f+m \tilde h_{\Gamma,G}})
\tilde t^m}
{\sum_{m=0}^\infty \#\Gamma_{\tilde f+m \tilde h_{\Gamma,G}}\tilde t^m} 
\biggr|_{\tilde t=r^{\tilde h_{\Gamma,G}}}
\vspace{-0.1cm}
\]
where, in the RHS, $\tilde t\!:=\!t^{\tilde h_{\Gamma,G}}$ is the new variable 
and $r^{\tilde h_{\Gamma,G}}\!=\!x^{\tilde h_{\Gamma,G}}$ 
is the common singular point of
the two power series (the numerator and the denominator) in $\tilde t$ at the 
crossing of the positive real axis and the circle of the convergent
radius
(c.f.\ 10.6 Lemma i)). Then, since the coefficients of the series 
are non-negative, this proportion of the residue value is equal to
the limit of the proportion of the coefficients of the series 
(c.f.\ \eqref{eq:10.6.4}) 
${\lim\limits_{m\to\infty}}^{\!\!\!\!cl}\frac{\mathcal{M}(\Gamma_{\tilde f+m \tilde h_{\Gamma,G}})}
{\#\Gamma_{\tilde f+m \tilde h_{\Gamma,G}}}$ which is nothing but the
limit element $\omega_{\Gamma,G}^{[\tilde f]}$ (11.5.1). 
Put $\tilde f\!=\!f\!+\!ih_{\Gamma,G}$ for 
$0\!\le\! f\!<\!h_{\Gamma,G}$ 
and $0\!\le\! i\!<\!m_{\Gamma,G}$. Then the RHS 
turns into 
\vspace{-0.2cm}
\[
{\large
\begin{array}{ll}
\frac{1}{m_{\Gamma,G}}\sum_{[f]\in \Z/h_{\Gamma,G}\Z}\frac{1}{A^{[f]}(x^{-1})}
\sum_{i=0}^{m_{\Gamma,G}-1}\omega_{\Gamma,G}^{[f+ih_{\Gamma,G}]}
\end{array}
}
\vspace{-0.2cm}
\]
where the second sum in the RHS gives the trace 
$\mathrm{Trace}^{[f]} \Omega(\Gamma,G)$. That is,
\begin{equation}
\begin{array}{l}
\label{eq:11.5.7}
\frac{P_{\Gamma,G}\mathcal{M}(t)}{P_{\Gamma,G}(t)}\Bigr|_{t=x}
= \
\frac{1}{m_{\Gamma,G}}\sum_{[f]\in \Z/h_{\Gamma,G}\Z}\frac{1}{A^{[f]}(x^{-1})}
\mathrm{Trace}^{[f]} \Omega(\Gamma,G).
\end{array}
\end{equation}
For a fixed $[e]\!\in\! \Z/h_{\Gamma,G}\Z$, we multiply $A^{[e]}(x^{-1})$ 
to both sides of \eqref{eq:11.5.7}, and 
sum over the index $x$ running over the set $V(\Delta_{P_{\Gamma,G}}^{top})$ 
of all roots of 
$\Delta_{P_{\Gamma,G}}^{top}(t)\!=\!0$, whose LHS is 
equal to the RHS of \eqref{eq:11.5.6}.
Using \eqref{eq:11.4.6}, one observes that
$\frac{A^{[e]}(x^{-1})}{A^{[f]}(x^{-1})}
$
is equal to the LHS of \eqref{eq:11.4.5}.
Replace the summation index ``$[f]\!\in\!\Z/h_{\Gamma,G}\Z$'' 
in \eqref{eq:11.5.7} by 
``$[e\!+\!i]$ for $i\!=\!0,\cdots,h_{\Gamma,G}\!-\!1$'' for fixed $[e]$. 
Using the first line of  RHS of \eqref{eq:11.4.5} and 
$i$th repeated applications of \eqref{eq:11.5.4},
the sum in RHS turns out to
\[
\begin{array}{ll}
& \frac{1}{m_{\Gamma,G}}\sum_{x\in V(\Delta_{P_{\Gamma,G}}^{top})}
\sum_{i=0}^{h_{\Gamma,G}-1}\frac{A^{[e]}(x^{-1})}{A^{[e+i]}(x^{-1})}
\mathrm{Trace}^{[e+i]}\ \Omega_{\Gamma,G}\\
=  & \frac{1}{m_{\Gamma,G}}\sum_{x\in V(\Delta_{P_{\Gamma,G}}^{top})}
\sum_{i=0}^{h_{\Gamma,G}-1}
\frac{x^{i}}{a_1^{[e+i]}a_1^{[e+i-1]}\cdots a_1^{[e+1]}} \prod_{j=1}^{i}
(a_1^{[e+j]}\tilde \sigma)\ 
\mathrm{Trace}^{[e]}\ \Omega_{\Gamma,G} \\
=  & \frac{1}{m_{\Gamma,G}}\sum_{x\in V(\Delta_{P_{\Gamma,G}}^{top})}
\sum_{i=0}^{h_{\Gamma,G}-1}
x^{i}\tilde \sigma^i
\ \mathrm{Trace}^{[e]}\ \Omega_{\Gamma,G}.
\end{array}
\]
Here, we note that the sum 
$\sum_{i=0}^{h_{\Gamma,G}-1}x^{i}\tilde \sigma^i$ is expressed as
$\frac{1-(r_{\Gamma,G}\tilde\sigma)^{h_{\Gamma,G}}}{1-x\tilde \sigma}$
and that $x\in  V(\Delta_{P_{\Gamma,G}}^{top})$ is equivalent to
$x^{-1}\in  V(\Delta_{P_{\Gamma,G}}^{op})$ due to the duality 
\eqref{eq:11.4.3}. 
We note further that an identity:

\centerline{
$\sum_{x^{-1}\in V(1-(r_{\Gamma,G}s)^{h_{\Gamma,G}})}
\frac{1-(r_{\Gamma,G}s)^{h_{\Gamma,G}}}{1-x s}
\ =\ h_{\Gamma,G}$
}

\smallskip
\noindent
holds (in the polynomial ring of $s$). Therefore, recalling \eqref{eq:11.3.6} 

\centerline{
$\delta_{P_{\Gamma,G}}(s)\cdot \Delta_{P_{\Gamma,G}}^{op}(s)=1-(r_{\Gamma,G}s)^{h_{\Gamma,G}}$
}

\smallskip
\noindent
we calculate further the sum as follows.
\[
\begin{array}{ll}
  & \frac{1}{m_{\Gamma,G}}\left(\sum_{x^{-1}\in V(\Delta_{P_{\Gamma,G}}^{op})}
\frac{1-(r_{\Gamma,G}\tilde\sigma)^{h_{\Gamma,G}}}{1-x\tilde \sigma}\right)
\ \mathrm{Trace}^{[e]}\ \Omega_{\Gamma,G} \\
=  & \frac{1}{m_{\Gamma,G}}\left( h_{\Gamma,G}\cdot id_{\overline{\R\Omega}(\Gamma,G)}-
\sum_{x^{-1}\in V(\delta_{P_{\Gamma,G}})}
\frac{\delta_{P_{\Gamma,G}}(\tilde\sigma)}{1-x\tilde \sigma} \Delta_{P_{\Gamma,G}}^{op}(\tilde \sigma)\right)
\ \mathrm{Trace}^{[e]}\ \Omega_{\Gamma,G}.
\end{array}
\]
This gives LHS of \eqref{eq:11.5.6}, and hence Theorem is proven.
\end{proof}



\noindent
{\bf  Example.} Consider the free group $F_f$ with $f$ number of generating system $G$ (\S11.1 Example 2.). Using the formula \eqref{eq:11.1.10}, it is immediate to calculate\vspace{-0.2cm}
{\footnotesize
\begin{equation}
\label{eq:11.5.8}
P_{F_f,G}\mathcal{M}(t)=\frac{1}{(1-t)(1-(2f\!-\!1)t)}\sum_{k=0}^\infty t^k\!
\left((1\!+\!t)\!\!\!\!\!\!\sum_{\substack{S\in \langle F_f,G\rangle_0\\d(S)=2k}}\!\!\varphi(S)+2t\!\!\!\!\!\!\sum_{\substack{S\in \langle F_f,G\rangle_0\\d(S)=2k+1}}\!\!\varphi(S)\right)\!\!\!\!\!
\end{equation}
}\vspace{-0.4cm}
{\footnotesize
\begin{equation}
\label{eq:11.5.9}
\frac{P_{F_f,G}\mathcal{M}(t)}{P_{F_f,G}(t)}=\sum_{k=0}^\infty t^k
\left(\sum_{\substack{S\in \langle F_f,G\rangle_0\\d(S)=2k}}\varphi(S)+\frac{2t}{1+t}\sum_{\substack{S\in \langle F_f,G\rangle_0\\d(S)=2k+1}}\varphi(S)\right)
\end{equation}
}
The denominator polynomial $\Delta_{F_f,G}(t)=(1-t)(1-(2f-1)t)$ has two roots 1 and $1/(2f-1)$. The specialization of the variable $t$ in \eqref{eq:11.5.9} to the smaller root $1/(2f-1)$ gives the limit element \eqref{eq:11.1.9}. The specialization to $t=1$ gives
{\small
 $\sum_{\substack{S\in \langle F_f,G\rangle_0}}\varphi(S)$
}
(see \eqref{eq:12.3} and \S12 Problem 3. iii)).

\medskip
\noindent
{\it Remark.} 
1. The second term of the LHS of \eqref{eq:11.5.6} belongs to the kernel of
$\pi$, since one has $\pi(\Delta_{P_{\Gamma,G}}^{op}(\tilde\sigma)\mathrm{Trace}^{[e]} \Omega_{\Gamma,G})=m_{\Gamma,G}\Delta_{P_{\Gamma,G}}^{op}(\sigma) a^{[e]}=0$.
Therefore, we ask whether 

\centerline{
$\Delta_{P_{\Gamma,G}}^{op}(\tilde \sigma)\
 \mathrm{Trace}^{[e]} \Omega_{\Gamma,G}=0$ \ ? 
}

\vspace{0.1cm}
\noindent
This is equivalent to 
the statement that {\it the $\R[\tilde \sigma]$-module spanned by the trace
elements  
$ \mathrm{Trace}^{[e]} \Omega_{\Gamma,G}$ 
is isomorphic to the $\R[\sigma]$-module $\overline{\R\Omega}(P_{\Gamma,G})$.}

\medskip
2. One can directly calculate the following formula:
\begin{equation}
\label{eq:11.5.10}
\begin{array}{l}
\pi\left(\frac{P_{\Gamma,G}\mathcal{M}(t)}{P_{\Gamma,G}(t)}\right)
\ =\ \frac{1}{1-st}.
\end{array}
\end{equation}
\noindent
Specializing $t$ to a root $x$ of $\Delta_{P}(t)\!=\!0$ in  
the formula gives the Cauchy kernel $\frac{1}{1-xs}$. Therefore, 
the $\pi$ image of 
\eqref{eq:11.5.6}  turns out to be the formula \eqref{eq:11.3.9}.

\medskip
3. If $(\Gamma,G)$ is a group of polynomial growth, then 
$\Delta_{P_{\Gamma,G}}(t)\!=\!(1\!-\!t)^{l+1}$ (where $l\!\!=\!\!\rank(\Gamma)\!\!>\!\!0$) 
is never reduced. 
However, due to \eqref{eq:10.6.4}, one sees directly the conclusion of 
Theorem: $\frac{P_{\Gamma,G}\mathcal{M}(t)}{P_{\Gamma,G}(t)}\Bigr|_{t=1}\!\!
\!\!\!=\!\sum_{S\in\langle \Gamma,G\rangle_0}\frac{\varphi(S)}{\#Aut(S)}$ (c.f.\ \eqref{eq:11.1.8}).

\medskip
4. Due to D.\ Epstein \cite{E3}, we know that there 
is a wide class of groups 
satisfying Assumption ii). See the remarks and problems in the next paragraph.

\section{ Concluding Remarks and Problems.}
\label{sec:12}
We are only at the start of the study of the space $\Omega(\Gamma, G)$ 
for discrete groups and monoids.
Here are some problems and conjectures for further study.

\medskip
\noindent
{\bf 1.} A formula similar to \eqref{eq:11.5.6} should be true without  
assuming the finiteness of $\Omega(\Gamma,G)$, where the formula 
should be rewritten as an integral formula. 

\medskip
{\bf Problem 1.} Find measures $\nu_a$ on $\pi^{-1}(a)$ 
and $\mu_a$ on the set $Sing(P_{\Gamma,G})$ of singularities of the  
series on the circle of radius $r$ so that 
\vspace{-0.2cm}
the following holds:
\begin{equation}
\label{eq:12.1}
\frac{\int_{\pi^{-1}(a)} \omega_{\Gamma,G} d\nu_a}{\int_{\pi^{-1}(a)} d\nu_a}
=
\int_{Sing(P_{\Gamma,G})} \frac{P_{\Gamma,G}\mathcal{M}(t)}{P_{\Gamma,G}(t)}\Bigr|_{t=x} d\mu_{a,x},
\end{equation}
for $a\in \Omega(P_{\Gamma,G})$, where $ \omega_{\Gamma,G}$ is a tautologiocal map from $\Omega(\Gamma,G)$ to $\mathcal{L}_{\R,\infty}$.

\bigskip 
\noindent
{\bf 2.} It is known (\cite{E3}) that, for a wide class of groups, the assumption 
ii) in the Theorem 6 is satisfied in a stronger (global) form: the Poincare 
series $P_{\Gamma,G}(t)$ and the growth series $P_{\Gamma,G}\mathcal{M}(t)$ 
are rational functions, where the denominator polynomial 
$\Delta_{\Gamma,G}(t)$ for the rational function 
$P_{\Gamma,G}(t)$ is also the universal denominator for the rational functions 
$P_{\Gamma,G}\mathcal{M}(t)$. More generally, $P_{\Gamma,G}(t)$ analytically continues  to a meromorphic function on a (branched) covering domain of $\C$ (in this case, $\Delta_{\Gamma,G}(t)$ is defined only up to a unit factor).

We remark that denominator polynomial $\Delta_{P_{\Gamma,G}}(t)$ 
for the Poincare series 
$P_{\Gamma,G}(t)$ as an element of $\C\{t\}_{r_{\Gamma,G}}$ (see 11.4 Definition) is the factor of $\Delta_{\Gamma,G}(t)$ consisting of the roots $x$
with minimal $|x|=r_{\Gamma,G}$ (in case $P_{\Gamma,G}(t)$ is defined in a covering of $\C$, whether $|x|$ of a pole $x$ makes sense or not is unclear).

Inspired by these observations, in order to get a global understanding of the monoid $(\Gamma,G)$, we propose  studying the {\it residues of 
$P_{\Gamma,G}\mathcal{M}(t)$ at any root of $\Delta_{\Gamma,G}(t)$}, 
which are defined and shown to belong to $\mathcal{L}_{\C,\infty}$ as follows.

\medskip
{\bf Definition.}
  Let $x$ be a root of $\Delta_{\Gamma,G}(t)\!=\!0$ of the 
multiplicity $d_x>0$. Then, for $0\!\le\! i\!<\! d_x$, 
we define the {\it residue of depth $i$ of the limit 
function $P_{\Gamma,G}\mathcal{M}(t)$ at $x$}  
by the formula
\vspace{-0.2cm}
{\small
\begin{equation}
\label{eq:12.2}
\left(\frac{d^i}{dx^i}\frac{P_{\Gamma,G}\mathcal{M}(t)}{P_{\Gamma,G}(t)}\right)\Biggr|_{t=x}
\end{equation} 
}

{\bf Example.} The formula \eqref{eq:11.1.7} is paraphraised as the formula for the residue of depth 0 at $t=1$.
{\small
\begin{equation}
\label{eq:12.3}
\frac{P_{\Gamma,G}\mathcal{M}(t)}{P_{\Gamma,G}(t)}\Biggr|_{t=1}=\sum_{S \in \langle\Gamma,G\rangle_0}\frac{1}{ \#(\Aut(S))}\varphi(S).
\end{equation}
}

{\bf Assertion.} {\it The residues belong to the space 
$\mathcal{L}_{\C,\infty}$ at infinity. }

\medskip
\noindent
{\it Proof.}
By the definition \eqref{eq:8.4.1},
$\overline{K}(\frac{P_{\Gamma,G}\mathcal{M}(t)}{P_{\Gamma,G}(t)})=\sum_{n=0}^\infty 
\mathcal{M}(\Gamma_n) \frac{t^n}{P_{\Gamma,G}(t)}$,
whose coefficients $\frac{t^n}{P_{\Gamma,G}(t)}$ 
are rational functions divisible by $\Delta_{\Gamma,G}$ and have zeros of order $d_x$ 
at the zero loci $x$ of $\Delta_{\Gamma,G}$.
Since the kabi-map $\overline{K}$ (8.4.1) is continuous with respect to the classical topology,
this implies the vanishing 
$\overline{K} \left(\left(\frac{d^i}{dt^i}\frac{P_{\Gamma,G}\mathcal{M}(t)}{P_{\Gamma,G}(t)}
\!\right)\Bigr|_{t=x}\!\right)\!=\!0$ \ for $0\!\le\! i\!<\! d_x$.\ $\Box$

\medskip
Using the all  residues for all roots of $\Delta_{\Gamma,G}(t)\!=\!0$, 
we introduce the {\it global module of 
limit elements for $(\Gamma,G)$}:
\begin{equation}
\label{eq:12.4}
\begin{array}{ll}
\mathcal{L}(\Gamma,G):=\underset{0<r<\infty}{\oplus} \underset{\substack{x: \text{ a root of }\\
 \Delta_{\Gamma,G}(t)=0\ s.t.\ |x|=r}}{\oplus}
\underset{0\le i<d_x}{\oplus}  \C\cdot \left(
\frac{d^i}{dx^i}\frac{P_{\Gamma,G}\mathcal{M}(t)}{P_{\Gamma,G}(t)}
\right)\Bigr|_{t=x}\!,\!\!\!\!\!\!
\end{array}
\end{equation} 
which is doubly filtered: one filtration is given by the absolute values $|x|$ of
the roots of $\Delta_{\Gamma,G}(t)\!=\!0$, and the other by the order $i$ of the depth of 
residues at $x$.

 Theorem 6 in \S11 states  relationships between the 
$\tilde\tau^{h_{\Gamma,G}}$-invariant part of the module $\overline{\R\Omega}(\Gamma,G)$
with the filter  
at $|x|=r_{\Gamma,G}:=\inf\{r\}$ and the first residues part of the module $\mathcal{L}(\Gamma,G)$. 
We ask its generalization.

\medskip
{\bf Problem 2.} What is the relationship between the modules 
$\overline{\R\Omega}(\Gamma,G)$, $\mathcal{L}(\Gamma,G)$ and 
$\mathcal{L}_{\C,\infty}\langle\Gamma,G\rangle$? 
Find  generalization of  Theorems in \S11 and, in particular, of 
\eqref{eq:11.4.3}, \eqref{eq:11.4.4} and \eqref{eq:11.5.6} in this context.

\bigskip
\noindent
{\bf 3.} Another important aspect of the  residues is that the Poincare series $P_{\Gamma,G}\mathcal{M}(t)$ and $P_{\Gamma,G}(t)$ are series, up to variables in $\Conf_0$, defined over integers $\Z$. Therefore, in case $\Delta_{\Gamma,G}(t)$ is a polynomial in $\Z[t]$, they are rational functions defined over $\mathbb{Q}$, and hence the residue (12.2) is defined over the algebraic number field $\mathbb{Q}(x)$ for a root $x$ of $\Delta_{\Gamma,G}(t)\!=\!0$. The action of an element $\sigma$ of the Galois group of the splitting field of $\Delta_{\Gamma,G}(t)$ commutes with the Kabi-map $\overline{K}$ (8.4.1), and seems to bring the space spanned by the residues at $x$ to that at $\sigma(x)$, and hence induces an action on $\mathcal{L}(\Gamma,G)_\Q$.  

Actually, in several interesting examples (surface groups by Cannon [Ca], Artin monoids [Sa5,6]), we observe that the denominator polynomial $\Delta_{\Gamma,G}(t)$ is, up to the factor of a power of $1-t$, irreducible.  In view of the above observation,  the limit space $\C\cdot \Omega(\Gamma,G)$ studied in the present paper is not ``isolated'' but related by the action of the Galois group with the residue modules at other places $x$ with $|x|>r_{\Gamma,G}$. However, no cocrete example is yet known.

On the other hand, the residue module at $t=1$ is  ``isolated'' 
(with respect to the Galois group action). There are a few examples of higher poles at $t=1$ (see [Sa6]), but we do not yet understand  their nature and role.

\medskip
{\bf Example.}(\cite{Sa2}) \ \ 
 Consider the infinite cyclic group $(\Z,{\pm1})$. 
Then, the growth function is given by $P_{\Z,{\pm1}}(t)\!=\!\frac{1+t}{(1-t)^2}$ and the principal part of the singularities of $P_{\Z,{\pm1}}\mathcal{M}(t)$ is given by 
$P_{\Z,{\pm1}}\mathcal{M}(t)=\sum_{m=0}^{\infty}\!\varphi(I_m)\left(\!\frac{2}{(1-t)^2}\!-\!
\frac{m}{1-t}\!+\!R_m\!\right)$ 
where $I_m$ is a linear graph of $m$-vertices 
and $R_m$ is a polynomial 
 of degree {\footnotesize $\!<\![(m\!-\!1)/2]$}
in $t$. 
\vspace{-0.2cm}
Therefore, the two residues of depth 0 and 1 at $t\!=\!1$ are given by
\[
\vspace{-0.1cm}
\begin{array}{rll}
 \frac{P_{\Z,{\pm1}}\mathcal{M}}{P_{\Z,{\pm1}}}\ \ \Bigr|_{t=1}
& =& \quad \sum_{m=0}^\infty\varphi(I_m), \\
\left(\frac{d}{dt}\frac{P_{\Z,{\pm1}}\mathcal{M}}{P_{\Z,{\pm1}}}\right)\Bigr|_{t=1}
& =& \quad \sum_{m=0}^\infty \frac{m-1}{2}\varphi(I_m), 
\end{array}
\]
 span the space 
$\mathcal{L}_{\R,\infty}\langle\Z,\pm1\rangle$, where the first one is the limit element in $\Omega(\Z,\pm1)$.

In view of these observations, we ask the following problems.

\medskip
{\bf Problem 3.} \ {\bf i)}  Describe the action of the Galois group of the splitting field of  $\Delta_{\Gamma,G}(t)=0$ on $\mathcal{L}(\Gamma,G)_{\mathbb{Q}}$.  Clarify the role of the classical part  $\C\cdot \Omega(\Gamma,G)$.

{\bf ii)}  When is the denominator  polynomial $\Delta_{\Gamma,G}(t)$, up to a factor of a power of $1-t$, irreducible over the integers $\Z$?

{\bf iii)} What is the meaning of the residue module at $t=1$:
\begin{equation}
\label{eq:12.5}
\mathcal{L}(\Gamma,G)_1\!:\!=\!
\underset{0\!\le\! i\!<\!d_1}{\oplus}\R\!\cdot\!\left(\!\frac{d^i}{dt^i}\frac{P_{\Gamma,G}\mathcal{M}(t)}{P_{\Gamma,G}(t)}\!\right)\!\Bigr|_{t=1}
\end{equation}
\noindent
{\bf 4.} Including Mach\`i's example, there are a number of examples 
where $\Omega(P_{\Gamma,G})$ is finite. 
However, we do not know an example when 
$\Omega(\Gamma,G)$ is finite
except for the simple accumulating cases (e.g.\ \eqref{eq:11.1.9}).  
We conjecture the following.

\vspace{0.1cm}
{\bf Conjecture 4.}
For any hyperbolic group $\Gamma$ with  any finite generating system 
$G$, the limit space $\Omega(\Gamma,G)$ 
 is finite accumulating.
\vspace{0.1cm}

Evidence is provided by Coornaert [Co]: {\it if $\Gamma$ is hyperbolic, then 
there exists positive real constants $c_1,c_2$ such that 
$c_1r_{\Gamma,G}^{-n}\!\le\!\#\Gamma_n\!\le\! c_2r_{\Gamma,G}^{-n}$.}
This implies that the property in {\bf Fact }in the proof of {\bf Assertion} in (\ref{subsec:11.3}) holds for hyperbolic groups without assuming the finite rational accumulation of $\Omega(P_{\Gamma,G})$. We further expect that  Coornaert's arguments can be lifted to the level of $A(S,\Gamma_n)$.

\bigskip
\noindent
{\bf 5.}
The following groups are not hyperbolic. However, because of their 
geometric significance, it is interesting to ask the following problems.

\medskip
{\bf Problem 5.1} Are the limit spaces $\Omega(\Gamma,G)$ for the  following 
pair of a group and a system of generators 
simple or finite? 

 \ 1.\ Artin groups of finite type with the generating systems 
given in \cite{BS}\cite{Sa4}, 

\ 2.\ The fundamental groups  of the complement of free divisors 
\vspace{-0.08cm}
with respect to the generating system defining positive homogeneous monoid structure 
\cite{S-I}.

\smallskip
In these examples, $G$ generates a positive homogeneous  monoid 
$\Gamma_+$ in $\Gamma$ such that $\Gamma\!=\! \cup_{n=0}^\infty \Delta^{-n} \Gamma_+$, 
where $\Delta$ is a fundamental element.


\medskip
{\bf Problem 5.2} Clarify the relationship among $\Omega(\Gamma,G)$,  
$\Omega(P_{\Gamma,G})$,  
$\Omega(\Gamma_+,G)$ and $\Omega(P_{\Gamma_+,G})$ (see \cite[Chap.13]{Ba} for $\Gamma_+\!=\!(\Z_+)^2$, and \cite{Sa5} for Artin monoids).

\bigskip

\end{document}